 \newcommand{\obra}[3]{{\sc #1} {\em #2}. {#3}.}
\newtheorem{theorem}{\bf Theorem}
 \newtheorem{lemma}[theorem]{\bf Lemma}
 \newtheorem{proposition}[theorem]{\bf Proposition}
 \newtheorem{definition}[theorem]{\bf Definition}
 \newtheorem{corollary}[theorem]{\bf Corollary}
 \newtheorem{remark}[theorem]{Remark}
 \newtheorem{scholium}[theorem]{Scholium}
 \newtheorem{remarks}[theorem]{Remarks}
 \renewenvironment{proof}{{\em Proof \/.-}}
   {\hfill $\square$\newline}
\DeclareMathOperator{\Sing}{Sing}
\DeclareMathOperator{\Sat}{Sat}
 \newcommand{\R}{\mathbb{R}}
 \newcommand{\SSS}{\mathbb{S}}
 \newcommand{\FF}{\mathcal{F}}
 \newcommand{\EE}{\mathcal{E}}
 \newcommand{\PPP}{\mathcal{P}}
 \newcommand{\CC}{\mathcal{C}}
  \newcommand{\KK}{\mathcal{K}}
   \newcommand{\RR}{\mathcal{R}}
   \newcommand{\DD}{\mathcal{D}}
    \newcommand{\LL}{\mathcal{L}}
    \newcommand{\NN}{\mathcal{N}}
    \newcommand{\MM}{\mathcal{M}}
    \newcommand{\TT}{\mathcal{T}}
    \newcommand{\VV}{\mathcal{V}}
 \newcommand{\AAA}{\mathcal{A}}
 \newcommand{\xx}{{\bf x}}
  \newcommand{\vp}{\varphi}
  \newcommand{\wt}[1]{{\widetilde{#1}}}
   \newcommand{\wh}[1]{{\widehat{#1}}}
\newcommand{\eps}{\varepsilon}
\newcommand{\g}{\gamma}
\newcommand{\G}{\Gamma}
\begin{document}

\author{C. Alonso-Gonz{\'a}lez}
\address{Universitat d'Alacant. Departamento de Matem\'{a}ticas.
	Carretera de San Vicente del Raspeig s/n 03690 San Vicente del
	Raspeig Alicante (Spain)} \email{clementa.alonso@ua.es}
\author{F. Sanz S\'{a}nchez}
\address{Departamento de \'{A}lgebra, An\'{a}lisis Matem\'{a}tico, Geometr\'{\i}a y Topolog\'{\i}a\\
	Universidad de Valladolid, Spain} \email{fsanz@agt.uva.es}
\date{\today}
\title[Stratification of the dynamics]
{Stratification of three-dimensional real flows I: Fitting Domains \footnotemark[1]}
\maketitle

\begin{abstract}
	Let $\xi$ be an analytic vector field in $\mathbb{R}^3$ with an isolated singularity at the origin and having only hyperbolic singular points after a reduction of singularities $\pi:M\to\R^3$. The union of the images by $\pi$ of the local invariant manifolds at those hyperbolic points, denoted by $\Lambda$, is composed of trajectories of $\xi$ accumulating to $0 \in \mathbb{R}^3$. Assuming that there are no cycles nor polycycles on the divisor of $\pi$, together with a Morse-Smale type property and a non-resonance condition on the eigenvalues at these points, in this paper we prove the existence of a fundamental system $\{V_n\}$ of neighborhoods well adapted for the description of the local dynamics of $\xi$: the frontier $Fr(V_n)$ is everywhere tangent to $\xi$ except around $Fr(V_n)\cap\Lambda$, where transvesality is mandatory.
\end{abstract}

	\textbf{Keywords:} Real vector fields, singularities, foliations, reduction of singularities, vector fields dynamics.
	
	\footnotetext[1]{The authors were supported by Ministerio de Ciencia e Innovaci\'on (MTM2016-77642-C2-1-P and PID2019-105621GB-I00).
	The second author was also supported by Junta de Castilla y León (VA083G19).}

\section{Introduction}\label{sec:intro}

This is the first of two papers dedicated to investigating the structure of the trajectory space of a real analytic vector field $\xi$ in a neighborhood of an isolated singular point $0\in\R^3$. In our study, we impose some non-degeneracy conditions that can be read after reduction of singularities of $\xi$ (in the sense of Panazzolo \cite{Pan}). In fact, under such conditions, our results also apply to a singular one-dimensional oriented foliation $\LL$ defined in an analytic three-dimensional manifold $M$ with boundary and corners such that $\partial M$ is homeomorphic to the sphere, regardeless of whether $\LL$ is the pull-back of a local vector field at the origin of $\R^3$.

Roughly, our aim is to establish a theorem of stratification of the dynamics of $\xi$ that generalizes to dimension three the classical one, coming from Poincar\'{e} \cite{Poi}, of decomposition of a planar analytic vector field dynamics into {\em parabolic}, {\em elliptic} or {\em hyperbolic} invariant sectors (see \cite{And}). The second of our papers \cite{Alo-S2} is devoted to the precise statement and proof of this result.

In the text at hand, we establish the existence of a fundamental system of compact neighborhoods which are specially adapted to the flow of $\xi$ in order to provide such a stratified structure. Those neighborhoods, called {\em fitting domains}, have the important property that their boundary is everywhere tangent to the flow, except for compulsorily controlled zones: near those points where the boundary crosses the local invariant manifolds associated with singular points after reduction of singularities. Apart from providing the technical preparation for our stratification theorem, we are convinced that the construction of fitting domains has interest in itself. They certainly provide good representantives for the ``germ of the space of leaves'' of the foliation generated by $\xi$, and they may be useful to undertake further studies of the local three-dimensional dynamics. Namely, classification under topological equivalence (this topic is treated by Alonso-González, Camacho and Cano in \cite{Alo-C-C1,Alo-C-C2}, references which constitute partially the motivation for this text); establishment and study of the partial ``Dulac's transition maps'' between transversal zones on the boundary (these maps have been extensively studied for planar vector fields, see for instance some recent references \cite{Ily-Y,Kai-R-S,Dol-S,Mar-V2}, but not yet for three-dimensional vector fields, as far as we know); suitable generalizations of the $\lambda$-lemma result for non-hyperbolic singularities; pieces for a ``surgery'' construction of global line foliations in three-dimensional spaces with certain prescribed local behavior at singular points, etc. Let us justify this conviction by motivating and describing more precisely the steps of our construction.

Consider first an analytic vector field $\xi$ with a hyperbolic singularity at $0\in\R^n$. By Hartmann-Grobman's Theorem, the germ of any trajectory of $\xi$ accumulating to the origin is contained in the union $W^s\cup W^u$ of the stable and unstable manifolds. To be able to properly state that $W^u\cup W^s$ is equal to the union of such ``genuine'' trajectories (not just germs) in a given fixed neighborhood $U$, one needs to require transversality conditions to the boundary of $U$. Typically, we
impose that the frontier $Fr(U)$ is transversal to $\xi$ only along sufficiently small neighborhoods $T^s$ and $T^u$ of $W^s\cap\partial U$ and $W^u\cap\partial U$, respectively. Then the whole space of trajectories of $\xi$ in $U$ can be described easily: each of them is either contained in $(W^s\cup W^u)\cap U$ and accumulates to $0$ or it is a segment from a point of $T^s\setminus W^s$ to a point of $T^u\setminus W^u$ and does not accumulate at $0$. Such a neighborhood, called below of {\em chimney type}, is the simplest example of a fitting domain.  

It is quite manifest that the above considerations can also be carried out for a generic non-hyperbolic planar vector field $\xi$ by virtue of the Seidenberg's {\em reduction of singularities} \cite{Sei}. To be more precise, using a real version of such a reduction of singularities by Dumortier in \cite{Dum}, there is a proper morphism $\pi:(M,D)\to(\R^2,0)$, where $M$ is an analytic surface with boundary and corners with $\partial M=D\simeq\SSS^1$ such that the pull-back $\pi^*(\xi|_{\R^2\setminus\{0\}})$ extends to an analytic foliation $\LL$ on $M$ with only finitely many singularities on $D$, all of them with non-nilpotent linear part.  Assuming the generic conditions that all the singularities are hyperbolic, that $D$ is invariant by $\LL$,  and that $D$ is not a polycycle of $\LL$ (in other words, that the vector field is not of the {\em center-focus type}), we can ``connect'' the chimney-type neighborhoods at the singular points by means of flow-boxes in order to obtain a neighborhoods basis $\{\wt{U}_n\}_n$ of $D$ in $M$ with the following properties (see Figure \ref{Fig:EntornoDimension2}): for any singular point $p\in D$, fixed a realization $W(p)$ of the local invariant manifold of $\LL$ at $p$ being transversal to $D$, the frontier $Fr(\wt{U}_n)$ is tangent to $\LL$ except on small neighborhoods $T_{p,n}$ of $Fr(\wt{U}_n)\cap W(p)$ in $Fr(\wt{U}_n)$ for each $p$. 
\begin{figure}[h]
	\begin{center}
		\includegraphics[scale=0.60]{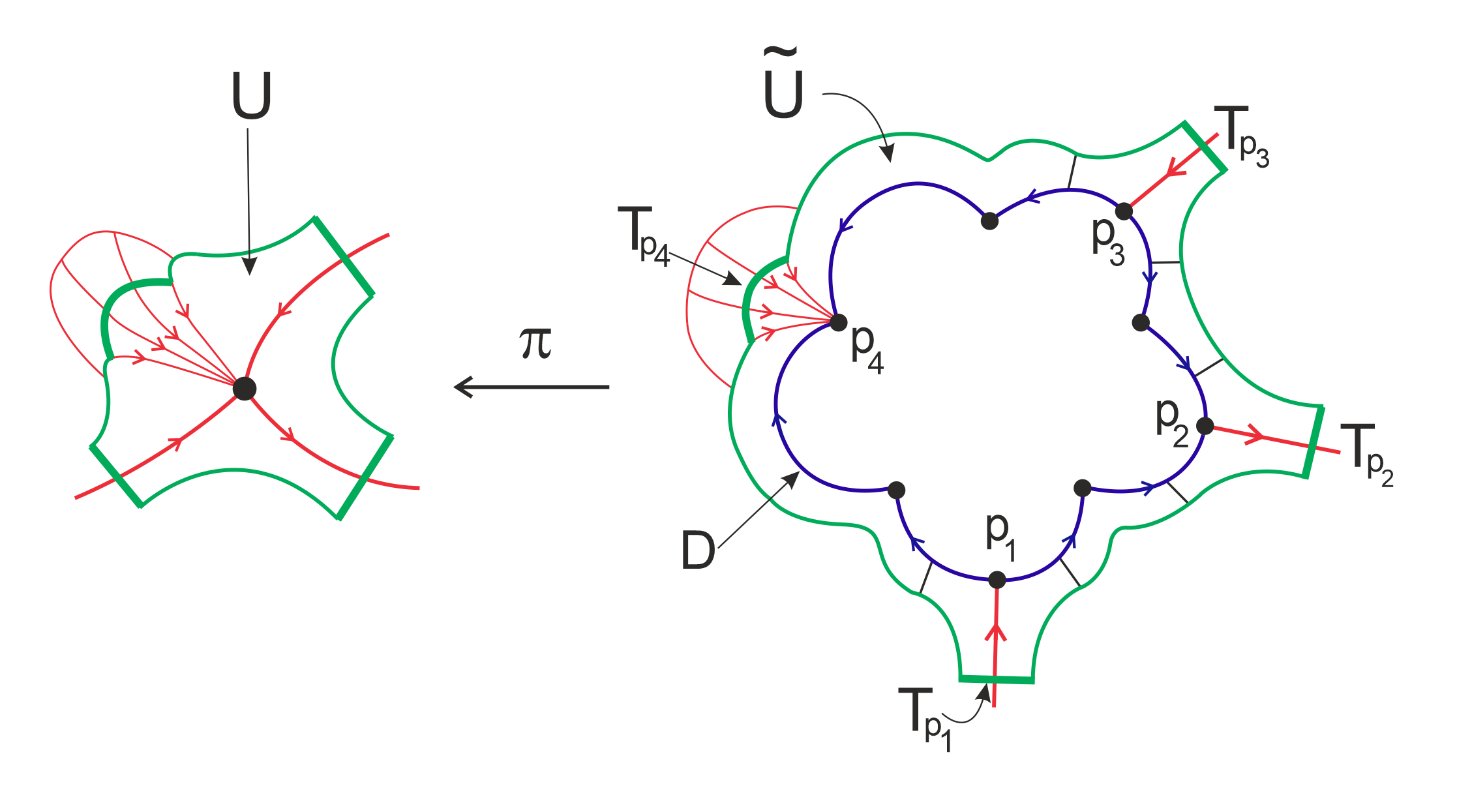}
	\end{center}
	\caption{A generalized chimney-type neighborhood in dimension two.}
	\label{Fig:EntornoDimension2}
\end{figure}

We get a basis $\{U_n=\pi(\wt{U}_n)\}_n$ of ``generalized chimney-type'' neighborhoods (fitting domains) of the origin where the space of trajectories of $\xi$ has a simple description: each trajectory either escapes in finite time (negative, positive or both) crossing the controlled transversal zone $\pi(\bigcup_pT_{p,n})$, or it remains in $U_n$ for $t\to\pm\infty$, and accumulates asymptotically to the origin in both senses.
It is worth noting that the (subanalytic) invariant set $\Lambda=\pi(\bigcup_p W(p))\setminus\{0\}$ is  formed by and contains any germ of a trajectory of $\xi$ accumulating at the origin (the so called {\em characteristic orbits} in Dumortier's terminology \cite{Dum}), thus being a realization of such {\em family of germs}. 
The transversal parts $\bigcup_pT_{p,n}$ in the above statement can be chosen to be  contained in a given fixed basis of neighborhoods of $\Lambda$.

The central objective in the current paper is to generalize the previous construction to dimension $3$. To this end, we use the reduction of singularities developed by Panazzolo in \cite{Pan}. So we have that
there exists a real analytic proper morphism $\pi:(M,D)\to(\R^3,0)$, where $M$ is a real analytic manifold with boundary and corners with $\partial M=D$, which restricts to an isomorphism outside the {\em divisor} $D$ and such that the foliation generated by $\pi^*(\xi|_{\R^3\setminus\{0\}})$ extends to an analytic foliation $\LL$ in $M$ having only elementary singularities, that is, the linear part of a local generator at any $p\in\Sing(\LL)$ is non-nilpotent.
Assuming that $0$ is an isolated singularity of $\xi$, we have, in addition, that $D$ is homeomorphic to $\SSS^2$ and $\Sing(\LL)\subset D$ (notice that in Panazzolo's procedure we just blow up along centers contained in the singular locus of the intermediate transformed foliations).

Our object of study is then a triple $\MM=(M,D,\LL)$, coming or not from a reduction of singularities of a local vector field, where $M$ is a real analytic manifold with boundary and corners, $D=\partial M$ is a normal crossings divisor homeomorphic to $\SSS^2$ and $\LL$ is a one-dimensional orientable singular foliation over $M$ such that $\Sing(\LL)\subset D$. We impose the following non-degeneracy conditions:
\begin{enumerate}
	\item {\em Non-dicriticalness.-} Every component of $D$ is invariant by $\LL$.
	\item {\em Hyperbolicity.-} All singularities are hyperbolic (therefore $\Sing(\LL)$ is a finite set).
	\item {\em Acyclicity.-} The restriction $\LL|_D$ has neither closed regular orbits nor {\em polycycles}.
	%
	\item {\em Morse-Smale condition.-} There are no leaves connecting two-dimensional saddle points of the restriction $\LL|_D$ and contained in the regular locus of the divisor.
	\item {\em No saddle-resonances.-} A condition that avoids certain specific resonances of the eigenvalues at the singular points appearing in a multiple saddle connection.
\end{enumerate}
The first two conditions are generic and commonly assumed. The third condition is essential for our study. It corresponds to the center-focus exclusion for planar vector fields. The last two conditions are more specific and deserve to be commented. They appear in precedent papers, for instance those already mentioned by Alonso-Gonz\'{a}lez et al. \cite{Alo-C-C1,Alo-C-C2}.
In general, once we are in a scenario where all the singularities are already hyperbolic, the ``classical Morse-Smale condition'' means that two saddle singularities can never be connected along the corresponding invariant stable-unstable manifolds (see for instance \cite{Pal-M}). The fourth condition above is an adaptation of such a property to the consideration of the divisor, where we only permit saddle connections along the skeleton, that is, the set of points where at least two components of $D$ intersect. Morally a two-dimensional saddle connection outside the skeleton can be avoided by ``perturbation'' whereas, those in the skeleton persist unless we completely break the divisor.

The fifth condition is more involved and it is described in detail in Section~\ref{sec:MS-foliations}. To have an idea of its meaning, consider the situation of a single connection between two saddle points $p,q$ along their common one-dimensional invariant manifolds, contained in the skeleton. The non $s$-resonance condition in this case is expressed saying that the quotients between the two (necessarily real) eigenvalues of the same sign at $p$ and at $q$ do not coincide with. Dynamically, it prevents the situation depicted in Figure \ref{Fig:ConexionSillas}, where the flow saturation of a small curve that accumulates in the ``middle'' of the two-dimensional invariant manifold at $p$ goes to the ``middle'' of the corresponding one at $q$.

\begin{figure}[h]
	\begin{center}
		\includegraphics[scale=0.45]{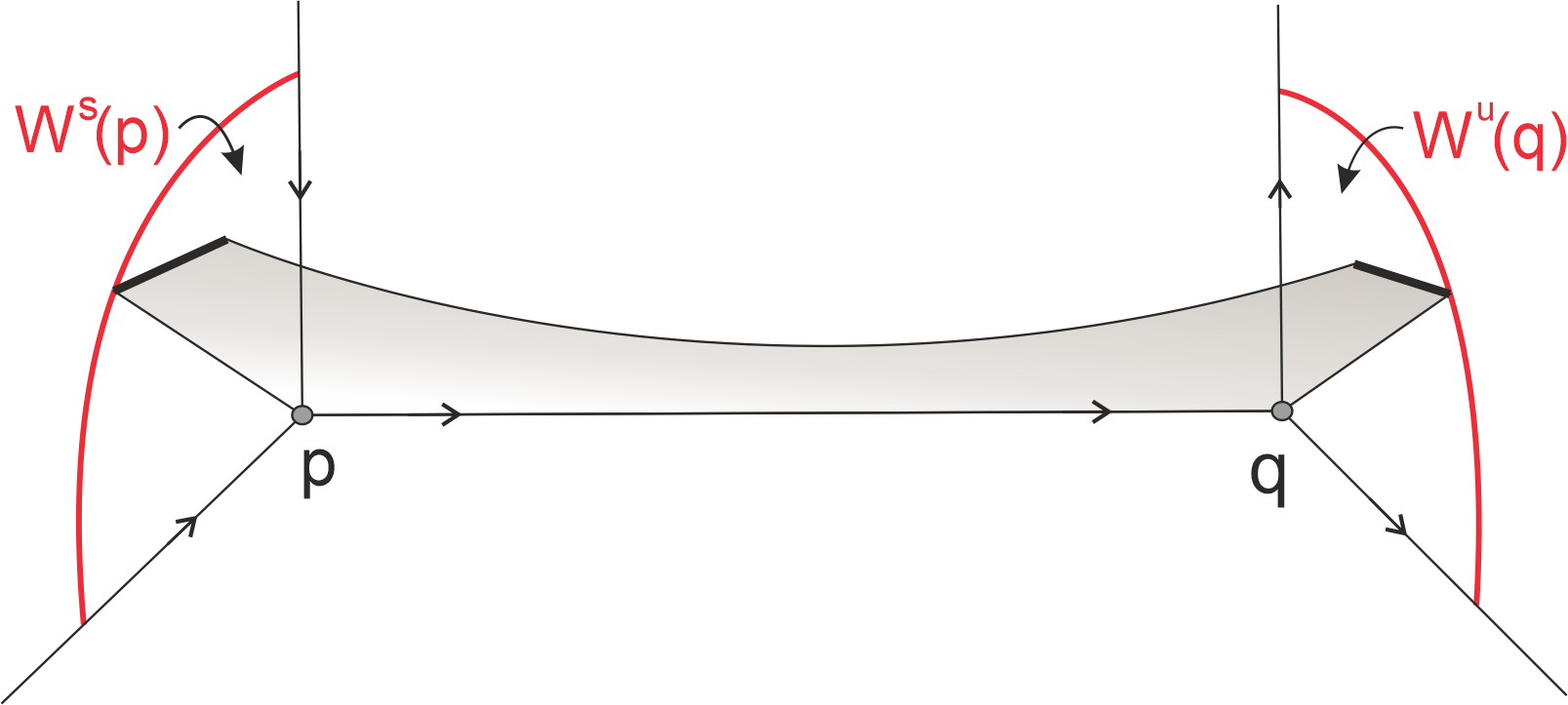}
	\end{center}
	\caption{A resonant saddle-connection.}
	\label{Fig:ConexionSillas}
\end{figure}

By means of the first and second condition, if $p$ is a singular point of $\LL$, between the two local stable and unstable manifolds of $\LL$ at $p$, either both are contained in $D$, or one of them is contained in $D$ and the other one, denoted by $W(p)$, is transversal to (and not contained in) $D$. In the first case, $p$ is called a {\em tangential saddle} point. Points $p\in\Sing(\LL)$ for which $\dim W(p)=2$ will play a special role in what follows and will be called {\em transversal saddle} points.

 As a matter of notation, if $A\subset M$, we denote by $A^\smallsmile$ the set of points where $A$ is locally invariant by $\LL$, that is, $a\in A^\smallsmile$ if and only if $a\in A$ and there exists a neighborhood $U_a$ of $M$ at $a$ such that any leaf of $\LL|_{U_a}$ through a point in $U_a\cap A$ is contained in $A$. Points of $A^\smallsmile$ where $A$ is a smooth submanifold not belonging to $\Sing(\LL)$ are, of course, points of tangency between $\LL$ and $A^\smallsmile$.
 
 The main result in this paper is the following. 
\begin{theorem}\label{th:main}
	Assume that $\MM=(M,D,\LL)$ satisfies conditions (1)-(5). Denote by $H$ the set of singular points of $\LL$ that are not tangential saddles and fix realizations of the local invariant manifolds $W(p)$, for any $p\in H$. Then, given a neighborhood $V$ of $D$ in $M$ and neighborhoods $V_p$ of $W(p)\cap V$ in $M$, for any $p\in H$, such that $V_p\cap V_q=\emptyset$ if $p\ne q$, there exists a compact semianalytic neighborhood $U\subset V$ of $D$ in $M$ and compact semianalytic discs $T_p\subset Fr(U)\cap V_p$, for each $p\in H$, satisfying the following:
	\begin{enumerate}[(i)]
		
		\item  The frontier $Fr(U)$ is a topological, piecewise smooth surface given by the disjoint union		
		$$
		Fr(U)= Fr(U)^\smallsmile\cup\bigcup_{p\in H}T_p.
		$$  
		\item Each disc $T_p$ contains $Fr(U)\cap W(p)$ and, in turn, it is contained in a smooth surface of $V_p$ everywhere transversal to $\LL$.  
	\end{enumerate}
\end{theorem}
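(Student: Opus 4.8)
The plan is to reduce the global construction to a finite list of local models — one near each singular point and one over each regular piece of $\LL|_D$ — organized by the combinatorics of $\LL|_D$ on $D$, and then to glue these pieces consistently along the flow. First I would analyze the restricted foliation $\LL|_D$ on the sphere‑with‑corners $D$. By conditions (1)-(3), $\Sing(\LL)$ is a finite subset of $D$ and $\LL|_D$ has neither closed orbits nor polycycles; moreover each skeleton edge of $D$ is itself a union of leaves (at a regular point of the skeleton the $\LL$‑direction is tangent to both incident components, hence to their intersection), so a Poincar\'e--Bendixson argument on $\SSS^2$ shows that every orbit of $\LL|_D$ has singular $\alpha$‑ and $\omega$‑limit sets. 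Hence the ``separatrix graph'' $\Gamma$ — the union of $\Sing(\LL)$, the skeleton edges, and the finitely many separatrix orbits — cuts $D$ into finitely many open canonical regions, each of them parallelizable (flow‑equivalent to a trivially foliated band). Conditions (4) and (5) are what make $\Gamma$ manageable: (4) forbids two‑dimensional saddle connections outside the skeleton, and (5) controls the transport of invariant data along chains of tangential‑saddle connections inside the skeleton. The outcome is a finite, cell‑like decomposition of $D$ into vertices (singular points), edges (separatrices and skeleton edges), and faces (canonical regions).

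\emph{Local models.} For each vertex $p\in\Sing(\LL)$ I would, using hyperbolicity, fix a chart at $p$ and build a compact semianalytic local fitting domain $U_p$: a neighborhood of $p$ in $M$ whose frontier inside $\mathrm{int}(M)$ consists of pieces of local leaves of $\LL$ — hence $\LL$‑tangent — everywhere except, when $p\in H$, on a single small disc $T_p$ transverse to $\LL$, placed across $W(p)$ and contained in the prescribed $V_p$; when $p$ is a tangential saddle no transverse disc is needed and the whole local frontier is $\LL$‑tangent. Here one uses that the local invariant manifolds at a hyperbolic point are semianalytic, and, for tangential saddles sitting on a connection, a sufficiently good normal form (equivalently, enough control of the local holonomy of $\LL$ at $p$) — which is precisely where condition (5) enters, to exclude the obstructive eigenvalue ratios. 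Each local model then prescribes, on the transverse sections that bound the local flow‑bands around $p$, the traces of the $\LL$‑tangent part of $\partial U_p$, with the incoming (stable‑side) and outgoing (unstable‑side) traces related by that local holonomy.

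\emph{Flow‑boxes and gluing.} Over each open edge and each face of the decomposition, after removing small neighborhoods of the incident singular points, $\LL$ in $M$ is a trivial product (flow‑box / tubular‑neighborhood theorem; over a face the relevant leaves have finite flow time, so an $\LL$‑invariant surface lying above $D$ is a positive graph over $D$, determined by its trace on one entrance section). I would then assemble $U$ from the $U_p$'s near the vertices together with flow‑box slabs over the open edges and faces, using the traces prescribed by the $U_p$'s as entrance data for the neighboring slabs. Consistency around the whole cell structure must be checked: acyclicity (3) prevents any global monodromy obstruction around cycles of $\Gamma$, while condition (5) guarantees that along a chain of tangential‑saddle connections the composed local holonomies do not force the invariant surface to ``close up in the middle'' — the phenomenon of Figure~\ref{Fig:ConexionSillas} — so that a coherent choice of traces, remaining inside $V$ and the $V_p$'s, is possible. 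Taking every piece small, $U=\bigcup_{p}U_p\cup(\text{slabs})$ is a compact semianalytic neighborhood of $D$ with $U\subset V$; its frontier is a piecewise smooth topological sphere equal to the disjoint union of $Fr(U)^\smallsmile$ and the discs $T_p$, and each $T_p\subset Fr(U)\cap V_p$ lies on an $\LL$‑transverse smooth surface and contains $Fr(U)\cap W(p)$, as required.

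\emph{Main obstacle.} The crux, and the step I expect to be hardest, is the propagation and matching of the $\LL$‑tangent boundary pieces across the skeleton, in particular through multiple tangential‑saddle connections: one must understand the local holonomy at each hyperbolic tangential saddle precisely enough to conclude that, under the non‑resonance condition (5), the composition of holonomies along a chain admits a consistent invariant trace that stays inside the prescribed neighborhoods. A second delicate point is the gluing at transversal saddles ($\dim W(p)=2$), where the disc $T_p$ must be attached along $\partial T_p$ to the $\LL$‑tangent part of the frontier so that the result is still a piecewise smooth surface. Once the local models and their mutual compatibility are established, the remaining bookkeeping — semianalyticity of the assembled set, compactness, and the piecewise smoothness of $Fr(U)$ — should be routine.
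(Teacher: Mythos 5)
Your plan follows the same broad outline as the paper's proof: a graph $\Omega$ recording the dynamics of $\LL|_D$ on the sphere, chimney-type local models at singular points, flow-box tubes over the edges and flow-box plateaux over the faces, assembled in an order compatible with the acyclicity of $\Omega$. You correctly identify where conditions (4) and (5) enter, and your ``local holonomy'' phrasing of the non-resonance condition is a reasonable restatement of the trace-mark/angle-mark transition mechanism (Propositions~\ref{pro:trace-to-angle}, \ref{pro:angle-to-angle} and Theorem~\ref{th:path of a trace mark}).

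The genuine gap is in the gluing step, and it is not the one you flag. Once the chimney-plus-tube neighborhood of $|\Omega|$ has been built (a distinguished fattening), its transversal frontier is a finite family of free doors, and closing a face $\G$ requires a plateau flow-box whose lateral boundary lies entirely inside the already-built tangential frontier. Achieving that generally forces one to trim the free door at $\tilde{\alpha}(\G)$; the trimmed curve, transported forward by the flow through the downstream chimneys, reappears on their fences --- possibly inside free doors of other faces --- forcing further trimming there, and so on. Acyclicity of $\Omega$ does not stop this: the obstruction is not monodromy around a cycle but an a priori unbounded chain of corrections propagating along the flow, and your plan contains no mechanism to terminate it. Your sentence ``a coherent choice of traces, remaining inside $V$ and the $V_p$'s, is possible'' asserts precisely what has to be proved. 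The paper breaks the loop with the good-saturations / disjoint-stains property (Section~\ref{sec:good-saturations}, Theorem~\ref{th:good-sat}): one performs a single preliminary refinement, recursively on the length filtration of $\Omega$, so that the flow-saturations of any two free doors not associated with the same face are disjoint and the fixed marks at transversal saddles avoid every free door; only then is a one-shot trimming enough (Proposition~\ref{pro:extendable-after-gdsfd}). This is the central technical difficulty and the bulk of the argument, and it is missing from your plan. Relatedly, you do not address keeping the separatrix surfaces $\Sat(W^2_p)$ at transversal saddles disjoint from all free-door saturations, for which the paper needs Corollary~\ref{cor:sat-w2} and Remark~\ref{rk:sat-empty-intersection}.
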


A neighborhood $U$ as in the previous statement will be called a {\em fitting domain} for $\MM$ (see Picture \ref{Fig:EntornoDimension3}). The subset $Fr(U)^\smallsmile$ is called the {\em tangential frontier} of $U$ while $Fr(U)^\pitchfork:=Fr(U)\setminus Fr(U)^\smallsmile$ is called the {\em transversal frontier}. Roughly speaking, the result says that there exists a basis $\{U_n\}$ of fitting domains such that the sequence of their transversal frontiers $\{Fr(U_n)^\pitchfork\}$ ``aproximate'', when $n\to\infty$, the germ of the analytic invariant set $\Lambda=\cup_{p\in H} W(p)$ (in a sense that can be made precise).
Notice that $\Lambda\setminus D$ realizes the {\em family of germs} at $p$ of leaves of $\LL$ that accumulate at $D$ (the ``characteristic orbits'' in analogy with the planar situation). Any such germ of a leaf enters ultimately in the fitting domain, which justifies that the transversal discs $T_p$ contain the corresponding sets $Fr(U)\cap W(p)$.

\begin{figure}[h]
	\begin{center}
		\includegraphics[scale=0.50]{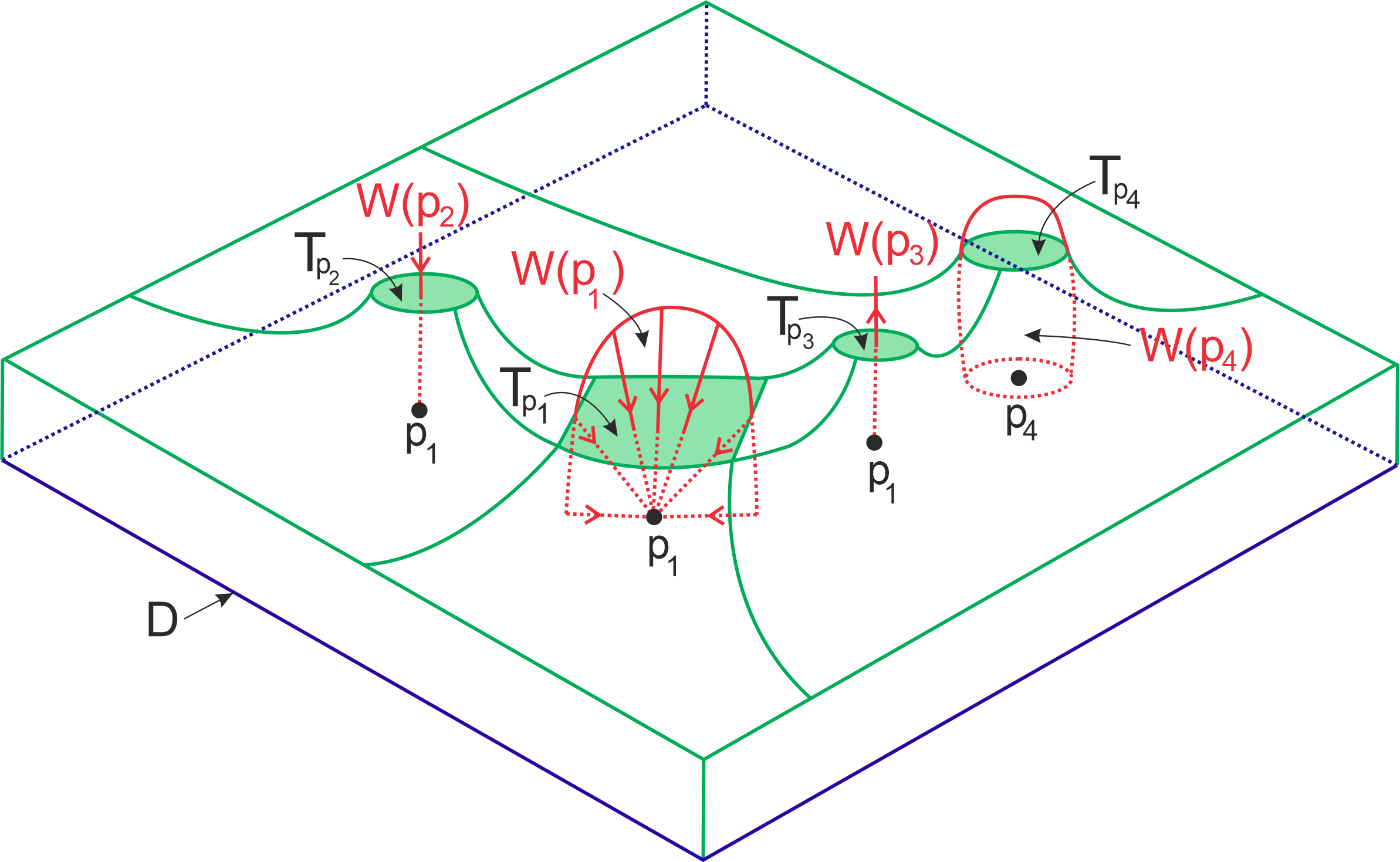}
	\end{center}
	\caption{A fitting domain in dimension three.}
	\label{Fig:EntornoDimension3}
\end{figure}

At this point, it is worth to remark that, although we have called transversal frontier to the union of the discs $T_p$, only the points in the interior of those discs are ``properly'' transversal points. On the contrary, a point $x$ in the boundary of $T_p$ may not be smooth for $Fr(U)$ and it is a transversal point only in a weak sense: one of the two sides of the leaf through $x$ may be contained in $Fr(U)^\smallsmile$, while the other one escapes or enters the neighborhood $U$. In Theorem~\ref{th:main2} below we will detail more in precise the relative position of $Fr(U)$ with respect to $\LL$ along the points in $\partial T_p$. The description is relatively simple if $p$ is not a transversal saddle: namely, if $\dim W(p)=3$, then $T_p= W(p)\cap Fr(U)$ and $\LL$ traverses from inside to outside (or viceversa) at any point of $T_p$; if $\dim W(p)=1$ then $T_p\cap W(p)$ is a singleton and $\LL$ passes from $Fr(U)$ to the exterior of $U$ (or viceversa) through any point of $\partial T_p$. But, if $\dim W(p)=2$, then there is a finite set $Z_p\subset\partial T_p$ (in fact $Z_p$ has four points) such that the leaf through a point $z\in Z_p$ stays locally inside $Fr(U)$ while, through any point of a connected component $I$ of $\partial T_p\setminus Z_p$, the leaf has one side contained in $Fr(U)$ and the other one is contained either in the interior or in the exterior of $U$. As we can see, the description of the frontier of a fitting domain is more intricate around transversal saddles. 

On the other hand, when $\MM$ comes from a morphism $\pi:M\to\R^3$ of reduction of singularities of a vector field $\xi$ at $0\in\R^3$, the image of a basis of fitting domains of $\MM$ by $\pi$ provide a basis of {\em fitting} neighborhoods of the origin for $\xi$, i.e., their frontiers satisfy the same properties of Theorem~\ref{th:main} with respect to the foliation generated by the vector field $\xi$. Under this point of view, the fitting domains provided by Theorem~\ref{th:main} can be considered as ``generalized chimney-type'' neighborhoods for non-hyperbolic singularities of three-dimensional vector fields.

\strut

The paper is structured as follows. In Section~\ref{sec:SHNR-foliations} we review some properties of the foliation $\LL$ that can be derived from conditions (1)-(3). Besides, we describe a planar directed graph $\Omega$, supported in $D$, which schematizes the dynamics inside the divisor $D$ and that will be our main combinatorial tool in the rest of the article. Its set of vertices is precisely $\Sing(\LL)$ and its edges are regular leaves, either contained in the skeleton of $D$ or those containing the local unstable or stable curves at saddle two-dimensional points of the restriction of $\LL$ to $D$. The acyclicity condition assures the $\Omega$ has no cycles as a directed graph. In particular, we can assign a {\em length} to any vertex $p$ as the maximal number of edges of paths of edges starting at $p$.

In Section~\ref{sec:MS-foliations}, we discuss conditions (4) and (5) and results that can be obtained from them. The principal one is Theorem~\ref{th:path of a trace mark}. It  asserts that, generalizing the concept of \textit{point-path} introduced in \cite{Alo-C-C1}, we can associate a path of edges $\Theta(\nu)$ of $\Omega$ with any three-dimensional saddle $p\in\Sing(\LL)$ and with any (local) side $\nu$ with respect to the two-dimensional invariant manifold at $p$, denoted as $W^2_p$, such that the saturation by $\LL$ of any small curve $J_\nu$ contained in $\nu$ and cutting transversally $W^2_p$, produces an invariant topological surface which accumulates to the divisor $D$ just along the support of $\Theta(\nu)$. In this way, we have a control over the saturations of such small curves $J_\nu$ and also over the saturation of the two-dimensional local invariant surface $W(p)=W^2_p$ in the important case where $p$ is a transversal saddle. These last saturations have the role of ``new'' two-dimensional components of the divisor, locally defined along $D$, that separate the dynamics of $\LL$ ({\em separant surfaces}). In particular, we prove that the family composed of those separant surfaces, together with any finite family of saturations of curves $J_\nu$ as above, is a family of pairwise disjoint elements, as long as we take a sufficiently small neighborhood of $D$. Besides facilitating the proof of Theorem~\ref{th:main}, these results will be crucial in our second article \cite{Alo-S2}.

The core of Theorem~\ref{th:main} proof is found in Sections \ref{sec:distinguished}, \ref{sec:good-saturations} and \ref{sec:extendable}. Following the general ideas in the two-dimensional situation, fitting domains $U$ are obtained as follows: first we consider small chimney-type neighborhoods at singular points (where the transversal frontier of $U$ will concentrate); next, we extend these neighborhoods adding flow-boxes (with appearance of tubes) along the edges of the graph $\Omega$; finally, we add new flow-boxes covering the connected components of the complement of the graph in $D$ (with appearance of plateaux). Nevertheless, it is clear that the proof is considerably much more complicated here than in dimension two since we have to match perfectly all the added flow-boxes in such a way that the frontier of $U$ is everywhere tangent outside the chimney-type local neighborhoods already considered. In the one hand, by using recurrence with respect to the maximal length of the vertices of $\Omega$, the tubes can be glued with the chimney-type neighborhoods after convenient refinements of the latter, creating a compact neighborhood $\KK$ of the support of the graph that we call a {\em distinguished fattening}. This is the content of Section~\ref{sec:distinguished}. After that, when we try to glue perfectly also the plateaux ``closing'' the remaining transversal parts of $Fr(\KK)$ (called {\em free doors} of $\KK$), we are led, {\em a priori}, to refine once more time chimneys and tubes. In order to avoid an endless sequence of refinements, in Section~\ref{sec:good-saturations} we establish a result asserting that the distinguished fattening $\KK$ can be assumed to have the property that the flow saturations of different free doors do not intersect (the so called property of {\em good saturations}). For this purpose, we make use of the results in Section 3 about the flow saturations of the aforementioned curves $J_\nu$ since the boundary of a free door is an example of such a curve. At the same time, the separant surfaces generated at transversal saddle points turn into play and we need to control as well their behavior inside the fattening support. Finally, in Section~\ref{sec:extendable}, we show that a fattening with the property of good saturations can be completed, up to trimming the free doors, to a fitting domain by adding convenient plateau blow-boxes. This will conclude the proof of the main theorem.

All in all, Sections \ref{sec:distinguished} and  \ref{sec:good-saturations} are the longest and more technical ones. However, we believe that our constructions are flexible and versatile enough to be used to get fitting domains with further interesting properties. Also, the construction could be carried out in much more general situations in which not all hypothesis (1)-(5) are necessarily fulfilled.

\section{Hyperbolic Acyclic Spherical Foliations}\label{sec:SHNR-foliations}

\subsection{Generalities on line foliations over manifolds with boundary}
Let $M$ be a three-dimensional real analytic manifold with boundary
and corners. This means that, for any $a\in M$, there exists an open neighborhood $V_a$ of $a$, a value $e=e(a)\in\{0,1,2,3\}$, and a homeomorphism $\phi_a:V_a\stackrel{\sim}{\to}(\R_{\ge 0})^{e}\times\R^{3-e}$ with $\phi_a(a)=0$ such that, whenever $V_a\cap V_b\ne\emptyset$, the map $\phi_a\circ\phi^{-1}_b$ is analytic. As usual, such a map $\phi_a$ is called a chart and its components are called analytic coordinates {\em at} $a$.
The number $e(a)$ does not depend on the chosen chart.  The point $a\in M$ will be called an {\em interior, trace, angle} or {\em corner} point if $e(a)$ is equal to either $0,1,2$ or $3$, respectively. Notice that $M$ is a topological manifold with boundary and the point $a$ belongs to $\partial M$ iff $e(a)>0$. In addition, the boundary $\partial M$ is a normal crossings divisor. We use the notation $D=\partial M$ and just say that $D$ is the {\em divisor} in $M$. The set of points where $e(a)>1$ is called  the {\em skeleton} of $D$ and it is denoted by $Sk(D)$.
 
The connected components of the fibers of the map $e:M\to\{0,1,2,3\}$ are the strata of a locally finite stratification of $M$ into smooth analytic subvarieties, called the {\em standard stratification}, that we will denote by $St(M)$. An stratum contained in the fiber $e^{-1}(k)$ has codimension equal to $k$ in $M$.
The closure of a two-dimensional stratum, that is, of a connected component of $D\setminus Sk(D)$, is
called a {\em component} of $D$. Any component $E$ of $D$ is a two-dimensional analytic manifold with boundary and corners satisfying $\partial E=E\cap Sk(D)$. Also, for any $a\in M$, the value $e(a)$ is precisely the number of components of $D$ containing $a$.

 An important example of manifold with boundary and corners is  the  ambient space obtained after a sequence of {\em real blow-ups} starting from an open set $M_0$ of $\R^3$ (for detailed definitions, see \cite{Mar-R-S}, for instance). More precisely, consider a finite composition
$$\pi:M=M_r\stackrel{\pi_r}{\longrightarrow}M_{r-1}\stackrel{\pi_{r-1}}{\longrightarrow}
\cdots\stackrel{\pi_2}{\longrightarrow}M_1\stackrel{\pi_1}{\longrightarrow}M_0
$$
where, for any $j=1,2,...,r$, the map $\pi_j$ is the blow-up with a non-singular closed center $Y_{j-1}\subset M_{j-1}$ having normal crossings with the ``intermediate divisor'' $D^{(j-1)}=(\pi_1\circ\cdots\circ\pi_{j-1})^{-1}(Y_0)$, with $D^{(0)}=\emptyset$.
Thus, $M$ is a real analytic manifold with boundary and corners with divisor $D=\partial M=\pi^{-1}(Y_0)$. Notice that, in the sequence above, if the first center is a point $Y_0=\{p_0\}$, and for any $j>1$ the center $Y_{j-1}$ of $\pi_{j}$ is contained in $D^{(j-1)}$, then $D$ is homeomorphic to the sphere $\SSS^2$.

\strut

Let $\LL$ be an {\em analytic oriented one-dimensional singular foliation} over $M$ (just called a {\em foliation}, for short). This means that for every $a\in M$, there exists an analytic vector field $\xi_a$ defined in a neighborhood of $a$ such that, for any pair of points $a,b\in M$, the vector fields $\xi_a,\xi_b$ are positively proportional along the intersection of their domains of definition. Any such a vector field $\xi_a$ is called a local {\em generator} of $\LL$ at $a$.
The {\em singular locus} of $\LL$ is the set $\Sing(\LL)=\{p\in M\,:\,\xi_p(p)=0\}$. 

From now on, we will always assume that $\LL$ is {\em tangent} to $D=\partial M$, that is, for any $a\in D$ any local generator $\xi_a$ is tangent to any component of $D$ through $a$. This condition is also sometimes called {\em non-dicriticalness}.

 A {\em leaf} (of $\LL$ {\em in} $M$) is a maximal connected subset $\ell$ of $M$ with the following property: for any $a\in\ell$, if $\xi_a$ is a local generator of $\LL$ at $a$ and $\g:(-\eps,\eps)\to M$ is the integral curve of $\xi_a$ with $\g(0)=a$, then there exists $0<\eps'\le\eps$ and some neighborhood $U_a$ of $a$ in $M$ such that $\g((-\eps',\eps'))$ is equal to the connected component of $\ell\cap U_a$ containing $a$. 
For any $a\in M$, there is exactly one leaf containing $a$, denoted by $\ell_a$ and called the leaf {\em at} $a$, so that the family of leaves gives a partition of $M$. Given a leaf $\ell$, there are two possibilities: either  $\ell=\ell_p=\{p\}$ for some $p\in\Sing(\LL)$ (a {\em singular leaf}), or  $\ell=\g(J)$, where $J$ is an open interval in $\R$ and $\g:J\to M$ is an injective immersion (a {\em non-singular leaf}, and $\g$ is called a {\em parametrization} of $\ell$). 
Any non-singular leaf $\ell$ will always be considered with the natural orientation induced by $\LL$, i.e., we choose a parametrization $\g:J\to M$ so that the tangent vector at each point is a positive multiple of the corresponding local generator. With such a parametrization, if $a=\g(t)\in\ell$, we put the sets $\ell^+_a=\g(J\cap[t,\infty))$, $\ell^-_a=\g(J\cap(-\infty,t])$, and call them the {\em positive} and {\em negative leaf} through $a$, respectively. 
Besides, we define the $\alpha$ and the $\omega$-limit set of $\ell$ in $M$ as
\begin{equation}\label{eq: alfa y omega limite}
\alpha(\ell)=\bigcap_{t_1<t<t_2}\overline{\g((t_1,t))},\;\;
\omega(\ell)=\bigcap_{t_1<t<t_2}\overline{\g((t,t_2))},	
\end{equation}
where $\mbox{int}(J)=(t_1,t_2)$. 
As usual, we write $\alpha(\ell)=p$ when $\alpha(\ell)=\{p\}$, and so on.

Given a subset $A\subset M$, an given $a\in A$, the {\em restricted leaf in $A$ at $a$} (or simply the {\em $A$-leaf at $a$}) is the connected component of $\ell_a\cap A$ containing $a$. We define similarly the {\em positive $A$-leaf} and the {\em negative $A$-leaf} {\em at} $a$ by using $\ell^+_a$ and $\ell^-_a$, instead of $\ell_a$, respectively. If $A$ is open in $M$, the $A$-leaves are the leaves of the restricted foliation $\LL|_A$.
 If $B\subset A\subset M$, we say that $B$ is {\em saturated in $A$} if, for any $b\in B$, the $A$-leaf at $b$ is contained in $B$. When $A=M$, we simply say that $B$ is saturated. For instance, $D$ is saturated by the assumed condition that $\LL$ is tangent to $D$. Finally, if $B\subset A\subset M$, the {\em saturation of $B$ in $A$}, denoted by ${\rm Sat}_A(B)$, is the minimal subset of $A$ containing $B$ that is saturated in $A$. It consists of the union of all the $A$-leaves at points of $B$. We also define the {\em positive saturation} (resp. {\em negative saturation}) of $B$ {\em in} $A$, denoted by ${\rm Sat}^{+}_A(B)$ (resp. ${\rm Sat}^{-}_A(B)$) as the union of positive $A$-leaves (resp. of negative $A$-leaves) at points of $B$. 

\strut

Let $A$ be a compact semianalytic subset of $M$ with non-empty interior. Let $a\in Fr(A)$ and assume that $a\not\in\Sing(\LL)$. Taking into account that $Fr(A)$ is also semianalytic, and due to the analytic nature of the foliation $\LL$, we have that each one of the two local components of $\ell_a\setminus\{a\}$ at $a$ is entirely contained either in the interior $int(A)$, or in the exterior $ext(A)=M\setminus A$, or in the frontier $Fr(A)$. Let us now denote by $Y_a^-,Y^+_a$ such local components, where $Y^\epsilon_a\subset\ell^\epsilon_a$ for $\epsilon\in\{+,-\}$, and consider the map $\sigma:\{i,e,t\}\to\{int(A),ext(A),Fr(A)\}$ defined by $\sigma(i)=int(A)$, $\sigma(e)=ext(A)$, $\sigma(t)=Fr(A)$. If $u,v\in\{i,e,t\}$, we will say that $a$ is of {\em type u-v with respect to $A$} in case $Y^-_a\subset\sigma(u)$ and $Y^+_a\subset\sigma(v)$.

As said in the introduction, the {\em tangential frontier} of $A$, denoted by $Fr(A)^\smallsmile$, is the set of points in $Fr(A)$ having a neighborhood in $Fr(A)$ composed of points of type t-t. Hence, the tangential frontier is open and coincides with the set of points where $Fr(A)$ is locally saturated for $\LL$. For the points $a\in Fr(A)^\smallsmile$ such that $Fr(A)$ is a smooth submanifold at $a$, the foliation $\LL$ is tangent to $Fr(A)$ in a neighborhood of $a$.

As a matter of notation, we set $Fr(A)^\pitchfork:=Fr(A)\setminus Fr(A)^\smallsmile$ and call it the {\em transversal frontier} of $A$. By definition, it is a closed subset of $Fr(A)$. Notice also that  $Fr(A)^\pitchfork$ contains any point in $Fr(A)$ which is not of type t-t. Nevertheless, it might also contain points of type t-t or even points where $Fr(A)$ is smooth and tangent to $\LL$ (it will not be the case for fitting domains). Of course, points of type either i-e or e-i, that is, points having a genuine property of ``transversality'', are included in $Fr(A)^\pitchfork$. To end this subsection, we define the {\em inner frontier} $Fr(A)^{in}$ and the {\em outer frontier} $Fr(A)^{out}$ of $A$, as the respective closures of the sets of points in $Fr(A)$ of type e-i or of type i-e. Hence, we clearly have that $Fr(A)^{in}\cup Fr(A)^{out}\subset Fr(A)^\pitchfork$, although, in general, equality may not be satisfied. We will see that it always holds for fitting domains.

\subsection{Hyperbolic acyclic spherical foliations}

Let us assume certain conditions on the topology of $M$ as well as on the nature of the singularities of the foliation in order to have a reasonable control of the asymptotic behavior of the leaves near $D$.

\begin{definition}\label{def:SHAFD}
 A {\em hyperbolic acyclic foliated variety with spherical divisor} (from now on a {\em HAFVSD}, for short) is a triple $\mathcal{M}=(M,D,\LL)$ where $M$ is a real analytic manifold with boundary and corners, $D=\partial M$, and $\LL$ is an analytic foliation over $M$ such that
 $\LL$ is tangent to $D$, $\Sing(\LL)\subset D$, and
\begin{itemize}
  \item Each $p\in\Sing(\LL)$ is a hyperbolic singular point.
   \item The foliation $\LL$ does not have cycles or polycycles contained in $D$. 
    \item The divisor $D$ is homeomorphic to the sphere $\SSS^2$.
  \end{itemize}
\end{definition}
Recall that a singular point $p$ is hyperbolic if all the eigenvalues of the linear part of one (thus, of any) local generator of $\LL$ at $p$ have non-zero real part. On the other hand, a cycle is a non-singular leaf homeomorphic to $\SSS^1$ and a polycycle is a union of a finite number of non-singular leaves $\ell_1,...,\ell_n$ and a finite number of singular points $p_1,...,p_n$ such that $p_{j+1}=\alpha(\ell_{j+1})=\omega(\ell_j)$, for $j=1,...,n-1$, and $p_1=\alpha(\ell_1)=\omega(\ell_n)$.

The condition of $D$ being tangent (also called sometimes  \emph{non-dicriticalness}) implies that $D$ is saturated for $\LL$ and, in fact, that any stratum of $St(M)$ contained in $D$ is also saturated. In particular, every corner point is singular and, if $H$ is a 1-dimensional stratum, the connected components of $H\setminus\Sing(\LL)$ are non-singular leaves of $\LL$. We consider the restriction $\LL|_D$ as a continuous one-dimensional foliation in $D$, analytic on each component of $D$.
As already pointed out above, we have that any leaf in $M$ is parameterized by an open interval and the singular leaves correspond exactly to the singular points.

By the hyperbolicity condition, the singular locus $\Sing(\LL)$ is finite. At any $p\in\Sing(\LL)$, we consider the associated {\em local stable} and {\em local unstable} manifolds, denoted by $W^s(p)$ and $W^u(p)$, respectively. They are analytic manifolds with boundary and corners, saturated in a neighborhood of $p$ and uniquely determined as germs at $p$ (for further information on such manifolds, see for instance \cite{Hir-P-S} and \cite{Car-S}).
When the singularity $p$ is a three-dimensional saddle point, that is, both stable and unstable manifolds have positive dimension, we denote by
$W^1_p$ and $W^2_p$ the elements of dimension 1 and dimension 2 in the set $\{W^s(p),W^u(p)\}$,  respectively. It holds $W^1_p\subset D$, if $e(p)\geq 2$, and
$W^2_p\subset D$, if $e(p)=3$.

Let us introduce the following terminology. Given $p \in \textrm{Sing}(\mathcal{L})$, we say that $p$ is a \emph{$D$-saddle} if there is at least one component $D_i$ of $D$ at $p$ such that $p$ is a two-dimensional saddle point for the restriction $\LL|_{D_i}$. Otherwise, we say that $p$ is a \emph{$D$-node}. A $D$-node can be a \emph{$D$-attractor} or a \emph{$D$-repeller} if the restriction of $\mathcal{L}$ to each component of $D$ at $p$ is respectively a two-dimensional attractor (both eigenvalues with negative real part) or a repeller (both eigenvalues with positive real part). We denote by $S$, $N^{a}$ and $N^r$ the sets of $D$-saddles, $D$-attractors and $D$-repellers, respectively.
Notice that a $D$-saddle point is a three dimensional saddle, whereas a $D$-node can be a three dimensional saddle (in which case $e(p)=1$ and $W^2_p\subset D$) or a three dimensional attractor or repeller (the three eigenvalues have real part with the same sign). 
A $D$-saddle point $p$ is called a \emph{transversal saddle point} if  $W_p^2\not\subset D$ or a \emph{tangential saddle point} if $W_p^2\subset D$. In the last case, $W^2_p$ coincides with one of the components of $D$, as germs at $p$. Notice that a transversal saddle is either an angle or a trace point, whereas a tangential saddle is either an angle or a corner point (see Figure \ref{Fig:Transversal-tangential}). In the remain of the paper we denote by $S_{tr}\subset S$ the set of transversal saddle points and by $S_{tg}\subset S$ the set of tangential saddle points.

\begin{figure}
\begin{center}
	\includegraphics[scale=0.65]{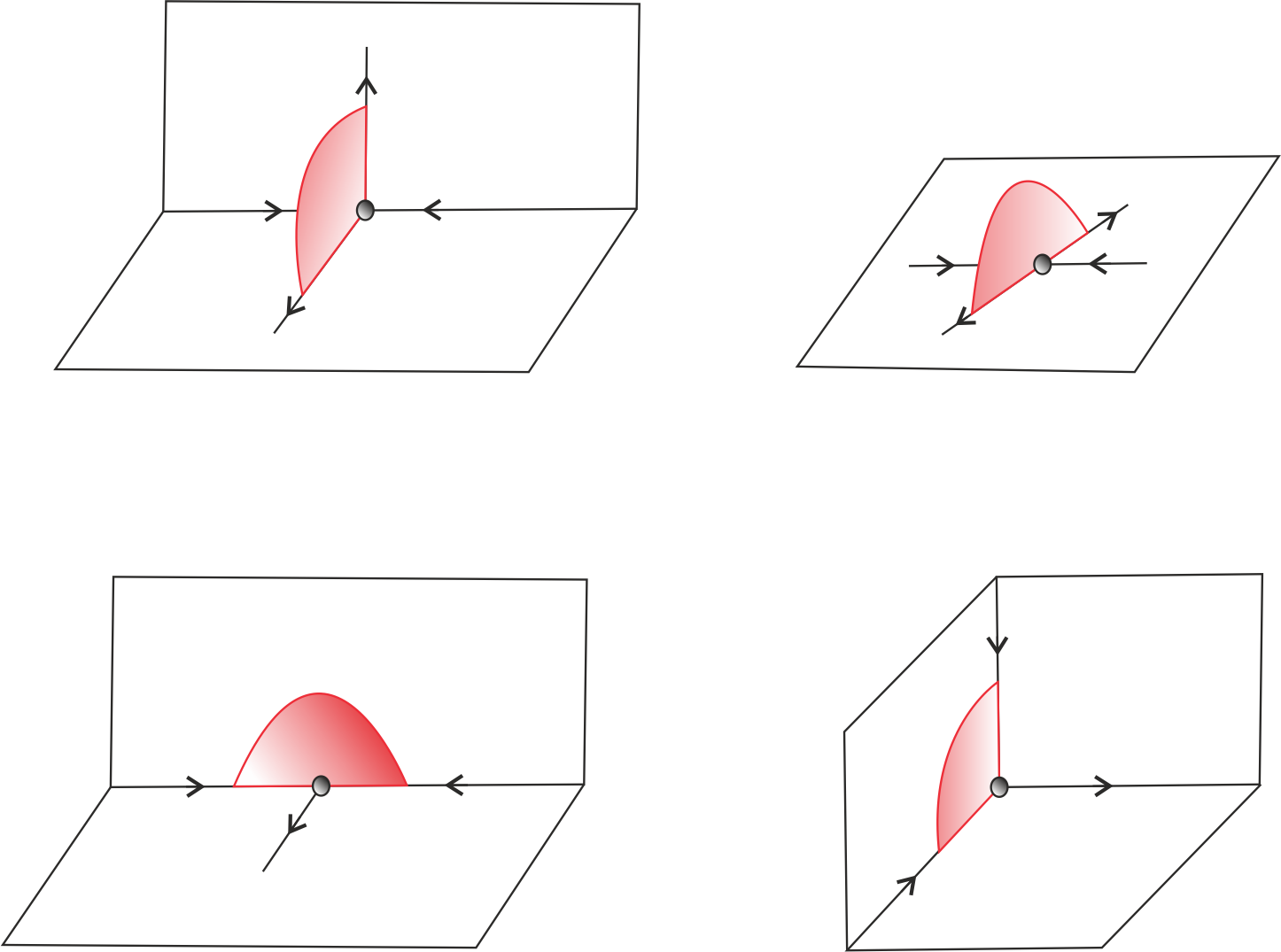}
\end{center}
\caption{Transversal saddles (above) and tangential saddles (below).}
\label{Fig:Transversal-tangential}
\end{figure}

\begin{remark}\label{rk:asymptotics-HAS}
\em{Concerning the leaves asymptotic behaviour, observe that, using the acyclicity condition and the Poincar\'{e}-Bendixson Theorem on the sphere (see for instance \cite{Pal-M,Per}), for any leaf
$\ell\subset D$, the sets $\alpha(\ell)$ and $\omega(\ell)$ are singletons and they do not coincide. On the other side, in light of the Hartman-Grobman's Theorem (see \cite{Har,Gro}) at hyperbolic points, we can extend this property in a sufficiently small open neighborhood $U$ of $D$: the $\alpha$-limit set (and also the $\omega$-limit set) of an $U$-leaf is either a singular point in $D$ or an empty set. }
\end{remark}

\subsection{The associated graph.}

Let $\mathcal{M}=(M,D,\LL)$ be a HAFVSD. We consider the directed planar graph $\Omega=\Omega_\mathcal{M}$ defined in the following way:
\begin{itemize}
  \item The set of vertices of $\Omega$ is $V(\Omega):=\Sing(\LL)$. 
  \item The set of edges of $\Omega$, denoted by $E(\Omega)$, consists of the non-singular leaves $\sigma$ of $\LL|_D$ satisfying one of the following (non-exclusive) properties:
  \begin{enumerate}[(a)]
  	\item either $\sigma$ is contained in the skeleton, 
  	\item or $\sigma$ is contained in a component $D_i$ of $D$ for which at least one of the limit points $p\in\{\alpha(\sigma),\omega(\sigma)\}$ is a two-dimensional saddle point of the restriction $\LL|_{D_i}$ (hence, at any such limit point $p$ the germ of $\sigma$ coincides with one of the local invariant one-dimensional manifolds of $\mathcal{L}|_{D_i}$ at $p$).
  	\end{enumerate}
  \item An edge $\sigma$ is adjacent to a vertex $p$ if $p=\epsilon(\sigma)$ for $\epsilon=\alpha$ or $\epsilon=\omega$.
  \item The orientation of the edges of $\Omega$ is the one induced by the one of $\mathcal{L}$.
\end{itemize}

By Remark~\ref{rk:asymptotics-HAS}, adjacency in $\Omega$ is well defined and any edge $\sigma$ is adjacent to exactly two distinct vertices, its $\alpha$ and $\omega$-limit points. Notice also that both $V(\Omega)$ and $E(\Omega)$ are finite sets. We put $\sigma=[p,q]$ if $p=\alpha(\sigma)$ and $q=\omega(\sigma)$ and use expressions of the form ``$\sigma$ starts at $p$ and ends at $q$''. An edge contained in $Sk(D)$ is called a \emph{skeleton edge}, otherwise is called a \emph{trace edge}. As usual, a {\em path of edges}  is a sequence of edges $\g=(\sigma_1,...,\sigma_r)$ such that $\alpha(\sigma_{j+1})=\omega(\sigma_j)$ for $j=1,...,r-1$ and we say that $\g$ starts at $\alpha(\sigma_1)$ and ends at $\omega(\sigma_r)$. A path of edges starting and ending at the same vertex is called a {\em cycle}.
We want to emphasize the following important property of the graph $\Omega$ which is a consequence of the acyclicity condition in Definition~\ref{def:SHAFD}:
\begin{quote}
{\em The oriented graph $\Omega$ has no cycles}.
\end{quote}

We will adopt the usual terminology for graphs. For instance: a {\em subgraph} of $\Omega$ is a directed graph $G$ for which $V(G)\subset V(\Omega)$ and $E(G)\subset E(\Omega)$ (we write $G<\Omega$); the {\em subgraph generated by} a subset of vertices $W\subset V(\Omega)$ is the subgraph $G=G(W)<\Omega$ such that $V(G)=W$ and $\sigma\in E(G)$ if, and only if, the vertices adjacent to $\sigma$ belong to $W$; the {\em subgraph generated by} a subset of edges $F\subset E(\Omega)$ is the subgraph $G=G(F)<\Omega$ such that $E(G)=F$ and $V(G)$ is the set of vertices adjacent to some $\sigma\in F$; the {\em edge-complement} of a subgraph $G$ {\em in } $\Omega$ is the subgraph, denoted by $G^c$, generated by the set of edges that do not belong to $E(G)$, that is $G^c=G(E(\Omega)\setminus E(G))$. For instance, if $\g=(\sigma_1,\dots,\sigma_n)$ is a path of edges, we see $\g$ also as the subgraph generated by the set of edges $\{\sigma_1,\dots,\sigma_n\}$. If $G<\Omega$ is a subgraph, the {\em support} of $G$ is the compact subset of $D$ defined by
$$
|G|=V(G)\cup\bigcup_{\sigma\in E(G)}\sigma.
$$
 A {\em face} of $\Omega$ is a connected component of $D\setminus|\Omega|$. It is an open subset of $D$, saturated by $\LL$ and contained in a unique component of $D$. In addition, $\Omega$ induces a stratification on $D$, denoted by $St_\Omega(D)$, where faces, edges and vertices are the strata of dimension 2, 1 and 0, respectively. Due to the possible presence of trace edges, such a stratification is finer than the stratification $St(M)|_D$ induced by the standard stratification in $M$.

We summarize some of the properties satisfied by $\Omega$ in the following result.
\begin{lemma}\label{lm:graph} Let $\Omega$ be the graph associtaed to a HAFVSD $\mathcal{M}$. Then it holds: 
\begin{enumerate}[(i)]
  \item For any $p\in S$ there exist at least a path of edges $\g$ passing through $p$ such that $\alpha(\g)\in N^r$ and $\omega(\g)\in N^a$.
    \item We have $\#N-\#S_{tr}=2$. In particular $N\ne\emptyset$. Moreover, $N^a\neq\emptyset$ and $N^r\neq\emptyset$.
     \item A vertex $p\in V(\Omega)$ is a transversal saddle point if, and only if, there are exactly four edges adjacent to $p$ such that two of them start at $p$ and the other two end at $p$.
     \item If there exists a vertex $p$ which is isolated in $\Omega$ (no edge is adjacent to $p$), then $V(\Omega)=\{p,q\}$, where $p,q$ are two different $D$-nodes, and $E(\Omega)=\emptyset$.
  \item For any face $\G$ of $\Omega$ there exist two different vertices $p,q\in V(\Omega)$ such that
   any leaf $\ell$ in $\G$ satisfies $\alpha(\ell)=p$ and
   $\omega(\ell)=q$. (We write $p=\alpha(\G)$ and $q=\omega(\G)$). Moreover, either we are in the situation of item (iv), that is, $\G=D\setminus\{p,q\}$, or    
   the topological frontier of $\G$ in $D$ is the support of a subgraph $\FF(\G)<\Omega$ consisting of exactly two paths of edges from $p$ to $q$ (which may share some edges but only in an initial and/or final segment of the path).
\end{enumerate}
\end{lemma}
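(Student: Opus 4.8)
The plan is to derive the five assertions essentially from the Poincaré–Bendixson dichotomy on the sphere (Remark~\ref{rk:asymptotics-HAS}), the acyclicity of $\Omega$, and the local normal forms at hyperbolic singularities, treating the global combinatorics of $\Omega$ as a planar directed graph on $D\simeq\SSS^2$. I would begin with a preliminary local analysis: for each $p\in\Sing(\LL)$ and each component $D_i$ of $D$ through $p$, I record the behaviour of $\LL|_{D_i}$ at $p$ (saddle, node, or a node that is a corner where $W^2_p\subset D$) and the number of edges of $\Omega$ issuing from $p$ inside $D_i$ together with their orientations. For a two-dimensional saddle of $\LL|_{D_i}$ there are exactly two local invariant curves, one in each of $W^s,W^u$, contributing two edges at $p$ in $D_i$ (one incoming, one outgoing); for a two-dimensional node there are no local invariant curves transverse to the (strong) directions that would force an edge, except along the skeleton. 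This bookkeeping, together with the observation that every skeleton curve through $p$ is an edge by clause (a) and is oriented consistently because $p$ is singular, gives (iii): a vertex is a transversal saddle exactly when it is a two-dimensional saddle in a single component $D_i$ with the two skeleton edges at $p$ (the boundary $\partial D_i=E\cap Sk(D)$ near $p$) each being, locally, one branch of $W^1_p$ — hence two in, two out; conversely four adjacent edges with the $2{+}2$ orientation pattern force, via the local classification, a two-dimensional saddle with $W^2_p\not\subset D$.

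Next I would prove (i) by a maximality/acyclicity argument. Start at any $p\in S$; since $p$ is a $D$-saddle it has an outgoing edge $\sigma$ lying on one branch of the appropriate invariant manifold inside $D$, and by Remark~\ref{rk:asymptotics-HAS} $\omega(\sigma)$ is a singular point. Iterate forward: each time the current endpoint is a $D$-saddle it again has an outgoing edge (choosing, at a transversal saddle, one of its two outgoing edges), and acyclicity of $\Omega$ forbids repeating a vertex, so the process terminates at a vertex with no outgoing edge — a $D$-attractor $N^a$ (a $D$-node with no outgoing edge cannot be a repeller, and by the local picture a node with outgoing edge would have to be a repeller or a saddle, so a terminal node is an attractor). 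Running the same argument backwards through incoming edges produces a $D$-repeller as the initial vertex. Concatenating gives the path $\g$ of (i). For (iv): if $p$ is isolated then no skeleton curve passes through $p$, so $e(p)\le 1$; combined with acyclicity and Poincaré–Bendixson on $\SSS^2$, $p$ must be a $D$-node (attractor or repeller) whose unstable/stable set fills an open hemisphere, and the complementary behaviour forces a second $D$-node $q$ of the opposite type with all of $D\setminus\{p,q\}$ a single face foliated by leaves from $p$ to $q$; hence $V(\Omega)=\{p,q\}$, no edges, and they are distinct $D$-nodes.

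For (v), given a face $\G$ (a component of $D\setminus|\Omega|$ inside one component $D_i$), the restriction $\LL|_\G$ has no singularities, no cycles and no separatrix connections internal to $\G$ (those would lie in $\partial\G\subset|\Omega|$), so by Poincaré–Bendixson every leaf in $\G$ has the same $\alpha$-limit $p$ and $\omega$-limit $q$, with $p\ne q$; writing $p=\alpha(\G)$, $q=\omega(\G)$, either $\partial\G=\{p,q\}$ (case (iv)) or $\partial\G$ is a non-degenerate piece of $|\Omega|$, and a boundary-walk of the topological disc $\G$ reads off a closed walk in $\Omega$ which, by orientation consistency of the edges bounding $\G$ and the fact that near $p$ (resp. $q$) exactly two edges of $\partial\G$ emanate (resp. terminate), decomposes as exactly two directed paths from $p$ to $q$, possibly sharing an initial and/or final segment when $\partial\G$ pinches. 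Finally (ii) is the Euler-characteristic/counting step and I expect it to be \textbf{the main obstacle}. The idea is to compute $\chi(\SSS^2)=2$ via the stratification $St_\Omega(D)$, $2=\#V(\Omega)-\#E(\Omega)+\#F(\Omega)$, and then eliminate $\#E$ and $\#F$ using local incidence data: by (iii) and the node classification, the number of edge-endpoints at a vertex is $4$ at a transversal saddle and a controlled value (depending on $e(p)$ and the tangential-saddle/node type) elsewhere, while by (v) each face is bounded by two directed paths, giving a relation between $\#F$, $\#E$ and the number of ``branch vertices''. Carefully combining a handshake-type identity ($\sum_p \deg p = 2\#E$) with the two-path structure of faces should collapse the Euler relation to $\#N-\#S_{tr}=2$; the delicate points are handling shared edges between the two bounding paths of a face, vertices lying on $Sk(D)$ with several components meeting, and making sure tangential saddles are correctly counted on the $N$ side of the ledger (they are $D$-saddles but contribute to the count like ``non-transversal'' vertices). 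Once the identity $\#N-\#S_{tr}=2$ is in hand, $N\ne\emptyset$ is immediate, and $N^a,N^r\ne\emptyset$ follows from (i) (every $D$-saddle sits on a path from a repeller to an attractor) together with the fact that $S\ne\emptyset$ unless we are in case (iv), where $N^a$ and $N^r$ are visibly both present.
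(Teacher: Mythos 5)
Your handling of items (i), (iii), (iv) and (v) follows essentially the route the paper takes: forward/backward edge-chasing using acyclicity for (i), local case analysis at $p$ for (iii), and Poincaré–Bendixson inside the face plus an open/closed argument for (v). For (iv) you skim over the real work: the paper must first show, by an open/closed argument on the circle of leaves $H$ issuing from the isolated node $p$, that these leaves all share the same $\omega$-limit $q$, and then show that $q$ is again isolated — the second point is where acyclicity enters, to rule out an infinite chain of two-dimensional saddles of $\LL|_{D_0}$ accumulating on the closure of the face. Neither step is a one-liner, so "Poincaré–Bendixson on a hemisphere" is not a proof of (iv), only its shape.

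The substantive divergence, and the genuine gap, is item (ii). The paper disposes of it in two lines by invoking Brunella's Poincaré index for the continuous restricted foliation $\LL|_D$: a short sector count shows the index is $1$ at a $D$-node, $0$ at a tangential saddle and $-1$ at a transversal saddle, and Poincaré–Hopf on $\SSS^2$ then gives $\#N-\#S_{tr}=2$ at once. Your proposal — compute $\chi(\SSS^2)=\#V-\#E+\#F$ from the CW structure $St_\Omega(D)$ and collapse it using the handshake identity and the two-path face structure — is in effect an attempt to reprove this from scratch, but as written it does not close. The vertex degree is not a function of the vertex type (a $D$-node may have arbitrarily many incoming trace edges, a corner contributes three skeleton half-edges, an angle two, a trace point none), and the face boundary walks have variable length, so the handshake identity and the two-path decomposition of face boundaries merely restate $\sum_p\deg p=2\#E$ from two sides without cancelling the variable contributions. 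What makes them cancel is exactly the local sector-by-sector bookkeeping that the Poincaré index encapsulates (hyperbolic versus parabolic sectors of $\LL|_D$ around each vertex); you would need to carry that out vertex by vertex, at which point you have reproved Brunella's formula rather than bypassed it. Citing it, as the paper does, is the efficient route. A secondary flaw: your argument for $N^a,N^r\ne\emptyset$ rests on "$S\ne\emptyset$ unless we are in case (iv)," which is false — one can have $S=\emptyset$, hence $\#N=2$ by the formula, with the two $D$-nodes joined by skeleton edges, so neither vertex is isolated. The paper's argument is the safe one: take any $p\in N$, follow a leaf of $\LL|_D$ into or out of $p$, and apply item (i) at the resulting $\alpha$- or $\omega$-limit when it happens to be a $D$-saddle.
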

\begin{proof}
{\em (i)} Let $D_i$ be a component of $D$ for which $\LL|_{D_i}$ has a two-dimensional saddle at $p$. The stable and unstable manifolds of $\LL|_{D_i}$ at $p$ are contained in respective edges $\sigma$, $\tau$ of $\Omega$ such that $\omega(\sigma)=p$ and $\alpha(\tau)=p$. If $\alpha(\sigma)$ is a $D$-node, this is a starting point for the required path $\g$. Otherwise, we restart the same argument for the $D$-saddle point $\alpha(\sigma)$. Doing analogously with the point $\omega(\tau)$ and using the acyclicity condition, the result follows.

{\em (ii)} Following the ideas of Brunella in \cite{Bru}, it is possible to assign a {\em Poincar\'{e} index} to the restricted (continuous) foliation $\LL|_D$ at any singularity $p\in V(\Omega)$. It is easy to check that this index is equal to 1, 0 or -1 when $p$ is a $D$-node, a tangential saddle or a transversal saddle point, respectively. Thus, the formula is a consequence of the Theorem of Poincar\'{e}-Hopf in the sphere (see \cite{Per} for differentiable vector fields, although it also works for continuous vector fields with isolated singularities). To show the last claim, assume that $p$ is a $D$-node and $p\in N^a$, for instance. We take a leaf $\ell$ of $\LL$ contained in $D$ such that $\omega(\ell)=p$ and we conclude that $\alpha(\ell)=q\in\Sing(\LL)$, where $q$ is either a $D$-saddle or a $D$-repeller point. In the second case, we are done. In the first case, we use item (i).

{\em (iii)} Consider $p\in\Sing(\LL)$. If $p$ is a $D$-node, then either any edge adjacent to $p$ starts at $p$ (when $p\in N^r$) or any edge adjacent to $p$ ends at $p$ (when $p\in N^a$). On the other hand, suppose that $p$ is a tangential saddle and assume, for instance, that $W^2_p$ is the stable manifold at $p$. Then there is a unique edge $\sigma$ that starts at $p$. Indeed, if $D_j$ is a component of $D$ at $p$ not containing $W^2_p$, then $\LL|_{D_j}$ has a two-dimensional saddle point at $p$ and hence its unstable manifold, equal to $W^1_p$, is a curve and $W^1_p\setminus\{p\}$ is contained in an edge $\sigma$ (notice also that $W^1_p$ has only one side in this case, since it is transversal to $D$). Finally, suppose that $p\in S_{tr}$. If for instance $W^2_p$ is stable, then the two connected components of $W^2_p\cap D\setminus\{p\}$ are contained in corresponding edges ending at $p$, whereas the two connected components of $W^1_p\setminus\{p\}$ are contained in corresponding edges starting at $p$.

{\em (iv)}
If $p$ is an isolated vertex, then $p$ is a $D$-node by item (i). Moreover, there is a unique component, say $D_0$, of $D$ at $p$ (i.e., $e(p)=1$). Assume, for instance, that $p\in N^r$.  Let $B$ be a compact neighborhood of $p$ in $D$, homeomorphic to a disc, whose boundary $\partial B$ is smooth and transversal to $\LL$. We have that any nonsingular leave of $\LL|_D$ issued at a point in $B$ cuts $\partial B$ at a single point. In fact, we may identify $\partial B$ with the set $H$ of non-singular leaves $\ell$ in $D$ with $\alpha(\ell)=p$, so that we equip $H$ with the topology of $\partial B$. Using Remark~\ref{rk:asymptotics-HAS}, for any $\ell\in H$ we must have $\omega(\ell)=q_\ell$, where $q_\ell\ne p$. It remains to show that $q_\ell$ does not depend on $\ell$. Let $q$ be the $\omega$-limit point of a fixed element $\ell_0$ of $H$ and let us show that the set $H_{q}=\{\ell\in H\,:\,\omega(\ell)=q\}$ is open and closed in $H$. 
To see that $H_{q}$ is open in $H$, notice that if $\ell\in H_{q}$, then the germ of $\ell$ at $q$ is contained in the stable manifold of $\LL$ at $q$. Moreover, if $\ell$ is contained in a component $D_i$ of $D$, then $q$ must be a two-dimensional attractor of $\LL|_{D_i}$ (otherwise, $\ell$ would be contained in $W^1_q$ and $\ell$ would be an edge adjacent to $p$). Similarly, we show that $H_q$ is closed in $H$: given $\ell\in\overline{H_q}$ and taking $q_\ell=\omega(\ell)$, we have that $\ell\in H_{q_\ell}$ and we have shown above that $H_{q_\ell}$ is open. Hence any $\ell'\in H$ in a neighborhood of $\ell$ would satisfy $\omega(\ell')=q_\ell$. In particular, if we take such $\ell'$ in $H_q$ we have $q=q_\ell$, which shows that $\ell\in H_q$. 

We can see that $|H|:=\bigcup_{\ell\in H}\ell$ is a face of $\Omega$: it is connected, open inside $D_0$, disjoint with $|\Omega|$ and closed inside the two-dimensional stratum of $St(M)$ contained in $D_0$ (using similar arguments as above).

Let us show that $q$ is also an isolated vertex. Notice that $q\in D_0$, since $\ell_0\subset D_0$. Moreover, the germ of any element $\ell\in H$ at $q$ is contained in the stable manifold $W^s(q)$. Since $H$ is infinite, $W^s(q)$ must be of dimension at least 2 and must contain the germ of $D_0$ at $q$. If $q$ is not an isolated vertex, we must have an edge $\sigma_1$ ending at $q$ which is contained in the closure of $|H|$ (if there are edges adjacent to $q$ not in $D_0$ we must have $e(q)>1$ so that necessarily there are such edges contained in $D_0$). Let $q_1=\alpha(\sigma_1)$. We have that $q_1\ne p$ because $p$ is an isolated vertex. Moreover, since $q_1$ is in the closure of $|H|$, we could not have that $q_1$ is a repeller point of the restriction $\LL_{D_0}$ (since, otherwise, we would have $\alpha(\ell)=q_1$ for several $\ell\in H$, contrary to $q_1\ne p$). Thus, $q_1$ is a two-dimensional saddle point of $\LL_{D_0}$ and we obtain an edge $\sigma_2$ ending at $q_1$ and contained in the closure of $|H|$. Repeating the argument, we find a sequence of vertices $q,q_1,q_2,...$, all saddle points of the restriction $\LL_{D_0}$, contradicting the acyclicity condition.
 
 We conclude that $D_0$ is the unique component of $D$ at $q$ and that $q\in N^a$. As we have shown above, $H$ is the family of leaves $\ell$ satisfying both $\alpha(\ell)=p$ and $\omega(\ell)=q$. Taking a compact neighborhood $B'$ of $q$ in $D_0$ whose boundary cuts exactly once each element $\ell\in H$, the flow establishes a homeomorphism $\partial B\to\partial B'$ given by $a\mapsto\ell_a\cap\partial B'$. By standard arguments, one proves that the subset $\{p,q\}\cup\bigcup_{\ell\in H}\ell$ of $D$ is homeomorphic to $\SSS^2$ and then it is equal to $D$, which gives the result.

{\em (v)} Notice that the face $\G$ is saturated by $\LL$ and contained in a single component of the divisor. Denote such component by $D_\G$. If $\ell$ is a leaf contained in $\G$, then the limits $p=\alpha(\ell)$ and $q=\omega(\ell)$ are different singular points that belong to $D_\G$. Let us see that they do not depend on $\ell$. We have that $p$ and $q$ are nodes (repeller and attractor, respectively) of the restriction $\LL|_{D_\G}$: for instance, if $p$ is a saddle of $\LL|_{D_\G}$, its invariant stable and unstable manifolds, being both disjoint with $\ell$, would be contained in $\alpha(\ell)$, which is not possible. Similarly to the item (iv), we obtain that the subset $R\subset\G$ defined by the union of leaves inside $\G$ with limits at $p$ and $q$, is open and closed in $\G$. Hence, $R=\G$, as wanted. The second part of statement (v) is a general result in the theory of planar graphs.
\end{proof}

\begin{figure}
\begin{center}
	\includegraphics[scale=0.55]{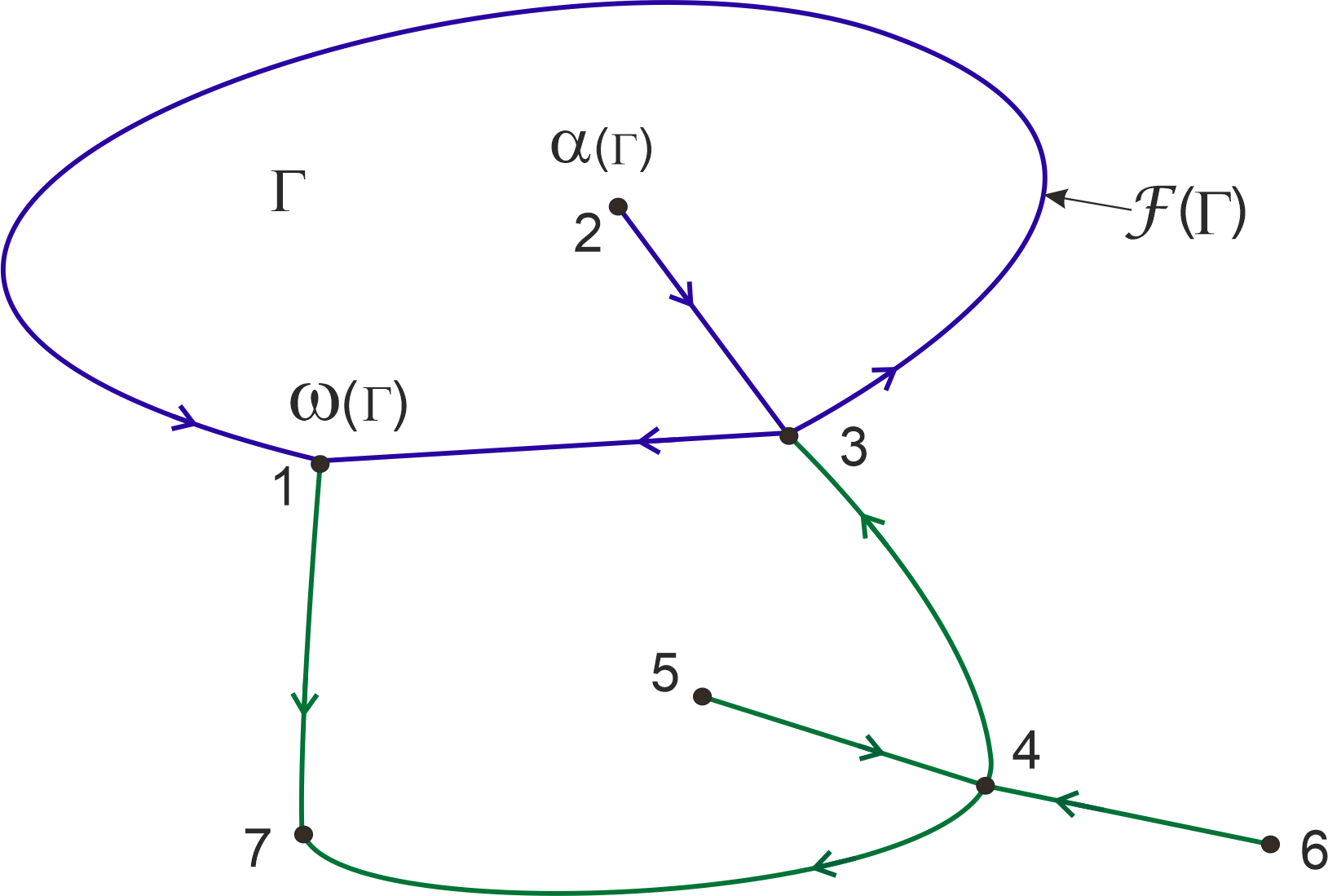}
\end{center}
	\caption{A graph $\Omega$ with three components. Points $3,4$ are transversal saddles, $1$ is a tangential saddle whereas $2,5,6$ and $7$ are nodes.}
	\label{Fig:Transversal-tangential}
\end{figure}

The acyclicity condition permits us to state the following definition.

\begin{definition}\label{def:length}
For any vertex $p \in V(\Omega)$ we define the {\em length} of $p$, and denote it by $l(p)$, as the maximal number of edges in a path of edges starting at $p$. The value $l(\Omega)=\max\{l(p):p \in V(\Omega)\}$ is called the {\em length} of $\Omega$. 
\end{definition}

Put $l:=l(\Omega)$. For $j=0,1,...,l$, we denote by $\Omega^j$ the subgraph of $\Omega$ whose set of vertices is $V(\Omega^j)=\{p\in V(\Omega)\,:\,l(p)\leq j\}$ and whose set of edges is the set of edges of $\Omega$ starting at a point in $V(\Omega^j)$. We have the following filtration of $\Omega$, which will be very useful in what follows: 
\begin{equation}\label{eq:filtration-graph}
	\Omega^0<\Omega^1<\cdots<\Omega^{l-1}<\Omega^l.
\end{equation}
Notice that we have  $V(\Omega^j)\setminus V(\Omega^{j-1})=\{p\in V(\Omega)\,:\,l(p)=j\}$, for $j=1,...,l$. Concerning the edges, we have $E(\Omega^0)=\emptyset$ and, for $j\ge 1$,
\begin{equation}\label{eq:edges-j}
	E(\Omega^j)\setminus E(\Omega^{j-1})=\bigcup_{p\in V(\Omega^j)\setminus V(\Omega^{j-1})}\alpha^{-1}(p).
\end{equation}
Notice also that $V(\Omega^0)=N^a$, except for the exceptional case described in Lemma~\ref{lm:graph} item (iv), in which $\Omega^0=\Omega$ consists of two isolated vertices (by the way, the unique case where $\Omega$ has length equal to zero).

\subsection{The local s-components} 
From now on, it will be convenient to consider separately the two sides in which the two-dimensional invariant manifold divides a neighborhood at a transversal saddle point. To unify notation, we define such ``sides'' at any singular point $p\in\Sing(\LL)$. 
More precisely, take an analytic chart $(U_p,\xx)$ at $p$ where the domain $U_p$ is small enough so that, when $p$ is a three-dimensional saddle, the two-dimensional invariant manifold $W^2_p$ is well defined in $U_p$ and given by a plane coordinate in the $\xx$ variables. In this case, $\wt{D}_p=(D\cap U_p)\cup W^2_p$ is a normal crossing divisor, contained in the union of the plane coordinates of the chart. When $p$ is not a three-dimensional saddle point, we simply put $\wt{D}_p=D\cap U_p$ and require the condition of $U_p\setminus\wt{D}_p$ being connected.
\begin{definition}\label{def:s-component}
	The germ at $p$ of a connected component of $U_p\setminus\wt{D}_p$ does not depend on the chart $(U_p,\xx)$ and will be called a {\em local saddle-component} (or just a {\em local s-component} for short) {\em at} $p$.
\end{definition}
By definition, if $p$ is a $D$-node or a tangential saddle point then there is exactly one local s-component at $p$. On the contrary, if $p$ is a transversal saddle point, then there are two local s-components at $p$.

We use the generic letters $\nu,\eta$, etc. either to denote local s-component at some point or representatives of them. In this way, the notation  $\overline{\nu}$ refers, either to the germ of the closure of a representaive of $\nu$ or to a representative of it. Besides, if $p$ is a $D$-node or a tangential saddle point, we denote $\nu=\nu_p$ the unique local s-component at $p$. If $p$ is a transversal saddle point, we denote by $\nu^+_p,\nu^-_p$ the two local s-components at $p$. Denote also by $\VV(\Omega)$ the set of all local s-components at singular points. More generally, if $G<\Omega$ is a subgraph, we denote by $\VV(G)$ the set of local s-components at points in $V(G)$.

We consider a partial ordering in the set $\VV(\Omega)$ of local s-components by putting $\nu\leq\mu$ if, and only if, either $\nu=\mu$ or there exists a path of edges $(\sigma_1,...,\sigma_r)$ such that $\sigma_1\cap\overline{\nu}\ne\emptyset$ and $\sigma_r\cap\overline{\mu}\ne\emptyset$.

Finally, we extend the notions of $\alpha$ and $\omega$-limits of edges stated in (\ref{eq: alfa y omega limite}) as follows: 
given an edge $\sigma\in E(\Omega)$ and $\nu\in\VV(\Omega)$, we say that $\nu$ is an $\tilde{\alpha}$-{\em limit} (respectively a $\tilde{\omega}$-{\em limit}) of $\sigma$, if the germ of $\sigma$ at $\alpha(\sigma)$ (respectively at $\omega(\sigma)$) is contained in $\overline{\nu}$. We consider then $\tilde{\alpha},\tilde{\omega}$ as correspondences from $E(\Omega)$ to $\VV(\Omega)$ so that we use, for instance, the notation $\sigma\in\tilde{\alpha}^{-1}(\nu)$ to indicate that $\nu$ is an $\tilde{\alpha}$-limit of $\sigma$. In addition, if $\G$ is a face of $\Omega$, we put $\tilde{\alpha}(\G)$ and $\tilde{\omega}(\G)$ to denote the unique local s-component at the vertex $\alpha(\G)$ and $\omega(\G)$, respectively.

\section{Morse-Smale non-resonant foliations}\label{sec:MS-foliations}

Let $\mathcal{M}=(M,D,\LL)$ be a HAFVSD and $\Omega$ its associated graph. An edge $\sigma=[p,q]$ of $\Omega$ is called a {\em saddle connection} if there exists a component $D_i$ of $D$ such that $\sigma\subset D_i$ and $p,q$ are both two-dimensional saddles of the restriction $\LL|_{D_i}$.
In this situation, notice that $\sigma$ is locally contained in the unstable manifold (respectively stable manifold) of $\LL|_{D_i}$ at $p$ (respectively at $q$). A {\em multiple saddle connection} is a path of edges $\g=(\sigma_1,\dots,\sigma_r)$ such that each $\sigma_j$ is a saddle connection.

\begin{definition}\label{def:Morse-Smale}
The HAFVSD $\mathcal{M}$ is said to be of {\em Morse-Smale type} if any saddle connection is a skeleton edge.
\end{definition}

The Morse-Smale condition implies, in particular, that if $\sigma$ is a trace edge of $\Omega$ and  $D_i$ is the (unique) component of $D$ containing $\sigma$, then one of the extremities of $\sigma$ is a saddle and the other one is a node of the restricted foliation $\mathcal{L}|_{D_i}$.
More precisely, being $\sigma=[p,q]$, we must have:

- If $p$ is a saddle point of $\LL|_{D_i}$, then $q$ can be either a $D$-node or a tangential saddle for which $\mathcal{L}|_{D_i}$ has a node at $q$. Notice that in this last case, $q$ is either an angle or a corner point.

- If $p$ is node point of $\LL|_{D_i}$, then $q$ is a saddle point of $\LL|_{D_i}$ (in this case $q$ cannot be a corner point).

\strut

As discussed in the introduction, we impose also a requirement concerning a ``non-resonance'' condition of multiple saddle connections. The rest of this section is devoted to explain what is behind this condition and to stablish some relevant consequences for the dynamics. Let us mention that such condition was introduced for the first time in Alonso-González et al. \cite{Alo-C-C1, Alo-C-C2}, where it was stated as the absence of ``infinitesimal saddle connections'' in the context of a topological classification of vector fields with Morse-Smale type reduction of singularities. 

\subsection{Trace marks}
First, we define what we call trace marks, curves attached to points in the ``middle'' of the two dimensional invariant manifold at a saddle point. 

If $p\in\Sing(\LL)$ is a three dimensional saddle point, we denote by $\wt{D}_p$ the germ at $p$ (or a representant of it) of $D\cup W^2_p$, a normal crossing divisor extending $D$. Notice that $\wt{D}_p$ coincides with the germ of $D$ at $p$ if $p$ is a tangential saddle point. Denote by $Sk(\wt{D}_p)$ the union of intersections of pairs of components of $\wt{D}_p$ (in particular $Sk(D)\subset Sk(\wt{D}_p)$). 

\begin{definition}\label{def:trace-mark}
	Let $p\in\Sing(\LL)$ be a three-dimensional saddle point and let $\nu$ be a local s-component at $p$. 
	A {\em trace mark on $\nu$} is the image $T=\beta([0,\infty))$ of an injective analytic parameterized curve $\beta:[0,\infty)\to\nu$ such that there exists $b:=\lim_{t\to\infty}\beta(t)$ with $b\in W^2_p\setminus Sk(\wt{D}_p)$. If we need to specify the limit point, we will say that $T$ is a trace mark {\em attached to $b$}.
\end{definition}

\begin{remark}\label{rk:mark-from-w2-to-w1}{\em 
		Suppose that $p$ is a three-dimensional saddle point, that $\nu$ is a local s-component and that $T$ is the image of an injective analytic curve contained in $\nu$ such that $\overline{T}\cap W^2_p$ is a single point $b$ with $b\ne p$ (a trace mark on $\nu$, for instance, although more generally we may have $b\in Sk(\wt{D}_p)$). By means of the Hartman-Grobman's Theorem, if $b$ is close enough to $p$ and $V$ is a  sufficiently small neighborhood of $p$ containing $b$, we have:
		$$(\overline{\Sat_{V}(T)}\cap V)\setminus\Sat_V(T)=(\ell_b \cup (W^1_p\cap\overline{\nu}))\cap V$$
		where $\ell_b$ is the leaf of $\LL$ at $b$ (so that $\ell_b\cap V$ is contained in $W_p^2$ if $V$ is small enough). Notice that, if $p$ is a $D$-saddle point, there is a unique edge $\sigma$ adjacent to $p$ such that $\sigma\cap\nu\ne\emptyset$ and such $\sigma$ contains $W^1_p\cap\nu$. On the contrary, if $p$ is a $D$-node, then $W^1_p\cap\nu$ does not intersect the divisor $D$.
	}
\end{remark}

\subsection{Angle marks} 
Next, we describe the curves attached to points in the (extended) skeleton that we will consider. They have some explicit ``tangency order'' with respect to the components of the divisor.
We start with the following definition, that adapts the one that already appears in Section $6$ of  \cite{Alo-C-R}.

\begin{definition}\label{def:quasi-order}
Let $c:[0, \infty) \rightarrow \mathbb{R}^2_{>0}$ be an injective analytic parameterized curve such that $\lim_{t \to \infty}c(t)=(0,0)$. Write $c(t)=(c_1(t),c_2(t))$ in the cartesian coordinates. We say that\emph{ $c$ has a quasi-order} if there exist $\rho>0$, $t_0>0$ and constants $k_1,k_2$ with $0<k_1<k_2< \infty$ such that
$$k_1c_1(t)^{\rho}< c_2(t) < k_2 c_1(t)^{\rho} \qquad \textrm{for any } t\ge t_0.$$
In this case, the univocally determined value $\rho$ is called the {\em quasi-order} of $c$ (with respect to the $x$-axis). A subset $C \subset \mathbb{R}^2_{>0}$ is said to be a {\em  curve with quasi-order $\rho$} if $C$ is the support of a parameterized analytic real curve $c$ with quasi-order $\rho$.
\end{definition}
Notice that if $C\subset\R^2_{>0}$ has quasi-order $\rho$ then $\rho$ does not depend on the parametrization $c$. Moreover, given $\psi:\R^2_{\ge 0}\to\R^2_{\ge 0}$ an analytic isomorphism, a subset $C\subset\R^2_{>0}$ has quasi-order if, and only if, $\psi(C)$ also does. If $\rho$ is the quasi-order of $C$, then the quasi-order of $\psi(C)$ is $\rho$, in case $\psi$ preserves each coordinate axis, or $1/\rho$, when $\psi$ inverts them. In particular, the quasi-orders of a curve with respect to the $x$ and the $y$-axis are mutually inverse.

 Let $a\in D$ be a non-singular point of $\LL$. A {\em planar section at $a$} is a germ of two-dimensional analytic submanifold $\Delta$ of $M$ passing through $a$ and everywhere transversal to the foliation $\LL$. We recall that the word ``submanifold'' is taken with the meaning of analytic manifold with boundary and corners. In particular, $\partial\Delta=\Delta\cap D$ and $(\Delta,a)$ is either isomorphic to a closed half space $(\R\times\R_{\ge 0},0)$ if $e(a)=1$, or to a quadrant $(\R^2_{\ge 0},0)$, if $e(a)=2$.

\begin{definition}\label{def:angle-mark}
Let $\sigma$ be a skeleton edge and let $a\in\sigma$. An {\em angle mark at $a$} is a subset $\Sigma$ of a planar section $\Delta$ at $a$ such that there exists an analytic chart $\psi:\Delta\to\R^2_{\ge 0}$ so that $\psi(\Sigma)$ is a curve with quasi-order. In this case, if $D_1,D_2$ are the two components of $D$ such that $\sigma\subset D_1\cap D_2$ then, for $i=1,2$, the {\em quasi-order of $\Sigma$ with respect to $D_i$} (or {\em $D_i$-quasi-order}) is the quasi-order $\rho_i$ of $\psi(\Sigma)$  with respect to the coordinate axis $\psi(\Delta\cap D_i)$ (and thus $\rho_1\rho_2=1$).
\end{definition}
Notice that if $\Delta'$ is a planar section at a different point $a'\in\sigma$ and $\phi:\Delta\to\Delta'$ is the analytic isomorphism induced by the foliation $\LL$, then $\Sigma\subset\Delta$ is an angle mark at $a$ iff $\phi(\Sigma)\subset\Delta'$ is an angle mark at $a'$ with the same quasi-order. Any such curve $\phi(\Sigma)$ will be said to be an angle mark {\em associated to $\sigma$} without explicit mention of the point $a'$.
\subsection{Transition of trace and angle marks}
Denote by $S'\subset S$ the set of $D$-saddle points for which the one-dimensional invariant manifold is contained in the skeleton. In other words, the set $S'$ is composed of the tangential saddle corner points and  the transversal saddle angle points.
In the following two propositions, we adapt a couple of results that appear in \cite{Alo-C-C1}, in order to describe the transition of trace and angle marks through a saddle point $p \in S'$. Roughly speaking, the saturation of a trace mark near a point $p \in S'$ produces an angle mark associated to the skeleton edge that supports $W^1_p$.

 Given $p \in S'$, if $D_i,D_j$ are the two components of $D$ such that $W^1_p\subset D_i\cap D_j$ and $\lambda_k$ is the eigenvalue of a local generator of $\LL$ at $p$ associated to the invariant curve $D_k\cap W^2_p$, for $k=i,j$, we define the {\em $D_i$-weight of $p$} (respectively the {\em $D_j$-weight of $p$}) as the value
 $$
 w_{D_i}(p)=\lambda_j/\lambda_i\,\mbox{(respectively } w_{D_j}(p)=\lambda_i/\lambda_j\mbox{)}.
 $$
Observe that the weights are independent of the chosen local generator and that they are mutually inverse positive real numbers.

\begin{proposition}\label{pro:trace-to-angle}
Let $p\in S'$ and let $\nu$ be a local s-component at $p$. Denote by $\sigma$ the edge containing $W^{1}_p\cap\nu$ and by $D_i,D_j$ the components of $D$ such that $\sigma\subset D_i\cap D_j$. Let $T$ be a trace mark on $\nu$. Then, in a sufficiently small neighborhood $V$ of $p$, if $\Delta$ is a planar section at some $a\in\sigma\cap V$, the curve $\Sigma=\Delta\cap\Sat_{V}(T)$ is an angle mark at $a$ with $D_k$-quasi-order equal to $w_{D_k}(p)$, for $k=i,j$.
\end{proposition}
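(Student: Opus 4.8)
The plan is to reduce everything to a local normal form around $p\in S'$ via the Hartman--Grobman / analytic linearization available at hyperbolic points, and then perform an explicit computation in linearizing coordinates. First I would choose an analytic chart $(U_p,\xx)$ at $p$ in which $\LL$ is generated by a linear (or, if a resonance prevented exact linearization, $C^1$-linearized) vector field; since $p\in S'$, the one-dimensional manifold $W^1_p$ is contained in the skeleton, so I can arrange coordinates $(x,y,z)$ in which $D_i=\{y=0\}$, $D_j=\{z=0\}$, $W^1_p=\{x=0\}$ (the $x$-axis being transverse to both), and $W^2_p=\{x=0\}$ is the plane cut out between the two eigen-directions carrying eigenvalues $\lambda_i$ (along $D_i\cap W^2_p$, i.e.\ the $z$-axis direction inside $W_p^2$) and $\lambda_j$ (along $D_j\cap W^2_p$). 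The generator is then $\xx\mapsto(\mu x\,\partial_x + \lambda_i y\,\partial_y + \lambda_j z\,\partial_z)$ up to positive scaling, with $\mu$ the eigenvalue along $W^1_p$ having the opposite sign to $\lambda_i,\lambda_j$ (so that $W^2_p=\{x=0\}$ is, say, the stable plane and $W^1_p$ the unstable line, or vice versa). The local s-component $\nu$ is one of the quadrants $\{x>0,\ \text{sign}(z)=\pm\}$ determined by $W^2_p$.

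\textbf{The transition computation.} Next I would solve the linear flow explicitly. A trace mark $T$ on $\nu$ is parametrized by $\beta(t)\to b=(0,0,z_0)\in W^2_p\setminus Sk(\wt D_p)$ (with $z_0\neq0$); so along $T$ the coordinates satisfy $x(t)\to 0$, $y(t)\to 0$, $z(t)\to z_0\neq0$, and moreover $x$ and $y$ decay ``analytically comparably,'' i.e.\ $y\asymp x^{\,c}$ for some $c>0$ determined by $T$ — here I would invoke that an injective analytic curve tending to a point has a well-defined Newton--Puiseux exponent relating any two of its vanishing coordinates, and $c$ is irrelevant to the final answer because $b\notin Sk(\wt D_p)$ forces only $x$ (not $z$) to go to zero on $T$. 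Wait — more carefully: on $\nu\cap U_p$ the coordinate $y$ need not be small a priori unless $T$ already accumulates to $b$, which it does; so both $x,y\to0$ along $T$. Flowing a point $(x_1,y_1,z_1)$ of $T$ forward gives $(x_1e^{\mu s},\, y_1e^{\lambda_i s},\, z_1e^{\lambda_j s})$; to reach the planar section $\Delta$ at $a\in\sigma$, which near $\sigma$ looks like a small disc transverse to the $x$-axis at some $x=x_*>0$, I solve $x_1e^{\mu s}=x_*$, i.e.\ $e^{s}=(x_*/x_1)^{1/\mu}$. Then on $\Delta$ the two remaining coordinates (which chart $\Delta$ with axes $\Delta\cap D_i=\{y\text{-axis image}\}$, $\Delta\cap D_j=\{z\text{-axis image}\}$, after the isomorphism $\psi:\Delta\to\R^2_{\ge0}$) become
$$
Y = y_1\Big(\tfrac{x_*}{x_1}\Big)^{\lambda_i/\mu},\qquad Z = z_1\Big(\tfrac{x_*}{x_1}\Big)^{\lambda_j/\mu}.
$$
As the point $(x_1,y_1,z_1)$ runs along $T$ toward $b$, we have $z_1\to z_0\neq0$ so $Z\to 0$ like $x_1^{-\lambda_j/\mu}$ (note $\lambda_j/\mu<0$, so $x_1\to0$ gives $Z\to0$ — consistent), and $Y\to0$ like $y_1 x_1^{-\lambda_i/\mu}$. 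Eliminating $x_1$: from $Z = z_1 (x_*/x_1)^{\lambda_j/\mu}$ we get $(x_*/x_1) = (Z/z_1)^{\mu/\lambda_j}$, hence $Y = y_1 (Z/z_1)^{\lambda_i/\lambda_j}$. Since along $T$ the ratio $y_1/z_1^{\lambda_i/\lambda_j}$ — because $z_1\to z_0\neq0$ while $y_1\to0$ — actually $Y\to0$ strictly faster than any power if $y_1$ decays faster... here is where I must use that $T$ is a \emph{trace mark} (analytic, accumulating to $b$) so that $y_1$ is bounded above and below by powers of $x_1$, which in turn is a power of $(x_*/x_1)^{-1}\asymp$ a power of $Z$; working this through, $y_1$ is squeezed between constant multiples of $Z^{r_1}$ and $Z^{r_2}$ for suitable exponents, and combined with the $(Z/z_1)^{\lambda_i/\lambda_j}$ factor this yields exactly $k_1 Z^{\,\lambda_j/\lambda_i}\le Y\le k_2 Z^{\,\lambda_j/\lambda_i}$ on $\Sigma=\psi(\Delta\cap\Sat_V(T))$ after possibly shrinking $V$ — i.e.\ $\Sigma$ has quasi-order $\lambda_j/\lambda_i=w_{D_i}(p)$ with respect to the $D_i$-axis and reciprocally $\lambda_i/\lambda_j=w_{D_j}(p)$ with respect to the $D_j$-axis, which is the claim. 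The image under the analytic chart $\psi$ and under the foliation-isomorphism relating two planar sections preserves having-a-quasi-order and its value (up to inversion only if axes are swapped, which they are not here), by the remarks following Definition~\ref{def:quasi-order}.

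\textbf{Main obstacle.} The delicate point is \emph{not} the formal flow computation but justifying the use of linearizing coordinates in which this computation is legitimate, and controlling the error coming from the fact that $\Delta$ is a genuine (curved) planar section, not the literal coordinate slice $\{x=x_*\}$. If the eigenvalues are resonant one only has a $C^1$ linearization, which is too weak to preserve quasi-orders; so I would instead work with the \emph{analytic} normal form guaranteed by the non-resonance hypotheses relevant here — or, following \cite{Alo-C-C1}, invoke the weaker statement that the transition map $\Delta\to\Delta$ along the flow near a hyperbolic point is, in suitable analytic coordinates on the sections, a ``monomial-like'' map whose effect on a curve-with-quasi-order is multiplication of the quasi-order by the weight $w_{D_i}(p)$. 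Concretely, the proof amounts to: (1) fix analytic coordinates realizing the normal crossings picture above; (2) compute the Poincar\'e-type transition from a section transverse to $\nu$ (containing $T$) to a section $\Delta$ at $a\in\sigma$, obtaining it as a composition of the above power-map with analytic isomorphisms; (3) check that $T$ being a trace mark (its analyticity and the fact that it accumulates to a single point of $W^2_p\setminus Sk(\wt D_p)$, so that exactly one of its coordinates along $W^2_p$ survives) forces the image to be squeezed between two monomial curves with exponent $w_{D_i}(p)$; (4) invoke the invariance of quasi-orders under analytic chart changes and foliation-induced isomorphisms to conclude for an arbitrary planar section and chart $\psi$. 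The hardest step is (3), matching the analytic-curve hypothesis on $T$ with the definition of quasi-order on $\Sigma$, and it is exactly where the restriction $b\notin Sk(\wt D_p)$ (which is built into the definition of trace mark) is used: it guarantees that $T$ does not also asymptotically align with $W^1_p$, so that the surviving coordinate direction is unambiguous and the exponent is clean.
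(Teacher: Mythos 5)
Your linear-model computation is the right target: flowing a trace mark to the section $\Delta$ along a linear saddle does produce exactly the power law whose exponent is $w_{D_i}(p)=\lambda_j/\lambda_i$ (you have a small bookkeeping slip near the end, where the intermediate relation $Y=y_1(Z/z_1)^{\lambda_i/\lambda_j}$ is inconsistent with the exponent $\lambda_j/\lambda_i$ you then write, but that is only an artifact of your self-contradictory labeling of $D_i$, $D_j$ and the associated eigenvalues). The genuine gap is in the paragraph you call the ``main obstacle'': none of the fixes you propose there actually close it. There is no eigenvalue non-resonance hypothesis available at $p$ --- the paper's ``no saddle-resonances'' assumption (Definition~\ref{def:infinitesimal-connections}) concerns \emph{multiple saddle connections}, i.e.\ compatibility of weights at distinct singular points along a chain of skeleton edges, and imposes nothing on the spectrum of the linear part at a single point $p\in S'$. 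Since at a saddle the eigenvalues sit in the Siegel domain, you cannot invoke Poincar\'e--Dulac or any Sternberg-type $C^k$ normal form without an extra Diophantine/non-resonance assumption the paper does not make. You also correctly note that the $C^1$ Hartman--Grobman conjugacy is too weak to preserve quasi-orders with exponent $\ne1$; and the fallback of ``invoking that the transition is monomial-like with effect equal to multiplication by the weight'' simply restates what must be proved.

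What the paper actually does is a different technique that robustly bypasses linearization. It performs a $\rho$-weighted blow-up $\pi_\rho$ with center $D_i\cap D_j$ and with the \emph{specific} weight $\rho=w_{D_i}(p)$, so that in the first chart the $z_1$-eigenvalue of the transformed generator becomes $\lambda_j-\rho\lambda_i=0$. After a ramification of $y_1$ to ensure $C^1$-regularity, every point of the fiber $\pi_\rho^{-1}(0)$ away from the chart origins is a singular point whose (unique) center manifold is that fiber, with $\{W^{cu},W^{cs}\}=\{H,\pi_\rho^{-1}(W^2_p)\}$. The $C^1$ Reduction-to-Center-Manifold theorem then shows that $\pi_\rho^{-1}(\ell_b)$ accumulates to a single point $t_b$ of the fiber (away from both origins because $b\notin D_i\cup D_j$), and that $\pi_\rho^{-1}(\Sat_V(T))$ accumulates onto a single leaf of $\wt{\LL}|_H$ through $t_b$; in blow-up coordinates this is exactly the boundedness $k_1\le z_1=z/y^\rho\le k_2$ that \emph{defines} $D_i$-quasi-order $\rho$. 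So the monomial computation you carried out is what the proof must recover, but the mechanism to make it rigorous at a hyperbolic saddle without any linearization hypothesis is the weighted blow-up together with the $C^1$ center-manifold theorem, which your proposal is missing.
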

\begin{proof} Let $(V,(x,y,z))$ be a chart at $p$ such that $D_i\cap V=\{z=0\}$, $D_j\cap V=\{y=0\}$, $W^2_p=\{x=0\}$ and $\nu=\{x>0\}$.  For any given $\rho>0$, the {\em $\rho$-weighted blow-up with center $Y=D_i\cap D_j\cap V$}  is the map $\pi_\rho:M\to V$, where $M$ is the analytic manifold with boundary and corners constructed from two charts $(M_i,(x_i,y_i,z_i))$, $i=1,2$, by identifying points in $M_1\setminus\{z_1=0\}$ with points in $M_2\setminus\{y_2=0\}$ so that $\pi_\rho$ is well defined by the respective expressions
	$$
	 x=x_1,\,y=y_1,\,z=z_1y_1^{\rho};\;\;\;\;x=x_2,\,y=y_2z_2^{1/\rho},\,z=z_2
	$$
(see \cite{Pan} or \cite{Mar-R-S} for intrinsic definitions). 
The map $\pi_\rho$ is a continuous proper surjection and restricts to an analytic isomorphism outside the exceptional divisor $H=\pi_\rho^{-1}(Y)$ (given by equations $\{y_1=0\}$ and $\{z_2=0\}$ in $M_1$ and $M_2$, respectively).
Notice that, given the planar section $\Delta_a=\{x=x(a)\}$ at some $a\in\sigma$ and the image $\Sigma\subset\Delta_a\setminus\{a\}$ of a parameterized analytic injective curve, $\Sigma$ is an angle mark with $D_i$-quasi-order equal to $\rho$ if, and only if, $\pi^{-1}_\rho(\Sigma)$ accumulates on the fiber $\pi^{-1}_\rho(a)$ along a subset contained in a segment inside $M_1\cap M_2$ of the form $\{y_1=0,x=x(a),k_1\leq z_1\leq k_2\}$, where $0<k_1<k_2$.

The transformed foliation $\wt{\LL}=\pi_\rho^*\,\LL|_V$ on $M\setminus H$ extends continuously to $H$ leaving $H$ invariant. More precisely, consider an analytic local generator $\xi$ of $\LL$ at $p$ written as
$$
\xi=x(\alpha+A)\partial_x+y(\lambda_i+B)\partial_y+
z(\lambda_j+C)\partial_z
$$  with $\alpha\lambda_i<0$, $\lambda_i\lambda_j>0$ and $A(0)=B(0)=C(0)=0$. The transformed vector field $\wt{\xi}=\pi_\rho^*\xi$ is a generator of $\wt{\LL}$ on $M\setminus H$. We have that $\xi_1=\wt{\xi}|_{M_1}$ is written as:
\begin{equation}\label{eq:xi1}
\xi_1=x_1(\alpha+A\circ\pi_\rho)\partial_{x_1}+
y_1(\lambda_i+B\circ\pi_\rho)\partial_{y_1}+
z_1\left((\lambda_j-\rho\lambda_i)+(C-\rho B)\circ\pi_\rho\right)\partial_{z_1}
\end{equation}
(and a similar expression for $\xi_2=\wt{\xi}|_{M_2}$). The two vector fields $\xi_1,\xi_2$ extend continuously to $H$ and, in fact, these extensions are of class $C^1$ at any point of $M_1\cap M_2\cap H$, independently of $\rho$. When $\rho<1$ (respectively $\rho>1$),  it is possible that $\xi_1$ (respectively $\xi_2$) is not of class $C^1$ at the origin of $M_1$ (respectively of $M_2$). If this is the case for instance for $\xi_1$, we can make a change of variables $y_1\leadsto y_1^{1/\rho}$ (valid in $M_1\setminus H$) so that $\xi_1$ transforms into a vector field written similarly as in equation (\ref{eq:xi1}) but replacing $\lambda_i$ with $\lambda_i\rho$ and $A\circ\pi_{\rho}$ with $\rho A(x_1,y_1^{1/\rho},z_1y_1)$, etc. This new vector field is then of class $C^1$ also at the origin of $M_1$.  

Consider the special weight $\rho=w_{D_i}(p)$. Taking into account the expression (\ref{eq:xi1}), we have the following properties:
\begin{enumerate}[(a)]
\item The leaves of $\wt{\LL}|_H$ are given by the lines $\{y_1=0,z_1=c\}_{c\in\R_{\ge 0}}$ in $M_1\cap H$ (or by $\{z_2=0,y_2=c\}_{c\in\R_{\ge 0}}$ in $M_2\cap H$).
\item The fiber $\pi_\rho^{-1}(0)=\pi_\rho^{-1}(W_p^2)\cap H$ is composed of singular points of $\wt{\LL}$.
\item At any point $t\in\pi_\rho^{-1}(0)\cap M_1\cap M_2$, the linear part of $\xi_1$ (or $\xi_2$) has one positive, one negative and one null eigenvalue. Moreover, the fiber $\pi_\rho^{-1}(0)$ is the (unique) center manifold $W^c(t)$ of $\wt{\LL}$ at $t$, whereas the center-unstable and center-stable manifolds satisfy $\{W^{cu}(t),W^{cs}(t)\}=\{H,\pi_\rho^{-1}(W^2_p)\}$.
\item Up to making a ramification of the variable $y_1$ (or of the variable $z_2$) as mentioned above, we have the same conclusion as in item (c) for $\xi_1$ at the origin of $M_1$ (or for $\xi_2$ at the origin of $M_2$). 
\end{enumerate}

\begin{figure}[h]
	\begin{center}
		\includegraphics[scale=0.70]{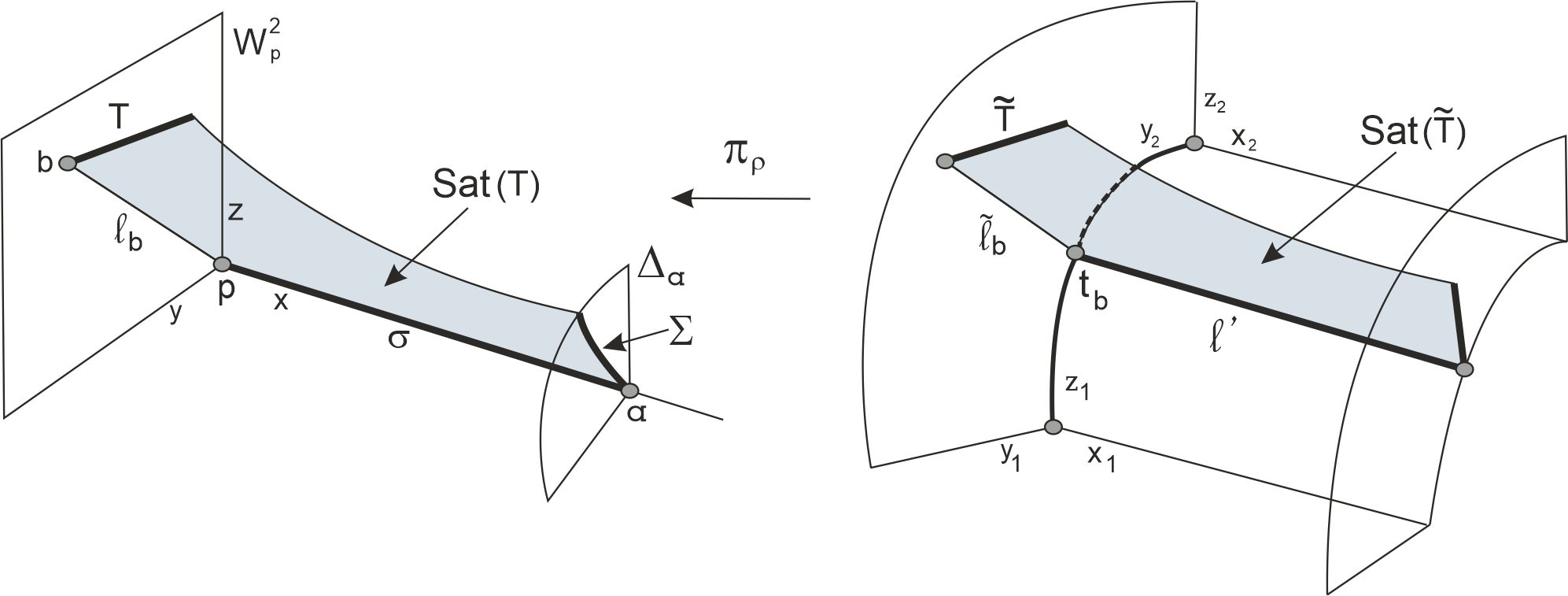}
	\end{center}
	\caption{Transition of a trace mark.}
	\label{Fig:TransitionTraceMark}
\end{figure}

Using the properties (b-d) above and applying the Theorem of Reduction to the Center Manifold of vector fields of class $C^1$ (see \cite{Hir-P-S}, for instance), we show that for a sufficiently small neighborhood $\wt{V}$ of $\pi_\rho^{-1}(0)$ and for any point  $t\in\pi_\rho^{-1}(0)$, there is exactly one non-singular leave of $\wt{\LL}|_{\wt{V}}$ contained in $\pi_\rho^{-1}(W^2_p)$ and accumulating to $t$.  

Now, suppose that $T$ is a trace mark attached to a point $b$. Notice that by definition of a trace mark we have $b\in W^2_p\setminus (D_i\cup D_j)$. Let $\ell_b\cap V\subset W^2_p$ be the leaf of $\LL|_V$ through $b$ and $\widetilde{\ell_b}=\pi_{\rho}^{-1}(\ell_b\cap V)$. If $V$ is sufficiently small, as we have said, $\widetilde{\ell_b}$ accumulates to a single point $t_b$ of the fiber $\pi^{-1}_\rho(0)$. Necessarily, $t_b$ is not the origin of none of the charts $M_1, M_2$ (because $b\not\in D_i\cup D_j$ and only the transforms by $\pi_\rho$ of the two edges intersecting $W^2_p\cap D_i$ or $W^2_p\cap D_j$  accumulate to those origin points). Again by the Theorem of Reduction to the Center Manifold applied to $\xi_1$ at $t_b$, if we put $\wt{T}=\pi^{-1}_\rho(T)$ and $\wt{V}=\pi_\rho^{-1}(V)$, we have that, $$\left(\overline{\Sat_\wt{V}(\wt{T})}\setminus\Sat_\wt{V}(\wt{T})\right)\cap\wt{V}=
\widetilde{\ell_b}\cup\{t_b\}\cup\ell',
$$
 where $\ell'$ is a non-singular leave of $\wt{\LL}|_{H\cap\wt{V}}$ that accumulates to $t_b$ (see Figure \ref{Fig:TransitionTraceMark}). By the property (a), $\ell'$ cuts any transversal section of the form $\pi^{-1}_\rho(\{x=x(a)\})$ with $a\in\sigma$ in a unique point with coordinates $(x_1,y_1,z_1)=(a,0,z_1(t_b))$. Since $z_1(t_b)\ne 0$, the observation above proves that $\Sigma=\Sat_V(T)\cap\{x=x(a)\}$ is an angle mark. This ends the proof.
 \end{proof}

The converse of Proposition~\ref{pro:trace-to-angle} is not necessarily true: from the proof just discussed, one can construct an angle mark $\Sigma$ whose saturation may accumulate to a set with non-empty interior inside $W^2_p$. However, if the angle mark $\Sigma$ has $D_k$-quasi-order different from $w_{D_k}(p)$, its saturation will accumulate to an edge adjacent to $p$ contained in $W^2_p$ and produce angle marks associated to that edge. In the following proposition we make this result precise.

 With the previous notation, take $p\in S'$ and $\sigma$ an edge intersecting $W^1_p$ with $\sigma\subset D_i\cap D_j$. Let $\tau_i,\tau_j$ be the two edges adjacent to $p$ and containing $D_i\cap W^2_p$, $D_j\cap W^2_p$, respectively. Finally, let $\alpha,\lambda_i,\lambda_j$ be the eigenvalues of a generator of $\LL$ at $p$ associated to the directions of $\sigma,\tau_i,\tau_j$, respectively.
\begin{proposition}\label{pro:angle-to-angle}
 Let $\Sigma$ be an angle mark associated to $\sigma$ with $D_k$-quasi-order equal to $\rho_k$ for $k=i,j$. Assume that $\rho_i\ne w_{D_i}(p)$ (thus also $\rho_j\ne w_{D_j}(p)$). Define $\epsilon\in\{i,j\}$ as the unique index satisfying $\rho_\epsilon>w_{D_\epsilon}(p)$. Then, in a sufficiently small neighborhood $V$ of $p$, we have
\begin{equation}\label{eq:sat-angle-mark}
\overline{\Sat_V(\Sigma)}\cap D\cap V=(\sigma\cup\{p\}\cup\tau_\epsilon)\cap V.
\end{equation}
 Moreover, if $p$ is a corner saddle point (so that $\tau_i,\tau_j$ are skeleton edges) then, for any planar section $\Delta$ at some point in $\tau_\epsilon\cap V$, we have that $\wt{\Sigma}=\Sat_V(\Sigma)\cap\Delta$ is an angle mark associated to $\tau_\epsilon$ with $D_\epsilon$-quasi-order equal to
\begin{equation}\label{eq:transition-explicit}
  \wt{\rho}_\epsilon=
    \frac{\lambda_{\epsilon'}-\lambda_{\epsilon}{\rho_\epsilon}}
   {\alpha},\;\;
  \mbox{ where }\{\epsilon,\epsilon'\}=\{i,j\}.
\end{equation}
Conversely, when $p$ is a corner saddle point, if $\wt{\Sigma}$ is an angle mark associated to $\tau_i$ (respectively to $\tau_j)$ with $D_i$-quasi-order (respectively $D_j$-quasi-order) equal to $\wt{\rho}$, then, for any planar section $\Delta$ at some point in $\sigma\cap V$, $\Sat_V(\wt{\Sigma})\cap\Delta$ is an angle mark with $D_i$-quasi-order equal to $\frac{\lambda_j}{\lambda_i}-\wt{\rho}\frac{\alpha}{\lambda_i}$ (respectively with $D_j$-quasi-order equal to $\frac{\lambda_i}{\lambda_j}-\wt{\rho}\frac{\alpha}{\lambda_j}$).
\end{proposition}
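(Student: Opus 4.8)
The plan is to reduce everything, as in the proof of Proposition~\ref{pro:trace-to-angle}, to a weighted blow-up at $p$ and a center-manifold analysis, but now the relevant center manifold is no longer the fiber over $W^2_p$: it is (the transform of) an edge $\tau_\epsilon$ contained in $W^2_p$. First I would fix a chart $(V,(x,y,z))$ at $p$ with $D_i\cap V=\{z=0\}$, $D_j\cap V=\{y=0\}$, $\sigma\subset\{y=z=0\}$, $W^2_p=\{x=0\}$, a generator $\xi=x(\alpha+A)\partial_x+y(\lambda_i+B)\partial_y+z(\lambda_j+C)\partial_z$ with $\alpha\lambda_i<0$, $\lambda_i\lambda_j>0$, and $A(0)=B(0)=C(0)=0$; here $\tau_i=\{x=z=0\}$, $\tau_j=\{x=y=0\}$. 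Then I would perform the $\rho_i$-weighted blow-up $\pi_{\rho_i}$ with center $\sigma=D_i\cap D_j\cap V$, exactly as in the previous proof, and compute the transformed vector field $\xi_1$ on the chart $M_1$ (coordinates $(x_1,y_1,z_1)$, $x=x_1$, $y=y_1$, $z=z_1y_1^{\rho_i}$). From equation~(\ref{eq:xi1}), $\xi_1=x_1(\alpha+\cdots)\partial_{x_1}+y_1(\lambda_i+\cdots)\partial_{y_1}+z_1((\lambda_j-\rho_i\lambda_i)+\cdots)\partial_{z_1}$, and the hypothesis $\rho_i>w_{D_i}(p)=\lambda_j/\lambda_i$ (for $\epsilon=i$; the case $\epsilon=j$ is symmetric, swapping the charts $M_1,M_2$) means exactly that $\lambda_j-\rho_i\lambda_i$ has the sign opposite to $\lambda_i$, i.e.\ the same sign as $\alpha$. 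So on $M_1$ the exceptional divisor $\{y_1=0\}$ is now not a locus of singular points: along it we have a hyperbolic-type behavior in the $x_1$ and $z_1$ directions and the $y_1$-direction is the single ``contracting-toward-the-divisor'' direction; the origin of $M_1$ lies on the transform of $\tau_i$.

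The key step is to identify $\overline{\Sat_V(\Sigma)}$. Since $\Sigma$ is an angle mark with $D_i$-quasi-order $\rho_i$, by the characterization recalled in the previous proof its transform $\widetilde{\Sigma}=\pi_{\rho_i}^{-1}(\Sigma)$ accumulates on the fiber $\pi_{\rho_i}^{-1}(a)$ along a segment $\{y_1=0,\,x_1=x(a),\,k_1\le z_1\le k_2\}$ with $0<k_1<k_2$; equivalently, after translating along the foliation, $\widetilde{\Sigma}$ meets $\{y_1=0\}$ in a compact arc bounded away from $z_1=0$ and from $z_1=\infty$. Now I would run the flow of $\xi_1$ forward (toward the divisor $D$, i.e.\ in the direction where the $x_1$-coordinate decreases toward $W^2_p$, using $\alpha\lambda_i<0$ to fix orientations): the $y_1$-coordinate stays positive and tends to $0$, the $x_1$-coordinate tends to $0$, and the $z_1$-coordinate, governed by $z_1'=z_1((\lambda_j-\rho_i\lambda_i)+\cdots)$ with leading coefficient of the sign of $\alpha$, is pushed toward $0$. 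Tracking the arc $k_1\le z_1\le k_2$ under this flow shows $\Sat_V(\Sigma)$ accumulates, inside $H\cup\{x=0\}$, onto the transform of $\tau_i$; pushing back down by $\pi_{\rho_i}$ gives $\overline{\Sat_V(\Sigma)}\cap D\cap V=(\sigma\cup\{p\}\cup\tau_i)\cap V$, which is~(\ref{eq:sat-angle-mark}). This is essentially a $\lambda$-lemma/strong-stable-foliation argument for the $C^1$ (or, after the ramification $y_1\leadsto y_1^{1/\rho_i}$ as in the previous proof, even genuinely $C^1$-at-the-origin) vector field $\xi_1$, invoking reduction to the center manifold — here the center manifold at the origin of $M_1$ being the transform of $\tau_i$ itself.

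For the quasi-order formula~(\ref{eq:transition-explicit}) when $p$ is a corner point, so that $\tau_i=\{x=z=0\}$ is a skeleton edge sitting inside the two components $D_i=\{z=0\}$ and $D_\epsilon'$-component $W^2_p=\{x=0\}$, I would compute directly with the linearized flow of $\xi_1$ at the origin of $M_1$: to first order, $x_1(t)\sim e^{\alpha t}$, $y_1(t)\sim e^{\lambda_i t}$ (or $e^{\lambda_i\rho_i t}$ after the ramification — one must keep careful track of which normalization is in force, this is a place where sign/normalization bookkeeping is delicate), $z_1(t)\sim e^{(\lambda_j-\rho_i\lambda_i)t}$. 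A planar section $\Delta$ at a point of $\tau_i\cap V$ is cut out by $\{x_1=\mathrm{const}\}$; on it the two axes are $\{y_1=0\}=H$ and $\{z_1=0\}=\pi_{\rho_i}^{-1}(W^2_p)$, and the $D_\epsilon$-quasi-order of the resulting angle mark is the exponent relating the $y_1$- and the $z_1$-coordinates along $\widetilde\Sigma$, namely $\log y_1/\log z_1$ evaluated as $t\to\infty$ after pulling $\widetilde\Sigma$ in along $\xi_1$. Using the initial conditions ``$z_1$ in a bounded interval, $y_1\to 0$'' and the linear rates, this ratio works out to $(\lambda_j-\lambda_i\rho_i)/\alpha$ up to sorting out the $i\leftrightarrow j$ labelling; writing $\epsilon'$ for the complementary index gives~(\ref{eq:transition-explicit}). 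The converse statement is the same computation with the roles of $\sigma$ and $\tau_i$ (equivalently, of the blow-up and the blow-down) interchanged: starting from an angle mark associated to $\tau_i$ with $D_i$-quasi-order $\widetilde\rho$ and saturating back toward $\sigma$, the inverse-transition exponent is $\lambda_j/\lambda_i-\widetilde\rho\,\alpha/\lambda_i$, and one checks this is indeed the functional inverse of~(\ref{eq:transition-explicit}) on the appropriate domain, which is a consistency check rather than a new argument.

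The main obstacle I expect is not the qualitative accumulation statement~(\ref{eq:sat-angle-mark}) — that follows cleanly from the sign computation $\mathrm{sign}(\lambda_j-\rho_i\lambda_i)=\mathrm{sign}(\alpha)$ together with reduction to the center manifold, as in Proposition~\ref{pro:trace-to-angle} — but rather the precise bookkeeping needed to get the exponent~(\ref{eq:transition-explicit}) exactly right: one must consistently track (i) which of the two charts $M_1,M_2$ of the weighted blow-up is the relevant one (this is where the index $\epsilon$ defined by $\rho_\epsilon>w_{D_\epsilon}(p)$ enters), (ii) whether the ramification $y_1\leadsto y_1^{1/\rho_i}$ has been applied, which rescales $\lambda_i\leadsto\lambda_i\rho_i$ and hence changes intermediate formulas, and (iii) the orientation of the flow (forward toward $D$), fixed by $\alpha\lambda_i<0$. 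A robust way to sidestep most of this is to compute the quasi-order as a genuinely invariant quantity: realize that both $\Sigma$ and $\widetilde\Sigma$ have well-defined quasi-orders independent of parametrization and of admissible chart (by the remarks following Definition~\ref{def:quasi-order}), pick the simplest possible representatives (monomial curves $z_1=c\,y_1^{\widetilde\rho}$, etc.), push them exactly through the monomial/linear part of the flow, and read off the new exponent; the higher-order terms $A,B,C$ only perturb constants, not the quasi-order, exactly as in Definition~\ref{def:quasi-order}. Finally, the converse direction needs the observation that an angle mark associated to $\tau_i$ of arbitrary quasi-order $\widetilde\rho$ always has $\widetilde\rho$ in the range making the corresponding saddle-transition non-resonant on the $\sigma$-side, so that the formula applies; this is where the standing non-saddle-resonance hypothesis ultimately gets used downstream, though for the bare statement of this proposition one only needs $\rho_i\ne w_{D_i}(p)$ as assumed.
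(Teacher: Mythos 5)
Your handling of the qualitative part~(\ref{eq:sat-angle-mark}) is essentially the paper's: blow up with weight $\rho_i$ along $\sigma$, observe that $\rho_i>w_{D_i}(p)$ forces $\lambda_j-\rho_i\lambda_i$ and $\alpha$ to share a sign, and conclude that the leaves leaving $\pi_{\rho_i}^{-1}(\Sigma)$ fall onto $Y'=\pi_{\rho_i}^{-1}(\tau_i)$. One terminological slip worth flagging: after the ramification $y_1\leadsto y_1^{1/\rho_i}$, the origin $p_1$ of $M_1$ is a \emph{hyperbolic saddle} for $\bar\xi_1$ (eigenvalues $\alpha$, $\lambda_i\rho_i$, $\lambda_j-\rho_i\lambda_i$, all nonzero), with $Y'$ as its one-dimensional invariant manifold and $H$ as the two-dimensional one. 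There is no zero eigenvalue and hence no center manifold at $p_1$; the paper applies Hartman--Grobman there, exactly as in Remark~\ref{rk:mark-from-w2-to-w1}, not center-manifold reduction. (You have also swapped some coordinate identifications: the planar section at a point of $\tau_i$ is cut out by $\{y_1=\mathrm{const}\}$, not $\{x_1=\mathrm{const}\}$, since the foliation is tangent to the $y_1$-direction along $Y'$; and $\{z_1=0\}$ lies over $D_i$, while $\pi_{\rho_i}^{-1}(W_p^2)=\{x_1=0\}$. The relevant ratio on the section is $\log z_1/\log x_1$, not $\log y_1/\log z_1$.)

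The genuine gap is in the justification of~(\ref{eq:transition-explicit}). You compute the exponent from the \emph{linear} flow and then assert that ``the higher-order terms $A,B,C$ only perturb constants, not the quasi-order, exactly as in Definition~\ref{def:quasi-order}.'' That definition only says quasi-order is invariant under analytic isomorphisms of the quadrant preserving or swapping axes; it says nothing about invariance under a saddle transition map, where the flow time is unbounded. Hartman--Grobman gives a topological conjugacy to the linear flow, and topological conjugacy does not preserve quasi-orders, so passing to the linear model is not licensed by what you have set up. The paper's route to rigor is precisely the step you skip: a \emph{second} weighted blow-up $\pi'=\pi_{\widetilde\rho_i}$ with center $Y'=\{x_1=z_1=0\}$. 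After $\pi'$, the fiber $\pi'^{-1}(p_1)$ becomes a genuine center manifold (eigenvalues $\alpha$, $\lambda_i$, $0$), trajectories of $\bar\xi_1|_H$ with quasi-order $\widetilde\rho_i$ lift to curves landing on single points of that fiber, and the leaves of $\xi'_1|_{H'}$ are the lines $\{x_1'=0,\,z_1'=c\}$. This converts the quasi-order assertion into the robust statement that the saturation lies in a slab $\{k_1'\le z_1'\le k_2'\}$, which projects to an angle mark with the required exponent. To complete your argument you need this second blow-up (or a comparably strong normal-form input controlling the Dulac-type transition map); the linearization heuristic as written does not suffice.
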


\begin{proof}
Assume that $\rho_i>\lambda_j/\lambda_i$ (i.e., $\epsilon=i$). Let $(V,(x,y,z))$ be a chart at $p$ with the same properties as in Proposition~\ref{pro:trace-to-angle}, and consider the $\rho_i$-weighted blow-up $\pi=\pi_{\rho_i}:M\to V$ with center $Y=\sigma\cap V$. Keeping the same notations as in the proof of Propostion \ref{pro:trace-to-angle}, we have that $Y'=\pi^{-1}(\tau_i)$ is the $y_1$-axis of the first chart $M_1$ of $\pi$. The transformed foliation $\wt{\LL}=\pi^*\LL$ has a local generator $\xi_1$ in $M_1$ given by the expression in (\ref{eq:xi1}), where $\rho$ is replaced with $\rho_i$. As we have already pointed out, this vector field is not necessarily of class $C^1$ at the origin of $M_1$. However, up to performing a ramification of the variable $y_1$, we obtain a vector field $\bar{\xi}_1$ of class $C^1$ on $M_1$ which is topologically equivalent to $\xi_1$ and has a similar expression as (\ref{eq:xi1}), although the eigenvalue $\lambda_i$ may be replaced with $\rho_i\lambda_i$.
In particular, taking into account that $\rho_i>w_{D_i}(p)$, the point $p_1$ is a hyperbolic saddle point of $\bar{\xi}_1$, for which the divisor $H=\pi^{-1}(Y)$ is the two-dimensional invariant manifold, and the $y_1$-axis is the one-dimensional invariant manifold.

Assume that the angle mark $\Sigma$ is contained in a transversal section at some $a\in\sigma$ of the form $\Delta_a=\{x=x(a)\}$. Since $\Sigma$ has $D_i$ quasi-order equal to $\rho_i$, we have that $\pi^{-1}(\Sigma)$ is contained in a compact subset of $M_1$ of the form $$K=\{x_1=x(a),\,k_1\le z_1\le k_2,\,0\le y_1\le\varepsilon\},$$
where $0<k_1<k_2$. The saturation of $K$ in $M$ by  $\wt{\LL}$ can be computed by using the vector field $\bar{\xi}_1$, since $K\subset M_1$ and $M_1$ is saturated for $\wt{\LL}$. Applying Hartman-Grobman's Theorem to $\bar{\xi}_1$ at $p_1$ as in Remark~\ref{rk:mark-from-w2-to-w1}, we conclude that $\overline{\Sat_{M}(K)}\cap\pi^{-1}(D)$ is contained in $H\cup Y'$ and contains $Y'$ (see Figure \ref{Fig:TransitionAngleMark}). This proves the required property (\ref{eq:sat-angle-mark}).

\begin{figure}[h]
	\begin{center}
		\includegraphics[scale=0.78]{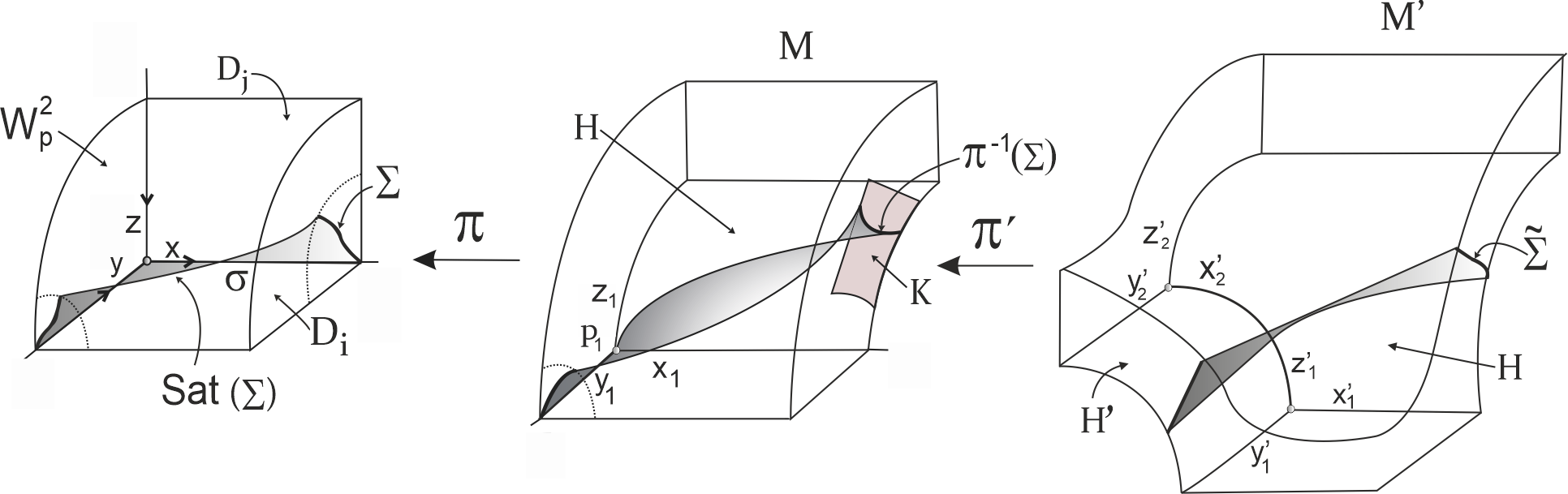}
	\end{center}
	\caption{Transition of an angle mark.}
	\label{Fig:TransitionAngleMark}
\end{figure}

In order to prove (\ref{eq:transition-explicit}), we notice first that the eigenvalues of the linear part of $\bar{\xi}_1|_H$ at $p_1$ are $\alpha,\lambda_j-\rho_i\lambda_i$ (both of the same sign), associated to the directions of the $x_1$ and $z_1$-axis, respectively. Thus, any trajectory of $\bar{\xi}_1|_H$ in a neighborhood of $p_1$, except for those two axis, has quasi-order equal to $\wt{\rho}_i$ with respect to the $x_1$-axis (identifying $H\cap M_1=\{y_1=0,x_1\ge 0,z_1\ge 0\}$ with the quadrant $\R^2_{\ge 0}$ by means of the coordinates $(x_1,z_1)$). In particular, $\Sat_M(K)\cap H$ is enclosed by two such trajectories.
 We perform a new $\wt{\rho}_i$-weighted blow-up $\pi'=\pi_{\wt{\rho}_i}:M'\to M_1$ with center $Y'=\{x_1=z_1=0\}$, written in the two charts $(M'_1,(x'_1,y'_1,z'_1))$, $(M'_2,(x'_2,y'_2,z'_2))$ defining $M'$ as
$$
x_1=x'_1,\,y_1=y'_1,\,z_1=z'_1(x'_{1})^{\wt{\rho}_i};\;\;\;\;
x_1=x'_2(z'_{2})^{1/\wt{\rho}_i},\,y_1=y'_2,\,z_1=z'_2.
$$
A computation as in the proof of Proposition~\ref{pro:trace-to-angle} shows that $\xi'_1=\pi'^{\ast}\bar{\xi}_1$ is a vector field of class $C^1$ on $M'_1\cap M'_2$ for which any point $q\in\pi'^{-1}(p_1)\cap M'_1\cap M'_2$ is a singular point with eigenvalues $\alpha,\lambda_i,0$ (independent of the point $r$). At any such point $q$, the fiber $\pi'^{-1}(p_1)$ is the center manifold of $\xi'_1$ (unique in this case), whereas the center-unstable and center-stable manifolds satisfy $\{W^{cu}(q),W^{cs}(q)\}=\{\pi'^{-1}(H),H'\}$, where $H'=\pi'^{-1}(Y')$ is the divisor of $\pi'$. We obtain that trajectories of $\bar{\xi}_1|_H$ outside $\{x_1z_1=0\}$ are lifted by $\pi'$ to curves accumulating to a single point of $\pi'^{-1}(p_1)\cap M'_1\cap M'_2$. Also, the leaves of $\xi'_1|_{H'}$ are the lines $\{x'_1=0,z'_1=c\}_{c\ge 0}$. Using these observations and the same arguments as in the proof of Proposition~\ref{pro:trace-to-angle}, we conclude that the saturation $\wt{\Sigma}$ of $(\pi\circ\pi')^{-1}(\Sigma)$ by $\xi'_1$ is contained in a region of the form $\{k'_1\le z'_1\le k'_2\}$ for some constants $0<k'_1<k'_2$. Thus, for $u>0$ sufficiently small, we have $\pi\circ\pi'(\wt{\Sigma}\cap\{y'_1=u\})$ is an angle mark associated to $\tau_i$ inside the transversal section $\Delta_u=\{y=u\}$ with quasi-order equal to $\wt{\rho}_i$ with respect to $D_i=\{z=0\}$. This proves the second assertion of the proposition.
The rest of the statement concerning the transition of angle marks associated to $\tau_i$ or $\tau_j$ to an angle mark associated to $\sigma$ can be proved analogously.
\end{proof}

With the notations of Proposition~\ref{pro:angle-to-angle}, when $p$ is a corner saddle point, the map $\rho_\epsilon\mapsto\wt{\rho}_\epsilon$ defined by (\ref{eq:transition-explicit}) for $\epsilon\in\{i,j\}$ will be called the {\em transition from $\sigma$ to $\tau_\epsilon$} and denoted by $T_{\sigma,\tau_\epsilon}$, whereas its inverse will be called the {\em  transition from $\tau_\epsilon$ to $\sigma$} and denoted by $T_{\tau_\epsilon,\sigma}$. Notice that $T_{\sigma,\tau_\epsilon}$ defines an affine bijection from the interval $(w_{D_\epsilon}(p),+\infty)$ into the interval $(0,+\infty)$.

\subsection{Saddle-resonance} 
Now we can introduce the main definitions of this section.
\begin{definition}{(Resonant s-connections)}\label{def:infinitesimal-connections}
Let $p,q\in S'$ with $p\ne q$ and let $\g=(\sigma_0,\sigma_1,...,\sigma_{n})$ be a multiple saddle connection from $p$ to $q$. We say that $\g$ is {\em saddle-resonant} (or {\em s-resonant} for short) if the following conditions are satisfied:
\begin{itemize}
  \item Each edge $\sigma_i$ is a skeleton edge.
  \item The first edge $\sigma_0$ (respectively the last edge $\sigma_{n}$) intersects $W^1_p$ (respectively $W^1_q$).
  \item For $i=0,\dots,n-1$, let $D_i$ be the component of $D$ which contains $\sigma_i$ and $\sigma_{i+1}$. Take $\rho_0=w_{D_0}(p)$. Then $\rho_0$ belongs to the domain of $T_{\sigma_0,\sigma_1}$. By defining recursively, for $i=1,\dots,n-1$, the value
      $$\rho_i=
      \left\{
        \begin{array}{ll}
          T_{\sigma_{i-1},\sigma_{i}}(\rho_{i-1}), & \hbox{if }D_{i-1}=D_{i} \\
          \\
          {1}/{T_{\sigma_{i-1},\sigma_{i}}(\rho_{i-1})}, & \hbox{if }D_{i-1}\ne D_{i},
        \end{array}
      \right.
      $$
      we have that $\rho_{i}$ belongs to the domain of $T_{\sigma_{i},\sigma_{i+1}}$ when $i<n$.
      \item It holds $T_{\sigma_{n-1},\sigma_n}(\rho_{n-1})=w_{D_{n-1}}(q)$.
\end{itemize}
The HAFVSD $\mathcal{M}$ will be called {\em s-resonant} if it has any s-resonant multiple saddle connection.
\end{definition}
\subsection{Saturation of trace marks}
Now we state the main result in this section. Roughly speaking, under the conditions of Morse-Smale and not s-resonance, the saturation of a given trace mark near a point $p\in S$ will be a surface accumulating along the support of a path of edges that does not depend on the trace mark. As a consequence, the saturation of the two-dimensional invariant manifold at any $p\in S_{tr}$ shares an analogous property. 

In the following statement, we fix realizations of the invariant manifolds $W^2_p$ for any $p\in S_{tr}$ and denote 
$$
\wt{D}=D\cup\bigcup_{p\in S_{tr}}W^2_p.
$$
Notice that, according to the notation already introduced before Definition~\ref{def:trace-mark}, the germ of $\wt{D}$ at any $p\in S_{tr}$ is equal to $\wt{D}_p$.

\begin{theorem}\label{th:path of a trace mark}
Let $\mathcal{M}=(M,D,\LL)$ be a HAFVSD and assume that it is not s-resonant and of Morse-Smale type. Consider a $D$-saddle point $p\in S$ and a local s-component $\nu$ at $p$. Then there exists a unique path of edges $\Theta(\nu)$ with extremities at $p$ and another point $q=q(\nu)$, which is a $D$-node, satisfying the following property:
for any sufficiently small neighborhood $U$ of $D$ in $M$, and for any trace mark $T$ on $\nu$ contained in $U$, we have
\begin{equation}\label{eq:Sat-T}
\overline{\Sat^\epsilon_U(T)}\cap\wt{D}=\ell^\epsilon_b\cup|\Theta(\nu)|,
\end{equation}
where $\epsilon=+$ (respectively $\epsilon=-$) in case $W^1_p$ is the unstable (respectively stable) manifold at $p$ and $b\in W^2_p$ is the point where $T$ is attached.
\end{theorem}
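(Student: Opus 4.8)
The strategy is to follow a single trace mark along its forced itinerary through the graph $\Omega$, using the local transition results of Section~\ref{sec:MS-foliations} at each singular point it meets, and to show that this itinerary is well-defined (independent of the mark), stops at a $D$-node, and never returns to the divisor elsewhere. I would organize this as an induction on the length $l(p)$ of the starting vertex in the filtration \eqref{eq:filtration-graph}.

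First I would establish the base of the local analysis. Since $p\in S$ is a $D$-saddle and $\nu$ is a local $s$-component at $p$, Remark~\ref{rk:mark-from-w2-to-w1} identifies $(\overline{\Sat_V(T)}\cap V)\setminus\Sat_V(T)$ with $\ell_b\cup(W^1_p\cap\overline{\nu})$ in a small neighborhood $V$ of $p$; the relevant piece near the divisor is the edge $\sigma_0$ adjacent to $p$ containing $W^1_p\cap\nu$ (when $p\in S'$), or a curve not meeting $D$ until later (when $p\in S\setminus S'$, i.e. $p$ is a trace transversal saddle or an angle tangential saddle whose $W^1_p$ leaves the skeleton). In the first case, Proposition~\ref{pro:trace-to-angle} tells us that the saturation, read on a planar section along $\sigma_0$, is an angle mark associated to $\sigma_0$ with $D_k$-quasi-order exactly $w_{D_k}(p)$. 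Now the edge $\sigma_0=[p,q_1]$ arrives at a vertex $q_1$ which, by the Morse-Smale consequences discussed after Definition~\ref{def:Morse-Smale}, is either a $D$-node — in which case we stop and set $\Theta(\nu)=(\sigma_0)$, $q=q_1$ — or a saddle of $\LL|_{D_0}$ (hence $q_1\in S'$ by Morse-Smale, since $\sigma_0\subset Sk(D)$ gives $q_1$ a corner/angle saddle). In the latter case $q_1\in S'$ is a corner saddle, so Proposition~\ref{pro:angle-to-angle} applies: comparing the incoming quasi-order $\rho$ at $q_1$ (obtained by possibly inverting via a coordinate axis exchange when the divisor component changes) with the weight $w_{D}(q_1)$, either they are equal — and then the saturation could spread over a region of $W^2_{q_1}$, which is exactly the $s$-resonance configuration of Definition~\ref{def:infinitesimal-connections} — or they differ, and \eqref{eq:sat-angle-mark} forces the saturation to accumulate precisely along $\sigma_0\cup\{q_1\}\cup\tau_\epsilon$ for a uniquely determined next edge $\tau_\epsilon$, with a new well-defined quasi-order given by \eqref{eq:transition-explicit}.

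The main induction then runs as follows: the non-$s$-resonance hypothesis guarantees that the equality $T_{\sigma_{i-1},\sigma_i}(\rho_{i-1})=w_{D_{i-1}}(q_i)$ never occurs at a saddle vertex of the itinerary before reaching a node, so at each step the next edge is uniquely forced; since $\Omega$ has no cycles (acyclicity) and is finite, the itinerary $(\sigma_0,\sigma_1,\dots,\sigma_n)$ terminates, necessarily at a vertex $q$ which is not a saddle of the relevant restricted foliation, hence a $D$-node. At the terminal $D$-node $q$, one uses Hartman-Grobman directly (as in Remark~\ref{rk:asymptotics-HAS}): the incoming angle mark near $q$ has its saturation accumulating only to $q$ itself within $D$, so nothing escapes past $q$. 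For the global statement \eqref{eq:Sat-T} one must patch the local pictures along consecutive flow-boxes covering the interiors of the edges $\sigma_i$ (where the foliation is a trivial bundle and the saturation of the angle mark is a smooth surface meeting $\wt D$ only in the edge), glue with the local models at the vertices, and finally invoke Remark~\ref{rk:asymptotics-HAS} on a neighborhood $U$ of $D$ to rule out the saturation touching $\wt D$ at any point outside $\ell_b^\epsilon\cup|\Theta(\nu)|$ (a leaf limiting on $\wt D$ must limit on a singular point, and the only singular points reached are the vertices of $\Theta(\nu)$, plus $b$ along $\ell_b^\epsilon$). The case distinction $\epsilon=+$ versus $\epsilon=-$ simply tracks whether $W^1_p$ is unstable or stable, i.e. whether we saturate forward or backward.

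The hard part will be two-fold. First, the bookkeeping at vertices where the divisor component changes along the itinerary (a skeleton edge $\sigma_i$ and $\sigma_{i+1}$ lying in different faces / components), where the quasi-order must be inverted and the recursion of Definition~\ref{def:infinitesimal-connections} must be matched exactly against the transitions $T_{\sigma_i,\sigma_{i+1}}$ of Proposition~\ref{pro:angle-to-angle} — one has to check that the domain conditions ``$\rho_i$ belongs to the domain of $T_{\sigma_i,\sigma_{i+1}}$'' are automatically satisfied at each step precisely because non-resonance forces $\rho_i>w_{D_i}(\cdot)$, i.e. the $\epsilon$-alternative of Proposition~\ref{pro:angle-to-angle} always holds. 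Second, and more delicate, is the handling of vertices $p$ (the starting one, or an intermediate tangential saddle that is an \emph{angle} rather than corner point) for which $W^1_p$ is \emph{not} contained in the skeleton — there the relevant invariant curve leaves $D$ into the interior of a component, runs through a face, and only the Morse-Smale condition (no saddle connections outside the skeleton) guarantees it returns cleanly to the divisor at a node or at a saddle on the opposite side; one must argue, using Poincaré-Bendixson on the sphere for $\LL|_D$ and Hartman-Grobman near the next singular point, that the trace-mark saturation follows this curve and reattaches as a trace mark (not an angle mark) at the next $D$-saddle, so that the induction hypothesis applies there with a strictly smaller length $l(\cdot)$. Uniqueness of $\Theta(\nu)$ then follows because at every stage the next edge was the \emph{only} possibility, and independence from the particular trace mark $T$ follows because none of the combinatorial choices (which edge, whether to stop) depended on $T$ — only the ``tail'' $\ell_b^\epsilon$ does.
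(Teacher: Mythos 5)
Your plan follows essentially the same route as the paper's own proof: induction on $l(p)$, the local model from Remark~\ref{rk:mark-from-w2-to-w1} at the starting saddle, Proposition~\ref{pro:trace-to-angle} to convert to an angle mark along the skeleton, Proposition~\ref{pro:angle-to-angle} with the non-$s$-resonance hypothesis to propagate through corner saddles, termination by acyclicity and finiteness, and re-invoking the induction hypothesis when the itinerary re-attaches as a trace mark at a tangential saddle across a trace edge. Two small imprecisions worth noting, but not gaps: what non-resonance actually gives is $\rho_i\ne w_{D_i}(p_i)$ (the strict inequality then determines \emph{which} of $\tau_i,\tau_j$ the saturation exits through, since $\rho_i\rho_j=1=w_{D_i}w_{D_j}$ forces exactly one of the two inequalities), and the invariant curve $W^1_p$ never ``leaves $D$'' — for a $D$-saddle it is always contained in $D$; what it may leave is the skeleton $Sk(D)$.
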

\begin{proof}
Assume for instance that $W^1_p=W^{u}(p)$. We proceed by induction on the length $l(p)$ (cf. Section~\ref{sec:SHNR-foliations}). Let $\sigma_1=[p,p_1]$ be the edge starting at $p$ which contains $W^1_p\cap\nu$. Take $T$ a trace mark on $\nu$ attached to a point $b\in W^2_p\setminus Sk(\widetilde{D}_p)$ sufficiently near to $p$. Using Remark~\ref{rk:mark-from-w2-to-w1}, in a small open neighborhood $V_0$ of $p$ we have 
\begin{equation}\label{eq:Sat-T-0}
\overline{\Sat^+_{V_0}(T)}\cap\wt{D}\cap V_0=(\ell^+_{b}\cup\{p\}\cup\sigma_1)\cap V_0
\end{equation}
Put $T_1=\Sat^+(T)\cap\Delta_1$, where $\Delta_1$ is a transversal section at some point $a_1\in\sigma_1\cap V_0$. From (\ref{eq:Sat-T-0}), if $U$ is a sufficiently small neighborhood of $D$, we obtain that
\begin{equation}\label{eq:saturation of marks}
\overline{\Sat^+_U(T)}\cap\wt{D}=\ell^+_b\cup\{p\}\cup\sigma_1\cup\left(\overline{\Sat^+_{U}(T_1)}\cap\wt{D}\right).
\end{equation}
Now, we have several possibilities:

(a-1) The point $p_1$ is a $D$-node. Hence, necessarily $p_1\in N^a$. We put $\Theta(\nu)=(\sigma_1)$. As in Remark~\ref{rk:mark-from-w2-to-w1}, the result follows from equation (\ref{eq:saturation of marks}) by applying Hartman-Grobman's Theorem at the point $p_1$. This situation occurs when $l(p)=1$, so we can start our induction argument with this case (a-1).

(b-1) The point $p_1$ is a $D$-saddle point and the edge $\sigma_1$ is a trace edge. In this case, by the Morse-Smale condition, the component of $D$ which contains $\sigma_1$ coincides with the two-dimensional invariant manifold $W^2_{p_1}$ locally at $p_1$, that is, $p_1\in S_{tg}$. Thus, by using the flow, we can see the curve $T_1$ as a trace mark on the (unique) local s-component at $p_1$, denoted by $\nu_1$. Using the induction hypothesis, we consider the path of edges $\Theta(\nu_1)$ starting at $p_1$ and satisfying (\ref{eq:Sat-T}) for $\nu_1$ and $T_1$. By means of (\ref{eq:saturation of marks}), the path $\Theta(\nu)=(\sigma_1,\Theta(\nu_1))$ satisfies the requirements of the theorem.

(c-1) The point $p_1$ is a $D$-saddle point and the edge $\sigma_1$ is a skeleton edge. Let $D_1,D_2$ be the two components of $D$ such that $\sigma_1\subset D_1\cap D_2$. By Proposition~\ref{pro:trace-to-angle}, the curve $T_1$ is an angle mark associated to $\sigma_1$ with $D_i$-quasi-order equal to $w_{D_i}(p)$, for $i=1,2$. It turns out that there exists a unique edge $\sigma_2=[p_1,p_2]$ such that, in a small enough neighborhood $V_1$ of $p_1$, and assuming that $a_1$ is sufficiently close to $p_1$, we have
\begin{equation}\label{eq:sigma1-sigma2}
\overline{\text{Sat}_{V_1}(T_1)}\cap\wt{D}\cap V_1=\overline{\text{Sat}_{V_1}(T_1)}\cap D\cap V_1=(\sigma_1\cup\{p_1\}\cup\sigma_2)\cap V_1.
\end{equation}
Indeed, if $\sigma_1$ is contained in $W^2_{p_1}$ locally at $p_1$ (that is, either $W^2_{p_1}\subset D_1$ or $W^2_{p_1}\subset D_2$) then (\ref{eq:sigma1-sigma2}) holds with $\sigma_2$ the edge intersecting $W^1_{p_1}$, by Remark~\ref{rk:mark-from-w2-to-w1}. On the other hand, if $\sigma_1$ intersects $W^1_{p_1}$ then, by the non-resonance hypothesis, we have that $w_{D_i}(p)\ne w_{D_i}(p_1)$, for $i=1,2$, and equation (\ref{eq:sigma1-sigma2}) holds as a consequence of  Proposition~\ref{pro:angle-to-angle}. 
Put $T_2=\Sat^+(T_1)\cap\Delta_2$, where $\Delta_2$ is a transversal section at some point $a_2\in\sigma_2$.
Using equations (\ref{eq:saturation of marks}) and (\ref{eq:sigma1-sigma2}), we obtain that if $U$ is a small enough neighborhood of $D$ then
$$
\overline{\Sat^+_U(T)}\cap\wt{D}=\ell^+_b\cup\{p\}\cup\sigma_1\cup\sigma_2\cup\left(\overline{\Sat^+_{U}(T_2)}\cap\wt{D}\right).
$$
Take $\sigma_2=[p_1,p_2]$. 
Now, we have the same three possibilities for the extremity $p_2$ as those we had for $p_1$, namely:

(a-2) The point $p_2\in N^a$. We finish by taking $\Theta=(\sigma_1,\sigma_2)$.

(b-2) The point $p_2$ belongs to $S_{tg}$ and $\sigma_2$ is a trace edge (thus intersecting $W^2_{p_2}\subset D$). Taking $a_2$ sufficiently near to $p_2$, we have that $T_2$ is a trace mark at $p_2$. By induction hypothesis on $l(p_2)<l(p)$, we finish by taking $\Theta=(\sigma_1,\sigma_2,\Theta(\nu_2))$ where $\nu_2$ is the local s-component at $p_2$ containing $T_2$.

(c-2) The point $p_2$ belongs to $S$ and  $\sigma_2$ is a skeleton edge. Applying Proposition~\ref{pro:angle-to-angle} to $p_1$ and $T_1$, we have that $T_2$ is an angle mark associated to $\sigma_2$. 

Since the graph $\Omega$ is finite and has no cycles, by repeating the arguments, we obtain a path $(\sigma_1,\dots,\sigma_r)$ with the following properties:
\begin{itemize}
\item The path $(\sigma_1,\dots,\sigma_{r-1})$ is a multiple saddle connection.
\item For $j\in\{2,\dots,r\}$, define 
$T_j=\Sat^+(T_{j-1})\cap\Delta_j$, where $\Delta_j$ is a transversal section at some point $a_j\in\sigma_j$. Then $T_j$ is an angle mark for $j<r$ and, if $U$ is a sufficiently small neighborhood of $D$, we have
$$
\overline{\Sat^+_U(T)}\cap\wt{D}=\ell^+_b\cup\{p\}\cup\sigma_1\cup\sigma_2\cup\cdots\cup\sigma_r\cup
\left(\overline{\Sat^+_{U}(T_r)}\cap\wt{D}\right).
$$
\item The extremity $p_r=\omega(\sigma_r)$ is either in the situation (a-$r$), i.e. $p_r\in N^a$, or in the situation (b-$r$), i.e., $p_r\in S_{tg}$ and $\sigma_r$ is a trace edge intersecting $W^2_{p_r}\subset D$.
\end{itemize}
If $p_r\in N^a$, we put $\Theta(\nu)=(\sigma_1,...,\sigma_r)$ and the result follows. In the situation (b-r), we may assume that $T_r$ is a trace mark on $\nu_r$, the (unique) local s-component at $p_r$. By induction hypothesis, we get a path $\Theta(\nu_r)$ satisfying (\ref{eq:Sat-T}) for $\nu_r$ and $T_r$. Finally, the path $\Theta(\nu)=(\sigma_1,\dots,\sigma_r,\Theta(\nu_r))$ satisfies the required property for $\nu$ and $T$.
This ends the proof.
\end{proof}
\begin{remark}\label{rk:path-theta}
{\em
 From the construction of the path $\Theta(\nu)$ in Theorem~\ref{th:path of a trace mark}, it follows that there is no trace edge in $\Theta(\nu)$ which ends (resp. starts) at a transversal saddle point if $W^1_p$ if the unstable manifold (resp. the stable manifold) at $p$.
}
\end{remark}

\begin{corollary}\label{cor:sat-w2}
With the same conditions as above, given a transversal saddle point $p\in S_{tr}$, there are two paths of edges $\Pi_p^1,\Pi_p^2$ starting (resp. ending) at $p$ and ending (resp. starting) at some $D$-node point when $W^2_p$ is unstable (resp. stable), such that for any sufficiently small neighborhood $U$ of $D$ in $M$ we have
	\begin{equation}\label{eq:paths-w2p}
	\overline{\Sat_U(W^2_p)}\cap\wt{D}=\overline{W^2_p\cap U}\cup|\Pi_p^1| \cup |\Pi_p^2|.
\end{equation}%
As a consequence, two different elements in the family $\{\Sat_U(W^2_p)\}_{p\in S_{tr}}$ are always mutually disjoint, for $U$ small enough.
\end{corollary}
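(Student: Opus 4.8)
The plan is to mimic the proof of Theorem~\ref{th:path of a trace mark}, with $W^2_p$ playing the role of a trace mark and the two edges of $D$ bounding it near $p$ playing the role of the first edge of the path. First I would reduce to the case $W^2_p=W^u(p)$; the case $W^2_p=W^s(p)$ follows by reversing the orientation of $\LL$, which preserves all the hypotheses. Since $p\in S_{tr}$, Lemma~\ref{lm:graph}(iii) and its proof show that $W^2_p\cap D$ consists, near $p$, of two edges $\tau_1=[p,q_1]$ and $\tau_2=[p,q_2]$ of $\Omega$, both starting at $p$. Fixing a sufficiently small realization of $W^2_p$ and a small neighborhood $V_0$ of $p$, Hartman-Grobman's Theorem gives $\overline{\Sat_{V_0}(W^2_p)}\cap\wt{D}\cap V_0=(W^2_p\cup\tau_1\cup\tau_2)\cap V_0$, and the part of $\Sat_U(W^2_p)$ lying outside $V_0$ is governed, near each $\tau_k$, by the forward saturation of a curve $T_k$ contained in a planar section $\Delta_k$ at a point $a_k\in\tau_k$ close to $q_k$. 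By Remark~\ref{rk:mark-from-w2-to-w1} and Proposition~\ref{pro:trace-to-angle}, $T_k$ is a trace mark at $q_k$ (when $\tau_k$ is a trace edge) or an angle mark associated to $\tau_k$ (when $\tau_k$ is a skeleton edge).

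With this in place, one obtains, for $U$ a sufficiently small neighborhood of $D$,
$$
\overline{\Sat_U(W^2_p)}\cap\wt{D}=\overline{W^2_p\cap U}\cup\tau_1\cup\tau_2\cup\bigcup_{k=1,2}\bigl(\overline{\Sat^+_U(T_k)}\cap\wt{D}\bigr).
$$
The inductive construction in the proof of Theorem~\ref{th:path of a trace mark} (induction on $l(p)$, noting that $l(q_k)<l(p)$ since $\tau_k$ starts at $p$, and using the Morse-Smale condition, Proposition~\ref{pro:angle-to-angle} and the non-$s$-resonance hypothesis) then applies verbatim to each $T_k$ and yields a path of edges $\Pi^k_p$, beginning with $\tau_k$ and ending at a $D$-node, for which $\overline{W^2_p\cap U}\cup|\Pi^1_p|\cup|\Pi^2_p|$ equals the right-hand side above. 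This is (\ref{eq:paths-w2p}), and uniqueness of the $\Pi^k_p$ is inherited from the uniqueness in Theorem~\ref{th:path of a trace mark}.

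For the disjointness statement, suppose $\Sat_U(W^2_p)\cap\Sat_U(W^2_q)\ne\emptyset$ with $p\ne q$ in $S_{tr}$ and $U$ small; being saturated, both sets contain a common leaf $\ell$. Choosing the realizations of all the $W^2_r$ pairwise disjoint and inside the regions where they are locally $\LL$-invariant, the fact that $\ell$ meets $W^2_p$ forces one of $\alpha(\ell),\omega(\ell)$ to equal $p$, and likewise one of them to equal $q$; since these limit sets are singletons (Remark~\ref{rk:asymptotics-HAS}) and $p\ne q$, we get $\{\alpha(\ell),\omega(\ell)\}=\{p,q\}$, and since the stable and unstable manifolds of a hyperbolic point meet only at the point, $W^2_p$ and $W^2_q$ must be, respectively, the unstable and the stable manifold; say $\alpha(\ell)=p$, $\omega(\ell)=q$. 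Then $\ell\subset\Sat_U(W^2_p)$ together with (\ref{eq:paths-w2p}) gives $q\in\overline{\Sat_U(W^2_p)}\cap\wt{D}$, hence $q$ is a vertex of $\Pi^1_p$ or $\Pi^2_p$. But along the construction of the $\Pi^k_p$ the leaves of $\Sat_U(W^2_p)$ stay, near any transversal-saddle vertex they approach, inside a single local $s$-component of that vertex (because local $s$-components are locally saturated and the transitions used, namely Remark~\ref{rk:mark-from-w2-to-w1} and Propositions~\ref{pro:trace-to-angle} and~\ref{pro:angle-to-angle}, all preserve them), so these leaves never meet the realization $W^2_q$, contradicting $\ell\subset\Sat_U(W^2_q)$.

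The step I expect to be the main obstacle is the first one: organizing the realization of $W^2_p$ and the planar sections $\Delta_1,\Delta_2$ so that $\Sat_U(W^2_p)$ is accounted for exactly by $W^2_p$ together with the forward saturations of $T_1$ and $T_2$, and checking that $T_1,T_2$ are genuine trace or angle marks in the sense of Definitions~\ref{def:trace-mark} and~\ref{def:angle-mark}. Once this reduction is available, the production of $\Pi^1_p,\Pi^2_p$ is a faithful transcription of the proof of Theorem~\ref{th:path of a trace mark} and the disjointness is the routine limit-set argument above.
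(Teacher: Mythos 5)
Your derivation of (\ref{eq:paths-w2p}) is the paper's: reduce to $W^2_p$ unstable, identify (Lemma~\ref{lm:graph}(iii)) the two edges $\tau_1,\tau_2$ starting at $p$ and containing $W^2_p\cap D\setminus\{p\}$, saturate forward along each to obtain a curve $T_k$ on a planar section near the far extremity $q_k$, and invoke Theorem~\ref{th:path of a trace mark} to produce $\Pi^k_p=(\tau_k,\Theta(\nu_k))$. You leave undecided whether $\tau_k$ is a trace edge or a skeleton edge, but this is in fact forced: since $p\in S_{tr}$ the realization $W^2_p$ is not contained in $D$, so near $p$ the locus $W^2_p\cap D$ lies inside a single component of $D$ away from $Sk(D)$, and both $\tau_k$ are trace edges. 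The paper uses this together with the Morse-Smale condition to conclude that each $q_k$ is a $D$-node or a tangential saddle, hence has a unique local s-component $\nu_k$, so that $T_k$ is literally a trace mark on $\nu_k$ in the sense of Definition~\ref{def:trace-mark} and the theorem applies directly. Your ``angle mark'' fallback never arises; and had it arisen, Theorem~\ref{th:path of a trace mark} would not apply ``verbatim'', since its hypothesis is a trace mark and angle marks only occur as intermediate data inside its proof.

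For the disjointness you take a route genuinely different from the paper's, which argues set-theoretically: from (\ref{eq:paths-w2p}) one gets $\Sat_U(W^2_p)\cap W^2_{p'}\subset D$ (the realizations being disjoint and the paths lying in $D$), while $\Sat_U(W^2_p)\cap D=\{p\}\cup\tau_1\cup\tau_2$ since these are the only $D$-leaves through $W^2_p\cap D$; and neither $\tau_1$ nor $\tau_2$ is adjacent to $p'$ by the Morse-Smale condition, so neither meets the local realization $W^2_{p'}$. This gives $\Sat_U(W^2_p)\cap W^2_{p'}=\emptyset$ at once. Your version passes through the $\alpha/\omega$-limits of a hypothetical common leaf and then asserts that $\Sat_U(W^2_p)$ stays on one side of $W^2_q$ near any transversal-saddle vertex it approaches. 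That assertion is true — angle marks live inside a quadrant of a planar section away from $W^2_q$, and $W^2_q$ is locally $\LL$-invariant so a saturation starting off it stays off it — but it is essentially a dynamical restatement of the conclusion $\Sat_U(W^2_p)\cap W^2_{p'}=\emptyset$, and checking it requires unwinding the quadrant-preservation hidden in the proofs of Propositions~\ref{pro:trace-to-angle} and~\ref{pro:angle-to-angle}. The paper's version is shorter, bypasses the limit-set analysis altogether, and makes the role of Morse-Smale transparent; I would keep it.
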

\begin{proof}
	Assume, for instance, that $W^2_p$ is the unstable manifold. In light of Lemma~\ref{lm:graph}, there are exactly two edges starting at $p$, say $\sigma_i=[p,p_i]$, $i=1,2$. Moreover, they are trace edges and satisfy $W^2_p\cap D\subset\sigma_1\cup\sigma_2\cup\{p\}$. By the Morse-Smale condition, the extremity $p_i$, for $i=1,2$, is either a $D$-node or a tangential saddle point,  and hence, there is a unique local s-component $\nu_i$ at $p_i$. Consider a neighborhood $U_i$ of $\overline{\sigma_i}$ and a transversal section $\Delta_i$ at some point in $\sigma_i\cap\nu_i$, both sufficiently small to guarantee that  $T_i=\Sat^+_{U_i}(W^2_p)\cap\Delta_i$ is a trace mark on $\nu_i$. 
	Put $\Pi_p^i=(\sigma_i, \Theta(\nu_i))$, $i=1,2$. Property (\ref{eq:paths-w2p}) follows by applying  Theorem~\ref{th:path of a trace mark} to the local s-components $\nu_1,\nu_2$ and the trace marks $T_1, T_2$, respectively. Now, notice that if $p'\in S_{tr}$ and $p'\ne p$, then (\ref{eq:paths-w2p}) implies that $\Sat_U(W^2_p)\cap W^2_{p'}\subset D$. On the other hand, we have $\Sat_U(W^2_p)\cap D=\{p\}\cup\sigma_1\cup\sigma_2$, where $\sigma_1,\sigma_2$ are as above. These two observations prove the last claim in the corollary, taking into account that none of those two edges cuts $W^2_{p'}$, by virtue of the Morse-Smale condition. 
\end{proof}

Note that the paths in Corollary~\ref{cor:sat-w2} associated to different transversal saddle points may share common edges. This means that, although the saturations of the two-dimensional invariant manifolds at different transversal saddle do not intersect, their closures in $M$ could.
\begin{remark}\label{rk:sat-empty-intersection}
	{\em 
The disjointness property of the family of sets in the last part of Corollary~\ref{cor:sat-w2} can be extended to saturations of trace marks. More in precise: asume that for each $\nu\in\VV(\Omega)$ we take a (possibly empty) family $\{T^1_\nu,\dots,T^{s_\nu}_\nu\}$ of mutually disjoint trace marks in $\nu$, attached to points not in $|\Omega|$.
Then, for a sufficiently small neighborhood $U$ of $D$ in $M$, the elements of the family
$$
\FF=\{\Sat_U(W^2_p)\}_{p\in S_{tr}}\cup\{\Sat_U(T^j_\nu)\}_{\nu\in\VV(\Omega),\,j=1,...,s_\nu}
$$
are pairwise disjoint. To see this, notice that if $\nu$ is associated to a transversal saddle, then equation (\ref{eq:Sat-T}) is true for any $T^j_\nu$ (deleting the exponent $\epsilon\in\{+,-\}$ that indicates the sense of the flow). On the contrary, if $\nu$ is not associated to a transversal saddle and $T^j_\nu$ is attached to $a^j$, then $a^j\in D$ for $j \in \{1,\dots ,s_\nu\}$. By pushing $T^j_\nu$ by the flow, we get two trace marks $T^j_\nu(+)$, $T^j_\nu(-)$ on the local s-components $\tilde{\omega}(\ell_{a^j})$ and $\tilde{\alpha}(\ell_{a^j})$, respectively, and we have
$$
\overline{\Sat_U(T^j_\nu)}\cap\wt{D}=
\left(\overline{\Sat^+_U(T^j_\nu(+))}\cap\wt{D}\right)\cup
\ell_{a^j}\cup
\left(\overline{\Sat^-_U(T^j_\nu(-))}\cap\wt{D}\right).
$$
The fact that two different elements of $\FF$ do not intersect follows, as in Corollary~\ref{cor:sat-w2}, from these observations along with the corresponding properties (\ref{eq:Sat-T}) and (\ref{eq:paths-w2p}) applied to those elements. 
}
\end{remark}

\section{Distinguished neighborhoods from chimneys}\label{sec:distinguished}

Let $\mathcal{M}=(M,\LL,D)$ be a non s-resonant of Morse-Smale type HAFVSD.
In this section we construct a convenient base of neighborhoods of the support $|\Omega|$ by using the so-called {\em distinguished fattenings} that we define below. These fattenings will constitute the first pieces in order to get the fitting domains announced in Theorem~\ref{th:main}. Each one of them will be constructed by considering small local neighborhoods at points of $\Sing(\LL)$ of {\em chimney-shape} connected along the edges of the graph in such a way we control perfectly the transversal frontier of the resulting neighborhood. The constructions in this section are inspired in those ones presented by Alonso-Gonz\'{a}lez, Cano and Camacho in \cite{Alo-C-C1,Alo-C-C2}.

For ease of its reading, since we introduce a considerable amount of notation and intermediate technical results, we organize this section in several subsections.

{\em Notation.-} In what follows, a subset $I\subset M$ will be called an {\em (open or closed) interval} if $I$ is homeomorphic to an (open or closed) interval of $\R$ and its closure $\overline{I}$ is homeomorphic to a compact interval $[a,b]\subset\R$. In this case, the points $a',b'$ of $\overline{I}$ corresponding to $a,b$ by an homeomorphism are called the {\em extremities} of $I$, while we use the notation $\dot{I}=I\setminus\{a',b'\}$. An interval $I$ will be called {\em non-trivial} if it is non-empty and not reduced to a single point.
On the other hand, a subset of $M$ homeomorphic to a (open or closed) two-dimensional disc will be also called a {\em (open or closed) disc}. In this situation, expressions of the type ``boundary'' of a disc $T\subset M$, etc., refer to the subset of $T$ which corresponds to the boundary of the genuine disc by a homeomorphism.

\subsection{Chimney-shape neighborhoods at singular points}
Let $p\in\Sing(\LL)$ be a singular point and assume that $p$ is a three-dimensional saddle.  Let $(V,\xx=(x,y,z))$ be an analytic
chart of $M$ at $p$ such that the local invariant manifolds at $p$ are analytic submanifolds of $V$ given by 
$$
W^2_p\cap V=\{z=0\},\;\; W^1_p\cap V=\{x=y=0\}.
$$
The coordinate $z$ can take values either on $\R_+$ (when $p\in N\cup S_{tg}$) or on $\R$ (when $p\in S_{tr}$). For convenience, we use the notation $\epsilon=+$ in the first case and $\epsilon\in\{+,-\}$ in the second one. According to the definition in Section \ref{sec:SHNR-foliations}, the local s-components at $p$ are the germs at $p$ of the open sets $\{\epsilon z>0\}$, denoted by $\nu_p^\epsilon$, where $\epsilon$ is chosen with that convention.
For any such $\epsilon$ and any $\delta>0$, we
consider the closed cylinder
$B=B^\epsilon_{\delta}=\{x^2+y^2\leq\delta,0\leq\epsilon z\leq\delta\}$ and distinguish the followings three components on its boundary:
$$
\begin{array}{l}
b(B)=\{x^2+y^2\leq\delta,z=0\},\\
t(B)=\{x^2+y^2\leq\delta,z=\epsilon\delta\},\\
w(B)=\{x^2+y^2=\delta,0\leq\epsilon z\leq\delta\},
\end{array}
$$
called, respectively, the {\em base}, the {\em top} and the {\em wall} of $B$. Notice that $b(B)=B\cap W^2_p$ and that $t(B)$ cuts transversally $W^1_p$ at a unique point. The base and the top are closed discs, whereas the wall is either a disc or homeomorphic to the cylinder $\SSS^1\times[0,1]$, the latter case occurring if and only if  $p\in N$. When $w(B)$ is a disc, its boundary is the union of four closed intervals: $w(B)\cap t(B)$, $w(B)\cap b(B)$ and the two connected components of $D^*_p\cap w(B)$, where $D^*_p$ is the union of components of $D$ containing $W^1_p$.

We will always assume that $\delta$ is sufficiently small so that $p$ is the only singular point of $\LL$ in $B$ and $\LL$ is transversal to the top and to the wall. By means of the Hartman-Grobman Theorem, this implies that, for any $a\in w(B)\setminus b(B)$, the leaf $\ell_a$ of $\LL|_{B}$ through $a$ cuts $t(B) \setminus W_p^1 $ at a single point, thus establishing a homeomorphism $w(B)\setminus b(B)\simeq t(B)\setminus W^1_p$. Moreover, if $a \in b(B) \setminus\{p\}$, then $\ell_a\cap B$ is a $B$-leaf, completely contained in $W^2_p$, that cuts once $w(B)$ and accumulates to $p$.

\begin{definition}\label{def:fence}
Given $B=B^\epsilon_\delta$ as above, a {\em fence} in $B$ is a semi-analytic subset $F\subset w(B)$ which is homeomorphic to the wall $w(B)$ and satisfies (see Figure \ref{Fig:Chimney})
\begin{enumerate}
  \item $F\cap t(B)=\emptyset$.
  \item The set $b(F)=F\cap W^2_p$, called the  {\em base} of $F$, is equal to $b(B)\cap w(B)$.
  \item If $w(B)$ is a disc and $\tilde{J}$ is a connected component of $D^*_p\cap w(B)$, then $J=\tilde{J}\cap F$ is a non trivial closed interval. Any such $J$ is called a {\em doorjamb} of the fence.
\end{enumerate}
\end{definition}

Recall that either $F$ is homeomorphic to $\SSS^1\times[0,1]$ or $F$ is a closed disc. In the first case $F$ has no doorjambs, while in the second case it has two doorjambs, denoted by $J_1,J_2$. In both cases, 
the fence $F$ is a topological surface with boundary, and its boundary $\partial F$ contains the base $b(F)$ and the doorjambs of $F$ (if they exist). The {\em handrail} of the fence $F$ is defined as
$
h(F):=\overline{\partial F\setminus(b(F)\cup J_1\cup J_2)},
$
(and hence $\partial F=b(F)\cup J_1\cup J_2\cup h(F)$, where $J_1=J_2=\emptyset$ if $F$ has no doorjambs). The handrail of $F$ does not intersect the base $b(F)$. It is homeomorphic to $\SSS^1$ if $F\simeq\SSS^1\times[0,1]$ or a closed interval if $F$ is a disc. Moreover, in this latter case, the handrail $h(F)$ cuts each doorjamb just in a single point, which is a common extremity of both.

\begin{definition}\label{def:chimney}
Let $\nu$ be a local s-component at $p$. A {\em chimney neighborhood} (or a {\em c-nbhd} for short) in $\nu$, is a set of the form
$
\CC=\CC_{B,F}=\overline{\Sat_{B}(F)}\cap B
$, where $F$ is a fence in a cylinder $B=B^\epsilon_{\delta}$ such that $\nu\subset B$ (see Figure \ref{Fig:Chimney}). We say that $F$ is the {\em fence} of $\CC$, and denote it by $F_\CC$, if we want to emphasize that it is associated to $\CC$. A {\em refinement} of $\CC=\CC_{B,F}$ is another c-nbhd of the form $\CC'=\CC_{B,F'}$, where $F'$ is a fence in $B$ contained in $F_\CC$. We will use the notation $\CC'<\CC$. (Notice that the cylinder frame $B$ is the same for $\CC$ and for any of its refinements).
\end{definition}

\begin{figure}[h]
	\begin{center}
		\includegraphics[scale=0.60]{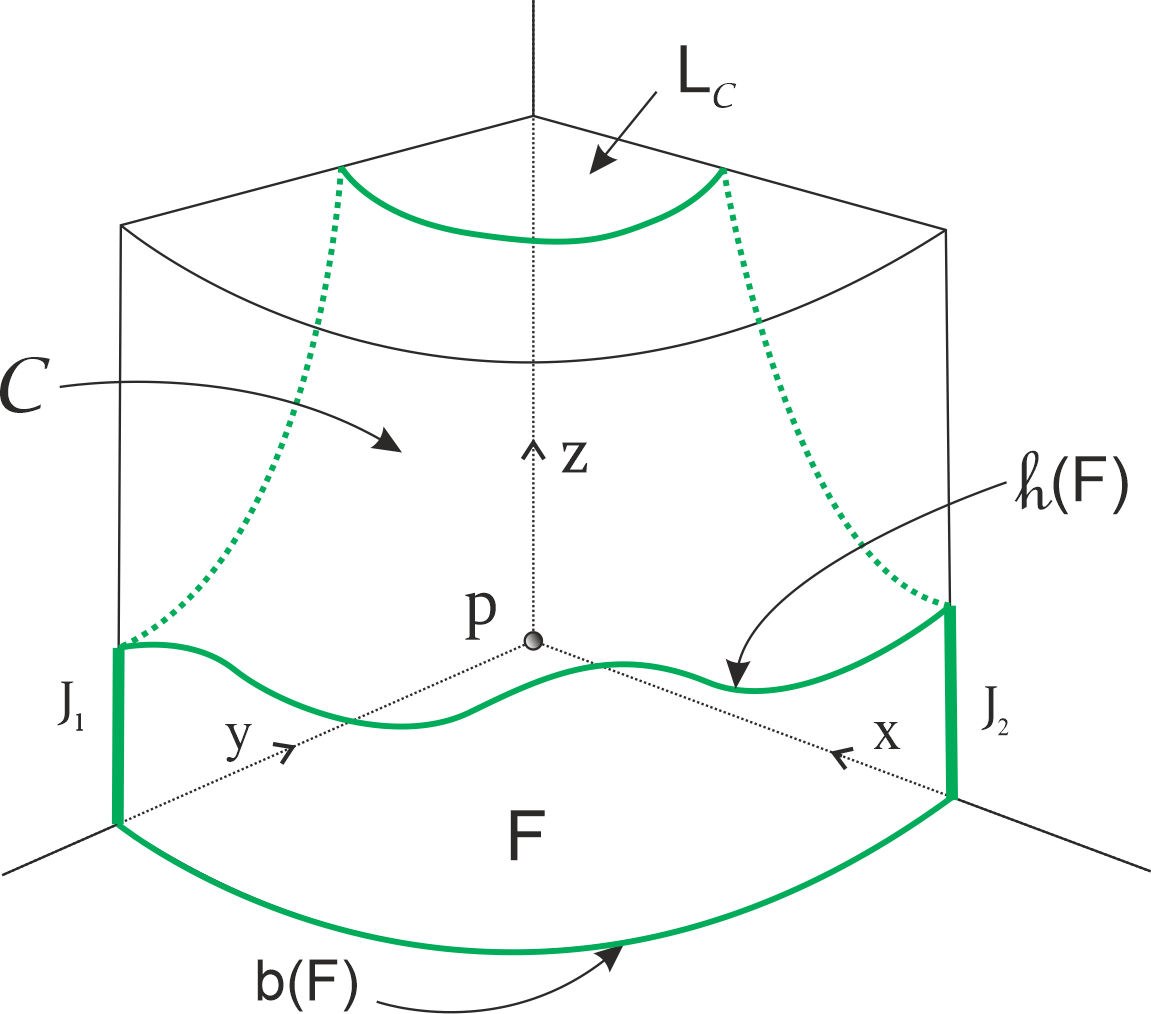}
	\end{center}
	\caption{Chimney with elements.}
	\label{Fig:Chimney}
\end{figure} 

A c-nbhd $\CC$ is a compact three dimensional topological manifold with boundary and a semi-analytic set of $M$. Notice that its boundary $\partial\CC$ contains the topological frontier $Fr(\CC)$ inside $M$, but that the equality does not hold (unless $p\in S_{tr}$) since we are working in $M$, having itself a boundary. More precisely, $\partial\CC\setminus Fr(\CC)$ is equal to the interior inside $\partial\CC$ of $\partial\CC\cap D$, a set which is locally invariant by $\LL$. Therefore, there is no ambiguity in adopting, from now on, the notation $\partial\CC^\pitchfork:=Fr(\CC)^\pitchfork$ for the transversal frontier of $\CC$ (according to the definitions introduced in Section~\ref{sec:SHNR-foliations}). Observe that $\partial\CC^\pitchfork$ consists of two connected components:
 the fence $F=F_\CC$ of $\CC$ and the set $L=L_\CC:=\CC \cap t(B)$, called the {\em lid of $\CC$}. The lid is a closed disc, independently of the topology of the fence. More precisely, we have $\{\partial\CC^{in},\partial\CC^{out}\}=\{F_\CC,L_\CC\}$, where $\partial\CC^{in}=F_\CC$ (resp. $\partial\CC^{in}=L_\CC$) if, and only if, $W^2_p$ is the stable (resp. unstable) manifold.  Considered as manifolds with boundary, points in the interior of $\partial\CC^{in}$ (resp. $\partial\CC^{out}$) are of type e-i (resp. i-e),  relatively to $\CC$, while points in the boundary of $\partial\CC^{in}$ (resp. $\partial\CC^{out}$) are of type e-t (resp. t-e).

Notice that, if $p$ is a tangential saddle or a $D$-node point, and $\nu_p$ is the unique local s-component at $p$, then a c-nbhd in $\nu_p$ is actually a neighborhood of $p$ in $M$. On the contrary, if $p$ is a transversal saddle point, the union of the respective chimney neighborhoods $\CC^{+},\CC^{-}$ in the two local $s$-components $\nu_p^{+},\nu_p^{-}$ at $p$, is a neighborhood of $p$. 

\strut

So far, we have constructed chimney neighborhoods for three-dimensional saddles.
In order to unify notation, if $p\in\Sing(\LL)$ is not a three-dimensional saddle point and $\nu=\nu_p$ is the unique local s-component at $p$, a {\em
chimney neighborhood} (or a c-nbhd for short) in $\nu$ is a pair $(\CC,F)$ of sets where:

- $\CC=\{x^2+y^2+z^2\leq\delta\}\subset\overline{\nu}$ in some analytic
coordinates $\xx=(x,y,z)$ at $p$ such that ${\partial\CC\setminus D}$ is everywhere
transversal to $\LL$;  

- $F$ is a subset of $\partial\CC^\pitchfork(:=Fr(\CC)^\pitchfork=\overline{\partial\CC\setminus D})$ which is the image of a one-to-one continuous map $\vp:(\partial\CC\cap D)\times[0,1]\to\partial\CC^\pitchfork$ such that $\vp(t,0)=t$ for any $t\in\partial\CC\cap D$. 

For the sake of simplicty, we just say that $\CC$ is a c-nbhd and that $F=F_\CC$ is the {\em fence} of $\CC$. The set $b(F):=\partial \CC \cap D$ is called the {\em base} of the fence $F$ and the set $h(F):=\vp((\partial\CC\cap D)\times\{1\})$   is called the {\em handrail} of the fence $F$; both are semi-analytic curves homeomorphic to $\SSS^1$. As in the case of a three-dimensional saddle $D$-node, there are no doorjambs of $F$. Finally, the set $L=\overline{\partial\CC^\pitchfork\setminus F}$ is called the {\em lid} of the c-nbhd $\CC$. It is a semi-analytic closed disc, satisfying $\partial L=h(F)$.

\subsection{Doors and pre-doors of a chimney neighborhood}

\begin{definition}\label{def:pre-door}
Let $\CC$ be a c-nbhd in some $\nu\in\VV(\Omega)$ with fence $F=F_{\CC}$ and lid $L=L_{\CC}$. A {\em pre-door of}
$\CC$ is a semi-analytic disc $\DD\subset\partial\CC^\pitchfork$, either contained in $F$ or in $L$, such that:
\begin{enumerate}[(i)]
  \item $\DD$ contains at most one point of the set $\partial\CC\cap|\Omega|$. If such a point exists, it is called the {\em center} of $\DD$.
  \item If $\DD\subset L$, then $\overline{\Sat_{\CC}(\DD)}$ is a refinement of $\CC$. The set $b(\DD):=D \cap \DD$ is called the {\em base} of $\DD$ in this case.
  \item If $\DD\subset F$, the set $b(\DD):=\DD\cap b(F)$, called the {\em base} of $\DD$, is a non-trivial closed interval (necessarily contained in $\partial\DD$).
  \item If $\DD\subset F$ and $F$ has doorjambs $J_1,J_2$, then at most one of the two sets $J_1\cap\DD, J_2\cap\DD$ is non empty and, if $J_i\cap\DD\ne\emptyset$ for $i\in\{1,2\}$, then $J_i\cap\DD$ is a non-trivial closed interval with one extremity in $b(\DD)$. If $J_i\cap\DD\ne\emptyset$ then $J_i\cap\DD$ is called the {\em fixed doorjamb} of $\DD$.
  \item If $a$ is the center of $\DD$, then $a$ is an interior point of $b(\DD)\cup J$, where $J$ is the fixed doorjamb of $\DD$ or $J=\emptyset$ if $\DD$ has no fixed doorjamb (for instance, when $\DD \subset L$).
\end{enumerate}
\end{definition}

Given a pre-door $\DD$ in $\CC$, a {\em framing} of $\DD$ is the choice of a non-trivial closed interval $h(\DD)\subset\partial\DD$, called a {\em handrail} of $\DD$, which satisfies the following rules (see Figure \ref{Fig:Predoor_door}):
\begin{itemize}
\item The interior $\dot{h(\DD)}$ of $h(\DD)$ does not intersect $D\cup W^2_p$.
\item If $\DD$ is contained in the lid of $\CC$ then $h(\DD)=\overline{\partial\DD\setminus b(\DD)}$.
\item If $\DD$ is contained in the fence $F_\CC$ then $h(\DD)$ contains $\DD\cap h(F_\CC)$ and $\partial\DD\setminus(\dot{b(\DD)}\cup\dot{h(\DD)})=J\cup J'$, where $J,J'$ are two nontrivial closed disjoint intervals such that one of them is the fixed doorjamb of $\DD$ if $\DD$ has a fixed doorjamb.
\end{itemize}

\begin{figure}[h]
	\begin{center}
		\includegraphics[scale=0.65]{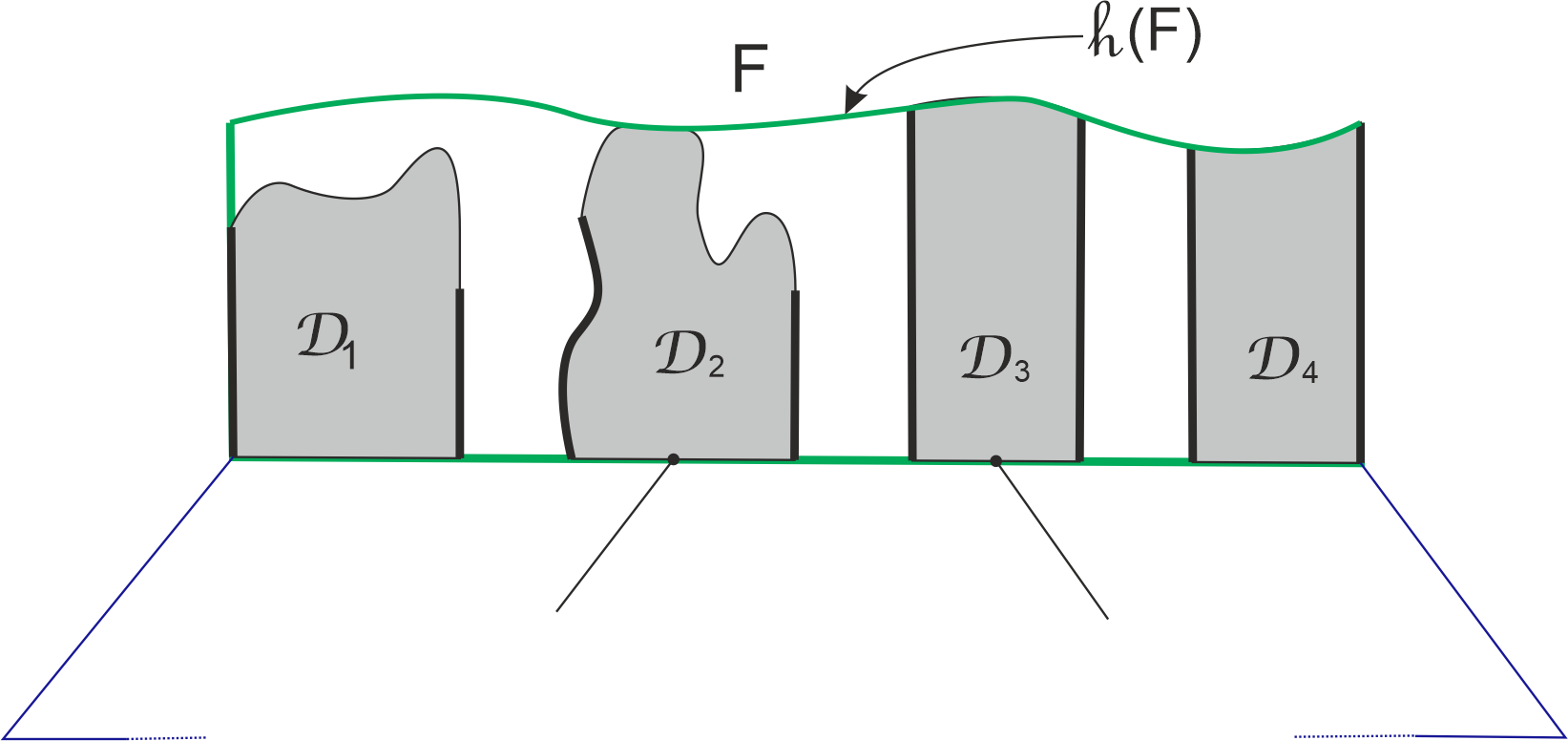}
	\end{center}
	\caption{Predoors $\mathcal{D}_1$, $\mathcal{D}_2$ and doors  $\mathcal{D}_3$, $\mathcal{D}_4$, all of them with their doorjams in bold line.}
	\label{Fig:Predoor_door}
\end{figure}

Once we have assigned a handrail to $\DD$, we will say that $\DD$ is a {\em framed pre-door}. Notice that there is only a possible frame in  case $\DD$ is contained in the lid. Moreover, in this case, $\Sat_{\CC}(h(\DD))\cap F_\CC$ is the  handrail of the fence of the refinement $\overline{\Sat_{\CC}(\DD)}<\CC$.  On the contrary, if $\DD\subset F_\CC$, the two intervals $J,J'$ in the definition of frame are 
 called the {\em doorjambs} of the framed pre-door $\DD$. One of them coincides with the fixed doorjamb of $\DD$, in case it exists (hence, it does not depend on the chosen frame). A doorjamb of $\DD$ that is not equal to the fixed doorjamb is called an {\em unfixed doorjamb} of $\DD$.

Two pre-doors $\DD_1,\DD_2$ in $\CC$ will be said {\em compatible} if either $\DD_1\cap\DD_2=\emptyset$ or if there exist respective frames such that $\DD_1\cap\DD_2$ is a common doorjamb of both of them (necessarily a common unfixed doorjamb).

\begin{definition}
 A {\em door of} the c-nbdd $\CC$ with  fence $F$ and lid $L$ is a framed pre-door $\DD$ in $\CC$ such that:
\begin{enumerate}[(i)]
\item If $\DD\subset L$, then $\DD=L$.
 \item If $\DD\subset F$, then the handrail of $\DD$ is equal to $h(\DD)=\DD\cap h(F)$.
 \item If $\DD \subset F$ and $\DD$ has a fixed doorjamb $J$, then $J$ is also a doorjamb of $F$.
\end{enumerate} 
We will also say that $\DD$ is an {\em in-door} or an {\em out-door} if it is contained either in $\partial\CC^{in}$ or in $\partial\CC^{out}$, respectively.
\end{definition}

\subsection{Distinguished fattenings of the graph}

We use the notations introduced in Section~\ref{sec:SHNR-foliations}. In particular, recall that if $G<\Omega$, then $\mathcal{V}(G)$ denotes the family of local s-components at vertices of $G$.

\begin{definition}\label{def:fattening}
Let $G<\Omega$ be a subgraph of $\Omega$. A {\em fattening of $G$}
 is a map
$\KK$ from $\VV(G)\cup E(G)$ into the family of compact subsets of $M$ satisfying:

\begin{enumerate}[(i)]
  \item For each $\nu\in\VV(G)$, the set $\KK(\nu)$ is a c-nbdh in $\nu$.
  \item For each edge $\sigma=[p,q] \in E(G)$, there are points $a_1, a_2 \in \sigma$ and respective transversal sections $\Sigma_i$ at $a_i$, for $i=1,2$, both compact discs, such that $\mathcal{K}(\sigma)$ is a flow box, with respect to the foliation $\mathcal{L}$, from $\Sigma_1$ to $\Sigma_2$.
  \item The set
  $$
  |\KK|=\bigcup_{\nu\in\VV(G)}\KK(\nu)\;\;\cup\;
  \bigcup_{\sigma\in E(G)}\KK(\sigma),
  $$
  is a  neighborhood of $|G|$ in $M$. It is called the {\em support of $\KK$}.
\end{enumerate}
\end{definition}
The elements $\KK(\sigma)$ for $\sigma\in E(G)$ are also called {\em tubes}. 
Notice that the transversal sections $\Sigma_1,\Sigma_2$ in Definition~\ref{def:fattening} are the inner and outer frontier of the tube $\KK(\sigma)$, denoted by $\partial\KK(\sigma)^{in}$, $\partial\KK(\sigma)^{out}$, respectively.

Given a fattening $\KK$ of $G$, a {\em refinement of $\KK$} is a fattening $\wt{\KK}$ of $G$ such that $\wt{\KK}(\nu)$ is a refinement of c-nbhd of $\KK(\nu)$ for any $\nu\in V(G)$, and such that $\wt{\KK}(\sigma)$ satisfies $\partial\wt{\KK}(\sigma)^{in}\subset\partial\KK(\sigma)^{in}$ and $\partial\wt{\KK}(\sigma)^{out}\subset\partial\KK(\sigma)^{out}$, for any $\sigma\in E(G)$ (we just say that $\wt{\KK}(\sigma)$ is a refinement of the tube $\KK(\sigma)$). Notice that a refinement $\wt{\KK}(\sigma)$ is completely determined by either its inner or its outer frontier.
On the other hand, if $\KK$ is a fattening of $G$ and $G'<G$ is a subgraph, we denote by $\KK|_{G'}$ the fattening of $G'$ given by the restriction of $\KK$ to $\VV(G') \cup E(G')$.

\begin{definition}\label{def:distinguished}
Let $\KK$ be a fattening of a subgraph $G<\Omega$. Given $p\in V(G)$, we will say that $\KK$ is {\em pre-distinguished at $p$} if the following properties hold:
 \begin{enumerate}[(a)]
 \item If $\sigma,\tau$ are different edges of $G$ adjacent to $p$, then $\KK(\sigma)\cap\KK(\tau)=\emptyset$.
   \item If $p\in S_{tr}$ and $\nu_p^+,\nu_p^-$ are the two local s-components at $p$, then the c-nbhs $\KK(\nu_p^+)$ and $\KK(\nu_p^-)$ have equal base.
   \item For any edge $\sigma\in E(G)$ starting at $p$ and for any $\nu\in\tilde{\alpha}(\sigma)$,    
   the set $\partial\KK(\sigma)^{in}\cap\KK(\nu)$ is a door of $\KK(\nu)$, and we have $\partial\KK(\sigma)^{in}=\bigcup_{\nu\in\tilde{\alpha}(\sigma)}\partial\KK(\sigma)^{in}\cap\KK(\nu)$. Moreover, if $p\in S_{tr}$ and $\sigma\cap W^2_p\ne\emptyset$, then the two doors $\partial\KK(\sigma)^{in}\cap\KK(\nu^\epsilon_p)$, $\epsilon\in\{+,-\}$, have equal base.
  \item  For any edge $\sigma\in E(G)$ ending at $p$ and for any $\nu\in\omega(\sigma)$, the set $\partial\KK(\sigma)^{out}\cap\KK(\nu)$ is a pre-door of $\KK(\nu)$, and we have $\partial\KK(\sigma)^{out}=\bigcup_{\nu\in
  	\tilde{\omega}(\sigma)}\partial\KK(\sigma)^{out}\cap\KK(\nu)$. Moreover, if $p\in S_{tr}$ and $\sigma\cap W^2_p\ne\emptyset$ then the two pre-doors $\partial\KK(\sigma)^{out}\cap\KK(\nu^\epsilon_p)$, $\epsilon\in\{+,-\}$, have equal base.

 \end{enumerate}

The fattening $\KK$ will be called {\em distinguished} at $p$ (or also {\em distinguished at $\nu$} if $\nu$ is a local s-component at $p$) if $\KK$ is pre-distinguished at $p$ and in item (d) we may replace ``pre-door'' with ``door''. 
Finally, $\KK$ is said to be {\em pre-distinguished} (resp. {\em distinguished}) if it is pre-distinguished (resp. distinguished) at every $p\in V(G)$.
\end{definition}

Notice that being distinguised is the same thing as being pre-distinguished both for $\LL$ and for the reverse foliation $-\LL$.

\begin{figure}[h]
	\begin{center}
		\includegraphics[scale=0.65]{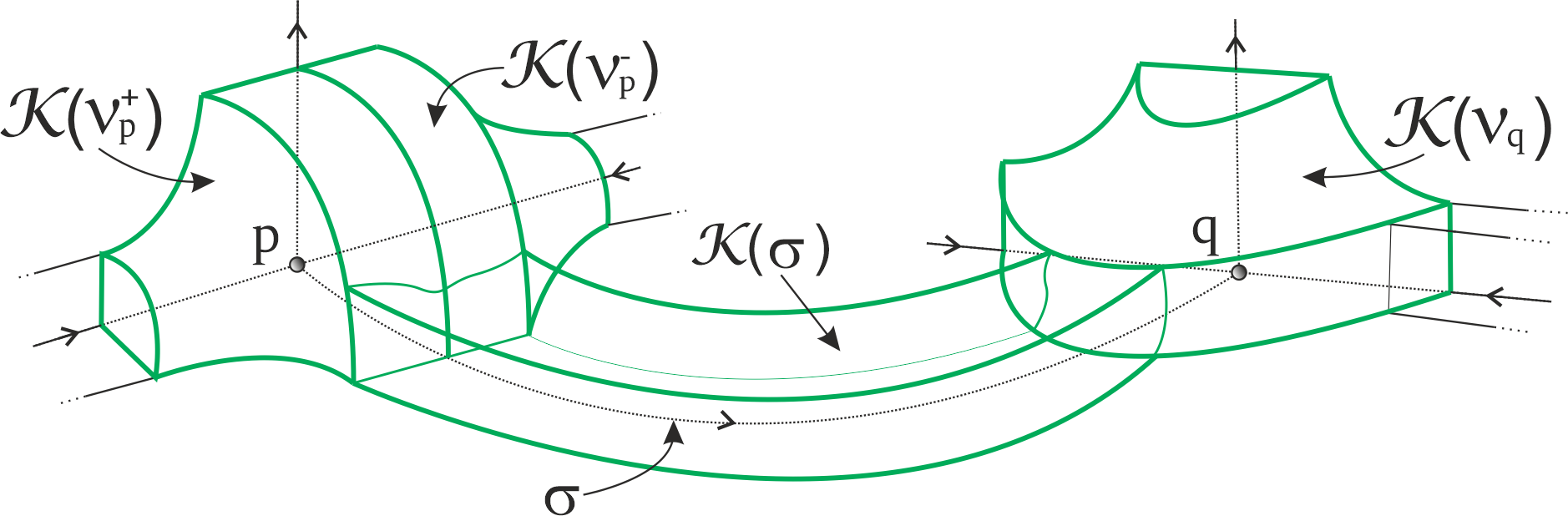}
	\end{center}
	\caption{Distinguished fattening.}
	\label{Fig:Distinguidos1}
\end{figure}

\begin{definition}\label{def:free-doors}
Let $\KK$ be a pre-distinguished fattening of the whole graph $\Omega$ and let $\nu$ be a local s-component at some $p\in V(\Omega)$. Assume that $\KK$ is distinguished at $p\in V(\Omega)$ or that $\partial\KK(\nu)^{out}=F_{\KK(\nu)}$. In this case, an {\em unfree $\KK$-door at $\nu$} (or {\em in $\KK(\nu)$}) is any of the sets of the form $F_{\KK(\nu)}\cap\KK(\sigma)$, where $\sigma$ is an edge satisfying $\sigma\cap\overline{\nu}\ne\emptyset$. Also, a {\em free $\KK$-door at $\nu$} (or {\em in $\KK(\nu)$}) is the closure of a connected component of $F_{\KK(\nu)}\setminus H$, where $H$ is the union of the unfree $\KK$-doors at $\nu$.   
\end{definition}

Notice that both the unfree or free $\KK$-doors at $\nu$ are actually doors of the c-nbhd $\KK(\nu)$. Besides, a free $\KK$-door does not cut the support of the graph $\Omega$. When $\KK$ is distinguished, given a face $\G$ of $\Omega$, there are exactly two free $\KK$-doors intersecting $\G$ and we say that they are {\em associated to} $\G$. One of them, denoted by $\DD^{out}_{\KK,\G}$, is an out-door of $\KK(\tilde{\alpha}(\G))$ and the other one, denoted by $\DD^{in}_{\KK,\G}$, is an in-door at $\KK(\tilde{\omega}(\G))$.   
On the other hand, if $p$ is a transversal saddle and $\KK$ is distinguished at $p$ then there is a unique free $\KK$-door at any of the two local s-components $\nu_p^+,\nu_p^-$ and none of them cuts the divisor. We say also that they are {\em associated to $p$}.
Notice that any free $\KK$-door is either associated to a face or associated to a transversal saddle point.

\strut

In the following statement we summarize the structure of the points on the frontier of a distinguished fattening support according to the definitions in Section~\ref{sec:SHNR-foliations} (see Figure~\ref{Fig:Distinguidos2}). The proof follows straightforwardly by construction. 

\begin{proposition}\label{pro:trans-distinguished}
	Let $\KK$ be a distinguished fattening of $\Omega$ and $\FF$ the family of all free $\KK$-doors.  	
	Then, the transversal frontier of $|\KK|$ is equal to
	$$
	Fr(|\KK|)^\pitchfork=\bigcup_{\DD\in\FF}\DD\;\cup\;\bigcup_{q\in N} L_{\KK(\nu_q)},
	$$
		and this set does not contain points of type e-e nor i-i (relatively to $|\KK|$). Consider now $p\in V(\Omega)$, $\nu$ a local s-component at $p$ and $\DD$ a  free $\KK$-door  in $\KK(\nu)$. Assume for instance that  $\DD\subset\partial\KK(\nu)^{in}$. Let $J_1,J_2$ be the doorjambs of $\DD$ with extremities $h(J_i):=J_i\cap h(\DD)$, $b(J_i):=J_i\cap b(\DD)$, for $i=1,2$. Then we have the following types, relatively to $|\KK|$, of points in $Fr(|\KK|)^\pitchfork$: 
	\begin{enumerate}[(i)]
		\item If $p\in S$, then  $$int(\DD)\cup\dot{b(\DD)},\;\; \dot{h(\DD)},\;\;\dot{J_1}\cup\dot{J_2}\cup\{b(J_1),b(J_2)\},\;\;\{h(J_1),h(J_2)\}
		$$ are, respectively, the sets of points in $\DD$ of type e-i, e-t, t-i and t-t. 
		\item If $p\in N$ and $p$ is a three-dimensional saddle, then 
		the interior (resp. the boundary) of $ L_{\KK(\nu)}$ is the set of points in $ L_{\KK(\nu)}$ of type i-e (resp. t-e). 
		\item If $p\in N$ and $p$ is not a three-dimensional saddle, then any point of $int(L_{\KK(\nu)})$ is of type i-e whereas the sets
		$$
		\DD\setminus(J_1\cup J_2),\;\;J_1\cup J_2
		$$
		are, respectively, the sets of points in $\DD$ of type e-i and t-i.
	\end{enumerate}
The situation in completely analogous in case $\DD\subset\partial\KK(\nu)^{out}$.
\end{proposition}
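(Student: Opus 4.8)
The plan is to prove Proposition~\ref{pro:trans-distinguished} by a purely local analysis at each point of $Fr(|\KK|)^\pitchfork$, exploiting the fact that a distinguished fattening is assembled from chimney neighborhoods and tubes whose transversal frontiers have already been described. First I would establish the global decomposition of $Fr(|\KK|)^\pitchfork$. Since $|\KK|$ is the union of the c-nbhds $\KK(\nu)$ and the tubes $\KK(\sigma)$, a point $x\in Fr(|\KK|)$ is transversal only if it lies in the transversal frontier of one of these pieces and is not ``absorbed'' into the interior by a neighbouring piece. The tubes $\KK(\sigma)$ are flow-boxes glued to the c-nbhds precisely along the unfree $\KK$-doors (by the distinguishedness conditions (c),(d) of Definition~\ref{def:distinguished}), so the inner/outer transversal frontiers $\Sigma_1,\Sigma_2$ of a tube are matched with doors of chimneys and hence become interior to $|\KK|$; what survives from each $\partial\KK(\nu)^\pitchfork=F_{\KK(\nu)}\cup L_{\KK(\nu)}$ is exactly the lid $L_{\KK(\nu)}$ (only when $\nu=\nu_q$ for $q\in N$, since for a three-dimensional saddle one of $F,L$ is an out-piece and the other an in-piece, and for a transversal saddle the lid is glued to the symmetric chimney on the other side via condition (b)) together with the part of $F_{\KK(\nu)}$ not covered by tubes, i.e.\ the union of free $\KK$-doors. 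This yields the displayed formula; I would also note that the two transversal frontiers being built from flow-boxes, no point can be of type e-e or i-i, since such a point would force $Fr(|\KK|)$ to be locally disjoint from every leaf through it, impossible near a door or a lid where the leaf genuinely crosses.

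Next I would carry out the type computation on a single free door $\DD\subset\partial\KK(\nu)^{in}$, which is the heart of the statement. Here I invoke the description of $\partial\CC^\pitchfork$ for a c-nbhd given just after Definition~\ref{def:chimney}: points in the interior of $\partial\CC^{in}$ are of type e-i relatively to $\CC$, points on its boundary are of type e-t, and dually for $\partial\CC^{out}$. The free door $\DD$ inherits these types relative to $\KK(\nu)$; the passage from ``relative to $\KK(\nu)$'' to ``relative to $|\KK|$'' is where I must check that adjoining the tubes and the other chimneys does not change the side on which a leaf lies. Since $\DD$ is a \emph{free} door it does not meet $|\Omega|$, so in a neighbourhood of $\DD$ the fattening support $|\KK|$ coincides with $\KK(\nu)$ (a free door, by Definition~\ref{def:free-doors}, lies in a connected component of $F_{\KK(\nu)}$ away from the unfree doors where tubes attach), hence the relative types coincide. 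It then remains to read off, on $\DD$ viewed inside $F_{\KK(\nu)}$, which stratum of $\partial\DD$ corresponds to which relative position: $\dot{b(\DD)}$ lies in $b(F_{\KK(\nu)})=\KK(\nu)\cap W^2_p$, which is locally invariant, so the ``$D$-side'' (really $\wt D$-side) of the leaf stays in $Fr$ giving type t-i; the handrail $\dot h(\DD)$ lies in $h(F_{\KK(\nu)})$, the free upper boundary of the fence, giving e-t; the doorjambs $\dot J_i$ lie in the components $D^*_p\cap w(B)$ of the genuine divisor, again locally invariant, giving t-i; and the four corner points $b(J_i)$ (on $W^2_p\cap D$, doubly invariant, hence type t-i after orientation bookkeeping) and $h(J_i)$ (where handrail meets doorjamb, type t-t) are handled by the explicit corner structure of the wall described before Definition~\ref{def:fence}.

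For the node cases (ii) and (iii) I would argue similarly but using the lid. When $p\in N$ is a three-dimensional saddle, $L_{\KK(\nu_p)}=\KK(\nu_p)\cap t(B)$ is the transversal slice hit by every leaf entering through the wall, and by the Hartman--Grobman normal form the leaf crosses from inside $\KK(\nu_p)$ to the exterior of $M$ (or the reverse, depending on whether $W^2_p$ is stable or unstable — but for a $D$-node that is not a three-dimensional saddle $p$ is a genuine attractor/repeller, which is case (iii)); interior points of the lid are thus i-e and boundary points, lying on the handrail $h(F)$ where the leaf becomes tangent to $Fr$, are t-e. Case (iii), where $p$ is a three-dimensional attractor or repeller, uses the alternative definition of c-nbhd: here $\partial\CC^\pitchfork=F\cup L$ with $L$ a disc and $F$ swept by the map $\vp$; the free door $\DD\subset F$ has no fixed doorjamb and $\dot{b(\DD)}$ together with the rest of $\DD\setminus(J_1\cup J_2)$ gives e-i while the doorjambs $J_1\cup J_2\subset D$ give t-i, and the lid interior is i-e exactly as before.

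The main obstacle I expect is the bookkeeping at the corner points and along the doorjambs: making precise that a leaf through a point of $\dot J_i\subset D^*_p$ has one local branch in $Fr(|\KK|)$ (the branch running inside the divisor, which is locally invariant) and the other branch genuinely entering $\inf(|\KK|)$, and that at $b(J_i)$ — lying on $W^2_p\cap D$, which is invariant for both ``components'' of $\wt D_p$ — the orientation of the foliation still forces type t-i rather than t-t, whereas at $h(J_i)$ the handrail being a free (non-invariant) curve forces t-t. This requires carefully tracking, in the Hartman--Grobman coordinates $(x,y,z)$ with $W^2_p=\{z=0\}$, $W^1_p=\{x=y=0\}$, which of the two half-leaves $Y^-_a,Y^+_a$ lands where, and checking consistency with the conventions $\partial\CC^{in}=F_\CC$ iff $W^2_p$ is stable. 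Once this local dictionary is set up at one representative configuration, the remaining cases follow by the symmetry $\LL\leftrightarrow-\LL$ noted after Definition~\ref{def:distinguished} and by the already-recorded fact that ``distinguished'' is stable under this reversal, which also disposes of the case $\DD\subset\partial\KK(\nu)^{out}$ announced at the end of the statement. $\square$
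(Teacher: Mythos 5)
Your overall strategy — a pointwise type computation carried out piecewise on the chimneys and tubes, matching the paper's remark that the statement follows "straightforwardly by construction" — is the right one, and your global decomposition of $Fr(|\KK|)^\pitchfork$ and the analysis of the lids in (ii)--(iii) are fine. But there is a concrete error in the type computation on a free door, which is the heart of the proposition.

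You assign type t-i to $\dot{b(\DD)}$, arguing that since $\dot{b(\DD)}\subset W^2_p$ is locally invariant, the outgoing branch of the leaf "stays in $Fr$". The proposition, however, assigns type e-i to $int(\DD)\cup\dot{b(\DD)}$, and this is the correct type: invariance of $W^2_p$ only guarantees that the outgoing half-leaf at $a\in\dot{b(\DD)}$ remains in $W^2_p$, not that it remains in $Fr(|\KK|)$. As soon as that half-leaf leaves the cylinder $B$ it leaves $\KK(\nu)$, and because $\DD$ is a \emph{free} door it is disjoint from the unfree doors where the tubes attach, so locally near $a$ we have $|\KK|\cap W^2_p = \CC\cap W^2_p$; hence the outgoing half-leaf immediately enters $ext(|\KK|)$. (For a tangential saddle the point $a$ lies in a face $\G\subset D$, and $|\KK|$ is a neighbourhood of $|\Omega|$, not of $\G$; for a transversal saddle the two chimneys $\KK(\nu_p^\pm)$ share the same cylinder base, so again the leaf exits both.) The type is therefore e-i, exactly as for $int(\DD)$.

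The reason you fail to distinguish $\dot{b(\DD)}$ from $\dot{J_i}$ is a second, related slip: you write that "the doorjambs $\dot J_i$ lie in the components $D^*_p\cap w(B)$ of the genuine divisor", but for a \emph{free} door that is not so. The fixed doorjambs of $F_{\KK(\nu)}$ (the ones in $D^*_p$) are absorbed by the unfree doors, and a free door's doorjambs are the \emph{unfixed} doorjambs shared with the neighbouring unfree doors. What makes $\dot J_i$ of type t-i is therefore not invariance of $D^*_p$, but the fact that the outgoing half-leaf at a point of $\dot J_i$ runs along the tangential part of $\partial\KK(\sigma)$ for the tube $\KK(\sigma)$ glued at the adjacent unfree door, and that tangential part lies in $Fr(|\KK|)$. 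No such tube boundary is available at an interior point of $b(\DD)$, which is why the two loci get different types. Fixing these two points — the actual location of the doorjambs, and the resulting e-i type on $\dot{b(\DD)}$ — would bring the proof in line with the statement.
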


 \begin{figure}[h]
	\begin{center}
		\includegraphics[scale=0.65]{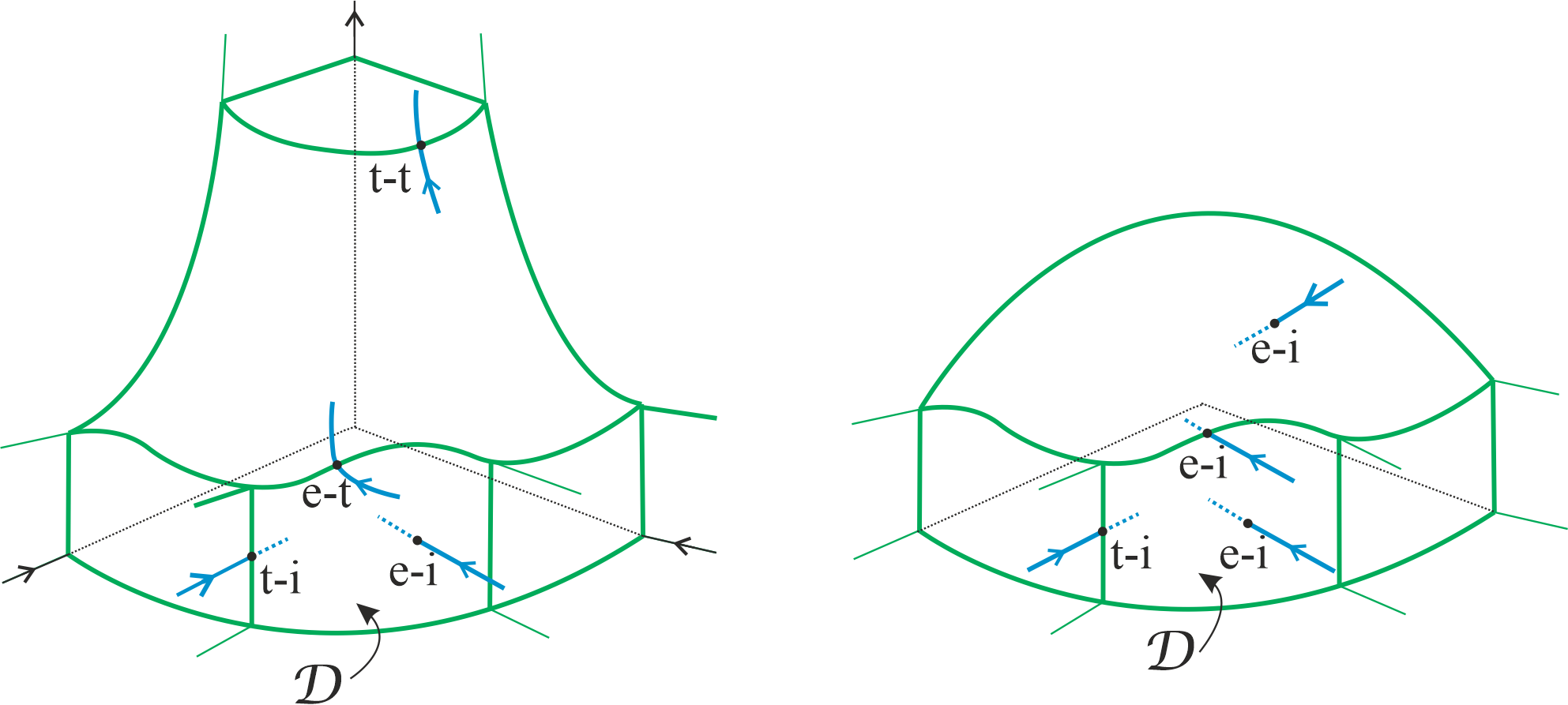}
	\end{center}
	\caption{Different type points.}
	\label{Fig:Distinguidos2}
\end{figure}

\subsection{Obtaining distinguished fattenings}

It is clear that the family of supports of fattenings of $\Omega$ forms a base of neighborhoods of $|\Omega|$. Our main objective in this subsection is to prove that this is also true for the family of supports of distinguished fattenings. Let us first introduce a definition and a lemma whose proof is direct.
\begin{definition}
	Let $\nu\in\VV(\Omega)$ and let $\CC$ be a c-nbhd in $\nu$. A {\em system of entrances (resp. sources)} of $\CC$ is a finite collection $\mathcal{E}=\{\DD_1,\dots,\DD_r\}$ of mutually compatible framed pre-doors in $\partial\CC^{in}$ (resp. $\partial\CC^{out}$). The system $\mathcal{E}$ will be called {\em complete} if the set  $\CC'=\Sat_\CC(\bigcup_{i=1}^r\DD_i)$ is a refinement of $\CC$. In this last case, we say that $\CC'$ is the c-nbhd {\em associated} to the complete system $\EE$.
\end{definition}

Notice that, if some $\DD\in\EE$ is contained in $L_\CC$, then $\EE=\{\DD\}$ and $\EE$ is complete. On the other hand, if  $\EE=\{\DD_1,\dots,\DD_r\}$ and any $\DD_i\subset F_\CC$, then $\EE$ is complete if, and only if, $\bigcup_{i=1}^rb(\DD_i)=b(\CC)$. Finally, if $\EE$ is a complete system of entrances and $\CC'$ is the associated c-nbhd, then each element of $\EE$ is a door of $\CC'$.

\begin{lemma}\label{lm:cled}
	Any system of entrances $\EE=\{\DD_1,\dots,\DD_r\}$ of a c-nbhd $\CC$ can be extended to a complete one $\wt{\EE}\supset\EE$. As a consequence, there exists a refinement $\CC'<\CC$ for which any $\DD_i$ is a door of $\CC'$.
\end{lemma}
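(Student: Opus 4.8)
The plan is to reduce everything to a one–dimensional covering problem along the base of the fence. First, if some $\DD_i$ lies in the lid $L_\CC$, then, as observed right after the definition, $\EE$ is forced to be the singleton $\{\DD_i\}$ and is already complete, so we may take $\wt\EE=\EE$. Hence from now on I assume each $\DD_i$ is contained in the fence $F_\CC$. Then each base $b(\DD_i)$ is a non-trivial closed subinterval of $b(\CC)(=b(F_\CC))$, which is a circle when $p\in N$ (or when $p$ is not a three–dimensional saddle) and a closed interval otherwise. The compatibility hypothesis forces any two of these bases to meet in at most one point: if $\DD_i\cap\DD_j\neq\emptyset$ it is a common unfixed doorjamb $J$, and by condition (iv) in the definition of pre-door $J$ meets each of $b(\DD_i),b(\DD_j)$ only in the single point $b(J)$. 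Consequently $B_0:=\bigcup_i b(\DD_i)$ is a finite union of closed subintervals of $b(\CC)$ with pairwise disjoint interiors, and the closures of the finitely many connected components of $b(\CC)\setminus B_0$ are closed intervals $K_1,\dots,K_s$.

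Next I would fill in the gaps. Discard the $K_l$ that are single points. Since $\partial\CC\cap|\Omega|$ is finite, I may subdivide each remaining $K_l$ into finitely many non-trivial closed subintervals $K'_a$, consecutive ones sharing an endpoint, with the subdivision points chosen at the points of $|\Omega|$ lying in $K_l$ (if any), so that every $K'_a$ contains at most one point of $|\Omega|$ and, when it contains one, that point is interior to $K'_a$. For each $K'_a$ I then construct a framed pre-door $\DD'_a\subset F_\CC$ with base $K'_a$: take a sufficiently thin semi-analytic closed collar of $K'_a$ inside the semi-analytic surface $F_\CC$, staying near $b(F_\CC)$, and frame it following the framing rules — the boundary arc opposite to $K'_a$ (completed to contain $\DD'_a\cap h(F_\CC)$ whenever the collar meets $h(F_\CC)$) as handrail, and the two remaining boundary arcs as doorjambs, the one meeting a doorjamb of $F_\CC$ being declared the fixed doorjamb if $K'_a$ reaches an endpoint of $b(F_\CC)$. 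Shrinking these collars, I can arrange that $\DD'_a$ meets each $\DD_i$ and each $\DD'_{a'}$ only where the corresponding bases share an endpoint, and there make the two discs share the doorjamb emanating from that endpoint; thus $\wt\EE:=\EE\cup\{\DD'_1,\dots,\DD'_m\}$ is a system of entrances of $\CC$ extending $\EE$. By construction $\bigcup_{\DD\in\wt\EE}b(\DD)=B_0\cup\bigcup_a K'_a=b(\CC)$, so $\wt\EE$ is complete by the criterion recalled just before the statement, and letting $\CC':=\Sat_\CC\big(\bigcup_{\DD\in\wt\EE}\DD\big)$ be the associated c–nbhd, the remark immediately preceding the lemma gives that each element of $\wt\EE$, in particular each $\DD_i$, is a door of $\CC'$; this is the ``as a consequence'' part.

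The only genuine work is the explicit construction of the filler pre-doors $\DD'_a$ as honest semi-analytic discs carrying the prescribed combinatorial data (base, handrail, the two doorjambs with the fixed/unfixed distinction, possibly a center), compatibly glued to the data already fixed on the $\DD_i$ along shared doorjambs. This is a routine collar construction in a semi-analytic surface with corners; the single point that needs a little care is the placement of the subdivision points of the gaps so that the ``at most one center, placed in the interior of $b(\DD)\cup J$'' conditions (items (i) and (v) of the definition of pre-door) are met — which is exactly where one uses that $\partial\CC\cap|\Omega|$ is finite.
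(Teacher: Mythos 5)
The paper does not actually give a proof of this lemma (it is introduced as one ``whose proof is direct''), so there is nothing to compare against; your one-dimensional covering of the base $b(\CC)$ by the existing bases $b(\DD_i)$ and filler collars is the natural and essentially correct way to argue, and your reduction to checking $\bigcup b(\DD)=b(\CC)$ correctly invokes the characterization of completeness recalled just before the statement.

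There is, however, one small but genuine slip in the subdivision step. You propose to subdivide each gap $K_l$ ``at the points of $|\Omega|$ lying in $K_l$''. Doing so places those points at the \emph{endpoints} of the resulting pieces $K'_a$, not in their interiors, which is the opposite of what you then claim. This matters: if $a\in|\Omega|$ is a shared endpoint of two consecutive pieces $K'_{a_1},K'_{a_2}$, then both filler discs $\DD'_{a_1},\DD'_{a_2}$ contain $a$ and would each have $a$ as center; yet $a$ sits at the end of $b(\DD'_{a_i})$, glued along an \emph{unfixed} doorjamb of its neighbour, so condition (v) of Definition~\ref{def:pre-door} (the center must lie in the interior of $b(\DD)\cup J$ with $J$ the \emph{fixed} doorjamb) fails for both. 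The remedy is trivial: choose the subdivision points strictly \emph{between} consecutive points of $|\Omega|\cap K_l$ (and off $|\Omega|$), so each point of $|\Omega|$ lands in the interior of exactly one $K'_a$. A related cosmetic point: when $p$ is a $D$-node (so $b(\CC)$ is a circle without fence doorjambs) and $\EE=\emptyset$, the ``complement'' $b(\CC)\setminus B_0$ is the whole circle, not an interval, so the first subdivision must also be into arcs; this is immediate but your sentence as written does not cover it. With these two corrections the collar construction produces a complete system $\wt\EE\supset\EE$, and the final remark preceding the lemma yields the ``as a consequence'' clause.
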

\begin{theorem}\label{th:distinguished}
Let $\mathcal{M}=
(M,D,\LL)$ be a non s-resonant of Morse-Smale type HAFVSD and  $\Omega$ its associated graph.
 Then given any fattening $\KK$ of $\Omega$, there exists a distinguished fattening $\wt{\KK}$ of $\Omega$ such that $|\wt{\KK}|\subset|\KK|$.
\end{theorem}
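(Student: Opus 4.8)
The plan is to construct the pieces of $\wt\KK$ one vertex at a time, going through the vertices of $\Omega$ in order of \emph{decreasing length} (equivalently, peeling off the filtration (\ref{eq:filtration-graph}) from the top). The point of this order is that every edge $\sigma=[p,q]$ has $l(q)<l(p)$, so when a vertex is reached all the tubes \emph{ending} at it have already been placed, with their outer ends laid down inside the chimney prescribed by $\KK$ at that vertex, while the tubes \emph{starting} at it are placed precisely at that step; consequently no tube is ever modified twice and no cascade of refinements occurs. Fix once and for all realizations $W^2_p$ at the transversal saddles, and keep Lemma~\ref{lm:cled} at hand.

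The vertex step. Suppose that every vertex $r$ with $l(r)>l(p)$ has already been treated, producing: a c-nbhd $\wt\KK(\nu_r)\subset|\KK(\nu_r)|$ for each local s-component $\nu_r$ at $r$; and, for each edge $\tau=[r,q']$ starting at such an $r$, a tube $\wt\KK(\tau)\subset|\KK(\tau)|$ whose inner frontier is a door of $\wt\KK(\nu_r)$ lying on $\partial\wt\KK(\nu_r)^{out}$, and whose outer frontier is a thin, compatible framed pre-door of the still-unrefined chimney prescribed by $\KK$ at $q'$, placed next to the germ of $\tau$ at $q'$. Now treat $p$. For each local s-component $\nu$ at $p$, the outer frontiers $\partial\wt\KK(\tau)^{out}\cap\KK(\nu)$, as $\tau$ ranges over the edges ending at $p$ with $\tau\cap\overline\nu\ne\emptyset$, form a system of entrances of $\KK(\nu)$ (they are mutually compatible, being thin and attached near the distinct germs at $p$ of those edges); by Lemma~\ref{lm:cled} we complete it and let $\wt\KK(\nu)$ be the associated refinement of $\KK(\nu)$, so that each such outer frontier becomes a door of $\wt\KK(\nu)$ — this is Definition~\ref{def:distinguished}(d) with ``door'' at $p$. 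When $p\in S_{tr}$ we carry this out for $\nu_p^+$ and $\nu_p^-$ simultaneously using the same cylinder, and we will have chosen, at the earlier steps, the pre-doors $\partial\wt\KK(\tau)^{out}\cap\KK(\nu_p^\pm)$ of an edge $\tau$ meeting $W^2_p$ with equal base; this yields (b) and the transversal-saddle clause of (d). Finally, for each edge $\sigma=[p,q]$ starting at $p$ we pick a door $\DD_\sigma$ of $\wt\KK(\nu)$ on $\partial\wt\KK(\nu)^{out}$ (for the appropriate $\nu\in\tilde{\alpha}(\sigma)$, with equal bases on $\nu_p^\pm$ when $\sigma$ meets $W^2_p$) and let $\wt\KK(\sigma)$ be a short flow box from $\DD_\sigma$ to a thin disc sitting inside $|\KK(\sigma)|\cap|\KK(\nu_q)|$ near the germ of $\sigma$ at $q$; shrinking these flow boxes makes the tubes incident to $p$ pairwise disjoint and disjoint from the ones already placed, which gives (a), and (c) at $p$ holds by construction. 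The top vertices have no incoming edges (the first half of the step is empty), the vertices of $N^a$ have no outgoing edges (the last half is empty), and the degenerate case $\Omega=\{p,q\}$ of Lemma~\ref{lm:graph}(iv) is immediate.

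Running the vertex step through all of $V(\Omega)$ produces a fattening $\wt\KK$ of $\Omega$, distinguished at every vertex and with $|\wt\KK|\subset|\KK|$, which is the theorem. The real work lies in the vertex step, and it is combinatorial–geometric bookkeeping rather than deep dynamics: one must check that the outer ends of the incoming tubes can genuinely be arranged — already when their source vertices are treated — as \emph{compatible framed pre-doors} of the prescribed chimneys, with their bases running along the traces of the edges on $D$, their doorjambs along the (extended) skeleton and their handrails off $D\cup W^2_p$, so that Lemma~\ref{lm:cled} applies and the refinement it produces is a c-nbhd of exactly the required shape; and that at a transversal saddle all of this can be done coherently for the two local s-components under the equal-base normalizations of Definition~\ref{def:distinguished}. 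I expect this coordination at transversal saddles, together with the fussy matching of framings, to be the main obstacle. Organizing the verification is made easier by the observation (recorded after Definition~\ref{def:distinguished}) that being distinguished is the same as being pre-distinguished for $\LL$ and for $-\LL$: the construction above, read for $\LL$, automatically makes the in-ends of tubes doors and the out-ends pre-doors, i.e.\ it is pre-distinguished for $\LL$; the equal-base choices, and the fact that each outgoing-edge door $\DD_\sigma$ is chosen on an already-fixed fence, are precisely what make the out-ends doors as well, i.e.\ make $\wt\KK$ pre-distinguished for $-\LL$, hence distinguished.
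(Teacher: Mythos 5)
Your plan is, in essence, the paper's own strategy: the paper proves Theorem~\ref{th:distinguished} by first constructing a pre-distinguished fattening (a bottom-up pass on the filtration (\ref{eq:filtration-graph})) and then applying Proposition~\ref{pro:getting-distinguished}, whose proof is exactly the top-down recursion on vertex length that you describe — Lemma~\ref{lm:cled} at each vertex, refined outgoing tubes whose outer ends become the entrance systems for the lower-length vertices. Your single-pass organization is a cosmetic reshuffling of that two-step structure; the paper separates the pre-distinguished step largely so that Proposition~\ref{pro:getting-distinguished} can be stated with prescribed entrance systems and re-used in Section~\ref{sec:good-saturations}, not because the theorem itself requires two passes.

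The one place where you wave rather than argue — and you flag it yourself as the ``main obstacle'' — is the coordination at transversal saddles, and this is a genuine step, not mere bookkeeping. When placing the tube along an edge $\sigma$ from its source $\nu$ toward a transversal saddle $q$ with $\sigma\cap W^2_q\ne\emptyset$, you must choose the inner door $\DD_\sigma\subset\partial\wt\KK(\nu)^{out}$ so that its forward saturation meets $W^2_q$ in a closed interval, making $\partial\wt\KK(\sigma)^{out}$ decompose into two pre-doors in $\KK(\nu^+_q)$ and $\KK(\nu^-_q)$ sharing a base. The paper handles this in case~(b) of the proof of Proposition~\ref{pro:getting-distinguished}: the Morse--Smale hypothesis forces $\sigma$ to cross $\CC_\nu$ through the fence (hence $\partial\CC_\nu^{out}=F_{\CC_\nu}$), and one then exploits the fact that $\Sat_{|\KK|}(W^2_q)\cap\partial\KK(\nu)^{out}$ is an \emph{analytic} curve to choose $\partial\DD_\sigma$ meeting it only at the center $c_\sigma$ and at one handrail point. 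Absent that analytic-curve argument (or an equivalent), the equal-base condition in Definition~\ref{def:distinguished}(c)–(d) is not automatic, so you should supply it rather than defer it.
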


In the proof of Theorem~\ref{th:distinguished}, we proceed recursively on the components of the filtration (\ref{eq:filtration-graph}) of the graph $\Omega$ defined in Section~\ref{sec:SHNR-foliations}. Let us notice that we may assume that $\KK$ satisfies that for any $p\in S_{tr}$, the two c-nbds $\KK(\nu^+_p)$ and $\KK(\nu^-_p)$ have the same base.

{\em First step.- Getting pre-distinguished fattenings.} We first show that the set of pre-distinguished fattenings is a base of neighborhoods of $|\Omega|$. More precisely, given a fattening  $\KK$ of $\Omega$, there exists a pre-distinguished fattening $\KK'$ such that $|\KK'|\subset|\KK|$.
To do this, we use the filtration (\ref{eq:filtration-graph}), where $l:=l(\Omega)$ is the length of $\Omega$, and we construct, by recurrence on $j=0,\dots,l$, a pre-distinguished fattening $\KK^j$ of $\Omega^j$ such that $|\KK^j|\subset|\KK|_{\Omega^j}|$. At the end, the desired pre-distinguished fattening is given by $\KK':=\KK^l$. 

For $j=0$ we just take $\KK^0=\KK|_{\Omega^0}$. 

Assume  that $\KK^{j-1}$ is constructed for $j>0$. Given an edge $\sigma\in E(\Omega^j)\setminus E(\Omega^{j-1})$, we consider a tube $\TT_\sigma\subset\KK(\sigma)$ from a disc $A(\sigma)$ to a disc $B(\sigma)$ such that, if $\sigma=[p,q]$, they satisfy the following conditions: 
\begin{enumerate}[(i)]
	\item If $\wt{\alpha}(\sigma)=\{\nu\}$, then $A(\sigma)$ is a pre-door of $\KK(\nu)$ contained in $\partial\KK(\nu)^{out}$ with center the point $\partial\KK(\nu)^{out}\cap \sigma$.
	\item If $p\in S_{tr}$ and $\wt{\alpha}(\sigma)=\{\nu^+_p,\nu^-_p\}$, then $A(\sigma)$ is the union of two pre-doors with equal base, one of them in $\partial \KK(\nu_{p}^+)^{out}$ and the other one in $\partial \KK(\nu_{p}^-)^{out}$, both with equal center (at $\partial \KK(\nu_{p}^+)^{out}\cap\sigma$).
	\item If $\tilde{\omega}(\sigma)=\mu$ then $B(\sigma)$ is a pre-door of $\KK(\mu)$ contained in $\partial\KK(\mu)^{in}$.
	\item If $q\in S_{tr}$ and $\wt{\omega}(\sigma)=\{\nu^+_q,\nu^-_q\}$, then $B(\sigma)$ is the union of two pre-doors with equal base and equal center, one of them in $\partial \KK(\nu_{q}^+)^{in}$ and the other one in $\partial \KK(\nu_{q}^-)^{in}$.
\end{enumerate}
Notice that, in the situation (ii) (resp. in (iv)) we just have to guarantee that the boundary of $A(\sigma)$ (resp. $B(\sigma)$) cuts only once $W^2_p$ (resp. $W^2_q$). Observe also that both (ii), (iv) do not occur simultaneously by the Morse-Smale condition\footnote{In any case, there is no particular problem here to guarantee both conditions (ii) and (iv), because $W^2_p$, $W^2_q$ are analytic manifolds.}.
Moreover, the tubes $\TT_\sigma$ for $\sigma\in E(\Omega^j)\setminus E(\Omega^{j-1})$ should be chosen small enough to be mutually disjoint. 
We put $\KK^{j-1}(\sigma):=\TT_\sigma$ for any such $\sigma$.

  Now, if $\nu\in\VV(\Omega^j)\setminus\VV(\Omega^{j-1})$, the family $\mathcal{E}_{\nu}=\{A(\sigma)\cap\KK(\nu)\,:\,\nu\in\wt{\alpha}(\sigma)\}$  is a system of sources of $\KK(\nu)$. Applying Lemma~\ref{lm:cled}, there exists a refinement  $\CC_\nu<\KK(\nu)$ such that each element of $\EE_\nu$ is a door of $\CC_\nu$. Put $\KK^j(\nu):=\CC_\nu$. Summarizing, we have defined $\KK^j$ at the local s-components at vertices of $\Omega^j$ that are not in $\Omega^{j-1}$, and at the edges starting at them. We extend these values to $\Omega^{j-1}<\Omega^j$ by putting $\KK^j|_{\Omega^{j-1}}=\KK^{j-1}$ so that we get a fattening on $\Omega^j$, using (\ref{eq:edges-j}). By construction and recurrence, $\KK^j$ is pre-distinguished and $|\KK^j|\subset|\KK|_{\Omega^j}|$, as wanted.

\strut

{\em Second step.- Getting distinguished fattenings.} To finish the proof of Theorem~\ref{th:distinguished}, it is enough to prove that any pre-distinguished fattening has a distinguished refinement.
This will be a consequence of the following more general result, which will be useful to us later. It is stated for subgraphs of $\Omega$ and asserts that the refinement $\wt{\KK}$ can be chosen to preserve some prescribed systems of entrances.

\begin{proposition}\label{pro:getting-distinguished}
Let $G$ be a subgraph of $\Omega$ and let $\KK$ be a pre-distinguished fattening of $G$. Suppose that for any $\nu\in\VV(G)$ there exists a (possible empty) system of entrances $\mathcal{E}_\nu$ of $\KK(\nu)$ satisfying the following property
	\begin{equation}\label{eq:condition-E-nu}
	\DD\cap\partial\KK(\sigma)^{out}=\emptyset\;\;\mbox{ for any }\DD\in\mathcal{E}_\nu\mbox{ and for any }\sigma\in E(G)\cap\tilde{\omega}^{-1}(\nu).
	\end{equation}
	Then there exists a distinguished refinement $\wt{\KK}$ of $\KK$  such that, for any $\nu\in \mathcal{V}(G)$ and for any $\DD\in\mathcal{E}_\nu$, $\DD$ is a door of $\wt{\KK}(\nu)$.
\end{proposition}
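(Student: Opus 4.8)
The plan is to build $\wt\KK$ by a single pass over the vertices of $G$, processing them in order of \emph{decreasing} length and, at each vertex, refining the c-nbhd sitting there and re-attaching the tubes that start at it. Since $G$ is a subgraph of the acyclic graph $\Omega$ it is itself acyclic, hence carries a length function taking finitely many values, and a vertex of maximal length has no edge of $G$ ending at it. The point that makes one pass suffice is the following asymmetry: refining the c-nbhd $\KK(\nu)$ at a vertex $p$ forces a re-choice only of the tubes over edges \emph{starting} at $p$ --- the tubes over edges ending at $p$ are left as they are, since their outer frontiers belong to the complete system one saturates and therefore survive inside the refined c-nbhd as doors --- and the edges starting at $p$ reach vertices of strictly smaller length, not yet processed. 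Thus the re-refinements only cascade ``downwards'' in the length filtration and the procedure stops.

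Concretely, suppose all vertices of length $>j$ have been processed, let $p$ have length $j$ and let $\nu$ be a local s-component at $p$ (if $p\in S_{tr}$ the two s-components are treated together, see below). For each edge $\sigma\in E(G)$ ending at $p$ with $\nu\in\tilde\omega(\sigma)$, the source $\alpha(\sigma)$ has length $\ge j+1$, so the tube over $\sigma$ has already received its final form and $\DD_\sigma:=\partial\KK(\sigma)^{out}\cap\KK(\nu)$ is a fixed pre-door of $\KK(\nu)$ contained in $\partial\KK(\nu)^{in}$. By pre-distinguishedness condition (a) the tubes over distinct edges adjacent to $p$ are disjoint, so the $\DD_\sigma$ are pairwise disjoint; by hypothesis~(\ref{eq:condition-E-nu}) every $\DD\in\mathcal{E}_\nu$ is disjoint from each $\DD_\sigma$; and $\mathcal{E}_\nu$ is a system of entrances of $\KK(\nu)$ by hypothesis. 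Hence $\mathcal{E}_\nu\cup\{\DD_\sigma\}_\sigma$ is again a system of entrances of $\KK(\nu)$, lying in $\partial\KK(\nu)^{in}$. Applying Lemma~\ref{lm:cled}, we extend it to a complete system and take $\wt\KK(\nu)$ to be the associated refinement of $\KK(\nu)$; then each $\DD_\sigma$ and each $\DD\in\mathcal{E}_\nu$ is a door of $\wt\KK(\nu)$, which gives condition (d) at $p$ and the conclusion of the proposition at $\nu$. Finally, for each edge $\tau=[p,q']\in E(G)$ starting at $p$, we pick a door $\DD_\tau$ of $\wt\KK(\nu)$ contained in $\partial\KK(\tau)^{in}$ with center the point where $\tau$ meets $\partial\wt\KK(\nu)$ --- enlarging the complete system above if necessary so that such a door is available --- and let $\wt\KK(\tau)$ be the sub-flow-box of $\KK(\tau)$ whose inner frontier is $\DD_\tau$. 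This restores condition (c) at $p$, and since the flow preserves $D$ the new outer frontier $\partial\wt\KK(\tau)^{out}\cap\KK(\nu_{q'})$ is again a pre-door of $\KK(\nu_{q'})$, to be processed when its turn comes.

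When $p\in S_{tr}$ one handles $\nu^+_p$ and $\nu^-_p$ simultaneously: using condition (b) (equal bases) and the analyticity of $W^2_p$, the refinements and the choices of the doors $\DD_\tau$ can be performed symmetrically across $W^2_p$, so that the ``equal base'' clauses in conditions (b), (c), (d) survive; the Morse-Smale hypothesis prevents an incoming and an outgoing edge through $W^2_p$ from occurring at the same time. Once every vertex has been processed, one checks that the resulting $\wt\KK$ is a fattening of $G$ refining $\KK$: each refined tube joins the refined c-nbhds at its two ends (its inner frontier is a door of one of them and, once the far endpoint has been processed, its outer frontier is a door of the other), so $|\wt\KK|$ is still a neighborhood of $|G|$; moreover $\wt\KK$ is pre-distinguished (condition (a) from disjointness of the refinements, (b) and (c) by construction, (d) because each $\DD_\sigma$ is a pre-door) and, by the above, distinguished, with every prescribed $\DD\in\mathcal{E}_\nu$ a door of $\wt\KK(\nu)$.

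The main obstacle I anticipate is the verification, at each vertex, that the incoming pre-doors together with the prescribed $\mathcal{E}_\nu$ really do assemble into a system of entrances to which Lemma~\ref{lm:cled} applies --- this is exactly what conditions (a), (d) of pre-distinguishedness and hypothesis~(\ref{eq:condition-E-nu}) are meant to provide, the only delicate point being the case $\partial\KK(\nu)^{in}=L_{\KK(\nu)}$, where one must use that a pre-door contained in the lid forces the system to be a singleton and that in that situation $\KK(\nu)$ has at most one incoming edge --- together with the bookkeeping needed to keep the construction symmetric at transversal saddle points and coherent with the flow-box structure of the tubes. The acyclicity of $G$ is precisely what rules out an infinite regress of refinements.
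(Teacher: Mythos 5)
Your proposal is correct and follows essentially the same route as the paper: the paper phrases the construction as an induction on the maximal length $g(G)$, refining the c-nbhds at top-level vertices with Lemma~\ref{lm:cled}, then shrinking the tubes over edges that start there, and finally invoking the inductive hypothesis on $G^{g-1}$ with the augmented systems $\wt{\mathcal{E}}_\mu$ of equation~(\ref{eq:tilde-E-mu}) — which is exactly your ``single pass in decreasing length order, carrying the incoming outer frontiers down as new prescribed entrances.'' The points you flag as delicate (the lid case, the symmetric handling at transversal saddles via the Morse--Smale hypothesis, compatibility of $\mathcal{E}_\nu$ with the incoming pre-doors via hypothesis~(\ref{eq:condition-E-nu}) and condition~(a) of pre-distinguishedness) are precisely the ones the paper treats; for the transversal-saddle case the paper additionally makes explicit the geometric fact that the inner door can be chosen so that its boundary meets $\Sat_{|\KK|}(W^2_q)\cap\partial\KK(\nu)^{out}$ only at $c_\sigma$ and at a handrail point, which is what guarantees the two resulting pre-doors at $\nu_q^{\pm}$ share a base — worth spelling out if you were to write this up in full.
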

\begin{proof}
	Let $g=g(G)$ be the maximal length  of the vertices of $G$. Notice that the filtration (\ref{eq:filtration-graph}) induces a filtration
	$$
	G^0<G^1<\cdots<G^g=G^{g+1}=\cdots=G^l=G,
	$$
	where, for any $j$, the graph $G^j$ is the intersection of $\Omega^j$ and $G$, that is, $V(G^j)=V(\Omega^j)\cap V(G)$ and $E(G^j)=E(\Omega^j)\cap E(G)$.
	The proof goes by induction under $g$.
	
	If $g=0$, then $G=G^0$ consists only of finitely many $D$-nodes (all of them attractors, except, possibly, for the exceptional case described in Lemme~\ref{lm:graph}, (iv)) and contains no edges. The proposition follows by applying Lemma~\ref{lm:cled} to every element of $\VV(G)$.
	
	Suppose that $g>0$ and assume that the result holds for any subgraph $G'<\Omega$ with $g(G')<g$. Let $\nu\in\VV(G)\setminus \VV(G^{g-1})$. First, we apply Lemma~\ref{lm:cled} to the system $\EE_\nu$, so that we obtain a refinement $\CC_\nu<\KK(\nu)$ for which any $\DD\in\mathcal{E}_{\nu}$ is a door of $\CC_\nu$. Now, for any given $\sigma\in\tilde{\alpha}^{-1}(\nu)$, we can take a refinement  $\mathcal{T}_\sigma<\KK(\sigma)$ determined by an inner transversal part $\partial\mathcal{T}_\sigma^{in}\subset\partial\KK(\sigma)^{in}$
	in such a way that, being $\{c_\sigma\}=\sigma\cap\partial\KK(\nu)$, the following conditions are satisfied:
	
	(a) If $\wt{\alpha}(\sigma)=\{\nu\}$ and $\wt{\omega}(\sigma)=\{\eta\}$ are both singletons, then we choose $\partial\mathcal{T}_\sigma^{in}$ to be a door of $\CC_\nu$ contained in $\partial\CC_\nu^{out}$ and centered at $c_\sigma$ (hence, $\partial\mathcal{T}_\sigma^{out}$ is a pre-door in $\partial\KK(\eta)^{in}$).
	
	(b) If $\wt{\alpha}(\sigma)=\{\nu\}$ and  $\wt{\omega}(\sigma)=\{\nu^+_q,\nu^-_q\}$ with $q\in S_{tr}$, then we choose $\partial\mathcal{T}_\sigma^{in}$ to be a door in $\partial\CC_\nu^{out}$ centered at $c_\sigma$ and such that the resulting outer part $\partial\mathcal{T}_\sigma^{out}$ is the union of a pre-door in $\partial\KK(\nu_q^+)^{in}$ and a pre-door in $\partial\KK(\nu_q^-)^{in}$, both sharing their base.
	
	(c) If $\nu$ is associated to some $p\in S_{tr}$ and $\wt{\alpha}(\sigma)=\{\nu^+_p,\nu^-_p\}=\{\nu,\nu'\}$, then we choose $\partial\mathcal{T}_\sigma^{in}=A\cup A'$, where $A$ is a door in $\partial\CC_\nu^{out}$ and $A'$ is a door in $\partial\CC_{\nu'}^{out}$, both centered at $c_\sigma$ with equal base. Notice that the Morse-Smale hypothesis implies that in this case $\wt{\omega}(\sigma)$ is a singleton. We conclude, as in case (a), that $\partial\mathcal{T}_\sigma^{out}$ is a pre-door in $\partial\KK(\nu_q)^{in}$.
	
	To see that there exists such a tube $\mathcal{T}_\sigma$ (determined by its inner frontier) with the above properties, only the case (b) deserves a comment: in this case, the Morse-Smale hypothesis implies that $\sigma$ crosses the boundary of  $\CC_\nu$  necessarily through the fence, thus equal to $\partial\CC_\nu^{out}$. This is enough to guarantee that the door $\partial\mathcal{T}_\sigma^{in}$ can be chosen so that its boundary cuts the analytic curve $\Sat_{|\KK|}(W^2_q)\cap\partial\KK(\nu)^{out}$ only at $c_\sigma$  and at another point belonging to the handrail $h(\partial\mathcal{T}_\sigma^{in})\subset h(F_{\CC_\nu})$ (see Figure \ref{Fig:DemosDistinguidos}). As a consequence, $\partial\mathcal{T}_\sigma^{out}$ intersects $W^2_q$ along a closed interval and hence it holds the required property stated in (b).
	
\begin{figure}[h]
	\begin{center}
		\includegraphics[scale=0.65]{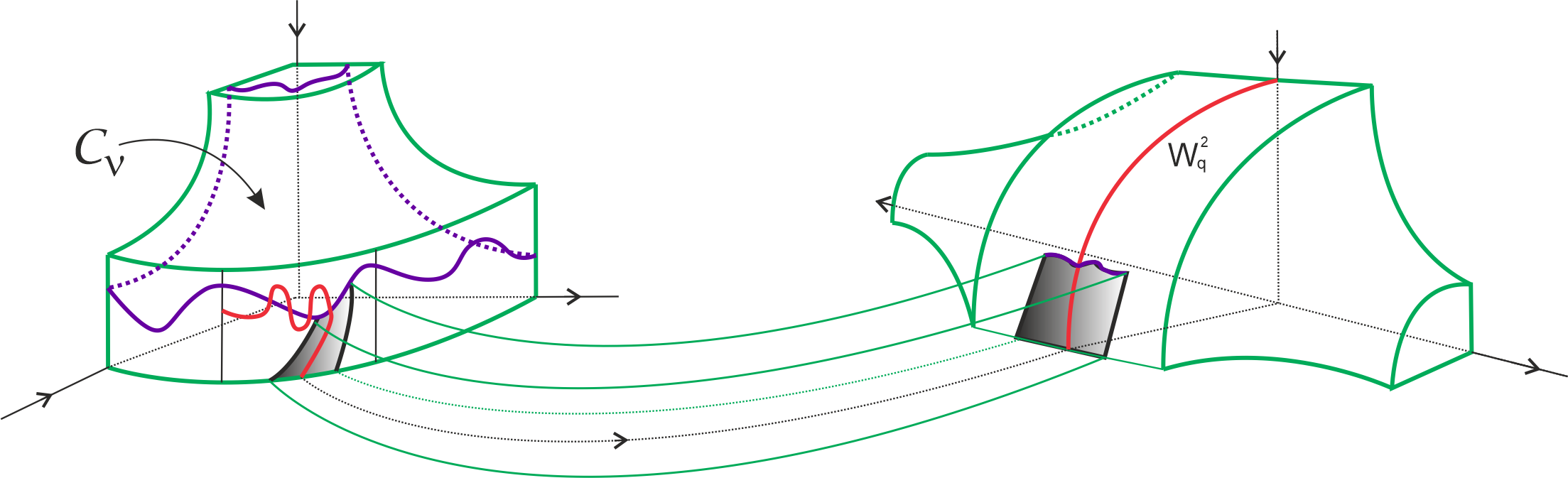}
	\end{center}
	\caption{Getting distinguished neigborhoods.}
	\label{Fig:DemosDistinguidos}
\end{figure}
	
	Now we consider for any $\mu\in \mathcal{V}(G^{g-1})$ the following family of pre-doors of $\KK(\mu)$ in $\partial\KK(\mu)^{in}$:
	\begin{equation}\label{eq:tilde-E-mu}
	\wt{\mathcal{E}}_{\mu}\!=\!\mathcal{E}_{\mu}
	\bigcup
	\{\partial\mathcal{T}_\sigma^{out}\!\cap\!\partial\KK(\mu)^{in}:
\sigma\!\in\!E(G)\mbox{ with }\mu\!\in\!\tilde{\omega}(\sigma),\,
\tilde{\alpha}(\sigma)\cap\mathcal{V}(G^{g-1})=\!\emptyset\}.
	\end{equation}
	Using (\ref{eq:condition-E-nu}) for $\EE_\mu$ and the construction of the tubes $\TT_\sigma$ above, we have that the family $\wt{\mathcal{E}}_{\mu}$ is a system of entrances of the c-nbhd $\KK(\mu)$ for any $\mu\in\mathcal{V}(G^{g-1})$. Moreover, the collection $\{\wt{\EE}_\mu\}$ still satisfies (\ref{eq:condition-E-nu}) when we replace $G$ by $G^{g-1}$ (notice that $\wt{\EE}_\mu$ only differs from $\EE_\mu$ if $\mu\in\VV(G^{g-1})\setminus\VV(G^{g-2})$, and there are no edges of the graph $G^{g-1}$ ending at those local s-components).  By the induction hypothesis, there exists a distinguished refinement $\wt{\KK}^1$ of $\KK|_{G^{g-1}}$ such that for any $\mu\in\mathcal{V}(G^{g-1})$ and for any $\DD\in\wt{\mathcal{E}}_{\mu}$, $\DD$ is a door of $\wt{\KK}^1(\mu)$. Let $\wt{\KK}$  be the fattening of $G=G^g$ defined by $\wt{\KK}|_{G^{g-1}}=\wt{\KK}^1$ and also by putting $\wt{\KK}(\nu)=\CC_\nu$ and $\wt{\KK}(\sigma)=\mathcal{T}_\sigma$, for any $\nu\in\VV(G)\setminus\VV(G^{g-1})$ and any $\sigma\in E(G)\setminus E(G^{g-1})$. By construction, we have that $\wt{\KK}$ is a distinguished refinement of $\KK$ satisfying the desired requirements.
\end{proof}

{\em Proof of Theorem~\ref{th:distinguished}.-}
In light of the first step above described, we may assume that the initial fattening $\KK$ is pre-distinguished. Applying Proposition~\ref{pro:getting-distinguished} to $G=\Omega$ and taking $\EE_\nu=\emptyset$ for any $\nu\in\VV(\Omega)$, we have that there is a distinguished refinement $\wt{\KK}<\KK$, and we are done.
$\hfill{\square}$

\begin{scholium}\label{sch-prop-distinguished}
	{\em
		It is clear that the distinguished refinements construction given in the proof of Proposition~\ref{pro:getting-distinguished} is by no means unique, so it can be adapted to many different situations. We take advantage of this adaptability along the rest of the paper in order to construct distinguished fattenings with additional properties. To systematize the arguments and notations in such constructions, let us summarize the proof of  Proposition~\ref{pro:getting-distinguished} in the following way:

The resulting distinguished refinement $\wt{\KK}<\KK$ is constructed recursively as a final step in a sequence
\begin{equation}\label{eq:process-dist}
		\KK=\NN_{g+1}>\NN_{g}>\cdots>\NN_{1}>\NN_{0}=\wt{\KK}
\end{equation}
		of predistinguished fattenings of $G$
		such that, for $j\in\{0,1,\dots,g\}$, $\NN_j$ only differs from $\NN_{j+1}$ on the local s-components at points of $V(G^{j})\setminus V(G^{j-1})$ and also on the edges starting at them (with $V(G^{-1})=\emptyset$). In other words, for any such index $j$ (and putting $G^{g+1}=G$), we have
\begin{equation}\label{eq:prop-scholium}
\begin{array}{c}
  \NN_{j}|_{G^{j-1}}=\KK|_{G^{j-1}}, \\
  \NN_j|_{(G^{j+1})^c\cap G}=\NN_{j+1}|_{(G^{j+1})^c\cap G},
\end{array}
\end{equation}
where $(G^k)^c\cap G$ is the complement of $G^k$ in $G$, i.e., the
subgraph of $G$ generated by the edges of $E(G)\setminus E(G^k)$ (cf. Section~\ref{sec:SHNR-foliations}). In particular, $\NN_j$ is distinguished at any $\nu\in\VV(G)\setminus\VV(G^{j-1})$, for any $j$.
}
\end{scholium}

\section{Good saturations}\label{sec:good-saturations} 

As mentioned in the introduction, the fitting neighborhoods we are looking for are obtained by extending the support of an appropriate distinguished fattening. To this end, we adapt the construction in Proposition~\ref{pro:getting-distinguished} in order to get distinguished fattenings with controlled free doors saturations  (the {\em good saturations} property). Recall that these free doors form essentially the transversal frontier of the fattening (Proposition~\ref{pro:trans-distinguished}) and those associated to faces of the graph must be ``closed'' to the purpose of getting fitting domains. 

We fix again a non s-resonant of Morse-Smale type HAFVSD $\MM=(M,D,\LL)$.
\subsection{Definitions and statements}

Let $\KK$ be a pre-distinguished fattening of $\Omega$. Given $p\in S_{tr}$, a {\em fixed mark (of $\KK$) at $p$} is the intersection of $W^2_p$ with the boundary of a tube $\KK(\sigma)$, where $\sigma$ is adjacent to $p$ and intersects $W^2_p$. If $\nu$ is a s-component at $p$, we say also that we have a fixed mark (of $\KK$) at $\nu$. Since there are two of those edges $\sigma$, there are also two fixed marks at $p$. If we need to distinguish them, we just say that the fixed mark is {\em associated} to the edge $\sigma$. Notice that a fixed mark associated to $\sigma$ provides, by saturation, a trace mark in the local s-component at the extremity of $\sigma$ that is not $p$ (cf. proof of Corollary~\ref{cor:sat-w2}).

\begin{definition}\label{def:good-saturations}
	Let $\KK$ be a distinguished fattening of $\Omega$. We say that $\KK$ has
	\begin{itemize}
\item  {\em Good saturations for fixed marks (gsfm)} if given $p,q\in S_{tr}$ with $p\ne q$ and fixed marks $I_p,I_q$ of $\KK$ at $p$ and $q$, respectively, we have
$$
\Sat_{|\KK|}(I_p)\cap\Sat_{|\KK|}(I_q)=\emptyset.
$$
\item {\em Good saturations for free doors (gsfd)} if for any pair of different free $\KK$-doors $\DD,\DD'$ which are not associated to the same face of $\Omega$, we have
		$$
		\Sat_{|\KK|}(\DD)\cap\Sat_{|\KK|}(\DD')=\emptyset.
		$$
\item  {\em Good saturations for fixed marks with respect to free doors (gsfmfd)} if for any fixed mark $I_p$ at some $p\in S_{tr}$ and any free $\KK$-door $\DD$ at $\nu\in\VV(\Omega)$ such that $\nu\not\in\{\nu^+_p,\nu^-_p\}$, we have
$$
\Sat_{|\KK|}(I_p)\cap\DD=\emptyset. 
$$
\end{itemize}
	We say that $\KK$ has {\em good saturations} if the three conditions above hold.
\end{definition}

\strut

The main result in this section is that any distinguished fattening has a refinement with the property of good saturations. More in precise:

\begin{theorem}\label{th:good-sat}
Assume that $\mathcal{M}=(M,\LL,D)$ is non s-resonant and of of Morse-Smale type. Given a distinguished fattening $\KK$ over $\Omega$, there exists a refinement $\wt{\KK}$ of $\KK$ which is distinguished and has good saturations.
\end{theorem}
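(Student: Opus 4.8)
The plan is to combine three ingredients already established: the separant-surface results of Section~\ref{sec:MS-foliations} (Corollary~\ref{cor:sat-w2} and Remark~\ref{rk:sat-empty-intersection}), which give disjointness of the saturations of the manifolds $W^2_p$ and of pairwise disjoint trace marks once the ambient neighbourhood is small; Theorem~\ref{th:path of a trace mark}, which pins down the accumulation of the saturation of a trace mark along a prescribed path of edges $\Theta(\nu)$; and the recursive refinement machinery of Proposition~\ref{pro:getting-distinguished} and Scholium~\ref{sch-prop-distinguished}, which allows us to modify a distinguished fattening vertex by vertex along the filtration $\Omega^0<\cdots<\Omega^l$ while keeping it distinguished.

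\textbf{First reductions.} I would first refine $\KK$ so that its support lies in a neighbourhood $U$ of $D$ small enough that Corollary~\ref{cor:sat-w2} and Remark~\ref{rk:sat-empty-intersection} apply to $U$; this is possible because the supports of the refinements of a fattening form a base of neighbourhoods of $|\Omega|$ that can be prescribed inside any given $U$. Since every fixed mark $I_p$ at $p\in S_{tr}$ satisfies $I_p\subset W^2_p$, one has $\Sat_{|\KK|}(I_p)\subset\Sat_U(W^2_p)$, so the disjointness in Corollary~\ref{cor:sat-w2} yields \emph{gsfm} at once. For \emph{gsfmfd} it then remains to keep each free $\KK$-door at a local s-component $\nu\notin\{\nu^+_p,\nu^-_p\}$ disjoint from the two-dimensional separant surface $\Sat_U(W^2_p)$, which will be one of the constraints imposed below.

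\textbf{Core construction.} To each free $\KK$-door $\DD$ I attach an \emph{interior edge} $c(\DD)$: one of the two boundary curves of $\DD$ along its doorjambs, chosen not to meet $D$; when $\DD$ sits at a $D$-saddle this curve is, after pushing along the flow, a trace mark on the corresponding s-component (cf. Remarks~\ref{rk:mark-from-w2-to-w1} and~\ref{rk:sat-empty-intersection}). The geometric point is that $\Sat_{|\KK|}(\DD)$ is trapped between the saturations of the boundary curves of $\DD$, of the finitely many tubes $\DD$ meets, and of the pieces of $D$ it borders; hence, if the interior edges of two free doors are realized as disjoint trace marks attached off $|\Omega|$, Theorem~\ref{th:path of a trace mark} and Remark~\ref{rk:sat-empty-intersection} force their saturations to accumulate on $\wt{D}$ along disjoint subsets and, for $|\KK|$ small, to be disjoint. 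I would therefore run the recursion of Scholium~\ref{sch-prop-distinguished}: at the stage handling the length-$j$ vertices I choose the doors of the c-nbhds $\KK(\nu)$ with $l(p)=j$ — and with them the entrances they induce into lower-length c-nbhds through the refined tubes — so that the interior edges of all free doors produced so far, the traces of the already committed tubes, and the separant surfaces $\Sat_U(W^2_q)$, $q\in S_{tr}$, form a pairwise disjoint family of trace marks attached off $|\Omega|$. At each stage only finitely many curves are involved, and Lemma~\ref{lm:cled} together with Proposition~\ref{pro:getting-distinguished} leaves enough room to perturb the doors on the fences (away from $|\Omega|$) to achieve this; the output is distinguished by construction. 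Two free doors associated to the same face $\G$ impose no constraint, their saturations being allowed to meet along the tubes over $\FF(\G)$, consistently with the fact that their interior edges lie over the same bounding paths. The pair of free doors at $\nu^+_p,\nu^-_p$ of a transversal saddle $p$ is treated separately: they lie on opposite sides of the invariant separant surface $\Sat_{|\KK|}(W^2_p)$, so their saturations inside $|\KK|$ stay on opposite sides and are disjoint by Corollary~\ref{cor:sat-w2}.

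\textbf{Main obstacle.} The delicate part is to make disjointness of the interior edges propagate to disjointness of the door saturations along \emph{shared} tubes: a priori two saturation slabs running in parallel through one tube could cross even with disjoint bounding curves, so one must guarantee that they enter the tube in a nested (hence flow-invariantly nested) configuration. This forces the choice of doors at a given stage of the recursion to be compatible with the configuration already committed in all tubes emanating from the vertices of that stage, and it is precisely here that the flexibility recorded in Scholium~\ref{sch-prop-distinguished}, combined with the accumulation description of Theorem~\ref{th:path of a trace mark} (which determines exactly which tubes each saturation traverses), keeps the bookkeeping consistent across the whole filtration; carrying the extra family of separant surfaces $\Sat_U(W^2_p)$ through the same nesting argument handles \emph{gsfmfd} and the transversal-saddle case of \emph{gsfd} simultaneously.
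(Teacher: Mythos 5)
Your overall strategy---isolate the two types of bounding curves (fixed marks at transversal saddles and doorjambs of free doors, the latter attached to points of $D\setminus|\Omega|$), keep them pairwise disjoint via Remark~\ref{rk:sat-empty-intersection} and Theorem~\ref{th:path of a trace mark}, and propagate this by a recursion modeled on Proposition~\ref{pro:getting-distinguished} and Scholium~\ref{sch-prop-distinguished}---is exactly the right skeleton, and it is essentially the one the paper implements. You also correctly identify the real difficulty: disjointness of the bounding curves of two doors does not by itself yield disjointness of the corresponding saturation slabs as they run through a shared tube, so one must enforce a nested configuration all along the recursion. Your ``first reductions'' match the paper's Remark~\ref{rm:siempre gsfm}, and your separate treatment of the two free doors at a transversal saddle is also correct.

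However, the paragraph in which you claim to dispose of the obstacle is where a genuine gap opens. Saying that Scholium~\ref{sch-prop-distinguished} ``keeps the bookkeeping consistent'' is not an argument: you never formulate an inductive invariant precise enough to survive one step of the recursion. The paper does this through the ``stain'' machinery: it tracks, on the fence of every c-nbhd, the traces left by fixed marks and by doorjambs of free doors (Definition~\ref{def:stains}, Lemma~\ref{lm:stains}), reduces good saturations to the ``disjoint stains'' property (DS) via Lemma~\ref{lm:getting-es-adm} and Proposition~\ref{pro:DS implies GS}---a nontrivial step your sketch takes for granted---and then runs the recursion carrying \emph{two} invariants, $(DS)^{\le\nu}$ and the weaker-looking $(qDS)^{>\nu}$ of Definition~\ref{def:qds}. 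The second invariant is indispensable: when you refine the tubes leaving a vertex of length $j$ you create new doorjambs, hence new mobile stains that reach into lower-length c-nbhds, and without a control of the form $(qDS)^{>\nu}$ on the stains arriving ``from above'' you cannot guarantee that the refined doors at the next stage can be chosen to avoid them (the paper's Figure accompanying property $(P_1)$ illustrates precisely the failure mode). Your proposal only demands disjointness of the ``interior edges'' at each stage, which corresponds roughly to $(DS)^{\le\nu}$ alone and does not survive the tube refinements. There is also a circularity you do not address: Remark~\ref{rk:sat-empty-intersection} requires $U$ to be small \emph{depending on} the finite family of trace marks, but the doorjambs you want to control are themselves produced by the fattening being built, so shrinking first and perturbing afterwards is not available; the paper avoids this by working with stains intrinsically rather than by shrinking the ambient neighborhood a priori. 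In short: right blueprint, but the heart of the proof---the precise invariant and why it propagates---is missing.
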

\begin{remark}\label{rm:siempre gsfm}
	{\em 
Notice that the germ along the divisor of a fixed mark does not depend on the considered fattening and that, using Corollary~\ref{cor:sat-w2}, we could assume that our given distinguished fattening $\KK$ already has good saturations for fixed marks (gsfm). Moreover, such property is preserved by refinements. On the contrary, this is not the case for conditions (gsfd) and (gsfmfd) given that they involve free doors and these elements strongly depend on $\KK$. This fact explains the technical difficulties to prove the existence of neighborhoods with the good saturations property: reasoning by recurrence on the length of subgraphs in the filtration (\ref{eq:filtration-graph}), as done in Theorem~\ref{th:distinguished}, eventually at some stage we are forced to impose proper refinements of tubes that, in turn, provoque modifications of the free doors where the good saturations condition must be reconsidered.
}
\end{remark}

\subsection{The stains}

 In a first step 
 towards the proof of Theorem~\ref{th:good-sat}, 
 we show that good saturations can be obtained by dealing just with fixed marks and free doors doorjambs saturations. Let us introduce some related notation.

\begin{definition}\label{def:stains}
	Let $\RR$ be a pre-distinguished fattening over $\Omega$. Take $\nu\in\VV(\Omega)$ and $\CC$ a refinement of $\RR(\nu)$. 
	\begin{itemize}
		\item A {\em fixed stain (of $\RR$) in $\CC$} is a non-empty subset of $F_\CC$ of the form
		$
		A=\Sat_{|\RR|}(I\setminus\{b(I)\})\cap F_\CC,
		$
		where $I$ is a fixed mark of $\RR$ at some $p\in S_{tr}$ such that $\nu\not\in\{\nu^+_p,\nu^-_p\}$ and $b(I)=I\cap D$. We say that $A$ is {\em generated at $p$} (or also {\em at} $\nu^+_p$ or {\em at} $\nu^-_p$) and that $I$ is the {\em generating mark} of $A$. Denote by $\Upsilon^\RR_\CC$ the family of fixed stains of $\RR$ in $\CC$.
\item
	A {\em mobile stain (of $\RR$) in $\CC$} is a non-empty subset of $F_\CC$ of the form
	$
	A=\Sat_{|\RR|}(J\setminus\{b(J)\})\cap F_\CC,
	$
	where $J$ is a doorjamb of a free $\RR$-door at some $\mu\in\VV(\Omega)$ at which $\RR$ is distinguished (see Definition~\ref{def:free-doors}) and $b(J)=J\cap b(\RR(\mu))$. We will say that $A$ is {\em generated at $\mu$} and that $J$ is the {\em generating mark} of $A$.
	Denote by $\Lambda^\RR_\CC$ the family of mobile stains of $\RR$ in $\CC$. 
		\end{itemize}
\end{definition}
\begin{remark}\label{rk:stains1}
	{\em
Notice that, if there exists a fixed or mobile stain $A$ in $\CC<\RR(\nu)$ generated at $\mu\in\VV(\Omega)$, then $\nu$ is related to $\mu$ with respect to the partial ordering on $\VV(\Omega)$ established in Section~\ref{sec:SHNR-foliations}. Moreover, if for instance $\mu\le\nu$ and $J$ is the generating mark of $A$, then we have
$
A=
\Sat^+_{|\RR|}(J\setminus b(J))\cap F_{\RR(\nu)}
$ 
(that is, only the saturation in one sense suffices to create a stain).
}
\end{remark}
For convenience, if $\kappa\in\{<,\le,>,\ge, \neq\}$, we will denote by $\Lambda^{\RR(\kappa)}_\CC\subset\Lambda^\RR_\CC$ the family of mobile stains in $\CC$ generated at a local s-component $\mu$ satisfying $\mu\kappa\nu$. We use also the notation $\Upsilon^{\RR(>)}_\CC$ (resp. $\Upsilon^{\RR(<)}_\CC$) for the family of fixed stains in $\CC$ generated at some $\mu$ with $\mu<\nu$ (resp. $\mu>\nu$).
 
\strut

Let us start with a lemma that provides a first description of the stains in a general fattening.

\begin{lemma}\label{lm:stains}
 Assume that $\RR$ is distinguished. Fix $\nu\in\VV(\Omega)$ and consider $A$ a fixed or mobile stain of $\RR$ in $\RR(\nu)$. Then the closure $\overline{A}$ has a finite number of connected components and intersects $D\cup b(\RR(\nu))$ along a finite (possibly empty) set of points. Moreover, if $a\in\overline{A}\cap(D\cup b(\RR(\nu))$, then the germ of $\overline{A}\setminus\{a\}$ at $a$ is non-empty and one of the following possibilities holds (See Figure \ref{Fig:Salpicaduras}):
\begin{enumerate}[(a)]
  \item  The point $a$ belongs to $F_{\RR(\nu)}\cap|\Omega|$. In this case, some representant of the germ of $\overline{A}\setminus\{a\}$ at $a$ has finitely many connected components, all of them being intervals. Moreover, given such an interval $Y$, there is a local s-component $\mu_Y$ with $\mu_Y\le\nu$ and a trace mark $T_Y$ in $\mu_Y$ such that $a\in|\Theta(\mu_Y)|$ and $Y$ is contained in $\Sat^+(T_Y)$. 
 
  \item The point $a$ belongs to $b(\RR(\nu))\setminus|\Omega|$. In this case,  $\overline{A}$ is semi-analytic at $a$. In case $\nu$ is not associated to a transversal saddle (i.e., $b(\RR(\nu))\subset D$) then the germ of $\overline{A}$ at $a$ coincides with the germ of a doorjamb of a free $\RR$-door in $\RR(\nu)$.
  \item The point $a\in D\setminus b(\RR(\nu))$. In this case $a$ is an extremity of the handrail $h(F_{\RR(\nu)})$ and the germ of $\overline{A}$ at $a$ coincides with the germ of $h(F_{\RR(\nu)})$.
\end{enumerate}
\end{lemma}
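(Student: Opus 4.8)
The plan is to reduce everything to the saturation-of-trace-marks machinery of Section~\ref{sec:MS-foliations}, principally Theorem~\ref{th:path of a trace mark} and Corollary~\ref{cor:sat-w2}, together with the explicit local normal forms for chimney neighborhoods. First I would fix $\nu \in \VV(\Omega)$, the refinement $\RR(\nu)$ is a c-nbhd, and a stain $A$ with generating mark $I$ (a fixed mark, i.e.\ a segment of $W^2_q$ cut by a tube boundary, or a mobile mark, i.e.\ a doorjamb of a free door). Both kinds of generating marks, after pushing along the flow through the tubes and chimneys connecting them to $\nu$, become (pieces of) trace marks on an intermediate s-component: for a fixed mark this is exactly the observation recorded in the sentence after Definition~\ref{def:good-saturations} (``a fixed mark associated to $\sigma$ provides, by saturation, a trace mark\dots'', cf.\ the proof of Corollary~\ref{cor:sat-w2}); for a doorjamb of a free door, the interior of the doorjamb is disjoint from $D\cup W^2_p$ by the framing rules, and its endpoints lie on the base and on the handrail, so its flow image inside a neighbouring chimney is again an injective analytic curve accumulating to a single point of an invariant manifold, hence a trace mark (possibly after discarding the point $b(I)$, which is why the definition of stain saturates $I\setminus\{b(I)\}$). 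This gives the observation in Remark~\ref{rk:stains1} that $\nu$ is comparable with the s-component $\mu$ at which $A$ is generated, and that only one-sided saturation is needed, which I would invoke freely.

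Next I would establish finiteness of the connected components of $\overline A$. By Theorem~\ref{th:path of a trace mark}, for $U=|\RR|$ small enough the saturation of a trace mark $T$ on $\mu$ accumulates onto $\widetilde D$ exactly along $\ell_b^\epsilon \cup |\Theta(\mu)|$, a finite union of edges and vertices; intersecting with the fence $F_{\RR(\nu)}$ and using that each $\RR(\sigma)$ is a flow box over a compact disc and each c-nbhd is the saturation of a compact fence, one sees that $\Sat_{|\RR|}(T\setminus b(T))\cap F_{\RR(\nu)}$ is a semianalytic set whose closure meets the one-dimensional set $D\cup b(\RR(\nu))$ (recall $b(\RR(\nu))\subset D$ unless $\nu$ is at a transversal saddle, in which case $b(\RR(\nu))$ lies in $D\cup W^2_p$) in a semianalytic set of dimension $0$, hence a finite set; semianalyticity also bounds the number of components. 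The nonemptiness of the germ of $\overline A\setminus\{a\}$ at each such $a$ is immediate because $A$ is a nonempty stain accumulating to the boundary of the fence. This is the step I expect to be the main obstacle: one must match the three possible geometric locations of the limit point $a$ against the structure of $|\Theta(\mu)|$ and of $h(F_{\RR(\nu)})$, and verify that no other behaviour occurs; the control comes entirely from equation~(\ref{eq:Sat-T}) of Theorem~\ref{th:path of a trace mark} (together with Remark~\ref{rk:path-theta} and Corollary~\ref{cor:sat-w2} in the transversal-saddle case), so the real work is bookkeeping of the cases rather than a new estimate.

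Finally I would treat the three alternatives (a), (b), (c) for a limit point $a\in\overline A\cap(D\cup b(\RR(\nu)))$. If $a\in F_{\RR(\nu)}\cap|\Omega|$, then $a$ lies on an edge or vertex of $\Omega$ through which some leaf of $\Sat(T)$ passes; by (\ref{eq:Sat-T}) such an $a$ must lie on $|\Theta(\mu_Y)|$ for a trace mark $T_Y$ obtained by pushing $T$ forward, and locally near $a$ the accumulation set of $\Sat^+(T_Y)$ on the fence is, by the transition analysis of Propositions~\ref{pro:trace-to-angle} and~\ref{pro:angle-to-angle} (angle marks have a quasi-order, hence their traces on a transversal disc are intervals), a finite union of arcs — this yields the interval decomposition and the assertion $Y\subset\Sat^+(T_Y)$ with $\mu_Y\le\nu$. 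If $a\in b(\RR(\nu))\setminus|\Omega|$ and $\nu$ is not at a transversal saddle, then $b(\RR(\nu))\subset D$ and $a$ is not an accumulation point coming from $|\Theta|$ (which lies in $|\Omega|$); the only way $\overline A$ can reach the base away from the graph is along the flow image of the generating doorjamb itself, which by construction is a doorjamb of a free $\RR$-door of $\RR(\nu)$, whence the germ coincides with that of such a doorjamb, and semianalyticity at $a$ holds because we are away from $|\Omega|$ where the flow-box charts are analytic. If $a\in D\setminus b(\RR(\nu))$, then $a$ lies on the top $t(B)$ side of the fence, i.e.\ on the handrail $h(F_{\RR(\nu)})$: the only leaves of $\Sat(T)$ reaching $D$ through points of the fence other than the base are those whose forward (or backward) continuation runs along $W^1$-type directions onto the lid, and their accumulation on the fence near $D$ is precisely the handrail, giving the stated germ. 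Assembling these cases proves the lemma; I would close by remarking that all smallness conditions on $|\RR|$ used are finite in number (one per pair mark/target s-component and per application of Theorem~\ref{th:path of a trace mark}), so a single sufficiently small fattening works simultaneously. $\hfill\square$
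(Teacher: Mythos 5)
There is a genuine gap in the argument for finiteness. In your second paragraph you assert that $\Sat_{|\RR|}(T\setminus b(T))\cap F_{\RR(\nu)}$ ``is a semianalytic set'', and you then use semianalyticity both to bound the number of connected components of $\overline A$ and to make $\overline A\cap(D\cup b(\RR(\nu)))$ finite. But this is precisely where the whole machinery of Section~3 is needed: the saturation of a trace mark through a corner $D$-saddle in $S'$ produces an \emph{angle mark}, whose quasi-order $\rho$ is in general an irrational quotient of eigenvalues (Proposition~\ref{pro:trace-to-angle}), and a curve with irrational quasi-order is not semianalytic. So after the stain has crossed one skeleton saddle, the set $A$ is no longer semianalytic near the graph, and the argument you invoke breaks down. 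The statement of the lemma is careful about this: semianalyticity of $\overline A$ is claimed only at points $a$ in cases (b) and (c), i.e. \emph{away} from $|\Omega|$, exactly because it fails at points of $\overline A\cap|\Omega|$.

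The paper's proof avoids this by not trying to get finiteness from a global regularity property at all. It proves a propagation claim: if $A$ satisfies the conclusion of the lemma in $\RR(\mu)$ and $\sigma$ is an edge from $\mu$ to $\mu'$, then the pushforward $A'=\Sat^+_{\RR(\mu)\cup\RR(\sigma)\cup\RR(\mu')}(A)\cap F_{\RR(\mu')}$ again satisfies it in $\RR(\mu')$. Finiteness is preserved at each step because within a single chimney or tube the flow gives an analytic isomorphism $F_{\RR(\mu)}\setminus b(\RR(\mu))\to H$ (or $\PPP\to\PPP'$), and the only new accumulation points created along the way are single points on $|\Omega|$, controlled by Remark~\ref{rk:mark-from-w2-to-w1} and Propositions~\ref{pro:trace-to-angle}--\ref{pro:angle-to-angle}. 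This inductive bookkeeping is the real content of the lemma; your plan recognizes the relevant tools but substitutes a regularity shortcut that is not available, and (as you yourself flag) leaves the actual case-by-case transit analysis to ``bookkeeping'' without setting up the recursion that makes it finite.

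A smaller imprecision: in your discussion of case (c) you write that $a\in D\setminus b(\RR(\nu))$ ``lies on the top $t(B)$ side of the fence''. The top $t(B)$ is not part of the fence; the point is rather that the only points of $F_{\RR(\nu)}\cap D$ outside $b(\RR(\nu))$ are the two extremities of the handrail $h(F_{\RR(\nu)})$ (where the handrail meets the doorjambs of $F_{\RR(\nu)}$). You reach the right conclusion but the justification is loose. Also note that your treatment of case (b) implicitly assumes that the only stain reaching $b(\RR(\nu))\setminus|\Omega|$ is the one generated at $\nu$ itself or at $\tilde\alpha(\G)/\tilde\omega(\G)$ of the face $\G$; this is true (Remark~\ref{rk:stains}-2) but it is again a consequence of the propagation claim, not something that falls out for free.
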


\begin{proof}
If $A$ is a stain generated at $\nu$ itself, then it is a doorjamb of a free door, that is, a mobile stain, and it satisfies (b) straightforwardly. Also, if $A$ is a fixed stain generated at some $\nu'$ immediately preceding or immediately succeeding $\nu$ then $A$ is a semianalytic interval satisfying (a). From these two starting situations, by a natural recurrence, using Remark~\ref{rk:stains1} (and up to replace $\LL$ with $-\LL$) it will be sufficient to prove the following claim:

\vspace{.2cm}

{\bf Claim.-} Let $\mu,\mu'\in\VV(\Omega)$ be two local s-components connected by an edge $\sigma$ going from $\mu$ to $\mu'$. Let $A\in\Upsilon^{\RR(<)}_{\RR(\nu)}\cup\Lambda^{\RR(<)}_{\RR(\nu)}$ and assume that $A$ satisfies all the statement properties for $\nu=\mu$. Then the subset of $F_{\RR(\mu')}$ given by
$$
A'=\Sat^+_{\RR(\mu)\cup\RR(\sigma)\cup\RR(\mu')}(A)\cap F_{\RR(\mu')}
$$ also satisfies the statement properties for $\nu=\mu'$.

\vspace{.2cm}

To be convinced why this claim ends the proof of Lemma~\ref{lm:stains}, we point out two facts about the set $A'$. In the one hand, $A'$ is contained in a stain $\wt{A}$ (with the same generating mark as $A$), but it may occur that $A'\ne\wt{A}$; in fact, the stain $\wt{A}$ is the union of several subsets of the form $A'$ corresponding to different edges ending at $\mu'$. On the other hand, $\overline{A'}$ is semi-analytic at any point of $\overline{A'}$ except possibly at points of $\overline{A'}\cap|\Omega|$, where $A'$ and the whole stain $\wt{A}$ locally coincide (using (b) or (c) for $A$). This will show that the closure of $\wt{A}$ has finitely many connected components. The rest of the statement for $\wt{A}$ will be deduced by the claim.

\begin{figure}[h]
	\begin{center}
		\includegraphics[scale=0.65]{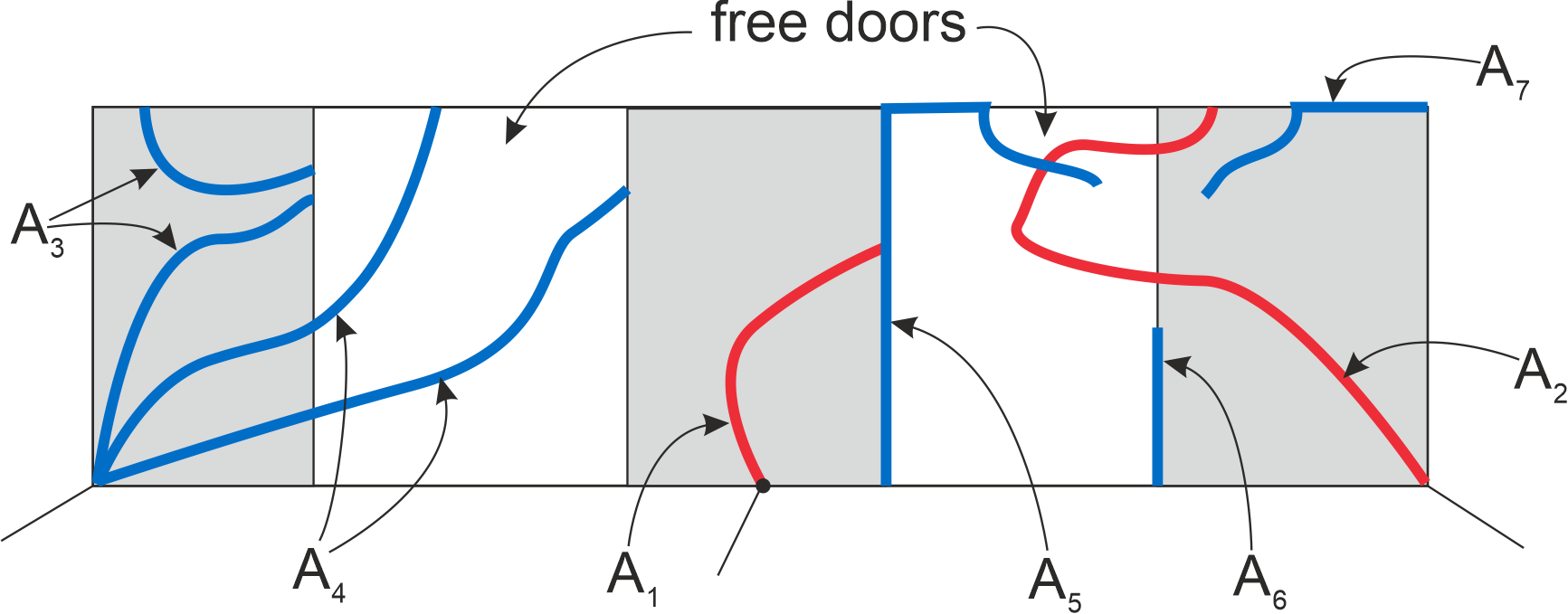}
	\end{center}
	\caption{Stains in a fence. $A_1, A_2, A_3$ and $A_4$ are in situation $a)$. $A_5, A_6$ are in situaction $b)$ and $A_7$ in situation $c)$.}
	\label{Fig:Salpicaduras}
\end{figure}

\strut To prove the claim we distinguish two cases:

\vspace{.1cm}

 {\bf Case I: $\partial\RR(\mu)^{out}=L_{\RR(\mu)}$.}  Being $\RR$ distinguished, we could have $\partial\RR(\sigma)^{out}=L_{\RR(\mu')}$, only if $\sigma$ is skeletical, by the Morse-Smale condition. Otherwise, $\partial\RR(\sigma)^{out}$ must be an unfree $\RR$-door contained in $F_{\RR(\mu')}$. Putting $H=F_{\RR(\mu')}\setminus b(\RR(\mu'))$ in the first situation and  $H=\partial\RR(\sigma)^{out}\setminus  b(\partial\RR(\sigma)^{out})$ in the second one, the flow defines an analytic isomorphism $$
\psi:F_{\RR(\mu)}\setminus b(\RR(\mu))\to H 
$$
sending $A$ to $A'$. We conclude that $A'$ has finitely many connected components. We need to show that $\overline{A'}\cap\big(D\cup b(\RR(\mu'))\big)$ is finite and that the germ of $\overline{A'}$ at each point of that set is in one of the situations (a)-(c). To do this, if $a\in\overline{A}\cap\big(D\cup b(\RR(\mu))\big)$ and $B$ is a sufficiently small representant of the germ of $\overline{A}\setminus\{a\}$ at $a$, it suffices to prove that $B'=\psi(B)$ accumulates to a single point in $D\cup b(\RR(\mu'))$ where (a), (b) or (c) holds. 

- Suppose that $B$ satisfies (a) at $a$. Let $Y$ be a connected component of $B$ and $\mu_Y\le\nu$, $T_Y$ as stated in item (a). Observe that $\nu$ cannot be associated to a transversal saddle point in this case; otherwise the edge containing the point $a$ must be an element of the path $\Theta(\mu_Y)$ and also end g at $\nu$, contradicting Remark~\ref{rk:path-theta}). In particular, $Y$ is either a trace mark (if $e(a)=1$) or an angle mark (if $e(a)=2$). Denote $Y'=\psi(Y)$, a connected component of $B'$. If $\partial\RR(\sigma)^{out}\subset F_{\RR(\mu')}$, then $Y'$ accumulates to $a'=\sigma\cap F_{\RR(\mu')}$ (using Remark~\ref{rk:mark-from-w2-to-w1}). On the contrary, if $\partial\RR(\sigma)^{out}=L_{\RR(\mu')}$, by the observation above asserting that $Y$ is a trace or an angle mark and taking into account that $\sigma\subset Sk(D)$, we have that $Y'$ accumulates to a single point $a'\in|\Omega|\cap F_{\RR(\mu')}$
(using the non s-resonant condition along with Propositions ~\ref{pro:trace-to-angle} and \ref{pro:angle-to-angle}). In both cases, the hypothesis $Y\subset\Sat^+(T_Y)$ implies that $a'\in\sigma\subset|\Theta(\mu_Y)|$ and $Y'\subset\Sat^+(T_Y)$ too. This proves that $B'$ satisfies (a) at the point $a'$.

- If $B$ satisfies (b) at $a$, then $B$ is connected and it is itself a trace mark in $\nu$. We prove as in the case above that $B'$ accumulates to a point $a'\in F_{\RR(\mu')}\cap|\Omega|$. Since $B'\subset\Sat^+(B)$ by definition, $B'$ satisfies again (a) at the point $a'$.  

- If $B$ satisfies (c) at $a$, then $a$ belongs to the domain of $\psi$ and hence, being $a'=\psi(a)$ (a point in the leaf through $a$, thus contained in $D$), we have that $\overline{B'}\setminus B'=\{a'\}$. By the definition of distinguished fattening, we have that, if $\partial\RR(\sigma)^{out}\subset F_{\RR(\mu')}$, then $B'$ is contained either in an unfixed doorjamb or in the handrail of the door $\partial\RR(\sigma)^{out}$. Hence, $B'$ is in one of the situations (b) or (c) at $a'$. Otherwise, if $\partial\RR(\sigma)^{out}=L_{\RR(\mu')}$, then $B'$ is in the situation (c) at $a'$.

\vspace{.1cm}

{\bf Case II: $\partial\RR(\mu)^{out}=F_{\RR(\mu)}$.} Denote $\PPP=\partial\RR(\sigma)^{in}$, $\PPP'=\partial\RR(\sigma)^{out}$ and $\phi:\PPP\to\PPP'$ the analytic isomorphism given by the flow of $\LL$. Notice that $\overline{A}\cap\PPP$ is semi-analytic at any of its points, except possibly at the center $c_\PPP=\PPP\cap\sigma$, if $c_\PPP\in\overline{A}$. In this last case, $A$  satisfies the properties of item (a) at $c_{\PPP}$. In any case, the set $\overline{A}\cap\PPP$ has finitely many connected components and we have
$$
A'=\Sat^+_{\RR(\sigma)\cup\RR(\mu')}\big
(A\cap\PPP\big)\cap F_{\RR(\mu')}.
$$  We distinguish several subcases:

- If $\partial\RR(\mu')^{in}=F_{\RR(\mu')}$ and $\mu'$ is not associated to a transversal saddle, then $\PPP'\subset F_{\RR(\mu')}$ and $\phi$ sends $\overline{A}\cap\PPP$ to $\overline{A'}$. The claim follows easily given that $\phi$ sends $c_\PPP$ to the center of $\PPP'$ as well as the germs of the handrail and the unfixed doorjambs of $\PPP$ (at the points where they cut $D$) to the germs of the handrail and the unfixed doorjambs of $\PPP'$, respectively. 

- If $\partial\RR(\mu')^{in}=F_{\RR(\mu')}$ and $\mu'$ is associated to some $q\in S_{tr}$, then $\PPP'\ne\PPP'\cap F_{\RR(\mu')}$ and $\overline{A'}$ is the image by $\phi$ of $\overline{A}\cap\PPP\cap\phi^{-1}(\PPP'\cap F_{\RR(\mu')})$. We conclude as in the previous subcase provided that we can ensure that there are representants of the germs of $A$ and  $\Sat_{\RR(\sigma)}(W^2_q)\cap F_{\RR(\mu)}$ at $c_\PPP$ that are mutually disjoint. Since $A$ is contained either in the saturation of $W^2_p$, with some $p\in S_{tr}$ different from $q$, or in the saturation of a trace mark attached to a point not in the support of $\Omega$, this is guaranteed by Remark~\ref{rk:sat-empty-intersection}.

- If $\PPP'=L_{\RR(\mu')}$, then $A'=\Sat^+_{\RR(\mu')}\big
(\phi(A\cap\PPP)\big)\cap F_{\RR(\mu')}$ and the claim can be proved as in Case I. Notice that in this case, $\mu$ is not associated to a transversal saddle point (by the Morse-Smale condition). Moreover, if $\sigma$ is a trace edge, then $A\cap\PPP$ is not in the situation (a) at the point $c_\PPP$ (since $\sigma$ cannot belong to the path of some $\mu_1$ with $\mu_1<\mu$).
\end{proof}

\begin{remarks}\label{rk:stains}{\em
Going over the proof of Lemma~\ref{lm:stains}, one can point out the following:

\vspace{.2cm}

 (\ref{rk:stains}-1) If $A$ is a stain in $\RR(\nu)$ in situation (a), it is posible that the point  $a\in\overline{A}\cap(b(\RR(\nu))\cap|\Omega|)$ is not unique or that there is more than one local component $Y$ at such $a$. However, if $\Sat_{|\RR|}(A)$ does not cut any fixed mark of $\RR$ (except the one that generates $A$, in case $A$ is a fixed stain), then $A=Y$ has a unique local connected component at $a$ and $\mu_A$ and the trace mark $T_A$ stated in item (a) does not depend on $\nu$, but only on the generating mark of $A$. In fact, if $A$ is a fixed stain generated at some $\nu_p^+$, then $\mu_A$ is a local s-component immediately connected to $\nu^+_p$ and $T_A=\Sat(W^2_p)\cap F_{\RR(\mu_A)}$; if $A$ is a mobile stain generated at some $\nu_1$ with generating mark $J$, then either $\nu_1=\mu_A$ and $T_A=J$, or $\{\nu_1,\mu_A\}=\{\tilde{\alpha}(\G),
 \tilde{\omega}(\G)\}$ for  $\G$ a face of $\Omega$. In this case $T_A$ is the image of $J$ by the flow of $\LL$. 

  (\ref{rk:stains}-2) If $A$ is either a stain in $\RR(\nu)$ in situation (b) with $b(\RR(\nu))\subset D$ or in situation (c), then $A$ is a mobile stain generated at some $\nu'$ such that $\nu,\nu'$ are associated to points in the boundary subgraph $\partial\G$ of a face $\G$ of $\Omega$. In fact, in the situation (b), either $\nu=\nu'$ or $\{\nu,\nu'\}=\{\tilde{\alpha}(\G),\tilde{\omega}(\G)\}$.

  (\ref{rk:stains}-3) Note that Lemma~\ref{lm:stains} also holds for a pre-distinguished fattening $\RR$ if, being $\nu_1$ a local s-component where the stain is generated, we assume that $\RR$ is distinguished at any vertex belonging to any path of edges with extremities at $\nu$ and $\nu_1$. This slightly weakening of the statement hypothesis will be useful in the next paragraph.
}
\end{remarks}

According to Lemma~\ref{lm:stains}, the family of stains of a general distinguished fattening may present a really complicated behavior, especially those whose saturation cuts either some fixed mark or some free door doorjamb, different from the one that generates them. The following definition captures a situation where we have a nicer ``picture'' for the stains behavior.

\begin{definition}\label{def:disjoint stains}
	Let $\RR$ be a distinguished fattening. We say that $\RR$ has the property of {\em disjoint stains} ((DS) for short) if given $\nu\in\VV(\Omega)$ and given different stains $A,A'\in\Upsilon^\RR_{\RR(\nu)}\cup\Lambda^\RR_{\RR(\nu)}$, we have $A\cap A'=\emptyset$.
\end{definition}
\begin{remark}\label{rk:DS}
	{\em
 By means of Remark~\ref{rk:sat-empty-intersection}, in general (with or without the (DS) condition), two stains $A,A'$ of $\RR$ in $\RR(\nu)$ with different generating marks can only intersect in a finite number of points, except in the case where $A,A'$ are mobile stains with respective generating doorjambs $J,J'$ of free $\RR$-doors associated to the same face of $\Omega$, and such that their germs at the base points $b(J), b(J')$ are connected by the flow. In this exceptional case, if $A\cap A'$ is infinite, then $A,A'$ accumulate to the same point of $b(\RR(\nu))\cup D$ and they share a common germ at that point. Notice that, in this situation, the condition (DS) would imply that $A=A'$, even if $J\ne J'$. 
}
\end{remark}

Our purpose is to prove that to get good saturations it is enough to have the (DS) property. To this end, we introduce the following definition and state a very helpful lemma.

\begin{definition}\label{def:essential-as}
	Let $\RR$ be a distinguished fattening at some  $\nu\in\VV(\Omega)$. An interval $Y$ contained in $F_{\RR(\nu)}$ will be called a {\em well-positioned curve (relatively to $\RR$ at $\nu$)} if there exists an unfree $\RR$-door $\PPP$ in $F_{\RR(\nu)}$ such that
		$\overline{Y}$ is a closed non-trivial interval contained in $\PPP$ with:
		\begin{itemize}
			\item $Y$ does not intersect neither the base nor the doorjambs of $\PPP$.
			\item The set of extremities of $\overline{Y}$ is $\{c_\PPP,h(Y)\}$, where $c_\PPP$ is the center of $\PPP$ and $h(Y)$ is a point in the interior $\dot{h(\PPP)}$ of the handrail of $\PPP$.
			\item  $Y=\overline{Y}\setminus\{c_\PPP\}$ (thus $Y$ is a half-open interval containig the extremity $h(Y)$).
		\end{itemize}  We also say that $Y$ is well-positioned {\em inside} $\PPP$.
\end{definition}
\begin{lemma}\label{lm:getting-es-adm}
	Assume that $\RR$ is a distinguished fattening with the  (DS) property. Let $A$ be a stain of $\RR$ in $\RR(\mu)$ for some $\mu\in\VV(\Omega)$.
	\begin{enumerate}[(a)]
		\item If $A$ is a fixed stain generated at $p\in S_{tr}$, then $\mu$ belongs to the path $\Pi^1_p\cup\Pi^2_p$ (cf. Corollary~\ref{cor:sat-w2})) and $A$ is a  well-positioned curve.
		\item  If $A$ is a mobile stain generated at $\nu$, where $\nu<\mu$ and $F_{\RR(\nu)}\subset\partial\RR(\nu)^{in}$, then  $\mu$ belongs to the path $\Theta(\nu)$ (cf. Theorem~\ref{th:path of a trace mark}) and $A$ is a well-positioned curve.
		\item  If $A$ is a mobile stain generated at $\nu$ with  $\nu<\mu$ and $F_{\RR(\nu)}\subset\partial\RR(\nu)^{out}$, then $A$ is contained in an unfree $\RR$-door $\PPP_A$ in $\RR(\mu)$. Moreover, either $A$ is a well-positioned curve, in case it intersects the interior of $\PPP_A$, or $A$ is the unfixed doorjamb of $\PPP_A$, in case it cuts an unfixed doorjamb of $\PPP_A$, or $A$ is contained in $h(\PPP_A)$, in other case.
	\end{enumerate}
Items (b) and (c) also hold if we replace $\nu<\mu$ with $\nu>\mu$ and we interchange the superscripts ``in'' and ``out''. 
\end{lemma}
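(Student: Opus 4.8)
The plan is to obtain the statement as a refinement of Lemma~\ref{lm:stains} under the extra hypothesis (DS), which lets us discard the pathological configurations allowed by that lemma, and then to pin down the relevant path of edges by means of Theorem~\ref{th:path of a trace mark} and Corollary~\ref{cor:sat-w2}. First I would make a reduction: by Remark~\ref{rm:siempre gsfm} we may assume that $\RR$ has (gsfm), and combined with (DS) and Remark~\ref{rk:sat-empty-intersection} this forces the saturation $\Sat_{|\RR|}$ of the generating mark of $A$ to meet no fixed mark of $\RR$ other than (in the fixed case) the one generating $A$, and no doorjamb of a free $\RR$-door other than (in the mobile case) the one generating $A$ --- any such intersection would, after transporting the common leaf onto a suitable fence and using Remark~\ref{rk:DS} to dispose of the infinite-intersection exception, yield two distinct stains of $\RR$ in one c-nbhd sharing a point. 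Consequently the hypothesis of Remark~\ref{rk:stains}(1) is satisfied, so $A$ has a single local connected component at its accumulation locus in $|\Omega|$, and the local s-component $\mu_A$ and the trace mark $T_A$ of Lemma~\ref{lm:stains}(a) depend only on the generating mark of $A$. I would also record here that the generating mark of a mobile stain is necessarily an \emph{unfixed} doorjamb, since a fixed doorjamb has its base point in $W^2_p\cap D\subset|\Omega|$, which a free door cannot meet; hence it is an analytic arc disjoint from $D\cup W^2_p$ away from its base, i.e.\ a genuine trace mark on its s-component.

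Next I would locate the path. In case (a), $I$ is a fixed mark attached to one of the two edges $\sigma_1,\sigma_2$ adjacent to $p$ that meet $W^2_p$, and $\Sat_{|\RR|}(I\setminus b(I))$ is exactly the branch of $\Sat_{|\RR|}(W^2_p)$ downstream of $\sigma_i$; by Corollary~\ref{cor:sat-w2} its closure accumulates on $\wt D$ along $|\Pi^i_p|$, so $\mu$ is an s-component at a vertex of $\Pi^1_p\cup\Pi^2_p$. In cases (b) and (c) the reduction gives $T_A=J$, the generating doorjamb seen (through the flow) as a trace mark on $\nu$; then Theorem~\ref{th:path of a trace mark}, applied with the sign prescribed by $\nu<\mu$ (forward saturation) and using that $F_{\RR(\nu)}\subset\partial\RR(\nu)^{in}$ (resp.\ $\subset\partial\RR(\nu)^{out}$) amounts to $W^1_p$ being unstable (resp.\ stable), shows that $\overline{\Sat^+_{|\RR|}(T_A)}$ accumulates on $\wt D$ along $|\Theta(\nu)|$, whence $\mu$ lies on $\Theta(\nu)$. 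In (c) the fence has doorjambs, so $\nu$ is automatically a saddle s-component and $\Theta(\nu)$ is defined.

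It then remains to read off the local shape of $A$ inside $\RR(\mu)$. Since $A$ is, up to the flow, a single trace or angle mark, it crosses the fence $F_{\RR(\mu)}$ at the unique point $c_\PPP$ where the path edge through $\mu$ meets the unfree $\RR$-door $\PPP$ of $\RR(\mu)$ attached to that edge, and deleting the base of the generating mark deletes precisely this endpoint; at the opposite end, the center manifold and Hartman-Grobman analysis underlying Theorem~\ref{th:path of a trace mark} (cf.\ Remark~\ref{rk:mark-from-w2-to-w1}) forces accumulation at an interior point of the handrail $h(\PPP)$. This is exactly Definition~\ref{def:essential-as}, so (a) and (b) follow. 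For (c) one propagates the generating doorjamb through the successive c-nbhds as in Case~I of the proof of Lemma~\ref{lm:stains}: $A$ ends up inside an unfree door $\PPP_A$ of $\RR(\mu)$, and it is well-positioned, equal to an unfixed doorjamb of $\PPP_A$, or contained in $h(\PPP_A)$, according to whether its image reaches $|\Omega|$, hits an unfixed doorjamb, or sits in a handrail. Finally, the $\nu>\mu$ versions of (b) and (c) follow by applying the proved statements to the reverse foliation $-\LL$, for which $\RR$ is again a distinguished fattening with (DS) and for which the roles of $\partial(\cdot)^{in}$, $\partial(\cdot)^{out}$ and of the associated paths are interchanged.

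The hard part will be the reduction in the first paragraph: establishing rigorously that under (DS), together with (gsfm) and Remark~\ref{rk:sat-empty-intersection}, the saturation of the generating mark cannot meet any extraneous fixed mark or free-door doorjamb. This requires following, c-nbhd by c-nbhd along the graph, how such an intersection would be transported to a common point of two distinct stains in a single c-nbhd, while carefully excluding the one genuine coincidence of stains that Remark~\ref{rk:DS} permits.
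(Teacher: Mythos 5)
Your proposal identifies the right ingredients but omits the step that actually drives the proof. For (a) and (b), the paper does not deduce the shape of $A$ from a global claim that $\Sat_{|\RR|}(\dot J)$ misses all extraneous marks; it proves directly, for every $\RR$-door $\PPP$ at a local $s$-component $\nu'>\nu$, that the set $B_\PPP=\{x\in\dot J:\ell^+_x\cap|\RR|\mbox{ cuts }\PPP\}$ is both open and closed in $\dot J$, hence $\emptyset$ or all of $\dot J$. Closedness is compactness of $\PPP$ plus flow continuity; openness is exactly where (DS) enters, since (DS) prevents the type along $\ell^+_x$ from switching from i-i to i-t (the points of type i-t lie on doorjambs of free $\RR$-doors, which are mobile stains). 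It is this dichotomy that forces the \emph{entire} interval $\dot J$ to propagate uniformly into the unfree door at $\mu$, so that $A$ is a single interval from the center of that door to an interior point of its handrail. Your substitute --- Remark~\ref{rk:stains}(1) plus ``center-manifold/Hartman--Grobman analysis'' --- does not bridge that gap: Remark~\ref{rk:stains}(1) only gives uniqueness of the \emph{local} component of $A$ at its accumulation point in $|\Omega|$ (and its hypothesis concerns fixed marks only), while the center-manifold estimates are local at the singular points and say nothing about where the opposite end of $A$ lands. In particular, avoiding free-door doorjambs does not by itself rule out that a proper subinterval of $\dot J$ escapes through the interior of some free door before reaching $\RR(\mu)$, which would leave $A$ with an extremity away from the handrail. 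You yourself flag the ``reduction'' as the hard, unproved part.

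For (c), the proposal contains an outright error. With $F_{\RR(\nu)}\subset\partial\RR(\nu)^{out}$, the manifold $W^1_{p_\nu}$ is \emph{stable}, so Theorem~\ref{th:path of a trace mark} applies with $\epsilon=-$; the path $\Theta(\nu)$ then \emph{ends} at $p_\nu$, so a local $s$-component $\mu>\nu$ cannot lie on it, and the lemma makes no such claim in case (c). You nevertheless write that $\overline{\Sat^+_{|\RR|}(T_A)}$ accumulates along $|\Theta(\nu)|$ and conclude $\mu\in\Theta(\nu)$, which applies the theorem with the wrong sign. The actual content of (c) --- that $A$ lies in a single unfree door $\PPP_A$ and has one of three explicit shapes --- requires the paper's contradiction argument tracking the possible type transitions along $\ell^+_x$ (ruling out a first switch to t-i by reducing to case (b), and to t-e by a separate open-and-closed analysis of leaves hitting a handrail), none of which is reproduced in your sketch.
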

\begin{proof}
	Put $A=\Sat_{|\RR|}(J)\cap F_{\RR(\mu)}$, where $J$ is either the generating fixed mark at $p$, in case (a), or a free door doorjamb  in $\RR(\nu)$, in cases (b) or (c). 
	
	Let us prove (a) and (b) jointly, since the proof is the same. Notice that the hypothesis in (b) implies that $A=\Sat^+_{|\RR|}(J)\cap F_{\RR(\mu)}$. Without lost of generality, we suppose that in case (a) we have that $W^2_p$ is the unstable manifold at $p$.
	
	First, we show that if  $\nu'>\nu$ and $\PPP$ is a $\RR$-door (free or unfree) at $\nu'$, then the set
	$$
	B_{\PPP}=\{x\in\dot{J}\,:\,\ell_x^+\cap|\RR|\mbox{ cuts }\PPP\}
	$$
	is open and closed in $\dot{J}$, thus either empty or equal to $\dot{J}$. To see that $B_{\PPP}$ is closed, suppose that $x\in\dot{J}$ is the limit point of a sequence $\{x_n\}\subset B_{\PPP}$. If $y_n\in\ell^+_{x_n}\cap|\RR|\cap\PPP$, by compactness of $\PPP$, we may assume that there exists $y=\lim_ny_n\in\PPP$. By continuity of the flow, we have $y\in\ell^+_x$, which shows that $x\in B_\PPP$. On the other side, to show that $B_{\PPP}$ is open, let us take $x\in B_{\PPP}$ and put $\ell^+_x\cap\PPP=\{y\}$. Now, given that points in an interval of $\ell^+_x$ to the right of $x$ are of type i-i ($\dot{J}$ is contained in the interior of $\RR$ in the case (a), and by the hypothesis  $F_{\RR(\nu)}\subset\partial\RR(\nu)^{in}$ in the case (b)), we have that the point $y$ must be of type i-i or i-e relatively to $|\RR|$. This is consequence of the fact that,  by the (DS) property, the type can not switch to i-t along the open segment between $x$ and $y$ in $\ell^+_x$ (recall that, by Proposition~\ref{pro:trans-distinguished}, the points of type i-t are contained in doorjambs of free $\RR$-doors). We conclude that $y$ belongs to $\PPP\setminus\partial\PPP$, an open subset of $F_{\RR(\nu')}$. This proves that $x$ is an interior point of $B_{\PPP}$ using again the continuity of the flow. Notice that we have shown that $\Sat^+_{|\RR|}(\dot{J})\cap\PPP\subset(\PPP\setminus\partial\PPP)$.
	
	The proof of statements (a) and (b) can be deduced now from Theorem~\ref{th:path of a trace mark} as follows. In case (b), we have that $J$ is a trace mark in $\nu$ at its base point $b(J)$. In case (a), the positive saturation of $J$ produces, in turn, a trace mark in an immediate successor of $p$ in $\Pi^1_p\cup\Pi^2_p$ (cf. proof of Corollary~\ref{cor:sat-w2}). In both cases, for $\nu'\in\VV(\Omega)$, Theorem~\ref{th:path of a trace mark} guarantees that  there is a subinterval $\tilde{J}\subset\dot{J}$ with extremity $b(J)$ satisfying $\Sat^+_{|\RR|}(\tilde{J})\cap F_{\RR(\nu')}\ne\emptyset$ if, and only if, $\nu'$ belongs to the path $\Theta(\nu)$, in case (b), or to the path $\Pi^1_p\cup\Pi^2_p$, in case (a). Moreover, in this situation, $\Sat^+_{|\RR|}(\tilde{J})\cap F_{\RR(\nu')}$ is an interval accumulating at the center of an $\RR$-unfree door $\PPP_{\nu'}$ at $\nu'$ and, by the property just proved, we must have $B_{\PPP_{\nu'}}=\dot{J}$. In this way, we put $\tilde{J}=\dot{J}$ and  $\Sat^+_{|\RR|}(\dot{J})\cap F_{\RR(\nu')}$ is an interval contained in $\PPP_{\nu'}\setminus\partial\PPP_{\nu'}$. Applied to $\nu'=\mu$, we deduce that $A$ is a well positioned curve inside $\PPP_\mu$: the extremity of $\overline{A}$ different from the center of $\PPP_\mu$ cannot belong to an unfixed doorjamb of $\PPP_\mu$ (by means of the (DS) property since this doorjamb is itself a mobile stain), nor to a fixed doorjamb contained in $D$. This proves (a) and (b).
	
	Let us prove (c). First, we show that $A$ is contained in the union of unfree $\RR$-doors in $\RR(\mu)$. Reasoning by contradiction, suppose that there exists some $x\in J$ such that $\ell^+_x\cap|\RR|$ cuts the fence $F_{\RR(\mu)}$ at some $y\in A$ which does not belong to any unfree $\RR$-door. Using that the union of unfree $\RR$-doors at $\mu$ is closed in $F_{\RR(\mu)}$, we may assume that $x\in\dot{J}$. By hypothesis, the point $x$ is of type i-t relatively to $|\RR|$, and all the points in an open subinterval of $\ell^+_x$ to the right of $x$ are of type t-t. Also, the point $y$ can not be of type t-t, because $y$ does not belong to any unfree door (the set of points of type t-t in $F_{\RR(\mu)}$ is equal to the union of the handrails of unfree doors, by Proposition~\ref{pro:trans-distinguished})). Consequently, there is a first point $z\ne x$ in the segment of the leaf $\ell^+_x$ between $x$ and $y$ where the type has switched to t-i or to t-e. Let us see that this is not possible.

	Suppose that the point $z$ is of type t-i. Then $z$ is in $\dot{J}'$, where $J'$ is an unfixed doorjamb of a free $\RR$-door at some $\nu'\in\VV(\Omega)$. Moreover, $z\ne y$ since $J'$ is also a doorjamb of an unfree $\RR$-door at $\nu'$. Hence $\nu<\nu'<\mu$. On the other hand, necessarily $F_{\RR(\nu')}\subset\partial\RR(\nu')^{in}$, thus we are in the situation of item (b) for the mobile stain $\Sat_{|\RR|}(J'\setminus\{b(J')\})\cap F_{\RR(\mu)}$ generated at $\nu'$. As a consequence, the leaf $\ell^+_{z}\cap|\RR|$ can only intersect $F_{\RR(\mu)}$ inside an unfree door. This contradicts the existence of $y$. 
	
Assume now that the point $z$ is of type t-e. This implies that $z=y$ and that $\ell^+_x\cap|\RR|$ is just the segment from $x$ to $y$ in the leaf $\ell_x$. Notice that $y$ is an interior point (not an extremity) of the handrail $h(\DD)$ of a free $\RR$-door $\DD$ at $\mu$. In other words, $x$ belongs to the set
$$
	K=\{a\in\dot{J}\,:\,\ell^+_a\cap|\RR|\mbox{ cuts }\dot{h(\DD)}\},
$$
	which is open in $\dot{J}$ (using similar arguments as those we have used for the set $B_\PPP$ in items (a), (b) above).
	If $K=\dot{J}$ then the base point $b(J)$ of $J$ is an accumulation point of $K$ and hence the positive leaf $\ell^+_{b(J)}$ either ends in a singular point (placed between $\nu$ and $\mu$) or it 	
	cuts $h(\DD)$. In the first case we would find some $x'\in\dot{J}$ whose positive $\ell^+_{x'}$ has points of type t-i, contrary to what we have proved above. In the second case we have also a contradiction: if $\nu$ is a local s-component associated to a transversal saddle $q\in S_{tr}$, then $b(J)$ is in a fixed mark at $q$ and then $h(\DD)$ (hence $\DD$) will cut the saturation of a fixed stain, i.e., some fixed stain, against statement (a); if $\nu$ is not associated to a transversal saddle, then $b(J)\in D$, so that $\ell_{b(J)}\subset D$, whereas $h(\DD)\cap D=\emptyset$.
	On the other hand, if $K$ is a proper subset of $\dot{J}$, there is some $w\in\dot{J}\setminus K$ in the frontier of $K$ in $\dot{J}$. By the flow continuity, the leaf $\ell^+_w$ cuts the handrail $h(\DD)$ in an extremity point $w'$, since $w$ is not in $K$. Hence, $w'$ belongs to the intersection of $A$ with a doorjamb of $\DD$, which gives also a contradiction with the (DS) hypothesis.
	
	Thus, we can assure that $A\subset\bigcup_{\PPP\in Q}\PPP$, where $Q$ is the family of unfree $\RR$-doors at $\mu$. Let us finish proving that there is one single $\PPP_A\in Q$ such that $A\subset\PPP_A$ and that the last sentence in (c) holds. We have different cases.
	
	- If there exists some $\PPP\in Q$ and some $y\in A\cap int(\PPP)$ with $y\in\ell^+_x$, $x\in \dot{J}$, then, being $x$ of type i-t and $y$ of type i-i with respect to $|\RR|$, there must be some $z$ of type t-i in the piece of  leaf between $x$ and $y$. That is, $z$ belongs to a doorjamb $J'$ of some free $\RR$-door in $\RR(\nu')$ for some $\nu'\in\VV(\Omega)$ with $\nu<\nu'<\mu$. By the (DS) property, we have that $\Sat_{|\RR|}(J\setminus b(J))\cap F_{\RR(\nu')}=J'\setminus b(J')$ and hence $A$ is a mobile stain in $\RR(\mu)$ generated by $J'$. We conclude that $A$ is well-positioned inside $\PPP$ by item (b). Put $\PPP_A:=\PPP$ and we conclude (c) in this case. 
	
	- If $A$ intersects the unfixed doorjamb $L$ of some $\PPP\in Q$, being $L$ also a stain in $\RR(\mu)$, then $A$ coincides with $L\setminus b(L)$, by the (DS) property. We conclude again by putting $\PPP_A:=\PPP$.
	
	- In the remaining case, we have $A\subset\bigcup_{\PPP\in Q}\dot{h(\PPP)}$. By using similar arguments as above, for each $\PPP\in Q$, the set $\{x\in\dot{J}\,:\,\ell^+_x\cap|\RR|\mbox{ cuts }h(\PPP)\}$ is open and closed in $\dot{J}$. We deduce that there is a single $\PPP\in Q$ such that $A\cap\PPP\ne\emptyset$ and, for this $\PPP(=:\PPP_A)$ we have $A\subset\dot{h(\PPP_A)}$, concluding also (c) in this case. 
\end{proof}

In light of the Lemma~\ref{lm:getting-es-adm} we prove that, to get  good saturations, it is enough to have the (DS) property.

\begin{proposition}\label{pro:DS implies GS}
	Let $\RR$ be a distinguished fattening. If $\RR$ has the property of disjoint stains, then $\RR$ has good saturations. 	
\end{proposition}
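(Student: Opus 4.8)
The plan is to verify separately the three constituent conditions (gsfm), (gsfmfd) and (gsfd) of Definition~\ref{def:good-saturations}, using Lemma~\ref{lm:getting-es-adm} as the main engine; the hypothesis (DS) enters only through that lemma, where it is what forces the rigid trichotomy (well-positioned curve / unfixed doorjamb / sub-arc of a handrail) for every stain. First, (gsfm) needs no use of (DS): a fixed mark $I_p$ at $p\in S_{tr}$ is contained in $W^2_p$, hence $\Sat_{|\RR|}(I_p)\subseteq\Sat_{|\RR|}(W^2_p)$, and for $p\ne q$ in $S_{tr}$ the sets $\Sat_{|\RR|}(W^2_p)$ and $\Sat_{|\RR|}(W^2_q)$ are disjoint by the last assertion of Corollary~\ref{cor:sat-w2} (with $|\RR|$ small enough, as always tacitly assumed); this is Remark~\ref{rm:siempre gsfm}.

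For (gsfmfd), let $I_p$ be a fixed mark at $p\in S_{tr}$ and $\DD$ a free $\RR$-door in $\RR(\nu)$ with $\nu\notin\{\nu^+_p,\nu^-_p\}$, and split $I_p=(I_p\setminus\{b(I_p)\})\cup\{b(I_p)\}$, where $b(I_p)=I_p\cap D$. If $A:=\Sat_{|\RR|}(I_p\setminus\{b(I_p)\})\cap F_{\RR(\nu)}$ is non-empty, it is by definition a fixed stain of $\RR$ in $\RR(\nu)$ generated at $p$ — the restriction $\nu\notin\{\nu^+_p,\nu^-_p\}$ being exactly the admissibility requirement — so Lemma~\ref{lm:getting-es-adm}(a) applies and $A$ is a well-positioned curve: it lies inside some unfree $\RR$-door $\PPP$ of $\RR(\nu)$, meeting neither the base nor the doorjambs of $\PPP$. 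Since $\DD$ is free, $\DD\cap\PPP$ is contained in doorjambs of $\PPP$ and $\DD\cap|\Omega|=\emptyset$; hence $A\cap\DD=\emptyset$. Moreover $b(I_p)$ lies in $W^2_p\cap D$, which by Corollary~\ref{cor:sat-w2} sits on an edge of $\Omega$, so the $|\RR|$-leaf through $b(I_p)$ is contained in $|\Omega|$ and can meet $F_{\RR(\nu)}$ only at centers of unfree doors, none of which belongs to the free door $\DD$. Therefore $\Sat_{|\RR|}(I_p)\cap\DD=\emptyset$, which is (gsfmfd).

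The substantial case is (gsfd). Suppose, for a contradiction, that $\DD$ and $\DD'$ are distinct free $\RR$-doors, not associated to the same face of $\Omega$, and that $x\in\Sat_{|\RR|}(\DD)\cap\Sat_{|\RR|}(\DD')$. The connected component $C$ of $\ell_x\cap|\RR|$ through $x$ then contains a point of $\DD$ and a point of $\DD'$; since $Fr(|\RR|)^\pitchfork$ contains no points of type i-i (Proposition~\ref{pro:trans-distinguished}), and since a leaf entering a chimney through its fence must leave through the lid or accumulate at the center and in particular never meets that fence again, these two points are distinct, $C$ is the closed leaf-arc joining them, and $\DD,\DD'$ lie in the fences of two different chimneys joined by a chain of tubes; say $\DD$ is the ``entering'' door at $\mu_1$ and $\DD'$ the ``exiting'' door at $\mu_2$, so that $\mu_1<\mu_2$ in $\VV(\Omega)$. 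I would then analyse the trace of $\Sat_{|\RR|}(\DD)$ on $F_{\RR(\mu_2)}$ via the saturation of $\partial\DD$: the doorjambs of $\DD$ generate mobile stains which, by Lemma~\ref{lm:getting-es-adm}(b) (and $\mu_2\in\Theta(\mu_1)$), are well-positioned curves inside unfree $\RR$-doors of $\RR(\mu_2)$, disjoint from their doorjambs; the handrail of $\DD$ has saturation contained in $Fr(|\RR|)^\smallsmile$; and the base of $\DD$ has saturation contained in $D$, or in $\Sat_{|\RR|}(W^2_q)$ when $\mu_1\in\{\nu^+_q,\nu^-_q\}$ for a transversal saddle $q$. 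As in the treatment of (gsfmfd), and using Lemma~\ref{lm:graph}(v) together with Corollary~\ref{cor:sat-w2}, none of these pieces meets $\DD'$ except possibly along $\partial\DD'$, the base case being precisely the configuration in which $\DD$ and $\DD'$ are the two free doors associated to a common face — excluded by hypothesis (the twin exceptional configuration being the one isolated in Remark~\ref{rk:DS}).

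I expect the main obstacle to be the final ``trapping'' step: deducing from the disjointness of the one-dimensional frames of $\DD$ and $\DD'$ the genuine disjointness of their full three-dimensional saturations. Concretely, one must show that a leaf issued from $int(\DD)$ cannot reach $\DD'$ without its trace on the intermediate fences first crossing the saturation of $\partial\DD$; this is an open-and-closed argument on the two-parameter family of leaves through $\DD$, of the same flavor as those in the proof of Lemma~\ref{lm:getting-es-adm}, and it is here that (DS) is genuinely indispensable — it is what prevents the type of a point from jumping and makes the combinatorial picture on each fence rigid enough for the connectedness argument. The remaining length is bookkeeping: distinguishing whether each free door is ``entering'' or ``exiting'', whether it is associated to a face or to a transversal saddle, and handling, via Remark~\ref{rk:DS}, the exceptional case of coinciding germs at a common base point.
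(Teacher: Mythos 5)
Your handling of (gsfm) and (gsfmfd) is fine, and in fact slightly different from the paper: for (gsfm) you avoid (DS) entirely by going through Corollary~\ref{cor:sat-w2} (as Remark~\ref{rm:siempre gsfm} anticipates), while the paper derives it directly from (DS) by observing that a common point of the two saturations would force the two fixed stains to meet on some fence. For (gsfmfd) you correctly notice, and patch, the small omission in the paper's one-line citation: Definition~\ref{def:stains} only saturates $I_p\setminus\{b(I_p)\}$, so the leaf through $b(I_p)$ needs the separate observation that it lies in $|\Omega|$ and can only touch fences at centers of \emph{unfree} doors. Both are correct.

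The problem is (gsfd), which you yourself call ``the substantial case.'' What you give there is not a proof but a plan with an explicitly acknowledged gap: you set up the contradiction scenario, invoke Lemma~\ref{lm:getting-es-adm}(b) to describe the behaviour of $\Sat_{|\RR|}(\partial\DD)$ on downstream fences, and then say ``I expect the main obstacle to be the final `trapping' step'' and that it should be ``an open-and-closed argument on the two-parameter family of leaves.'' That final step is the entire content of the case, and it is not carried out. Your strategy (constrain the interiors of the saturations by first locating the saturation of the one-dimensional boundary $\partial\DD$) is plausible but is also \emph{not} how the paper proceeds; the paper instead works pointwise on the leaf joining $x\in\DD$ to $z\in\DD'$ and runs a four-case analysis according to whether each of $\DD,\DD'$ lies in the inner or outer frontier of its chimney. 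In each of these four cases one pins down, using Proposition~\ref{pro:trans-distinguished} and all three items of Lemma~\ref{lm:getting-es-adm}, exactly which stratum of $\DD$ (interior, handrail, doorjamb) can contain $x$, similarly for $z$, and derives a contradiction either from the leaf escaping $|\RR|$ or from a forbidden stain intersection. You gesture at ``distinguishing whether each free door is entering or exiting'' but never perform this case split, and it is precisely in case (out,in) that the exceptional configuration of Remark~\ref{rk:DS} (the two doors associated to a common face) appears and must be excluded by hypothesis, and in case (in,out) that one needs the closure/open-and-closed argument you allude to but do not write. As submitted, the proof of (gsfd) is missing, so the proposition is not proved.
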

\begin{proof}
To prove we have the condition (gsfm), suppose that there are fixed marks $I_p,I_q$ at different transversal saddle points $p,q\in S_{tr}$, and that there exists some $x\in\Sat_{|\RR|}(I_p)\cap\Sat_{|\RR|}(I_q)$. Being $x$ in $|\RR|$, the $|\RR|$-leaf through $x$ cuts the c-nbhd $\RR(\nu)$ for some $\nu\in\VV(\Omega)$, then it cuts also the fence $F_{\RR(\nu)}$. Hence, the fixed stains at $\nu$ with generating marks $I_p$ and $I_q$ share some point, in contradiction with the (DS) property.

The condition (gsfmfd) is a direct consequence of item (a) in Lemma~\ref{lm:getting-es-adm}, which asserts, in particular, that a fixed stain cannot cut a free $\RR$-door. 

To finish, let us prove  that $\RR$ satisfies the condition (gsfd). Suppose, by contradiction, that there are free $\RR$-doors $\DD \subset F_{\RR(\nu)}$ and $\DD'\subset F_{\RR(\nu')}$ not associated to the same face of $\Omega$ together with some $x\in\DD$ such that $\ell_x\cap|\RR|$ cuts $\DD'$ at some point $z$. Assume, for instance, that $\nu'>\nu$, so the orientation of $\LL$ in the leaf $\ell_x$ goes from $x$ to $z$. We have several possibilities:

$\bullet$  $\DD\subset\partial\RR(\nu)^{in}$ and $\DD'\subset\partial\RR(\nu')^{in}$. We have that $x$ cannot belong to any of the doorjambs of $\DD$, by Lemma~\ref{lm:getting-es-adm}, (b). Also, 	
	$z$ cannot belong to any of the doorjambs of $\DD'$; otherwise, $x$ would be in an unfree $\RR$-door by the last sentence in Lemma~\ref{lm:getting-es-adm} concerning item (c) with $\nu:=\nu'$ and $\mu:=\nu$  (imcompatible with the fact that $x$ does not belong to a doorjamb of $\DD$). As a consequence, $z$ is of type e-i or e-t relatively to $|\RR|$, so that the negative leaf $\ell^-_z$ scapes from $|\RR|$, contradicting the existence of $z$.
	
	$\bullet$  $\DD\subset\partial\RR(\nu)^{in}$ and $\DD'\subset\partial\RR(\nu')^{out}$. As above, $x$ does not belong to any of the doorjambs of $\DD$ and $z$ does not belong to any of the doorjambs of $\DD'$. Using similar arguments as in the proof of Lemma~\ref{lm:getting-es-adm}, one can see that, if $x\in\dot{h(\DD)}$ (resp. if $x\in(\DD\setminus\partial\DD)$), then $\ell^+_{x'}\cap|\RR|$ cuts $\DD'$ for any $x'\in\dot{h(\DD)}$ (resp. for any $x'\in(\DD\setminus\partial\DD)$). By continuity, this last property must be also true for points in the closure of $\dot{h(\DD)}$ (resp. $\DD\setminus\partial \DD$), thus for some points in the doorjambs of $\DD$. This is a contradiction with Lemma~\ref{lm:getting-es-adm}, (b).  
	
	$\bullet$ $\DD \subset\partial\RR(\nu)^{out}$ and $\DD'\subset\partial\RR(\nu')^{in}$. The point $x$ belongs to one of the doorjambs of $\DD$; otherwise, $x$ would be of type t-e or i-e relatively to $|\RR|$ and the positive leaf $\ell^+_x$ would scape from $|\RR|$. By the same reason, $z$ belongs to one of the doorjambs of $\DD'$. Notice, moreover, that $x\in D$ if, and only if, $z\in D$. This is the forbiden situation in which $\DD$ and $\DD'$ are associated to the same face of $\Omega$. The case where $x$ and $z$ belong to doorjambs of the respective free doors $\DD$ and $\DD'$, but $x,z\not\in D$, gives also a contradiction with the (DS) property. 
	
	$\bullet$ $\DD\subset\partial\RR(\nu)^{out}$ and $\DD'\subset\partial\RR(\nu')^{out}$. As in the precedent case, $x$ belongs to one of the doorjambs of $\DD$ and hence, by Lemma~\ref{lm:getting-es-adm}, (c), $z$ belongs to some unfree $\RR$-door of $F_{\RR(\nu')}$, thus in one of the doorjambs of $\DD'$. This gives again a contradiction with the last sentence of Lemma~\ref{lm:getting-es-adm} concerning (b) with $\nu:=\nu'$, $\mu:=\nu$. 
\end{proof}

\subsection{Proof of the good saturations property}
The following result along with Proposition~\ref{pro:DS implies GS}, gives the proof of Theorem~\ref{th:good-sat}.

\vspace{.2cm}
\noindent {\bf Theorem~\ref{th:good-sat}'}.
{\em Assume that $\mathcal{M}=(M,\LL,D)$ is not s-resonant and of Morse-Smale type. Given a distinguished fattening $\KK$ over $\Omega$, there exists a distinguished refinement $\wt{\KK}<\KK$ that satisfies the (DS) property.
}

The strategy is to obtain the refinement $\wt{\KK}$ after applying Proposition~\ref{pro:getting-distinguished} to the initial fattening $\KK$ by imposing convenient systems of entrances at the graph sources, i.e., the repeller $D$-node points.
The systems of entrances will be chosen so that the stains at those points
are disjoint, and so that this property is susceptible to be propagated ``along'' the graph.
To do this, we introduce an auxiliar definition that highlights
the intermediate steps in the recursive procedure towards the (DS) property.

\begin{definition}\label{def:qds}
	Let $\RR$ be a predistinguished fattening of $\Omega$ and let $\nu\in\VV(G)$. Assume that $\RR$ is distinguished at the vertex corresponding to $\nu$.
	\begin{itemize}
	\item[(DS)] We say that $\RR$ has the {\em Property $(DS)^{\le\nu}$} if any pair of distinct stains $A,A'\in\Upsilon^\RR_{\RR(\nu)}\cup\Lambda^{\RR(\le)}_{\RR(\nu)}$ are mutually disjoint.
	\item[(qDS)] We say that $\RR$ has the {\em Property $(qDS)^{>\nu}$} if, given  $A\in
	\Upsilon^\RR_{\RR(\nu)} \cup \Lambda^{\RR(>)}_{\RR(\nu)}$, we have that:
	
	- If $J$ is a doorjamb of a free $\RR$-door at $\nu$, then either $A\cap J=\emptyset$ or the germs of $\overline{A}$ and $J$ at $b(J)$ coincide.
	
	 - If $\overline{A}\cap b(\RR(\nu))\ne\emptyset$ then any connected component $Y$ of $A$ satisfies also $\overline{Y}\cap b(\RR(\nu))\ne\emptyset$ and, either $Y$ does not intersect the interior of an unfree $\RR$-door or $Y$ is a well-positioned curve on it. Moreover, in this last situation, if  $A'$ is another stain $A'$ of $\RR$ in $\RR(\nu)$ and $Y\cap A'\ne\emptyset$, then $Y\subset A'$.
\end{itemize}
\end{definition}

\strut

{\bf Proof of Theorem~\ref{th:good-sat}'.}

We may assume that $\KK$, hence any pre-distinguished refinement of $\KK$, already satisfies the property of good saturations for fixed marks (gsfm) (cf. Remark \ref{rm:siempre gsfm}). Thus, any two different fixed stains of a refinement of $\KK$ do not intersect.

Consider now refinements $\CC_q<\KK(\nu_q)$ for any  $q\in N^r$ in such a way that, being
$$
\AAA_q=
\bigcup_{A\in\Lambda^\RR_{\CC_q}\cup\Upsilon^\RR_{\CC_q}}
\,A
$$
the union of all stains of $\RR$ in $\CC_q$, then it holds: 
\begin{enumerate}[(i)]
\item Any connected component $Y$ of $\AAA_q$ is an interval whose closure is a closed interval with extremities $b(Y),h(Y)$, where $b(Y)\in b(\CC_q)$ and $h(Y)\in h(F_{\CC_q})$. 
\item If $Y,Y'$ are two connected components of $\AAA_q$ then either the intersection $\overline{Y}\cap\overline{Y'}$ is empty or it contains just the common extremity $b(Y)=b(Y')$. 
\end{enumerate}
Such refinements $\CC_q$ (see Figure \ref{Fig:PruebaDS_1}) can be obtained by means of Lemma~\ref{lm:stains}, to guarantee item (i), and Remark~\ref{rk:sat-empty-intersection} to get item (ii).

\begin{figure}[h]
	\begin{center}
		\includegraphics[scale=0.60]{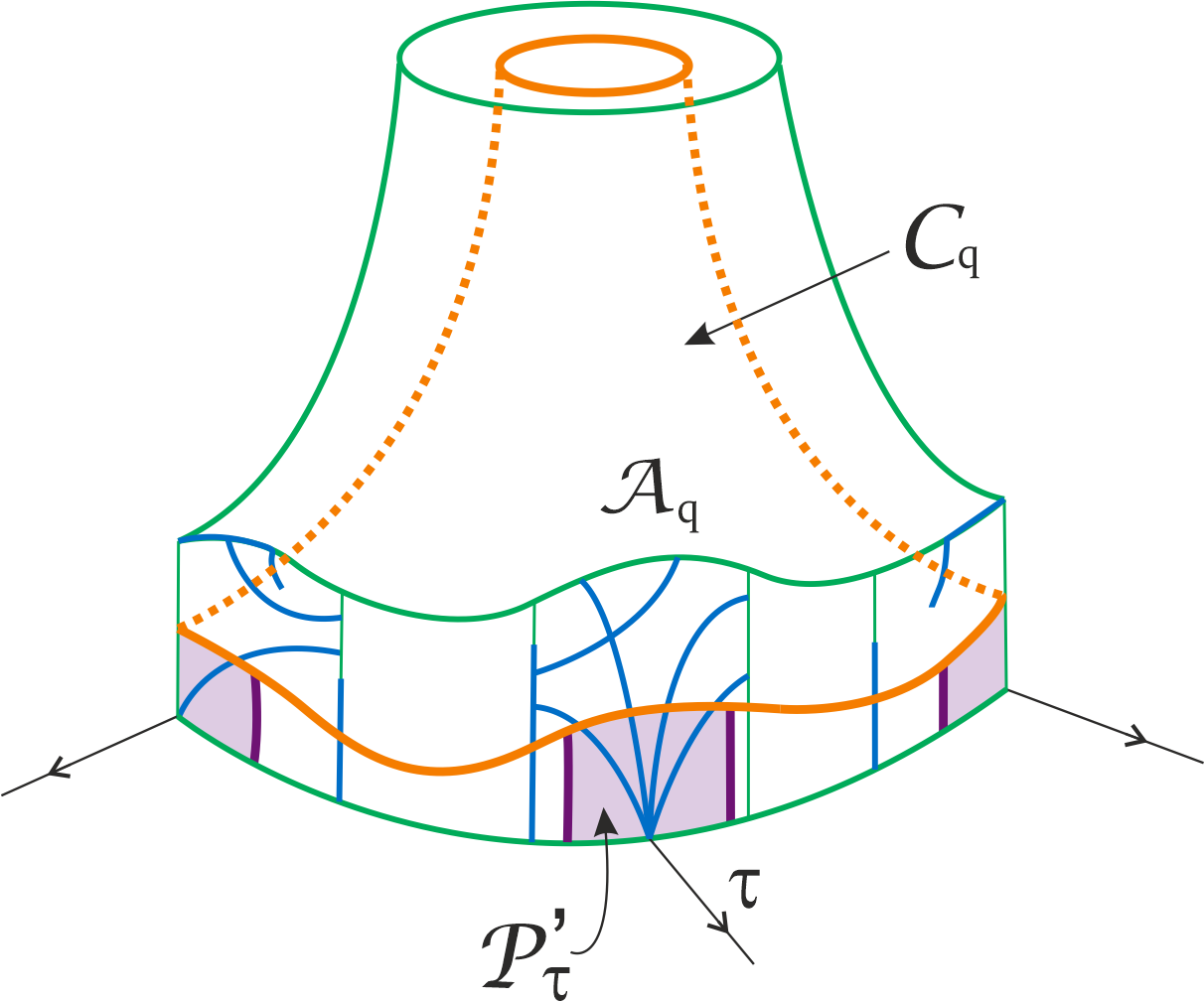}
	\end{center}
	\caption{Refinement of $\KK(\nu_q)$ to get the (DS) property.}
	\label{Fig:PruebaDS_1}
\end{figure}

The distinguished refinement $\wt{\KK}$ required in Theorem~\ref{th:good-sat}' is obtained by applying Proposition~\ref{pro:getting-distinguished} to the whole graph $G=\Omega$ together with the collection $\{\EE_\nu\}_{\nu\in\VV(\Omega)}$ of systems of entrances of  $\KK$, where $\EE_\nu=\emptyset$, if $\nu\not\in N^r$, and $\EE_{\nu_q}=\{L_{\CC_q}\}$, if $q\in N^r$ (thus $\EE_{\nu_q}$ is a complete system of entrances of $\KK(\nu_q)$ and the corresponding refinement obtained from it is precisely $\CC_q$). 

 As we know, if we apply Proposition~\ref{pro:getting-distinguished}, we can get many possible resulting refinements as application. To get one of them, say $\wt{\KK}$, having the (DS) property, we need to control the process according to what was discussed in Scholium~\ref{sch-prop-distinguished}. To be more precise, if $l=l(\Omega)$ is the length of the graph $\Omega$, the refinement $\wt{\KK}$ will be the last of a sequence of pre-distinguished refinements 
\begin{equation}\label{eq:sequence-dist-2}
\KK=\NN_{l+1}>\NN_l>\cdots>\NN_1>\NN_0=\wt{\KK},
\end{equation}
satisfying the conditions in (\ref{eq:prop-scholium}). Theorem~\ref{th:good-sat}' will be a consequence of the following claim.

\begin{quote}
{\bf Claim.-}
 Refinements in (\ref{eq:sequence-dist-2}) can be performed so that, given $j$, the fattening $\NN_j$ safisfies Property $(DS)^{\le\nu}$, for any  $\nu\in\VV(\Omega)\setminus\VV(\Omega^{j-1})$, and Property $(qDS)^{>\mu}$, for any $\mu\in\VV(\Omega^j)\setminus\VV(\Omega^{j-1})$.
\end{quote}

The claim proof goes by inverse recursion for $j=l,l-1,...,0$. Although a priori it seems that only condition $(DS)^{\le\nu}$ counts to get Theorem~\ref{th:good-sat}', we are led to consider also condition $(qDS)$ for the recurrence to work, as we will see next.

\strut

Let us start by constructing $\NN_l$. Notice that, if $\nu\in\VV(\Omega)\setminus\VV(\Omega^{l-1})$, then $\nu=\nu_q$ is the unique local s-component associated to a $D$-repeller point $q\in N^r$. For any such $q$, we have fixed the value $\NN_l(\nu_q):=\CC_{q}$, a refinement of $\KK(\nu_q)$. Let $\tau$ be an edge starting at $q$ and let $\PPP_\tau=\partial\KK(\tau)^{in}$ be the unfree $\KK$-door at $q$ with center at $c_\tau$, where $\{c_\tau\}=F_{\KK(\nu_q)}\cap\tau$. Taking into account the properties imposed to $\CC_q$, we may consider a refinement $\TT_\tau<\KK(\tau)$ defined by setting its inner part equal to
$\partial\TT_\tau^{in}=\PPP'_\tau$, where $\PPP'_\tau \subset \text{Int}_{F_{\KK(\nu_q)}}(\PPP_\tau)\cap F_{\CC_q}$ is a door in $F_{\CC_q}$  (in particular, the doorjambs of $\PPP'_\tau$ do not intersect the ones of $\PPP_\tau$) with center at $c_\tau$ and so that, if $Y$ is a connected component of $\AAA_q$, then $Y\cap \PPP'_\tau\ne\emptyset$ if, and only if, $Y$ has one extremity at $c_\tau$ and, in this case, $\overline{Y}$ is contained in $\PPP'_\tau$ and does not intersect the doorjambs of $\PPP'_\tau$ (see Figure \ref{Fig:PruebaDS_1} ).

Now, let $\NN_l$ be the fattening over $\Omega$ defined by $\NN_l(\nu_q):=\CC_q$ and $\NN_l(\tau):=\TT_\tau$, for any $q\in V(\Omega)\setminus V(\Omega^{l-1})$ and any $\tau\in\alpha^{-1}(q)$, and by putting $\NN_l=\KK$ elsewhere. By construction, $\NN_l$ is a pre-distinguished refinement of $\KK$ and it is distinguished at any $q\in V(\Omega)\setminus V(\Omega^{l-1})$. Moreover, the fattening $\NN_l$ has the property $(DS)^{\le\nu_q}$ for any $q\in V(\Omega)\setminus V(\Omega^{l-1})$ (observe that $\Upsilon^{\NN_l}_{\CC_q}=\Upsilon^{\KK}_{\CC_q}$ and that $\Lambda^{\NN_l(\le q)}_{\CC_q}$ consists only on the family of subsets of the form $J\setminus\{b(J)\}$, where $J$ is an unfixed doorjamb of a door $\PPP'_\tau$ with $\tau\in\alpha^{-1}(q)$). Finally, we check that $\NN_l$ satisfies the property $(qDS)^{>\nu_q}$ for any $q\in V(\Omega)\setminus V(\Omega^{l-1})$. Let $A\in\Upsilon^{\NN_l}_{\NN_l(\nu_q)}\cup\Lambda^{\NN_l(>)}_{\NN_l(\nu_q)}$. In the case that $A$ is a new stain not existing for $\RR=\NN_{l+1}$ (that is, $A$ is mobile generated at an immediate succesor $\nu'$ of $\nu_q$, only in case that $\NN_l$ is distinguished also at the point corresponding to $\nu'$), we have that $A$ is contained in the boundary of a door of the form $\PPP'_\tau$ and its germ at $\overline{A}\cap b(\NN_l(\nu_q))$ coincides with that of a doorjamb of $\PPP'_\tau$. Otherwise, $A$ coincides with some element of $\Upsilon^{\RR}_{\NN_l(\nu_q)}\cup\Lambda^{\RR(>)}_{\NN_l(\nu_q)}$ and it is contained in the union of the doors $\PPP'_\tau$, where $\tau$ runs over $\alpha^{-1}(q)$. By construction, we show the required properties for $(qDS)^{>\nu_q}$ in both cases.

\vspace{.2cm}

Assume that for some $k\le l$ we have already constructed a subsequence $\NN_l>\NN_{l-1}>\cdots>\NN_{k}$ of (\ref{eq:sequence-dist-2}) such that any of its terms satisfies the claim. We build the next fattening $\NN_{k-1}$ in three steps:
\begin{enumerate}
	\item We determine certain refinements $\CC_\mu<\NN_k(\mu)$ and $\TT_\tau<\NN_k(\tau)$, for any $\mu\in\VV(\Omega^{k-1})\setminus\VV(\Omega^{k-2})$ and any $\tau\in\tilde{\alpha}^{-1}(\mu)$.
	\item We define $\NN_{k-1}$ by setting $\NN_{k-1}(\mu):=\CC_\mu$ and $\NN_{k-1}(\tau):=\TT_\tau$ for $\mu,\tau$ as in $1)$, and $\NN_{k-1}:=\NN_k$ elsewhere.
	\item We check that $\NN_{k-1}$ is distinguished at any $\mu\in\VV(\Omega^{k-1})\setminus\VV(\Omega^{k-2})$ and satisfies the claim for index $j:=k-1$.
\end{enumerate} 
A pertinent remark should be made about the last step: in order to check that $\NN_{k-1}$ satisfies the claim, we just need to show that it satisfies properties $(DS)^{\le\mu}$ and $(qDS)^{>\mu}$ for any $\mu\in\VV(\Omega^{k-1})\setminus\VV(\Omega^{k-2})$. In turn, by virtue of Remark~\ref{rk:stains1}, these properties depend exclusively on the modifications of $\NN_k$ performed at $\mu$, that is, the values $\CC_{\mu}$ and $\TT_{\tau}$ for $\tau\in\tilde{\alpha}^{-1}(\mu)$. In other words, we check  properties $(DS)^{\le\mu}$ and $(qDS)^{>\mu}$ while we build the refinements $\CC_\mu$, $\TT_\tau$, even if we are not completely done with the first step yet.

\strut

We distinguish several situations according to the possibilities for $\mu\in\VV(\Omega^{k-1})\setminus\VV(\Omega^{k-2})$.

\vspace{.2cm}

(s0) {\em The local s-component $\mu=\nu_q$ is associated to a $D$-repeller $q\in N^r$.} In this case, we just put $\CC_\mu=\CC_q$, where this last c-nbhd is the refinement of $\KK(\nu_q)$ ($=\NN_k(\nu_q)$) fixed at the beginning of the proof. For any $\tau\in\tilde{\alpha}^{-1}(\mu)$, we determine a refinement $\TT_\tau<\NN_k(\tau)$ by setting $\partial\TT_\tau^{in}=\PPP'_\tau$, where $\PPP'_\tau$ is a door in $F_{\CC_q}$ with the same properties as stated for $j=l$. 
Analogously to that case, we conclude that $\NN_{k-1}$ is distinguished at $\mu$ and satisfies properties $(DS)^{\le\mu}$ and $(qDS)^{>\mu}$.

\vspace{.3cm}

(s1) {\em The lid $L_{\NN_k(\mu)}$ of $\NN_k(\mu)$ coincides with $\partial\NN_k(\mu)^{in}$, and we are not in the situation (s0).} In this case, there is a single edge $\sigma$ ending at $\mu$. Besides, by the Morse-Smale condition, there is a unique local s-component $\nu$ where $\sigma$ starts. Since $\NN_k$ is pre-distinguished, $\partial\NN_k(\sigma)^{out}$ is a predoor of $L_{\NN_k(\mu)}$ and we put $\CC_\mu$ to be the refinement of $\NN_k(\mu)$ for which $L_{\CC_\mu}=\partial\NN_k(\sigma)^{out}$ (in fact, this is the unique way to assign a value $\NN_{k-1}(\mu):=\CC_\mu$ in order that $\NN_{k-1}$ in the sequence (\ref{eq:sequence-dist-2}) is distinguished at $\mu$). Also, since $\NN_k$ is distinguished at $\nu$, the  inner part  $\PPP_\sigma=\partial\NN_{k}(\sigma)^{in}$ is either equal to the lid $L_{\NN_k(\nu)}$ or to an unfree $\NN_k$-door in the fence $F_{\NN_k(\nu)}$.
We show first that the following property is satisfied:
\begin{quote}
($P_1$) Given $A\in\Upsilon^{\NN_k}_{\CC_\mu}\cup\Lambda^{\NN_k}_{\CC_\mu}$ accumulating in the base $b(\CC_\mu)$, then $A$ is a union of finitely many mutually disjoint intervals whose closure has one extremity in $b(\CC_\mu)$ and the other one in  $h(F_{\CC_\mu})$.
\end{quote}
To do that, notice that $A_1=\Sat_{|\NN_k|}(A)\cap F_{\NN_k(\nu)}\in\Upsilon^{\NN_k}_{\NN_k(\nu)}\cup\Lambda^{\NN_k}_{\NN_k(\nu)}$. In case $\PPP_\sigma\subset F_{\NN_k(\nu)}$ we have that $A_1$ is contained in $\PPP_\sigma$, an unfree $\NN_k$-door, and accumulates to its center. Also, if $\{J,J'\}$ is the family of unfixed doorjambs of $\PPP_\sigma$ (it may hold $J=J'$), then $h(\PPP_\sigma)\cup J\cup J'$ is sent homeomorphically to $h(F_{\CC_\mu})$ by the flow. Property $(DS)^{\le\nu}$ for $\NN_k$ along Lemma~\ref{lm:getting-es-adm}, in case $A_1\in\Lambda^{\NN_k(\le)}_{\NN_k(\nu)}$, or Property $(qAS)^{>\nu}$ for $\NN_k$, in case $A_1\in\Upsilon^{\NN_k}_{\NN_k(\nu)}\cup\Lambda^{\NN_k(>)}_{\NN_k(\nu)}$, make that any connected component of $A_1$ is a well-positioned curve inside $\PPP_\sigma$ and hence property ($P_1$) follows for $A$. In the case $\PPP_\sigma=L_{\NN_k(\nu)}$, we have that $A_1$ accumulates to some point of $b(\NN_k(\nu))$. If $A_1\in\Lambda^{\NN_k(\le)}_{\NN_k(\nu)}$ then, again by $(DS)^{\le\nu}$, each connected component of $A_1$ is an interval with one extremity in $b(\NN_k(\nu))$ and the other one in $h(F_{\NN_k(\nu)})$. If $A_1\in\Upsilon^{\NN_k}_{\NN_k(\nu)}\cup\Lambda^{\NN_k(>)}_{\NN_k(\nu)}$, the same happens, in virtue of $(qDS)^{>\nu}$ and taking into account that in this situation each connected component of $A_1$ accumulates to some point in $b(F_{\NN_k(\nu)})\cap|\Omega|$. The required property ($P_1$) for $A$ follows as above since $h(F_{\NN_k(\nu)})$ is sent homeomorphically to $h(F_{\CC_\mu})$ in this case.

As a consequence of property ($P_1$), any doorjamb of  a free $\KK$-door in $F_{\RR(\mu)}$ cuts only once the handrail $h(F_{\CC_\mu})$. Hence, if $\tau$ is an edge starting at $\mu$, the set $\PPP_\tau:=\partial\KK(\tau)^{in}\cap F_{\CC_\mu}$ is a door in $F_{\CC_\mu}$. More generally, any connected component of a stain $B$ of $\NN_k$ in $\CC_\mu$ that accumulates to $b(\CC_\mu)\setminus|\Omega|$ also cuts once the handrail $h(F_{\CC_\mu})$. Notice that such stain $B$ different from a doorjamb of some door $\PPP_\tau$ may appear only if $\mu$ is associated to a transversal saddle and $B$ is generated at some $\mu'>\mu$. In particular, such $B$ cannot cut any other stain $A'$ of $\NN_k$ in $\CC_\mu$ (if such intersection occurs, $\Sat_{|\NN_k|}(B)\cap F_{\NN_k(\nu)}$ would be contained in a stain in $\NN_k(\nu)$ generated at some $\nu'>\nu$, contradicting the hypothesis $(qDS)^{>\nu}$ for $\NN_k$.  

Taking these remarks into account, we can consider a refined door $\PPP'_\tau=\PPP'_\tau(\mu)<\PPP_\tau$ (see Figure \ref{Fig:PruebaDS_2}) such that:

\begin{figure}[h]
	\begin{center}
		\includegraphics[scale=0.65]{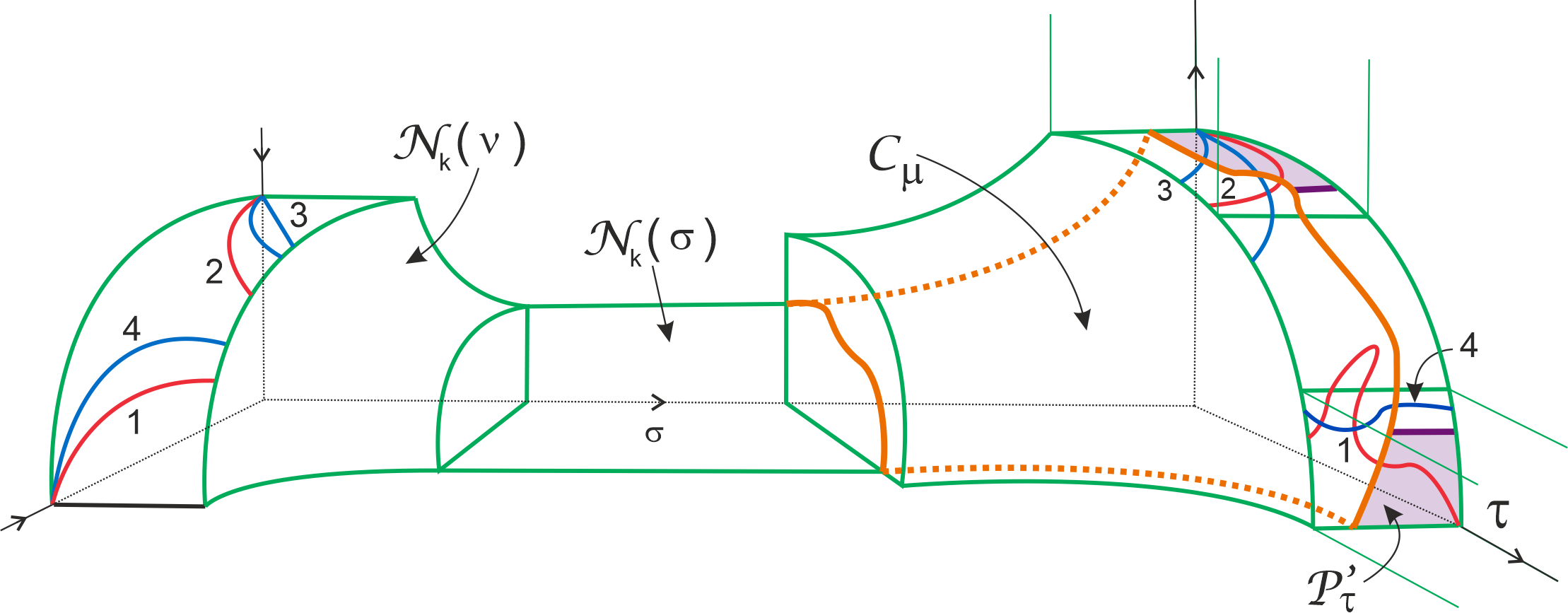}
	\end{center}
	\caption{Refinement of unfree doors in $\mathcal{N}_k(\mu)$ in situation (s1).}
	\label{Fig:PruebaDS_2}
\end{figure}

- For any $A\in\Upsilon^{\NN_k}_{\CC_\mu}\cup
\Lambda^{\NN_k}_{\CC_\mu}$ that accumulates in $b(\CC_\mu)$, either $\overline{A}\cap \PPP'_\tau=\emptyset$ for any $\tau\in\tilde{\alpha}^{-1}(\mu)$ or, if $\overline{A}\cap \PPP'_\tau\ne\emptyset$, then $A\cap\PPP'_\tau$ is a finite union of mutually disjoint well-positioned curves inside $\PPP'_\tau$. In particular, $\PPP'_\tau$ does not cut any unfixed doorjamb of $\PPP_\tau$.

- The unfixed doorjamb of $\PPP'_\tau$ does not intersect any $A\in\Upsilon^{\NN_k}_{\CC_\mu}\cup
\Lambda^{\NN_k}_{\CC_\mu}$.

- If $\mu$ is associated to some $q\in S_{tr}$ (so that $\mu\in\{\mu^+_q,\mu^-_q\}$), then the two doors $\PPP'_\tau(\mu^+_q),\PPP'_\tau(\mu^-_q)$ have equal bases (notice that both $\mu^+_q,\mu^-_q$ belong to $\VV(\Omega^{k-1})\setminus\VV(\Omega^{k-2})$ and are in the situation (s1), in this case).

\begin{remark}{\em
	It is worth noticing that if we do not assume property ($P_1$), or similar, we could not obtain a door $\PPP'_\tau$ with the above properties, as Figure 	\ref{Fig:PruebaDS_3} suggests. This important remark justifies the introduction of the $(qDS)$ condition, an essential tool in the proof of property ($P_1$).
}
\end{remark}

\begin{figure}[h]
	\begin{center}
		\includegraphics[scale=0.65]{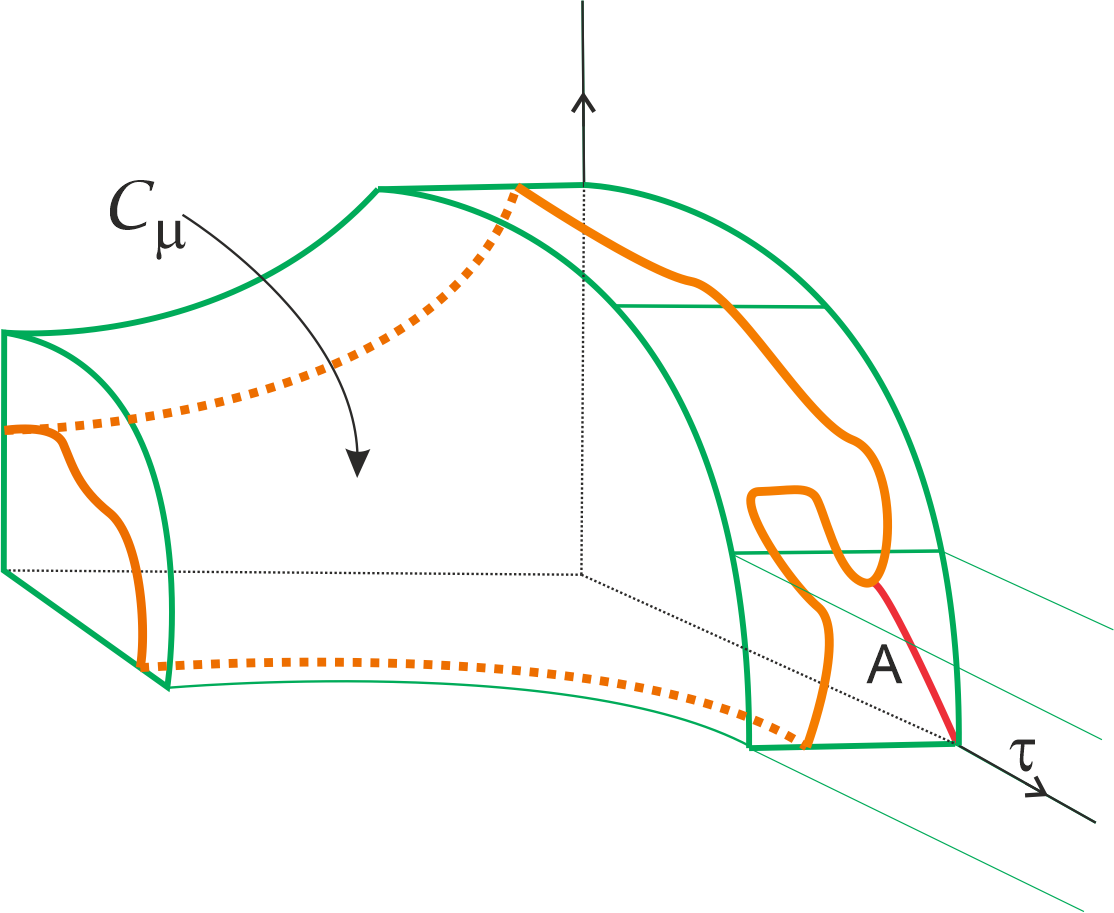}
	\end{center}
	\caption{Situation where the refinement of the unfree door associated to $\tau$ is not possible.}
	\label{Fig:PruebaDS_3}
\end{figure}
The refinement $\TT_\tau$ of the tube $\KK(\tau)$ to be considered for any $\tau\in\tilde{\alpha}^{-1}(\mu)$ is the one determined by setting
$\partial\TT_\tau^{in}=\PPP'_\tau$, in case that $\mu$ is associated to some vertex $q\not\in S_{tr}$, or the union of the two doors $\PPP'_\tau(\mu^\epsilon_q)$, $\epsilon=\pm$, if $\mu$ is associated to some $q\in S_{tr}$. In this way, since $\PPP'_\tau$ is a door in $F_{\CC_\mu}$ for any such $\tau$, we guarantee that the fattening $\NN_{k-1}$ 

Let us check that $\NN_{k-1}$ satisfies the property $(DS)^{\le\mu}$ as follows. Take $A,A'\in\Upsilon^{\NN_{k-1}}_{\CC_\mu}\cup\Lambda^{\NN_{k-1}(\le)}_{\CC_\mu}$ with $A\ne A'$. If both $A,A'$ are fixed doorjambs, we know already that they do not intersect, since $\NN_{k-1}$ satisfies (gsfm). If some of them is a doorjamb of a free $\NN_{k-1}$-door (i.e., an unfixed doorjamb of $\PPP'_\tau$ for some edge $\tau$ starting at $\mu$), then by construction $A\cap A'=\emptyset$. Otherwise, the sets 
$\Sat_{|\NN_{k-1}|}(A)\cap F_{\NN_k(\nu)}$ and $\Sat_{|\NN_{k-1}|}(A')\cap F_{\NN_k(\nu)}$ are contained (equal in this case) in respective elements $A_1,A'_1$ of  $\Upsilon^{\NN_{k}}_{\NN_k(\nu)}\cup\Lambda^{\NN_{k}(\le)}_{\NN_k(\nu)}$ with $A_1\ne A'_1$ (unless one of them, say $A$, is fixed and generated at $\nu$, in which case $A_1$ is a fixed mark). In any case, we have $A_1\cap A'_1=\emptyset$ since $\NN_k$ satisfies property $(DS)^{\le\nu}$. We deduce that $A\cap A'=\emptyset$. 

Finally, we check that $\NN_{k-1}$ satisfies property $(qDS)^{>\mu}$. Let  $A\in\Upsilon^{\NN_{k-1}}_{\NN_{k-1}(\mu)}\cup\Lambda^{\NN_{k-1}(>)}_{\NN_{k-1}(\mu)}$. If $A$ is a ``new'' stain of $\NN_{k-1}$ appearing from the modification of some of the tubes $\TT_\tau$ (that is, $A=\Sat_{|\NN_{k-1}|}(J\setminus b(J))\cup F_{\CC_\mu}$, where $J$ is an unfixed doorjamb of $\partial\TT_{\tau}^{out}\cap F_{\NN_{k-1}(\mu')}$ for $\mu'\in \tilde{\omega}(\tau)$, and in the case where this last disc is a door in $F_{\NN_{k-1}(\mu')}$), then $A$ coincides with a doorjamb $J$ of some $\PPP'_\tau$, as germs at $b(J)$. Moreover, $A$ is an interval and does not intersect the interior of any unfree $\NN_{k-1}$-door. This proves $(qDS)^{>\mu}$ for this situation. Otherwise, $A$ is contained in some $B\in \Upsilon^{\NN_k}_{\CC_\mu}\cup\Lambda^{\NN_k}_{\CC_\mu}$ (recall that $\NN_{k-1}(\mu)=\CC_\mu$). By construction $B$ does not cut any doorjamb of a door $\PPP'_\tau$, which shows the first part of property $(qDS)^{>\mu}$ for $A$. Assume now that $\overline{A}\cap b(\NN_{k-1}(\mu))\ne\emptyset$. 
 By property ($P_1$), we have that each connected component $Y$ of $B$ is an interval with extremities $b(Y)\in b(\NN_{k-1}(\mu))$ and $h(Y)\in h(F_{\NN_{k-1}(\mu)})$. Moreover, $b(Y)\in|\Omega|$ for any such $Y$, so that, $Y$ is a well-positioned curve inside one of the doors $\PPP'_\tau$ (by construction, $\overline{Y}$ does not intersect the doorjambs of such $\PPP'_\tau$). But, being $Y\subset\PPP'_\tau$, we have that 
$Y$ is also a connected component of $A$. This proves the first part of $(qDS)^{>\mu}$. Now, if $A'$ is another stain of $\NN_{k-1}$ in $\NN_{k-1}(\mu)$ that intersects $Y$, by construction, $A'$ cannot be one of the unfixed doorjambs of the doors $\PPP'_\tau$ (the ``new'' stains in $\NN_{k-1}$) and then $A'$ is contained in some stain $B'$ of $\NN_k$ in $\CC_\mu$. Take $\Sat_{|\NN_{k-1}|}(A)\cap F_{\NN_k(\nu)}$ and $\Sat_{|\NN_{k-1}|}(B')\cap F_{\NN_k(\nu)}$, which are contained in respective stains $A_1,B'_1$ of $\NN_k$ in $\NN_k(\nu)$. Moreover, $Y_1=\Sat_{|\NN_{k-1}|}(Y)\cap F_{\NN_k(\nu)}$ is connected and contained in $A_1$. Since $Y\cap A'\ne\emptyset$ by hypothesis, we have $Y_1\cap B'_1\ne\emptyset$. Using that $\NN_k$ satisfies $(qDS)^{>\nu}$, we must have $Y_1\subset B'_1$, and hence also $Y\subset A'$. This proves the second part of $(qDS)^{>\mu}$ for $\NN_{k-1}$.

\vspace{.3cm}

(s2) {\em The inner part $\partial\NN_k(\mu)^{in}$ coincides with the fence $F_{\NN_k(\mu)}$.} Let $\tilde{\omega}^{-1}(\mu)=\{\sigma_1,\ldots,\sigma_r\}$ be the family of edges ending at $\mu$ and  $c_i$ the point defined by $\sigma_i\cap F_{\NN_k(\mu)}=\{c_i\}$, for $i=1,\ldots,r$. Since $\NN_k$ is pre-distinguished, the intersection
$\DD_i:=\partial\NN_k(\sigma_i)^{out}\cap F_{\NN_k(\mu)}$ is a predoor in $F_{\NN_k(\mu)}$, and we have $\DD_i\cap\DD_j=\emptyset$, if $i\ne j$. Using that $\NN_k$ is distinguished at any $p\in V(\Omega^k)\setminus V(\Omega^{k-1})$, it holds one of the following possibilities for each $T_{i}=\partial\NN_k(\sigma_i)^{in}$ (see Figure \ref{Fig:PruebaDS_4}):
\begin{enumerate}[(a)]
	\item $\tilde{\alpha}(\sigma_i)=\{\nu_i\}$ and $T_{i}$ is the lid of $\NN_k(\nu_i)$;
	\item $\tilde{\alpha}(\sigma_i)=\{\nu_i\}$ and $T_{i}$ is a door in $F_{\NN_k(\nu_i)}$;
	\item $\tilde{\alpha}(\sigma_i)=\{\nu^+_i,\nu^-_i\}$ and $T_{i}$ is the union of two doors $T^+_{i}, T^-_{i}$, in $F_{\NN_k(\nu^+_i)}$ and $F_{\NN_k(\nu^-_i)}$, respectively, with equal base.
\end{enumerate}

    To cover all cases, we write $\nu_i^{\epsilon}$, $T_i^\epsilon$, etc., with $\epsilon\in\{+,-\}$  if $\#(\tilde{\alpha}(\sigma_i))=2$, or $\epsilon$ does not appear otherwise.

\begin{figure}[h]
	\begin{center}
		\includegraphics[scale=0.60]{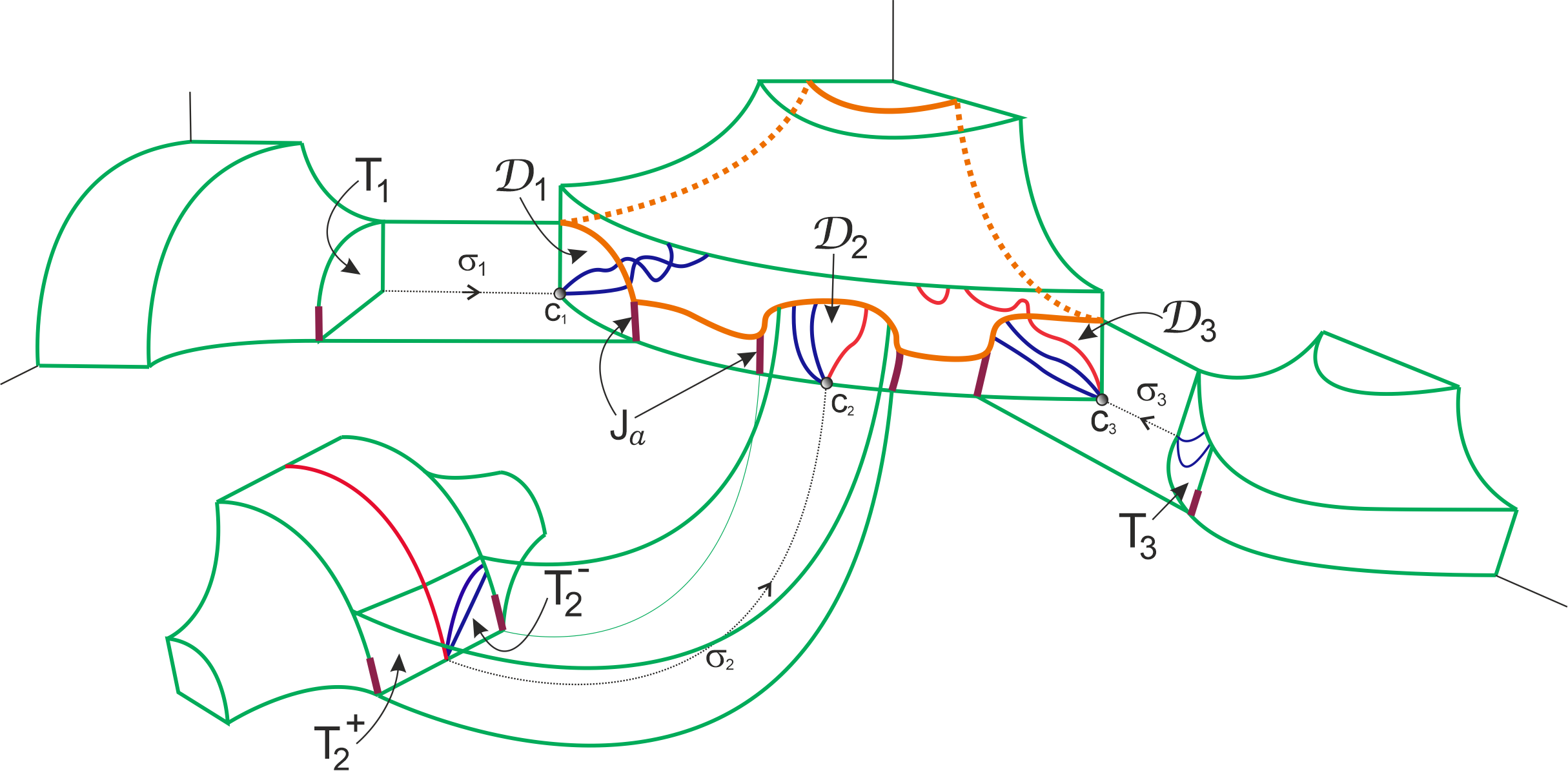}
	\end{center}
	\caption{Situation (s2) with $\mu$ corresponding to a tangential saddle.}
	\label{Fig:PruebaDS_4}
\end{figure}

Let us prove first that the following property is satisfied:
\begin{quote}
($P_2$) Let $A\in\Lambda^{\NN_k(>)}_{\NN_k(\mu)}\cup
\Upsilon^{\NN_k}_{\NN_k(\mu)}$ and let $Y$ be a connected component of $A$ accumulating in $b(\NN_k(\mu))$. Then we have $\overline{Y}\cap b(\NN_k(\mu))=\{c_i\}$ for some $i\in\{1,\ldots,r\}$ and $Y\cap\DD_i$ is an
interval with a extremity at $c_i$ and the other one in $Fr(\DD_{i})\setminus(b(\NN_k(\mu))\cup D)$. 
\end{quote}
{\em Proof of ($P2$)}.- If $A$ is fixed and generated at some $\nu^\epsilon_i$ (case (c)), then ($P_2$) is true (with a single component $Y$) provided that $\NN_k$ is pre-distinguished. If $A$ is fixed but generated at some $\nu<\mu$ with $\nu\ne\nu_j^\epsilon$ for any $j,\epsilon$, then $\Sat_{|\NN_k|}(A)\cap F_{\NN_k(\nu_i^\epsilon)}\ne\emptyset$ for at least one index $i$ (and some $\epsilon\in\{+,-\}$ if $\DD_i$ is in case (c)). Any such non-empty intersection is contained in a fixed stain $A_1$ with the same generating mark as $A$. In the ligth of Lemma~\ref{lm:getting-es-adm} (notice that we have the condition $(DS)^{\le\nu^\epsilon_k}$ by hypothesis), the indices $i,\epsilon$ are unique and $A_1$ is a well-positioned curve relatively to an unfree $\NN_k$-door $\PPP_{A_1}$ in $F_{\NN_k(\nu_i^\epsilon)}$. More precisely, $\PPP_{A_1}=T^\epsilon_i$ in cases (b) or (c), whereas $int(\PPP_{A_1})$ and $h(\PPP_{A_1})$ are sent by the flow, respectively, into $int(T_i)$ and $\partial T_i$, in case (a). We deduce that $A=\Sat_{|\NN_k|}(A_1)\cap F_{\NN_k(\mu)}$ is connected and contained in $\DD_i$ and that ($P_2$) is satisfied.
Finally, suppose that $A$ is either mobile or fixed but generated at some $\nu>\mu$ and let $Y$ be a connected component of $A$ accumulating in $b(\NN_{k}(\mu))$. Applying Lemma~\ref{lm:stains} and Theorem~\ref{th:path of a trace mark} we get $\overline{Y}\cap b(\NN_k(\mu))=\{c_i\}$, for some $i\in\{1,...,r\}$. Thus, $\Sat_{|\NN_k|}(A\cap\DD_i)\cap F_{\NN_k(\nu^\epsilon_i)}$ is non-empty (for some $\epsilon\in\{+,-\}$, if $\DD_i$ is in case (c)) and hence contained in some stain $A_1\in\Upsilon^{\NN_k(>)}_{\NN_k(\nu^\epsilon_i)}
\cup\Lambda^{\NN_k(>)}_{\NN_k(\nu^\epsilon_i)}$. Moreover, as above, $A_1$ intersects the interior of some unfree $\NN_k$-door $\PPP_{A_1}$ and accumulates in $b(\NN_k(\nu_i^\epsilon))$. Using that $\NN_k$ already satisfies $(qDS)^{>\nu_i^\epsilon}$, we get that $A_1\cap\PPP_{A_1}$ is a union of mutually disjoint well-positioned curves inside $\PPP_{A_1}$, one of them containing necessarily $\Sat_{|\NN_k|}(Y\cap\DD_i)\cap\PPP_{A_1}$. Since $\PPP_{A_1}$ is mapped by the flow into $\DD_i$, property $(P_2)$ holds for $Y$ as in the previous situation.

 \strut

Now, for each  $i\in\{1,\ldots,r\}$, we consider a framing of the pre-door $\DD_i$ by distinguishing two cases as follows:

(C1) {\em The s-component $\mu$ is not associated to a transversal saddle.-} (see Figure \ref{Fig:PruebaDS_4}) In this case, the base $b(\DD_i)$ has one or two extremities in $D\setminus|\Omega|$. For any such extremity $a$, we have to choose an unfixed doorjamb $J_a$ of $\DD_i$ with base point at $a$. Being $\Gamma_a$ the face of $\Omega$ containing $a$, we have that $a$ is an accumulation point of a mobile stain $A_a\in\Lambda^{\NN_k(<)}_{\NN_k(\mu)}$, generated at $\mu_1:=\tilde{\alpha}(\G_a)$, and that the germs of $\overline{A}_a$ and $\partial\DD_i$ at $a$ coincide. We claim that $A_a$ does not intersect any stain $B\in\Upsilon^{\NN_k}_{\NN_k(\mu)}\cup
\Lambda^{\NN_k}_{\NN_k(\mu)}$ unless the germs of $B$ and $A_a$ at $a$ are equal. To prove this claim, suppose otherwise that those germs are different and that $A_a\cap B\ne\emptyset$. Being $J_1$ the generating mark of $A_a$ (a doorjamb of a free $\NN_k$-door at $\mu_1$) and $l_1=l(\mu_1)$ the length of $\mu_1$, we would have that $\Sat_{|\NN_{l_1}|}(B)\cap F_{\NN_{l_1}(\mu_1)}$ is contained in some stain $B_1$ of $\NN_{l_1}$ in $\NN_{l_1}(\mu_1)$ that intersects $J_1$, but the germs of $B_1$ and $J_1$ at $b(J_1)$ do not coincide. This contradicts the recurrence hypothesis that $\NN_{l_1}$ satisfies $(qDS)^{>\mu_1}$ (notice that $l_1\ge k$).
We notice also that $A_a\subset\partial\DD_i$ (using for instance Lemma~\ref{lm:getting-es-adm} and taking into account that $\NN_k$ satisfies property $(DS)^{\le\eta}$ for any $\eta\in\VV(\Omega)\setminus\VV(\Omega^{k-1})$). 

If $A_a$ does not intersect  $h(F_{\NN_k(\mu)})$ then we set $J_a:=A_a$. But it is possible that $A_a$ cuts this handrail, in which case, the whole stain $A_a$ could not be chosen as the doorjamb $J_a$ of a new free $\NN_{k-1}$-door. Instead, we take $J_a$ to be a closed interval inside $A_a$ with set of extremities equal to $\{a,b\}$ and which does not intersect $h(F_{\NN_k(\mu)})$. Without any further modification, this would be an inconvenience for having the property $(DS)^{\le\mu}$ (since $A_a$ and $J_a$ would be two different stains that intersect). We propose to reconsider a new sequence of refinements $\RR=\NN'_{l+1}>\NN'_l>\cdots>\NN'_k$ such that $|\NN'_j|=\overline{|\NN_j|\setminus\Sat_{|\RR|}(Q)}$ where $Q$ is a closed disc inside $\DD_i$ whose boundary is the union of two intervals  (see Picture \ref{Fig:PruebaDS_4_2}): one is given by $L:=A_a\setminus J_a$ and the other one is an interval $L'$ going from $b$ to the  extremity of $L$ different from $b$, entirely contained in $int(\DD_i)$ (except for its extremities) and such that $Q$ cuts no stain of $\NN_j$ in $\RR(\mu)$ for any $j$, except $A_a$ (this is possible since $A_a$ cuts no other stain of $\NN_k$).

\begin{figure}[h]
	\begin{center}
		\includegraphics[scale=0.60]{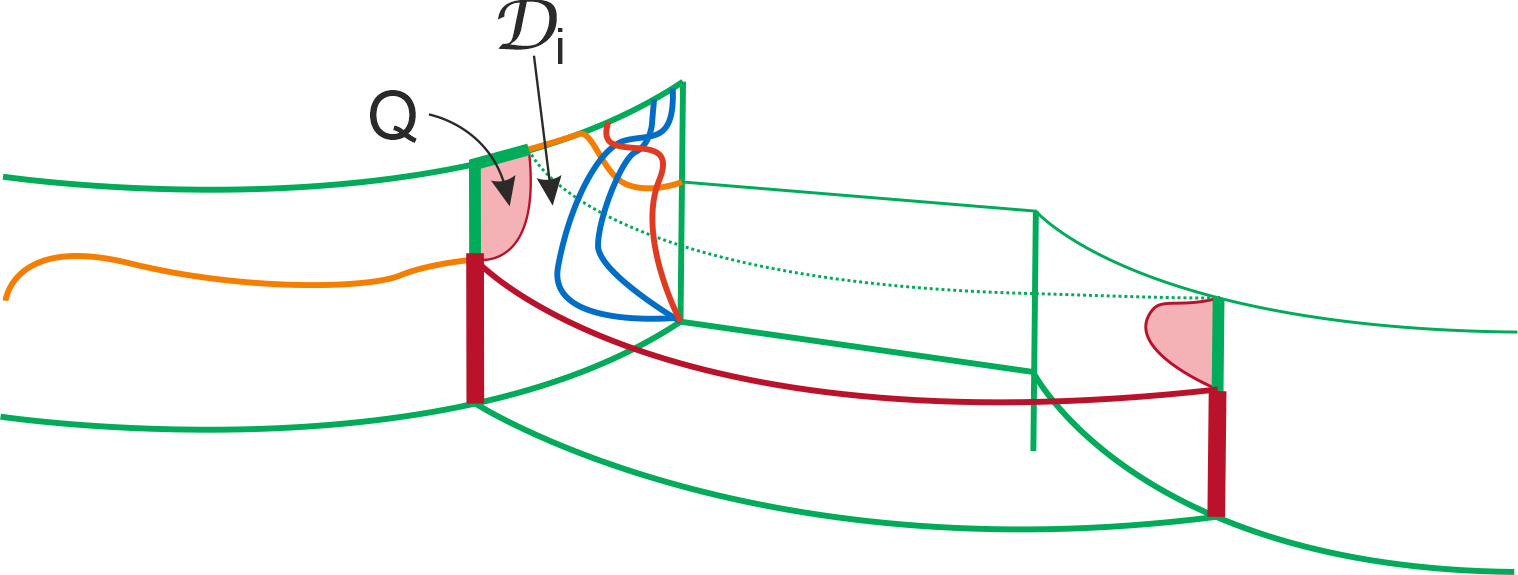}
	\end{center}
	\caption{Situation where we have to remove a disc $Q$ from $\mathcal{D}_i$.}
	\label{Fig:PruebaDS_4_2}
\end{figure}

 The fattenings in the resulting sequence  $(\NN'_j)_{j=l+1,...,k}$ are hence pre-distinguished, they satisfy the properties (\ref{eq:prop-scholium}) in the schema proposed in the Scolium and moreover, they satisfy the corresponding recurrence properties $(DS)^{\le\nu}$ and $(qDS)^{>\nu}$ stated in the Claim. With this modification, $J_a$ is a mobile stain of $\NN'_k$ in $\NN'_k(\mu)$, with generating mark at $\mu_1$ equal to a subinterval $J'_1\subset J_1$ (concretely $J_1\setminus\Sat_{|\NN_k|}(L)$), whereas the other boundary interval $\Sat_{|\NN_k|}(L')$ becomes part of the handrail of the unfree $\NN'_k$-door having $J'_1$ as a doorjamb. In other words, we rename again $\NN_k:=\NN'_k$ and we assume that we are in the precedent case; that is, that $J_a:=A_a$ is equal to the mobile stain with generating mark $J_1$. 

(C2) {\em The s-component $\mu$ is associated to a transversal saddle $q$.-} In this case, we have $r=2$ and, by the Morse-Smale condition, $\tilde{\alpha}(\sigma_i)=\{\nu_i\}$ and $F_{\NN_k(\nu_i)}=\partial\NN_k(\nu_i)^{out}$, for $i=1,2$. Put  $Y_i=\Sat_{|\NN_k|}(W^2_q\cap F_{\NN_k(\mu)}\setminus\sigma_i)\cap F_{\NN_k(\nu_i)}$, $i=1,2$, the fixed stain of $\NN_k$ at $\nu_i$ generated at $q$. They are connected and well-positioned inside the door $T_{i}$, because $\NN_k$ is pre-distinguished. In particular, $Y_i$ cuts the handrail $h(T_{i})$ in a single point $b_i$ which does not belong to any of the doorjambs of $T_{i}$ (see Figure \ref{Fig:PruebaDS_5}). 

\begin{figure}[h]
	\begin{center}
		\includegraphics[scale=0.65]{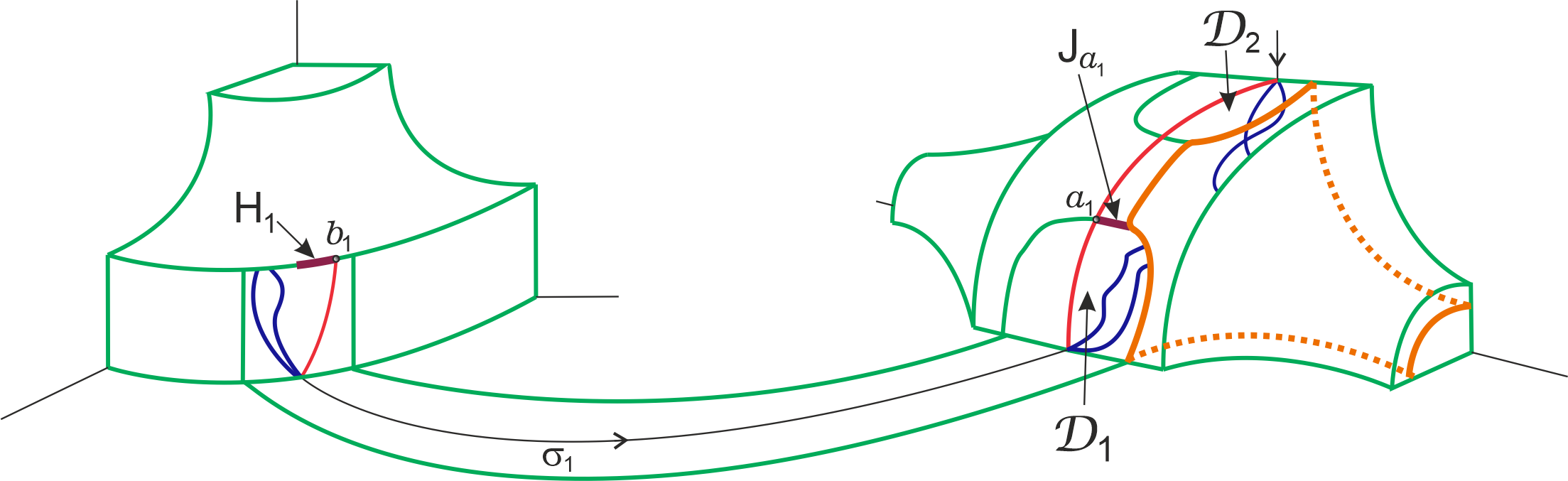}
	\end{center}
	\caption{Situation (s2) with $\mu$ corresponding to a transversal saddle.}
	\label{Fig:PruebaDS_5}
\end{figure}

Notice that the flow sends $b_i$ to one extremity $a_i$ of $b(\DD_i)$ ($\subset W^2_q$), the one that does not belong to $D$, and sends one of the two local connected components of $h(T_{i})\setminus\{b_i\}$ at $b_i$, say $H_i$, into $\partial\DD_i\setminus W^2_q$. We choose the unfixed doorjamb $J_{a_i}$ of $\DD_i$ at $a_i$ to be the image of $\overline{H_i}$ with the requirement that $H_i$ is sufficiently small so that it does not intersect any stain (fixed or mobile) of $\NN_k$ in $\NN_k(\nu_i)$. This is possible thanks to the fact that $\NN_k$ satisfies  $(DS)^{\le\nu_i}$ and $(qDS)^{>\nu_i}$. As a consequence, $J_{a_i}$ does not intersect any 
stain $B\in\Upsilon^{\NN_k}_{\NN_k(\mu)}\cup
\Lambda^{\NN_k(\ne)}_{\NN_k(\mu)}$.

In both cases (C1), (C2), we obtain a system of entrances $\EE_\mu=\{\DD_1,\ldots,\DD_r\}$ in $\NN_k(\mu)$. By Lemma~\ref{lm:cled} along with the choice of frammings for $\DD_i$ just made, we can extend $\EE_\mu$ to a complete system $\wt{\EE}_\mu$ in such a way that no new added door $\DD\in\wt{\EE}_\mu\setminus\EE_\mu$ intersects with any element of $\Upsilon^{\NN_k}_{\NN_k(\mu)}\cup
\Lambda^{\NN_k(\ne)}_{\NN_k(\mu)}$.
We set then $\CC_\mu$ to be the refinement of $\NN_k(\mu)$ given by the complete system $\wt{\EE}_\mu$. On the other hand, in this situation (s2), there is a unique edge $\tau$ starting at $\mu$. Thus, we consider the refinement $\TT_\tau$ of the tube $\NN_k(\tau)$ as the one determined by $\partial\TT_\tau^{in}=L_{\CC_\mu}$. In this way, we guarantee that the next fattening $\NN_{k-1}$ with values $\NN_{k-1}(\mu)=\CC_\mu$ and $\NN_{k-1}(\tau)=\TT_\tau$, will be distinguished at $\mu$.

Let us show that $\NN_{k-1}$ satisfies $(DS)^{\le\mu}$. Let $A,A'$ be two elements of  $\Upsilon^{\NN_{k-1}}_{\CC_\mu}
\cup\Lambda^{\NN_{k-1}(\le)}_{\CC_\mu}$ and suppose that they have non-empty intersection. By construction, both $A,A'$ are contained in the union of the $\NN_{k-1}$-doors $\DD_i$ and hence we can assume that $A\cap A'\cap\DD_i\ne\emptyset$, for some fixed index $i$. Hence the sets $\tilde{A}=\Sat_{|\NN_k|}(A)\cap F_{\NN_k(\nu_i^\epsilon)}$ and $\tilde{A}'=\Sat_{|\NN_k|}(A')\cap F_{\NN_k(\nu_i^\epsilon)}$ have non-empty intersection inside $T^\epsilon_i$, for a convenient choice of $\epsilon$ if necessary. These sets are contained in respective stains $A_1,A'_1$ of $\NN_k$ in $\NN_k(\nu_i^\epsilon)$ if both $A,A'$ are either fixed or generated at some local s-component strictly smaller than $\mu$. By the recurrence hypothesis that $\NN_k$ satisfies $(DS)^{\le\nu}$ for any $\nu<\mu$, we get that $A_1=A'_1$ which gives also $A=A'$. If for instance $A=J_a$ is one of the doorjambs of a new free $\NN_{k-1}$-door constructed above in case (C1), since $A=J_a$ is also a mobile stain generated at some $\mu_1<\mu$, as we have already discussed, $\tilde{A}$ is also a stain of $\NN_{k}$ in $F_{\NN_k(\nu^\epsilon_i)}$ with the same generating mark. We conclude as well $A=A'$. Finally, if for instance
 $A=J_{a_1}$ or $A=J_{a_2}$ in case (C2) above, then $A$ cannot cut any other stain of $\NN_k$ in $\NN_k(\mu)$. 


To finish, we show that $\NN_{k-1}$ satisfies the property $(qDS)^{>\mu}$. Consider a stain $A\in\Upsilon^{\NN_{k-1}}_{\NN_{k-1}(\mu)}
\cup\Lambda^{\NN_{k-1}(>)}_{\NN_{k-1}(\mu)}$. Then, $A$ is contained in a stain $B$ in $\Upsilon^{\NN_{k}}_{\NN_{k}(\mu)}
\cup\Lambda^{\NN_{k}(>)}_{\NN_{k}(\mu)}$ with the same generating mark as $A$. By construction, $B$ does not intersect any of the new doorjambs in $F_{\NN_{k-1}(\mu)}$ and hence so does $A$. This shows the first part of $(qDS)^{>\mu}$. Now, assume that $A$ accumulates in $b(\NN_{k-1}(\mu))$, necessarily only along the points $\{c_i\}$ by property $(P2)$. Let $Y$ be a connected component of $A$ and take an index $i$ such that $Y\subset\DD_i$. We would have that $\Sat_{|\NN_k|}(A)\cap F_{\NN_k(\nu^\epsilon_i)}$ is contained in some stain $A_1$ of $\NN_k$ in $\NN_k(\nu^\epsilon_i)$ which, moreover, accumulates to $b(\NN_k(\nu^\epsilon_i))$ and which contains the connected non-empty set $\tilde{Y}=\Sat_{|\NN_k|}(Y)\cap F_{\NN_k(\nu^\epsilon_i)}$. Using that $\NN_k$ satisfies $(qDS)^{>\nu^\epsilon_i}$, we have that $\tilde{Y}$ is contained in some connected component $Y_1$ of $A_1$ which is a well-positioned curve inside $T^{\epsilon}_i$. Since $Y_1\subset T^\epsilon_i$, this shows that $\tilde{Y}=Y_1$ and that $Y$ is a well-positioned curve inside $\DD_i$.
 Take now another stain $A'$ of $\NN_{k-1}$ in $\NN_{k-1}(\mu)$ with $Y\cap A'\ne\emptyset$. Then  $\Sat_{\NN_{k-1}}(A')\cap F_{\NN_k(\nu^\epsilon_i)}$  is contained in some stain $A'_1$ of $\NN_{k}$ in $\NN_{k}(\nu^\epsilon_i)$ such that $Y_1\cap A'_1\ne\emptyset$. Again, since $\NN_k$ satisfies $(qDS)^{>\nu_i^\epsilon}$ we must have that $Y_1\subset A'_1$ which shows as above that $Y\subset A'$. This gives the second part of $(qDS)^{>\mu}$.

\strut

Summarizing, once we have analyzied the different situations (s0), (s1), (s2) above, we have constructed a pre-distinguished refinement $\NN_{k-1}<\NN_k$ that is, in particular, distinguished at $\mu$ and satisfies $(DS)^{\le\mu}$ and $(qDS)^{>\mu}$, for any $\mu\in\VV(\Omega^{k-1})\setminus\VV(\Omega^{k-2})$. On the other hand, $\NN_{k-1}$ coincides with $\NN_k$ at any $\nu\in\VV(\Omega)$ with $l(\nu)\ne k-2$. Hence $\NN_{k-1}$ continues the sequence (\ref{eq:process-dist}) according to Scholium (\ref{eq:prop-scholium}) and, by the recurrence hypothesis, the claim to be proved holds for the value $k-1$. This ends the proof of Theorem~\ref{th:good-sat}'. 
$\hfill\square$.


\section{Extendable fattenings. Proof of the Main Theorem}\label{sec:extendable}

In this section, we gather all the results showed so far in order to give a proof of Theorem~\ref{th:main}. As mentioned in the introduction, fitting domains will be built from convenient fattenings. Apart from being distinguished and with the property of good saturations, we need an additional feature that permit to close up the free doors associated to faces of the graph.

\begin{definition}\label{def:extendable}
	Let $\KK$ be a distinguished fattening of $\Omega$ and let $U$ be a neighborhood of $D$ containing $|\KK|$. Given a face $\G$ of $\Omega$, we say that $\KK$ is {\em extendable on $\G$ inside $U$} if the following conditions hold:
	\begin{enumerate}[(i)]
		\item For any $x\in\DD^{out}_{\KK,\G}$, the positive $U$-leaf through $x$ cuts $\KK(\tilde{\omega}(\G))$ and the first intersection point, denoted by $\phi(x)$, belongs to $\DD^{in}_{\KK,\G}$.
		\item The map $\phi:\DD^{out}_{\KK,\G}\to\DD^{in}_{\KK,\G}$  is bijective (thus a homeomorphism) and preserves the respective doorjambs and handrails.
		\item Given $x\in\DD^{out}_{\KK,\G}$, if $\kappa_x$ denotes the piece of $U$-leaf from $x$ to $\phi(x)$, we have that $\kappa_x$ cuts $|\KK|$ if, and only if, $x$ belongs to one of the doorjambs of $\DD^{out}_{\KK,\G}$. In this case, $\kappa_x$ is contained in $Fr(|{\KK}\arrowvert_{\partial\G}|)$.
	\end{enumerate}
	The fattening $\KK$ is called {\em extendable inside $U$} if it is so on any face of the graph. It is called {\em extendable} if it is extendable inside $U$ for some $U$.
\end{definition}


Assume that $\KK$ is distinguished and extendable  inside some neighborhood $U$ of $D$. Hence, given a face $\G$ of $\Omega$, there is a flow-box $\KK_\G$, formed by the union of pieces $\kappa_x$ of $U$-leaves as defined in item (iii) above where $x$ runs over $\DD^{out}_{\KK,\G}$. Define the {\em extended support} of $\KK$ as the set
		$$
		\wh{\KK}:=|\KK|\cup\bigcup_{\scriptsize{\G\mbox{ a face }}}\KK_\G.
		$$
Notice that $\wh{\KK}$ does not depend on $U$, as long as $\KK$ is extendable inside $U$. 
In this situation, using Proposition~\ref{pro:trans-distinguished}, we can prove the following lemma that describes the type of any point in the frontier of $\wh{\KK}$. 

\begin{lemma}\label{lm:extended-support}
The extended support $\wh{\KK}$ is a compact semianalytic neighborhood of $D$. Moreover, $Fr(\wh{\KK})$ is a topological, piecewise smooth surface whose transversal frontier $Fr(\wh{\KK})^\pitchfork$ (cf. Section~\ref{sec:SHNR-foliations}) is the union of the following discs:
	\begin{enumerate}[(a)]
	\item The lids of the c-nbhds $\KK(\nu_q)$, where $q\in N$.
	\item The free $\KK$-doors of $F_{\KK(\nu_p^\epsilon)}$, where $p\in S_{tr}$ and $\epsilon\in\{+,-\}$.
\end{enumerate}
More precisely, if $H$ is one of the discs as in (a) or (b) then, always relatively to $\wh{\KK}$, $\mbox{int}(H)$ is formed of points of type i-e or e-i. Moreover,

- If $H=L_{\KK(\nu_q)}$ and $q$ is not a three dimensional saddle, then all the points of $\partial H$ are of type e-i or i-e.

- If $H=L_{\KK(\nu_q)}$ and $q$ is a three dimensional saddle, then all the points of $\partial H$ are of type e-t or t-e (as in case (ii) of Proposition~\ref{pro:trans-distinguished}).

- If $H$ is a free $\KK$-door then the type of the different points in $\partial H$ is exactly the one described in case (i) of Proposition~\ref{pro:trans-distinguished} taking $H$ in the role of $\DD$.
\end{lemma}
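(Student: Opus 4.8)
The plan is to build $\wh{\KK}$ piece by piece from objects whose local structure is already understood, and then read off the frontier type at each point from the gluing. Concretely, I would argue as follows.

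\emph{Step 1: $\wh{\KK}$ is a compact semianalytic neighborhood of $D$.} By Definition~\ref{def:fattening}, $|\KK|$ is already a neighborhood of $|\Omega|$ in $M$, and it is compact and semianalytic (each c-nbhd and each tube is such). For a face $\G$, the flow-box $\KK_\G=\bigcup_{x\in\DD^{out}_{\KK,\G}}\kappa_x$ is the image of a continuous injective map on the compact disc $\DD^{out}_{\KK,\G}$ (injectivity and the homeomorphism $\phi$ coming from extendability conditions (i)-(ii)), hence a compact set; that it is semianalytic follows because the $U$-leaves depend analytically on the starting point away from singularities, and $\G$ is relatively compact with $\overline{\G}\setminus\G\subset|\Omega|$, so the only degeneracy is at the boundary, which is handled by the doorjamb description in (iii). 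Taking the finite union over the finitely many faces and over $|\KK|$ gives a compact semianalytic set. To see it is a neighborhood of $D$: $|\KK|$ already contains a neighborhood of $|\Omega|$, and by Lemma~\ref{lm:graph}(v) the complement $D\setminus|\Omega|$ is the union of the faces, each of which is ``covered'' by the $\G$-flow-box in the sense that $\KK_\G$ together with the adjacent parts of $|\KK|$ contains a neighborhood of $\G$ in $M$ (since $\KK_\G$ sweeps out, via pieces of $U$-leaves, a full one-sided collar of $\G$ in $M$, while the two sides of $D$ near $\G$ are inside $M$ only on one side because $\G\subset\partial M$). So $\wh\KK$ is a neighborhood of every point of $D$.

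\emph{Step 2: identifying $Fr(\wh\KK)$ and its transversal part.} The key observation is that gluing the flow-boxes $\KK_\G$ onto $|\KK|$ exactly ``uses up'' the transversal frontier pieces of $|\KK|$ that are free $\KK$-doors \emph{associated to faces}. By Proposition~\ref{pro:trans-distinguished}, $Fr(|\KK|)^\pitchfork=\bigcup_{\DD\in\FF}\DD\cup\bigcup_{q\in N}L_{\KK(\nu_q)}$, and each free door $\DD$ is either associated to a face $\G$ (in which case $\DD=\DD^{out}_{\KK,\G}$ or $\DD=\DD^{in}_{\KK,\G}$) or associated to a transversal saddle $p\in S_{tr}$. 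When we attach $\KK_\G$, the doors $\DD^{out}_{\KK,\G}$ and $\DD^{in}_{\KK,\G}$ become interior to $\wh\KK$ (leaves through their interior points now continue inside $\wh\KK$ up to the ``other'' door, by extendability (i)-(ii)), while the new outer frontier contributed by $\KK_\G$ lies along $Fr(|\KK|_{\partial\G}|)$ and, by extendability (iii), matches up tangentially with the tangential frontier already present. Thus after attaching all flow-boxes, the only surviving transversal frontier pieces are: the lids $L_{\KK(\nu_q)}$, $q\in N$ (these are never doors associated to faces), and the free doors associated to transversal saddles, i.e. the free $\KK$-doors in $F_{\KK(\nu_p^\epsilon)}$ for $p\in S_{tr}$. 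This gives the list (a)-(b). The tangential part $Fr(\wh\KK)^\smallsmile$ is then everything else: the tangential frontier of $|\KK|$ together with the lateral surfaces of the flow-boxes, which glue into a piecewise smooth topological surface (the matching along doorjambs being exactly condition (iii)). That $Fr(\wh\KK)$ is a topological, piecewise smooth surface follows because $\wh\KK$ is a $3$-manifold with boundary: locally it is either a chart of a c-nbhd, a tube, or a flow-box, all manifolds with corners, glued along codimension-one faces.

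\emph{Step 3: point types on the transversal frontier.} For a disc $H$ as in (a) or (b), the type of its points relative to $\wh\KK$ is computed from the type relative to $|\KK|$ given in Proposition~\ref{pro:trans-distinguished}, adjusted only where $H$ meets a flow-box $\KK_\G$ — and by construction $H$ meets a flow-box only along doorjambs of $H$ that sit in $D$ (for a lid of a non-three-dimensional-saddle $D$-node) or not at all (for a lid of a three-dimensional saddle node, and for a free door at a transversal saddle, whose doorjambs do not meet $D$). Concretely: interior points of $H$ are of type i-e or e-i exactly as in Proposition~\ref{pro:trans-distinguished}, because near an interior point $\wh\KK$ coincides with $|\KK|$. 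For $H=L_{\KK(\nu_q)}$ with $q$ not a three-dimensional saddle: by Proposition~\ref{pro:trans-distinguished}(iii), relative to $|\KK|$, $\partial H$ splits into points of type t-i on the two ``doorjambs'' and the rest being of type e-i; but those doorjambs lie in $D$ and are precisely where the adjacent flow-boxes are attached, so the ``t'' side (which was $Fr(|\KK|)^\smallsmile\subset|\KK|$) becomes an ``e'' or ``i'' side after attaching, upgrading t-i to e-i (or i-e); hence all of $\partial H$ is of type e-i or i-e. For $H=L_{\KK(\nu_q)}$ with $q$ a three-dimensional saddle $D$-node: $\partial H$ does not meet $D$ (the lid is a disc in the top of the cylinder not touching the base), so no flow-box is attached along it, and the type stays as in Proposition~\ref{pro:trans-distinguished}(ii), namely e-t or t-e. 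For $H$ a free $\KK$-door at a transversal saddle: its doorjambs do not meet $D$ (Definition~\ref{def:free-doors} and the remark after it), so again no flow-box touches $\partial H$, and the types of points on $\partial H$ are exactly those in Proposition~\ref{pro:trans-distinguished}(i) with $H$ in the role of $\DD$.

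\emph{Main obstacle.} The delicate point is Step 2, specifically verifying that attaching the flow-boxes $\KK_\G$ does not create any \emph{new} transversal frontier — i.e. that the lateral boundary of each $\KK_\G$ glues tangentially onto $Fr(|\KK|)^\smallsmile$ rather than leaving an exposed transversal strip, and simultaneously that the interiors of $\DD^{out}_{\KK,\G},\DD^{in}_{\KK,\G}$ really become interior points of $\wh\KK$ and not frontier points. This is exactly what extendability conditions (i)-(iii) are designed to guarantee, and the proof of the lemma is essentially a careful bookkeeping argument that walks through, face by face, the three cases of extendability and the structure of $\partial\DD^{out}_{\KK,\G}$ (doorjambs, handrail, center) from Proposition~\ref{pro:trans-distinguished}(i), checking that each piece of $\partial\KK_\G$ lands on the correct existing piece of $Fr(|\KK|)$. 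The semianalyticity of $\KK_\G$ near $\overline\G\setminus\G\subset|\Omega|$ — where leaves accumulate to singular points — also needs a word: it follows from the fact that $\KK_\G$ is, away from a neighborhood of $|\Omega|$, a genuine analytic flow-box, while near $|\Omega|$ it coincides with $|\KK|$ by construction, so no new semianalyticity issue arises there.

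Once these points are settled, the statement follows by assembling Steps 1-3; I would present it essentially in that order, with Step 2's gluing verification being the bulk of the written proof.
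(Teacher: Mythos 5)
The paper does not actually provide a proof of this lemma: the text says only that it follows from Proposition~\ref{pro:trans-distinguished} and the extendability conditions, and moves on to Proposition~\ref{pro:extendable-after-gdsfd}. Your plan (build $\wh{\KK}$ from pieces, identify which pieces of $Fr(|\KK|)^\pitchfork$ are swallowed by the flow-boxes, then transfer the point-type classification from Proposition~\ref{pro:trans-distinguished}) is exactly the kind of bookkeeping the paper intends, so the general approach matches.

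However, your Step~3 contains a genuine misreading of Proposition~\ref{pro:trans-distinguished}(iii) in the case $H=L_{\KK(\nu_q)}$ with $q\in N$ not a three-dimensional saddle. You write that ``$\partial H$ splits into points of type t-i on the two `doorjambs' and the rest being of type e-i'' and that ``those doorjambs lie in $D$.'' Neither is right. The lid $L_{\KK(\nu_q)}$ has no doorjambs at all: its boundary is the handrail circle $h(F_{\KK(\nu_q)})$, which is disjoint from $D$ (only the base $b(F)$ of the fence touches $D$). Proposition~\ref{pro:trans-distinguished}(iii) does not describe $\partial L$ directly: it describes $int(L)$ and the free doors $\DD\subset F$ with their doorjambs $J_1,J_2$, and those doorjambs are subsets of the fence running from $D$ up to $h(F)$, not subsets of $D$. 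The set $\partial L=h(F)$ decomposes instead into the handrail segments $h(\DD)$ of the free doors (where Proposition~\ref{pro:trans-distinguished}(iii) does give e-i via the set $\DD\setminus(J_1\cup J_2)$, together with t-i at the isolated points $h(J_i)$) and the handrail segments $h(\PPP)$ of the unfree doors (the caps of tubes), which the proposition does not address at all and which your argument ignores. The points in $h(\PPP)$ are of type t-i relative to $|\KK|$ because one side of the leaf lies in the tangential wall of the tube $\KK(\sigma)$, and verifying how (or whether) attaching the flow-boxes $\KK_\G$ changes their type requires a separate argument: the flow-boxes are glued along the free doors $\DD^{out}_{\KK,\G},\DD^{in}_{\KK,\G}$ in the fence and their lateral boundaries run along the doorjamb saturations, so they do not attach ``along $\partial H\subset D$'' as you claim. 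Your upgrade from t-i to e-i/i-e is asserted but not justified, and the asserted mechanism (flow-boxes attached at doorjambs lying in $D$) does not occur.

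A related imprecision appears in Step~2: you say $Fr(\wh\KK)^\smallsmile$ is ``the tangential frontier of $|\KK|$ together with the lateral surfaces of the flow-boxes.'' That cannot be literally true: the portions of the tube walls swept out by the doorjamb saturations $\bigcup_{x\in J}\kappa_x$ belong to $Fr(|\KK|)^\smallsmile$ and become interior to $\wh\KK$ once the flow-boxes are added (they are sandwiched between a tube and a flow-box). So part of the old tangential frontier disappears. This does not affect the list (a)--(b), but it does affect the verification that $Fr(\wh\KK)$ is a topological surface with no new transversal pieces, which is exactly the delicate point you flagged as the ``main obstacle'' but did not carry out. To fix Step~3 you would need to track, door by door and handrail by handrail, which parts of each $\partial H$ abut the doorjamb saturations (which become interior), which abut the handrail saturations of the flow-boxes (which remain tangential), and which abut the surviving strips of the tube walls.

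Your treatment of the other two cases (lids of c-nbhds at three-dimensional-saddle $D$-nodes, and free doors at transversal saddles) is correct: in both cases $\partial H$ is disjoint from the free doors associated to faces, so the types from Proposition~\ref{pro:trans-distinguished}(i)--(ii) carry over unchanged, as you say. Step~1 and the identification of the list (a)--(b) in Step~2 are also essentially right.
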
	
\begin{proposition}\label{pro:extendable-after-gdsfd}
	Let $\mathcal{M}$ be a non s-resonant and of Morse-Smale type HAFVSD. Let $\KK$ be a distinguished fattening over $\Omega$ having the property (DS). Then there exists a refinement $\wt{\KK}$ of $\KK$ which is distinguished, extendable and has also the property (DS).
\end{proposition}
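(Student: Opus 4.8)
The plan is to start from a distinguished fattening $\KK$ with the (DS) property and produce a refinement that is still distinguished, still satisfies (DS), and is moreover extendable on every face of $\Omega$. The key observation is that extendability is a face-by-face condition, and that the obstructions to it are exactly of the kind controlled by the good-saturations machinery of Section~\ref{sec:good-saturations}: by Proposition~\ref{pro:DS implies GS}, the fattening already has good saturations, so the saturation $\Sat_{|\KK|}(\DD^{out}_{\KK,\G})$ of an out-door associated to a face $\G$ does not meet the saturation of any other free door not associated to $\G$, and does not meet any fixed-mark saturation. What remains to be checked is the internal behaviour of that saturation \emph{between} $\DD^{out}_{\KK,\G}$ and $\DD^{in}_{\KK,\G}$, i.e. that each positive $U$-leaf issued from a point of $\DD^{out}_{\KK,\G}$ actually reaches $\KK(\tilde\omega(\G))$ and does so through $\DD^{in}_{\KK,\G}$, hitting $|\KK|$ in between only when forced to (along the doorjambs).

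First I would fix a face $\G$ with $p=\tilde\alpha(\G)$, $q=\tilde\omega(\G)$, and analyse the flow of $\LL$ inside the region of $D$ bounded by $\FF(\G)$ (Lemma~\ref{lm:graph}(v)): every leaf in $\G$ goes from $p$ to $q$, and near $\G$ the only singular points are those on the two boundary paths. Using the Poincar\'e--Bendixson description in Remark~\ref{rk:asymptotics-HAS} together with Hartman--Grobman at the (finitely many) saddles and nodes along $\partial\G$, one shows that for a sufficiently small neighbourhood $U$ of $D$, the positive $U$-leaf of any interior point of $\DD^{out}_{\KK,\G}$ reaches $\KK(\nu_q)$ (or the appropriate s-component at $q$) and its first intersection with that c-nbhd lies on the fence, i.e. on the in-door $\DD^{in}_{\KK,\G}$; the map $\phi$ defined this way is a homeomorphism because it is the composition of the holonomy along $\G$ with the flow-box projections of the tubes $\KK(\sigma)$ for $\sigma\in\FF(\G)$. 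The content of item (iii) of Definition~\ref{def:extendable} is that the ``middle'' piece $\kappa_x$ of the leaf stays outside $|\KK|$ except for $x$ in a doorjamb, and this is precisely where I invoke the (DS)/good-saturations property: a point $x$ in the interior of $\DD^{out}_{\KK,\G}$ is of type i-e relative to $|\KK|$ by Proposition~\ref{pro:trans-distinguished}, so $\kappa_x$ leaves $|\KK|$ immediately; it can only re-enter $|\KK|$ through the frontier, but such re-entry would produce a point of $\Sat_{|\KK|}(\DD^{out}_{\KK,\G})$ on a different free door or on a fixed mark, contradicting (gsfd) and (gsfmfd); while for $x$ in a doorjamb $J$ of $\DD^{out}_{\KK,\G}$, $J$ is (the germ of) a generating mark of a mobile stain, and Lemma~\ref{lm:getting-es-adm}(b) tells us $\Sat_{|\KK|}(J)$ stays inside unfree doors along the path $\Theta$, so $\kappa_x\subset Fr(|\KK|_{\partial\G}|)$.

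After establishing this for one face, the point is that extendability on $\G$ only constrains the out-door of $\KK(\tilde\alpha(\G))$, the in-door of $\KK(\tilde\omega(\G))$, and the tubes along $\FF(\G)$; by shrinking these (refining the relevant c-nbhds and tubes in the order dictated by the filtration~(\ref{eq:filtration-graph}), exactly as in the proof of Proposition~\ref{pro:getting-distinguished} and as summarised in Scholium~\ref{sch-prop-distinguished}) one can arrange extendability on $\G$. The only thing to be careful about is that refining for the sake of one face must not destroy extendability already arranged for previously treated faces, nor destroy (DS). For the latter, I would feed the free-door doorjambs and the fixed marks back in as prescribed systems of entrances $\EE_\nu$ in an application of Proposition~\ref{pro:getting-distinguished}, and invoke the part of Theorem~\ref{th:good-sat}' (together with Remark~\ref{rk:DS}) that says (DS) is stable under the refinements produced by that construction; for the former, I process faces in an order compatible with the length function $l$ and note that a refinement performed ``downstream'' of an already-extendable face $\G$ only shrinks $\DD^{in}_{\KK,\G}$, and one checks directly that extendability on $\G$ is inherited by such restrictions (the flow-box $\KK_\G$ simply gets trimmed). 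The main obstacle, I expect, is this simultaneity: guaranteeing that a single refinement is extendable on \emph{all} faces at once while staying distinguished with (DS), which forces one to interleave the face-by-face extendability argument with the recursive refinement scheme of Section~\ref{sec:good-saturations} rather than doing them sequentially; concretely, one shows that at each stage $j$ of the filtration the modifications needed for extendability on the faces $\G$ with $\tilde\alpha(\G)$ of length $j$ are compatible with the stain-disjointness bookkeeping $(DS)^{\le\nu}$, $(qDS)^{>\nu}$ already in force, because both are ultimately governed by Theorem~\ref{th:path of a trace mark} and the disjointness in Remark~\ref{rk:sat-empty-intersection}.
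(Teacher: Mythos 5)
Your per-face analysis of extendability is essentially sound and close to what the paper establishes in the preamble of its proof (conditions (i)--(iv) on the pre-doors $\wt{\DD}^\alpha_\G,\wt{\DD}^\omega_\G$), but the route you then take to get a single refinement that is extendable on \emph{all} faces is structurally different from the paper's, and it is precisely at the point you yourself flag (``the main obstacle, I expect, is this simultaneity'') that your argument is not actually closed.

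You propose to re-run the recursive refinement machinery of Section~\ref{sec:good-saturations} face by face, interleaving extendability requests with the $(DS)^{\le\nu}$/$(qDS)^{>\nu}$ bookkeeping, and you justify the compatibility of this interleaving with a claim that a refinement performed ``downstream'' of an already-processed face $\G$ only shrinks $\DD^{in}_{\KK,\G}$ and that the flow-box $\KK_\G$ ``simply gets trimmed''. This claim is not obviously true and is the core of the missing step: shrinking the c-nbhd at $q=\tilde\omega(\G)$ moves the fence of $\KK(\tilde\omega(\G))$, so the first hitting point $\phi(x)$ of a leaf issued from $x\in\DD^{out}_{\KK,\G}$ changes; one must re-verify that the new $\phi(x)$ still lands inside the new in-door for every $x$ in the (unchanged) out-door, that the doorjamb/handrail correspondence of Definition~\ref{def:extendable}(ii) survives, and that the extra segment of the leaf between the old and new $\phi(x)$ does not re-enter $|\KK|$ in a forbidden way (Definition~\ref{def:extendable}(iii)). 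None of this is free, and it is exactly the kind of backward propagation that makes a naive face-by-face scheme circular. You also invoke ``the part of Theorem~\ref{th:good-sat}' \ldots that says (DS) is stable under the refinements produced by that construction'', but no such stability statement is part of Theorem~\ref{th:good-sat}'; that theorem produces one refinement with (DS), it does not assert (DS) is preserved by arbitrary further applications of Proposition~\ref{pro:getting-distinguished}.

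The paper avoids the interleaving entirely. It first fixes, simultaneously for all faces, flow-box-compatible pre-doors $\wt{\DD}^\alpha_\G\subset\DD^\alpha_\G$ and $\wt{\DD}^\omega_\G\subset\DD^\omega_\G$ with matching bases, doorjambs and handrails. It then forms the single saturated set $\mathbf E=\bigcup_\G\bigl(\Sat^-_{|\KK|}(\mathrm{Int}(\DD^\alpha_\G\setminus\wt{\DD}^\alpha_\G))\cup\Sat^+_{|\KK|}(\mathrm{Int}(\DD^\omega_\G\setminus\wt{\DD}^\omega_\G))\bigr)$ and defines $\wt{\KK}(\rho)=\KK(\rho)\setminus(\mathbf E\cap\KK(\rho))$ for every $\rho$ at once. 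The point is that the (DS) property of $\KK$ (via Lemma~\ref{lm:getting-es-adm}) forces each $E^\alpha_\G$ to meet a fence $F_{\KK(\nu)}$ only for $\nu$ on the path $\Theta(\tilde\alpha(\G))$, and there only inside the interior of a single unfree $\KK$-door, pinned between the two mobile-stain images of the doorjambs. Consequently the subtraction modifies each fence in a controlled way (only moving the handrail), every $\wt{\KK}(\nu)$ remains a c-nbhd refining $\KK(\nu)$, the fixed stains and the transversal-saddle free doors are untouched, and the new free doors associated to faces are exactly the $\wt{\DD}^\alpha_\G,\wt{\DD}^\omega_\G$, whose doorjambs correspond under the flow. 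That yields distinguishedness, (DS), and extendability in one shot, with no recursion and hence no compatibility issue between faces. If you wish to rescue your recursive approach you would need a genuine induction-on-length argument with an explicitly preserved invariant encoding partial extendability; the paper's one-shot trimming shows this is unnecessary once (DS) is in hand.
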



\begin{proof}
Given a face $\G$ of $\Omega$, let us denote  $\DD^{\alpha}_\G=\DD^{out}_{\KK,\G}$ and $\DD^\omega_\G=\DD^{in}_{\KK,\G}$ the respective free $\KK$-doors at $\tilde{\alpha}(\G)$ and $\tilde{\omega}(\G)$ corresponding to the face $\G$. Since $\KK$ is distinguished, 
we can consider a neighborhood $U$ of $D$ in $M$ and pre-doors $\wt{\DD}^{\alpha}_\G\subset\DD^{\alpha}_{\G}$ and $\wt{\DD}^{\omega}_\G\subset\DD^{\omega}_{\G}$ for any $\G$ such that:
\begin{enumerate}[(i)]
	\item For $u\in\{\alpha,\omega\}$, we have $b(\wt{\DD}^{u}_\G)=b(\DD^{u}_{\G})$.
	\item For any point $x\in\wt{\DD}^{\alpha}_\G$, the positive $U$-leaf starting at $x$ cuts a first time $\wt{\DD}^{\omega}_\G$ at a point $\phi_\G(x)$ so that $\phi_\G:\wt{\DD}^{\alpha}_\G\to\wt{\DD}^{\omega}_\G$ is a homeomorphism.
	\item For $u\in\{\alpha,\omega\}$, there is a framing of $\wt{\DD}^{u}_\G$ whose set of doorjambs $\{\wt{J}^u_{\G,1},\wt{J}^u_{\G,2}\}$ satisfies $\wt{J}^u_{\G,i}=\wt{\DD}^{u}_\G\cap J^u_{\G,i}$, for $i=1,2$, where $\{J^u_{\G,1},J^u_{\G,2}\}$ is the set of doorjambs of $\DD^{u}_{\G}$. Moreover, $\phi_\G$ maps the handrail and the doorjambs of $\wt{\DD}^{\alpha}_\G$ to the handrail and the doorjambs of $\wt{\DD}^{\omega}_\G$, respectively.
	\item If $x\in\mbox{int}(\wt{\DD}^\alpha_\G)$, then the $U$-leaf from $x$ to $\phi_\G(x)$ does not intersect $|\KK|$.
\end{enumerate}
In particular, if $u\in\{\alpha,\omega\}$, the framed predoor $\wt{\DD}^u_\G$ is compatible with any other free or unfree $\KK$-door at $\tilde{u}(\G)$ different from $\DD^u_{\G}$.

For any face $\G$ of $\Omega$, we consider the  subsets of $|\KK|$ (see Figure \ref{Fig:PruebaEXT_1}):

$$
E^\alpha_\G=\Sat_{|\KK|}^- \left(\rm{Int}_{F_{\KK(\tilde{\alpha}(\G))}}(\DD^{\alpha}_{\G}\setminus\wt{\DD}^\alpha_\G)\right), \
E^\omega_\G=\Sat_{|\KK|}^+
\left(\rm{Int}_{F_{\KK(\tilde{\omega}(\G))}}(\DD^{\omega}_{\G}\setminus\wt{\DD}^\omega_\G)
	\right), 
$$


\begin{figure}[h]
	\begin{center}
		\includegraphics[scale=0.60]{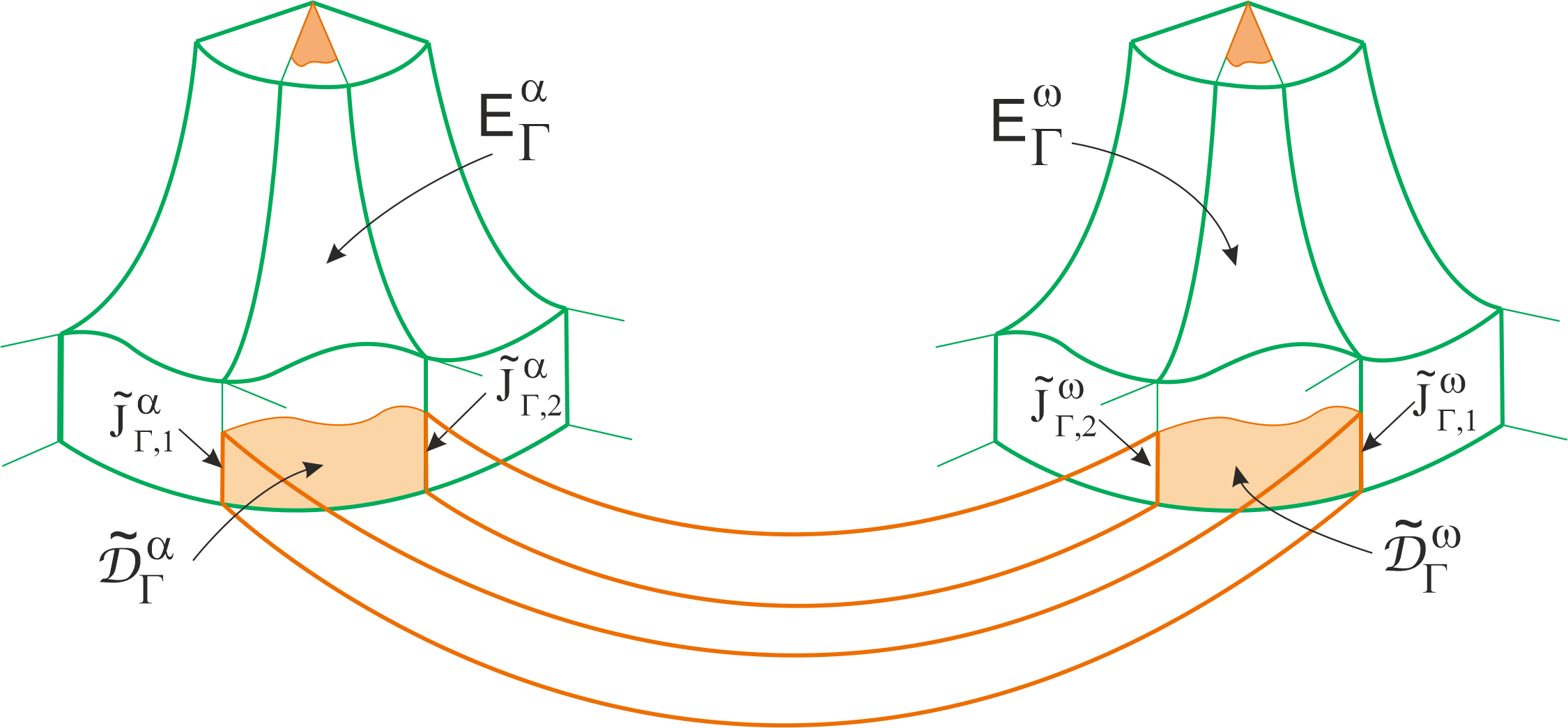}
	\end{center}
	\caption{Extending $\KK$ to the component $\Gamma$.}
	\label{Fig:PruebaEXT_1}
\end{figure}

\noindent and put $\mathbf{E}=\bigcup_{\G}\,E^\alpha_\G\cup E^\omega_\G$. Define the map
$$
\wt{\KK}:\VV(\Omega)\cup E(\Omega)\to\PPP(U)
$$
by setting $\wt{\KK}(\rho)=\KK(\rho)\setminus (\mathbf{E}\cap\KK(\rho))$, where $\rho$ is either a local s-component or an edge of the graph $\Omega$. We claim that $\wt{\KK}$ is a distinguished refinement of $\KK$ satisfying the required properties. To check this, let us point out the following observations, valid for any $\nu\in\VV(\Omega)$:
\begin{enumerate}[(a)]
	\item By means of Lemma~\ref{lm:getting-es-adm}, given $\G$ a face of $\Omega$, the set $E^\alpha_\G$ cuts $F_{\KK(\nu)}$ only if $\nu$ is in the path $\Theta(\tilde{\alpha}(\G))$ (cf. Theorem~\ref{th:path of a trace mark}). In this case, the set $E^\alpha_\G\cap F_{\KK(\nu)}$ is contained in the interior, with respect to $F_{\KK(\nu)}$, of an unfree $\KK$-door. In fact, $E^\alpha_\G\cap F_{\KK(\nu)}$ is contained in the ``wedge'' domain bounded by the two mobile stains $\Sat^-_{|\KK|}(J^\alpha_i)\cap F_{\KK(\nu)}$, $i=1,2$, both being well-positioned curves inside such a door. 
	\item The set $\wt{\KK}(\nu)$ is a c-nbhd at $\nu$ and also refinement of $\KK(\nu)$. Its fence is obtained by removing from $F_{\KK(\nu)}$ the subsets of the form $F_{\KK(\nu)}\cap E^\alpha_\G$, where $\G$ runs over the set of faces. If $E^\alpha_\G$ cuts the fence $F_\KK(\nu)$, the
	handrail $h(\wt{\KK}(\nu))$ differs from $h(\KK(\nu))$ precisely along the frontier of $F_{\KK(\nu)}\cap E^\alpha_\G$ inside $F_{\KK(\nu)}$, as depicted in Figure \ref{Fig:PruebaEXT_2}. In particular, $J^\alpha_{\G,i}\setminus\wt{J}^\alpha_{\G,i}$ is a segment of the handrail of $F_{\wt{\KK}(\tilde{\alpha}(\G))}$. 

\begin{figure}[h]
	\begin{center}
		\includegraphics[scale=0.60]{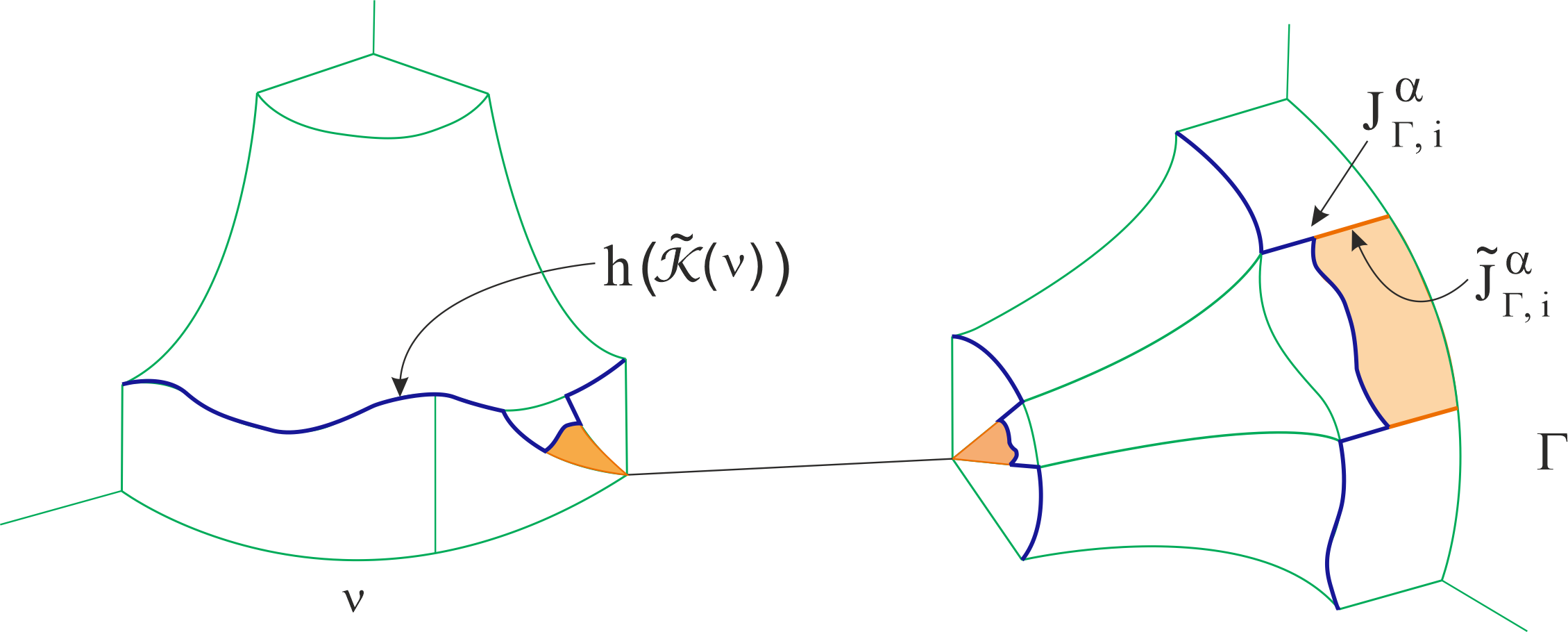}
	\end{center}
	\caption{The new handrail $h(\wt{\KK}(\nu))$.}
	\label{Fig:PruebaEXT_2}
\end{figure}
	\item Fixed stains in $\wt{\KK}$ coincide with those in the fattening $\KK$ thanks to property (DS). Moreover, if $\nu$ belongs to the path $\Theta(\tilde{\alpha}(\G))$ for some face $\G$ such that $\nu\ne\tilde{\alpha}(\G)$, in light of Lemma~\ref{lm:getting-es-adm}, the mobile stains of the form $\Sat^-_{|\wt{\KK}|}(\wt{J}^\alpha_{\G,i})\cap F_{\wt{\KK}(\nu)}\in\Lambda^{\wt{\KK}}_{\wt{\KK}(\nu)}$, for $i=1,2$, are contained in the corresponding elements of $\Lambda^{\KK}_{\KK(\nu)}$ which, in addition, are well-positioned inside an unfree $\KK$-door. In fact, those mobile stains of $\wt{\KK}$ are contained in an unfree $\wt{\KK}$-door at $\nu$ and do not intersect any doorjamb of a free $\wt{\KK}$-door.
	\item The same properties (a), (b), (c) above hold if we replace $\alpha$ with $\omega$ and $\Sat^-$ with $\Sat^+$.
\end{enumerate} 

 In sum, the fattening $\wt{\KK}$ is an extendable distinguished refinement of $\KK$. Moreover, $\wt{\KK}$ and $\KK$ have equal fixed stains,
 the free $\wt{\KK}$-doors and the free $\KK$-doors at local s-components associated to transversal saddle points coincide, and the free $\wt{\KK}$-doors associated to faces of the graph are contained in the corresponding free $\KK$-doors. Moreover, the doorjambs of these last free $\wt{\KK}$-doors are contained in the corresponding doorjambs of free $\KK$-doors and the doorjambs of the $\wt{\KK}$-doors $\DD^{\alpha}_{\G}$, $\DD^{\omega}_{\G}$ associated to the same face $\G$ are mapped one to the other by the flow. This proves that $\wt{\KK}$ has the property (DS) and we are done.
\end{proof}
\begin{remark}{\em 
	In the statement of Proposition~\ref{pro:extendable-after-gdsfd}, we may replace the hypothesis that $\KK$ satisfies the (DS) condition by the slightly weaker hypothesis that $\KK$ has good saturations. In fact, in the items (a)-(c) considered in the proof above we simply use that $\KK$ has the property $(DS)^{\le\nu}$ (cf. Definition~\ref{def:qds}) for $\nu$ equal to the $\tilde{\alpha}$-limit of a face of the graph. This last property can be obtained as a consequence of having good saturations.
}
\end{remark}

\strut

We end by proving Theorem~\ref{th:main}, the main result of the paper. Let us restate it here in slightly more precise terms. 

Let $\MM=(M,D,\LL)$ be a non s-resonant of More-Smale type HAFVSD. For any $p\in N\cup S_{tr}$ (i.e., any singular point which is not a tangential saddle point) fix a realization $W(p)$ of the local invariant manifold $W(p)$ which is transversal to $D$. More precisely, $W(p)$ is a neighborhood of $p$, if $p\in N$ is not a three-dimensional saddle, $W(p)=W^1_p$, if $p\in N$ is a three-dimesnional saddle and $W(p)=W^2_p$, if $p$ is a transversal saddle point.

\begin{theorem}\label{th:main2}
	 Let $V$ be a neighborhood of $D$ in $M$ and $V_p$ neighborhoods of $W(p)\cap V$ in $M$ for any $p\in N\cup S_{tr}$ such that $V_p\cap V_q=\emptyset$, if $p\ne q$. Then there exist a compact semianalytic neighborhood $U\subset V$ of $D$ in $M$ and compact semianalytic discs $T_p\subset Fr(U)\cap V_p$, for any $p\in N\cup S_{tr}$, satisfying the following:
	\begin{enumerate}[(i)]
		
		\item  The frontier $Fr(U)$ is a topological, piecewise smooth surface given by the disjoint union		
		$$
	Fr(U)= Fr(U)^\smallsmile\bigcup_{p\in N\cup S_{tr}}T_p.
		$$  
		\item Each disc $T_p$ contains $Fr(U)\cap W(p)$ and, in turn, it is contained in a smooth analytic surface in $V_p$ everywhere transversal to $\LL$. Moreover, $W(p)\cap T_p$ is equal to: $T_p$, when $\dim W(p)=3$; a singleton in $int(T_p)$, when $\dim W(p)=1$; a closed interval $I_p$ with extremities in $\partial T_p$ and such that $\dot{I_p}\subset int(T_p)$, when $\dim W(p)=2$. 
		\end{enumerate}
		Furthermore, the type of points in $T_p$ relatively to $Fr(U)$ is given by (assuming for instance that $W(p)$ is the stable manifold, otherwise change each type a-b with b-a):
		\begin{itemize}
		\item Points of $int(T_p)$ are of type e-i.
		\item If $\dim W(p)=3$, points in $\partial T_p$ are of type e-i.
		\item If $\dim W(p)=1$, points in $\partial T_p$ are of type type e-t.		
		\item  If $\dim W(p)=2$, then there are exactly four points in $\partial T_p$ of type t-t, none of them in $I_p$. Besides, among the four intervals in which these points divide $\partial T_p$, there are two of them, say $L_p^1$, $L_p^2$, intersecting $I_p$ and  formed by points of type t-i, while the other two do not intersect $I_p$ and are formed by points of type e-t.  
	\end{itemize}
	In addition, we may assume that the elements in the family 
	\begin{equation}\label{eq:last-disj-family}
	\{\Sat_U(W(p)))\}_{p\in S_{tr}}\,\cup\,\{\Sat_U(L^1_p\setminus I_p),\Sat_U(L^2_p\setminus I_p))\}_{p\in S_{tr}}
	\end{equation} 
	are mutually disjoint subsets of $U$.
\end{theorem}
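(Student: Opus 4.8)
The plan is to assemble Theorem~\ref{th:main2} directly from the machinery already built: a distinguished fattening with good saturations (Theorem~\ref{th:good-sat}), made extendable (Proposition~\ref{pro:extendable-after-gdsfd}), and then the extended support $\wh{\KK}$ of Lemma~\ref{lm:extended-support} will be the desired neighborhood $U$, after shrinking so that everything fits inside the prescribed $V$ and $V_p$. First I would start from an arbitrary fattening $\KK_0$ of $\Omega$ whose support lies inside $V$ and inside $\bigcup_p V_p$ near the singular points (possible since fattening supports form a base of neighborhoods of $|\Omega|$, and one may choose the c-nbhds $\KK_0(\nu)$ at a local s-component of $p$ to lie in $V_p$, which is legitimate because $V_p$ is a neighborhood of $W(p)\cap V$ and the local invariant manifolds pass through $p$). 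Apply Theorem~\ref{th:distinguished} to get a distinguished refinement, then Theorem~\ref{th:good-sat} to get a distinguished refinement $\KK_1$ with good saturations — in fact with the stronger (DS) property via Theorem~\ref{th:good-sat}'. Finally apply Proposition~\ref{pro:extendable-after-gdsfd} to obtain a distinguished, extendable refinement $\KK$ that still has (DS). Refinements only shrink supports, so $|\KK|\subset V$ still holds, and the tubes and extending flow-boxes $\KK_\G$ can be arranged to stay inside $V$ (using that faces are saturated and $V$ is a neighborhood of all of $D$); near each $p$, the relevant pieces remain in $V_p$.

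Next I would set $U=\wh{\KK}$, the extended support. By Lemma~\ref{lm:extended-support}, $U$ is a compact semianalytic neighborhood of $D$ contained in $V$, $Fr(U)$ is a topological piecewise-smooth surface, and its transversal frontier is the disjoint union of the lids $L_{\KK(\nu_q)}$ for $q\in N$ and the free $\KK$-doors in the fences at transversal saddles $p\in S_{tr}$. These are exactly the discs I would declare to be the $T_p$: for $q\in N$ put $T_q=L_{\KK(\nu_q)}$ (this lies in $V_q$ by the choice of $\KK_0$; moreover when $q$ is a three-dimensional saddle the lid cuts $W^1_q=W(q)$ transversally in a single interior point, and when $q$ is a three-dimensional attractor/repeller the lid is a transversal disc of the spherical-type local neighborhood $W(q)$, so $W(q)\cap T_q=T_q$); for $p\in S_{tr}$ put $T_p$ equal to the (unique) free $\KK$-door at one of the two local s-components $\nu_p^\epsilon$ — here one must check that the two free $\KK$-doors at $\nu_p^+,\nu_p^-$ glue along their common base contained in $W^2_p$ into a single disc $T_p$ with $W^2_p\cap T_p=I_p$ a closed interval, $\dot I_p\subset int(T_p)$; this gluing is exactly the content of the ``equal base'' clauses in the definition of distinguished fattening at a transversal saddle. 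That $T_p$ lies in a smooth analytic surface transversal to $\LL$ is immediate since each free door lies in a planar section (the fence lives in the wall $w(B)$ of a cylinder, which after the flow-box identification is an analytic transversal surface); for three-dimensional saddle nodes the lid lies in $t(B)$, again analytic and transversal. The equality $Fr(U)=Fr(U)^\smallsmile\cup\bigcup_p T_p$ is then just the statement that $Fr(U)^\pitchfork=\bigcup_p T_p$, which is Lemma~\ref{lm:extended-support}, together with $Fr(U)^\pitchfork$ being disjoint from $Fr(U)^\smallsmile$ by definition.

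The point-type description is read off directly: Lemma~\ref{lm:extended-support} gives it for lids (type e-i / i-e on the interior, and on the boundary either all e-i/i-e when $q$ is not a three-dimensional saddle, or all e-t/t-e when $q$ is a three-dimensional saddle), and for the free doors at transversal saddles it refers to case (i) of Proposition~\ref{pro:trans-distinguished}, which is precisely the list: interior and open base e-i, open handrail e-t, open doorjambs together with their base endpoints t-i, and the two handrail endpoints $h(J_1),h(J_2)$ of type t-t. Translating to the statement's notation with $T_p$ the glued door and $I_p=W^2_p\cap T_p$: $int(T_p)$ is e-i; the four handrail endpoints $h(J_1),h(J_2)$ of the two half-doors are the four t-t points (none lies on $I_p$, since $I_p\subset W^2_p$ is the shared base, interior to $T_p$); the four arcs into which they cut $\partial T_p$ are two handrail arcs (type e-t, not meeting $I_p$) and two doorjamb arcs (type t-i, these are the $L_p^1,L_p^2$, each meeting $I_p$ at an endpoint of the base). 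One then swaps a-b with b-a if $W(p)$ is the unstable rather than the stable manifold, as the statement allows. The final disjointness of the family in \eqref{eq:last-disj-family} is where I expect the only real work: $\{\Sat_U(W^2_p)\}_{p\in S_{tr}}$ are mutually disjoint by Corollary~\ref{cor:sat-w2}; the sets $\Sat_U(L^i_p\setminus I_p)$ are saturations of (pieces of) free-door doorjambs, i.e. mobile stains, and their disjointness from each other and from the $\Sat_U(W^2_{p'})$ is exactly the (DS) / good-saturations property (Proposition~\ref{pro:DS implies GS} and Definition~\ref{def:good-saturations}): two doorjambs of the free doors at $\nu_p^+,\nu_p^-$ are associated to the transversal saddle $p$, hence not to a common face, so (gsfd) applies; and a doorjamb's saturation cannot meet $\Sat_U(W^2_{p'})$ for $p'\ne p$ by (gsfmfd) combined with Remark~\ref{rk:sat-empty-intersection}. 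The main obstacle, then, is bookkeeping: verifying that the free doors at the two s-components of a transversal saddle genuinely assemble into a single disc $T_p$ with the claimed intersection $I_p$ and boundary-arc structure, and tracking through the refinements of Proposition~\ref{pro:extendable-after-gdsfd} that this gluing and the (DS) property both survive — but both facts are guaranteed by the ``equal base at transversal saddles'' clauses carried along in every step of the constructions in Sections~\ref{sec:distinguished}--\ref{sec:extendable}.
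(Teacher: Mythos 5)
Your proposal is correct and follows essentially the same route as the paper: take a fattening $\KK_0$ with support in $V$ and with the c-nbhds (or their relevant lids/fences) inside the $V_p$'s, refine via Theorem~\ref{th:distinguished}, Theorem~\ref{th:good-sat}' (to get (DS)), and Proposition~\ref{pro:extendable-after-gdsfd}, and take $U=\wh{\KK}$ with $T_q=L_{\KK(\nu_q)}$ for $q\in N$ and $T_p$ the union of the two free $\KK$-doors at $\nu_p^\pm$ for $p\in S_{tr}$. The one place where you slightly overcomplicate matters is the final disjointness of the family in \eqref{eq:last-disj-family}: you invoke (gsfd) and (gsfmfd), but those concern saturations of whole free doors and of fixed marks rather than of the doorjamb arcs $L^i_p\setminus I_p$, so the precise statement you want does not drop out of them directly; the paper argues directly from (DS) (any alleged intersection of two of the saturations lands on a common fence, producing two distinct intersecting stains, which (DS) forbids), which is cleaner. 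Your remark about needing to keep the plateaux $\KK_\G$ inside $V$ is a real point that the paper glosses over; it is handled by shrinking the initial $\KK_0$ so that all doors of a face lie close enough to the singular ends that the connecting flow-boxes remain in $V$.
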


\begin{proof}
Let us consider first a fattening $\KK_0$ over $\Omega$ with $|\KK_0|\subset V$ and such that, for any $p\in N\cup S_{tr}$, we have:

(a) If $\dim W(p)=3$, then $\KK_0(\nu_p)\subset V_p$.

(b) If $\dim W(p)=1$, then $L_{\KK_0(\nu_p)}\subset V_p$.

(c) If $\dim W(p)=2$, then $F_{\KK_0(\nu_p^+)}\cup F_{\KK_0(\nu_p^-)}\subset V_p$.

By virtue of Theorem~\ref{th:distinguished}, there is a distinguished refinement $\KK_1<\KK_0$. Now Theorem~\ref{th:good-sat}' along with Proposition~\ref{pro:extendable-after-gdsfd} allows us to consider an extendable distinguished refinement $\KK<\KK_1$. Put $U=\wh{\KK}$, the extended support of $\KK$. Being $\KK$ a refinement of $\KK_0$, it necessarily satisfies the same properties (a), (b), (c) above. Thus $U$ satisfies the requirements of Theorem~\ref{th:main2} by means of  Lemma~\ref{lm:extended-support}. More precisely, the disc $T_p$ in the statement is given by $T_p=L_{\KK(\nu_p)}$, if $\dim W(p)\in\{1,3\}$, and by $T_p=\DD_{\KK,\nu_p^+}\cup\DD_{\KK,\nu_p^-}$, where $\DD_{\KK,\nu_p^\epsilon}$ is the (unique) free $\KK$-door at $\nu_p^\epsilon$, for $\epsilon\in\{+,-\}$, if $\dim W(p)=2$. Finally, in light of Proposition~\ref{pro:extendable-after-gdsfd}, the sets in the family (\ref{eq:last-disj-family}) are mutually disjoint since $\KK$ has the property (DS).
\end{proof}


\end{document}